\DeclareMathOperator{\Aut}{Aut}
\DeclareMathOperator{\Coker}{Coker}
\DeclareMathOperator{\E}{E}
\DeclareMathOperator{\Ext}{Ext}
\DeclareMathOperator{\ext}{ext}
\DeclareMathOperator{\GL}{GL}
\DeclareMathOperator{\Gr}{Gr}
\DeclareMathOperator{\Hom}{Hom}
\DeclareMathOperator{\Id}{Id}
\DeclareMathOperator{\ind}{ind}
\DeclareMathOperator{\Ker}{Ker}
\DeclareMathOperator{\PHom}{PHom}
\DeclareMathOperator{\IHom}{IHom}
\DeclareMathOperator{\rep}{rep}
\DeclareMathOperator{\res}{res}
\DeclareMathOperator{\SI}{SI}
\DeclareMathOperator{\SL}{SL}
\DeclareMathOperator{\T}{T}
\newtheorem{theorem}{Theorem}[section]
\newtheorem{lemma}[theorem]{Lemma}
\newtheorem{corollary}[theorem]{Corollary}
\newtheorem{proposition}[theorem]{Proposition}
\theoremstyle{definition}
\newtheorem{definition}[theorem]{Definition}
\newtheorem*{definition*}{Definition}
\newtheorem{example}[theorem]{Example}
\newtheorem{conjecture}[theorem]{Conjecture}
\theoremstyle{remark}
\newtheorem{remark}[theorem]{Remark}
\newtheorem{observation}[theorem]{Observation}
\newtheorem{algorithm}[theorem]{Algorithm}
\newtheorem{question}[theorem]{Question}
\newtheorem*{question*}{Question}
\newtheorem{problem}[theorem]{Problem}
\numberwithin{equation}{section}
\newcommand{\op}[1]{\operatorname{#1}}
\newcommand{\mb}[1]{\mathbb{#1}}
\newcommand{\mc}[1]{\mathcal{#1}}
\newcommand{\mf}[1]{\mathfrak{#1}}
\newcommand{\bs}[1]{\boldsymbol{#1}}
\renewcommand{\b}[1]{\bold{#1}}
\newcommand{\e}{{\op{e}}}
\newcommand{\g}{{\sf g}}
\newcommand{\F}{{\sf F}}
\newcommand{\N}{{\sf N}}
\newcommand{\V}{{\sf V}}
\newcommand{\Ec}{{\check{\E}}}
\newcommand{\ec}{{\check{\e}}}
\newcommand{\Fc}{{\check{\sf \F}}}
\newcommand{\tc}{{\check{t}}}
\newcommand{\fc}{{\check{f}}}
\newcommand{\dc}{{\check{d}}}
\newcommand{\Nc}{{\check{\N}}}
\newcommand{\dtc}{{\check{\delta}}}
\newcommand{\dtb}{{\br{\delta}}}
\newcommand{\ep}{{\epsilon}}
\newcommand{\proj}{\operatorname{proj}\text{-}}
\newcommand{\br}[1]{\overline{#1}}
\newcommand{\innerprod}[1]{\langle#1\rangle}
\newcommand{\sm}[1]{\left(\begin{smallmatrix}#1\end{smallmatrix}\right)}
\newcommand{\dv}{\underline{\dim}}
\newcommand{\wtd}[1]{\widetilde{#1}}
\newcommand{\bminus}{{\scriptscriptstyle \boxminus}}
\newcommand{\bplus}{{\scriptscriptstyle \boxplus}}
\newcommand{\pperp}{\perp\!\!\!\perp}
\begin{document}
	
\title{Tropical $F$-polynomials and General Presentations}
\author{Jiarui Fei}
\address{School of Mathematical Sciences, Shanghai Jiao Tong University}
\email{jiarui@sjtu.edu.cn}
\thanks{The author was supported in part by National Science Foundation of China (No. 11971305 and No. 12131015)}

\subjclass[2020]{Primary 16G10; Secondary 13F60,\ 52B20}

\date{}
\dedicatory{}
\keywords{Tropical $F$-polynomial, General Presentation, Newton Polytope, Normal Fan, Edge Quiver, Cluster Fan, Exchange Quiver, Dual Cluster, Generic Newton Polytope, Associahedron}

\begin{abstract} We introduce the tropical $F$-polynomial $f_M$ of a quiver representation $M$. We study its interplay with the general presentation for any finite-dimensional basic algebra.
We give an interpretation of evaluating $f_M$ at a weight vector.
As a consequence, we give a presentation of the Newton polytope $\N(M)$ of $M$.
We study the dual fan and 1-skeleton of $\N(M)$.
We propose an algorithm to determine the generic Newton polytopes, and show it works for path algebras.
As an application, we give a representation-theoretic interpretation of Fock-Goncharov's duality pairing.
We give an explicit construction of dual clusters, which consists of real Schur representations.
We specialize the above general results to the cluster-finite algebras and the preprojective algebras of Dynkin type.
\end{abstract}

\maketitle
\setcounter{tocdepth}{2}
\tableofcontents

\section{Introduction}

For fixed projective representations $P_-,P_+$ of a given path algebra with relations $A=kQ/I$, a {\em general presentation} $d:P_-\to P_+$ is a general element in the vector space $\Hom(P_-,P_+)$. 
The study of general presentations of algebras was initiated in \cite{DF}. 
The theory was developed in parallel with that of general representations of quivers (without relations) (e.g., \cite{Ka1,S2}).
The dimension vectors in our setting is replaced by the weight vectors $\delta\in \mb{Z}^{Q_0}$.
The presentation space of weight $\delta$ is the space 
$$\PHom(\delta):=\Hom \left(P([-\delta]_+),P([\delta]_+) \right),$$ 
where we denote $[\delta]_+ := \max(\delta,0)$, $P(\beta):=\bigoplus_{i\in Q_0} \beta(i)P_i$, and $P_i$ is the indecomposable projective representation corresponding to the vertex $i$.
For two presentations $d_1,d_2$, we defined a finite-dimensional space $\E(d_1,d_2)$ which plays the role of $\Ext^1$ for path algebras (without relations). 
We denote by $\e(\delta_1,\delta_2)$ the minimal value of $\dim \E(-,-)$ on $\PHom(\delta_1)\times \PHom(\delta_2)$.
We found many analogous results about general representations for general presentations.
For example, the following analogue of Kac's canonical decomposition.
\begin{definition*}[\cite{DF}] A weight vector $\delta\in\mathbb{Z}^{Q_0}$ is called {\em indecomposable} if a general presentation in $\PHom(\delta)$ is indecomposable.
	We call $\delta=\delta_1\oplus \delta_2\oplus\cdots\oplus\delta_s$ the {\em canonical decomposition} of $\delta$ if a general element in $\PHom(\delta)$ decompose into (indecomposable) ones in each $\PHom(\delta_i)$.
\end{definition*}
\begin{theorem}[{\cite[Theorem 4.4]{DF}}] \label{T:introCDPHom} $\delta=\delta_1\oplus \delta_2\oplus\cdots\oplus\delta_s$ is the canonical decomposition of $\delta$ if and only if $\delta_1,\cdots,\delta_s$ are indecomposable, and $\e(\delta_i,\delta_j)=0$ for $i\neq j$.
\end{theorem}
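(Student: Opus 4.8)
The plan is to run the presentation-theoretic version of Kac's argument for the canonical decomposition of quiver representations. I will use three facts about the bifunctor $\E$ and the presentation spaces, which belong to the theory of \cite{DF}: \textbf{(a)} $\E(-,-)$ is additive in each variable on direct sums of presentations; \textbf{(b)} on $\PHom(\delta_1)\times\PHom(\delta_2)$ the function $(d_1,d_2)\mapsto\dim\E(d_1,d_2)$ is upper semicontinuous, so that its generic value is $\e(\delta_1,\delta_2)$, attained on a dense open subset; and \textbf{(c)} writing $G(\delta)$ for the group of base changes on the two projective modules underlying presentations of weight $\delta$, acting on $\PHom(\delta)$ in the obvious way, one has
$$\dim\PHom(\delta)=\dim\bigl(G(\delta)\cdot d\bigr)+\dim\E(d,d)\qquad\text{for every }d\in\PHom(\delta),$$
this being the assertion that $\E(d,d)$ is the cokernel of the differential of the orbit map at $d$ (the presentation analogue of the exact sequence computing $T_M(\GL_\beta\cdot M)$ with cokernel $\Ext^1(M,M)$). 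Since the weight of a $2$-term projective complex is additive under direct sums and unchanged by deleting trivial summands $P\xrightarrow{\ \sim\ }P$, for $d_i\in\PHom(\delta_i)$ the reduced direct sum $d_1\oplus\cdots\oplus d_s$ is a well-defined point of $\PHom(\delta)$, so I obtain a morphism
$$\mu\colon G(\delta)\times\PHom(\delta_1)\times\cdots\times\PHom(\delta_s)\longrightarrow\PHom(\delta),\qquad(h,d_1,\dots,d_s)\mapsto h\cdot(d_1\oplus\cdots\oplus d_s).$$
Because $\Img\mu$ is $G(\delta)$-stable and constructible, a short unwinding of the definition shows that $\delta=\bigoplus_i\delta_i$ is the canonical decomposition of $\delta$ \emph{if and only if} every $\delta_i$ is indecomposable \emph{and} $\mu$ is dominant.

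Thus the theorem reduces to the claim that $\mu$ is dominant if and only if $\e(\delta_i,\delta_j)=0$ for all $i\ne j$, and here I would compute $\dim\overline{\Img\mu}$ as $\dim(\text{source})-\dim(\text{generic fibre})$. The source has dimension $\dim G(\delta)+\sum_i\dim\PHom(\delta_i)$. For the generic fibre, take a general point $x=d_1\oplus\cdots\oplus d_s$ of $\Img\mu$ with each $d_i$ general in $\PHom(\delta_i)$; by the Krull--Schmidt property of the category of presentations, any tuple $(h',d_1',\dots,d_s')$ lying over $x$ has each $d_i'$ isomorphic to some $d_j$ with $\delta_j=\delta_i$, so the admissible $(d_1',\dots,d_s')$ sweep out a set of dimension $\sum_i\dim\bigl(G(\delta_i)\cdot d_i\bigr)$, and over each such tuple the admissible $h'$ form a coset of a stabilizer of dimension $\dim G(\delta)-\dim\bigl(G(\delta)\cdot x\bigr)$. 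Subtracting, the $\dim G(\delta)$-contributions cancel and fact \textbf{(c)} for each $\delta_i$ turns $\dim\PHom(\delta_i)-\dim\bigl(G(\delta_i)\cdot d_i\bigr)$ into $\dim\E(d_i,d_i)$, giving
$$\dim\overline{\Img\mu}=\dim\bigl(G(\delta)\cdot x\bigr)+\sum_i\dim\E(d_i,d_i).$$
On the other hand \textbf{(c)} for $\delta$ reads $\dim\PHom(\delta)=\dim\bigl(G(\delta)\cdot x\bigr)+\dim\E(x,x)$, and $\dim\E(x,x)=\sum_{i,j}\dim\E(d_i,d_j)$ by \textbf{(a)}; subtracting, $\dim\PHom(\delta)-\dim\overline{\Img\mu}=\sum_{i\ne j}\dim\E(d_i,d_j)$, a sum of nonnegative integers which by \textbf{(b)} equals $\sum_{i\ne j}\e(\delta_i,\delta_j)$. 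So $\mu$ is dominant exactly when every $\e(\delta_i,\delta_j)$ with $i\ne j$ is zero; combined with the reformulation from the first paragraph this is the stated equivalence, and applying it both to a hypothesized decomposition and to the one read off from the Krull--Schmidt decomposition of a general $d\in\PHom(\delta)$ also yields, along the way, the uniqueness of the canonical decomposition.

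The steps I expect to cost real work are \textbf{(c)} and the generic-fibre count. For \textbf{(c)} one must unwind the construction of $\E$ in \cite{DF} and identify it with the obstruction space to deforming $d$ inside its $G(\delta)$-orbit. For the fibre, the point is to verify that the possibly nonzero cross-homomorphisms $\Hom(d_i,d_j)$ with $i\ne j$, and any repetitions among the $\delta_i$ (which replace a single isomorphism class by a finite union of products of orbits), contribute nothing to $\dim\PHom(\delta)-\dim\overline{\Img\mu}$ — only the $\E$-terms may. A minor but genuine nuisance is the reduction passing from an honest direct sum of complexes to a point of $\PHom(\delta)$ when the supports of $[-\delta_i]_+$ and $[\delta_i]_+$ overlap; this is best handled by working in the homotopy category of $2$-term projective complexes, where every dimension appearing above is manifestly invariant under deleting trivial summands.
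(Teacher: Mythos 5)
This paper does not prove Theorem \ref{T:introCDPHom}; it is imported verbatim from \cite[Theorem 4.4]{DF}, so there is no internal proof to compare against. Your proposal is a correct reconstruction of the argument used there: the orbit-dimension count for the direct-sum morphism $\mu$ is exactly the presentation analogue of Kac's and Schofield's treatment of the canonical decomposition, and your fact \textbf{(c)} is precisely the interpretation of $\E(d,d)$ as the normal space to the $\Aut(P_-)\times\Aut(P_+)$-orbit that the present paper records in Section \ref{S:GP}. The one place where your write-up is literally wrong as stated is the claim that $d_1\oplus\cdots\oplus d_s$ is ``a well-defined point of $\PHom(\delta)$'': when the supports of $\sum_i[\delta_i]_+$ and $\sum_i[-\delta_i]_+$ overlap, the direct sum lives only in $\Hom\bigl(P(\sum_i[-\delta_i]_+),P(\sum_i[\delta_i]_+)\bigr)$ and need not be homotopy equivalent to anything reduced; but since you flag this and the correct fix (take that larger $\Hom$-space as the target of $\mu$ and use homotopy invariance of $\dim\E$ to transport the count back to $\PHom(\delta)$), I regard the proposal as sound.
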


However, an analogue of the following interesting result in \cite{S2} is missing.
\begin{theorem}[{\cite[Theorem 5.4]{S2}}] \label{T:introext} Let $\alpha$ and $\beta$ be dimension vectors for the quiver $Q$. Then 
	$$\ext(\alpha,\beta) = \max_{\beta\twoheadrightarrow \beta'}\{-\innerprod{\alpha,\beta'}\} =  \max_{\alpha'\hookrightarrow \alpha}\{-\innerprod{\alpha',\beta}\}.$$
\end{theorem}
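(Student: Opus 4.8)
The plan is to reduce to a statement about generic homomorphisms, settle the easy inequality by long exact sequences, and then do the real work of producing a quotient on which the bound is sharp. Since $Q$ has no relations, $kQ$ is hereditary and Ringel's exact sequence
$$0\to\Hom(M,N)\to\bigoplus_{i\in Q_0}\Hom(M_i,N_i)\to\bigoplus_{a\colon i\to j}\Hom(M_i,N_j)\to\Ext^1(M,N)\to 0$$
shows that $\dim\Hom(M,N)-\dim\Ext^1(M,N)=\innerprod{\alpha,\beta}$ is constant on $\rep(\alpha)\times\rep(\beta)$. As $\dim\Hom$ and $\dim\Ext^1$ are upper semicontinuous there, their generic values $\hom(\alpha,\beta)$ and $\ext(\alpha,\beta)$ are attained on one common dense open set, and $\ext(\alpha,\beta)=\hom(\alpha,\beta)-\innerprod{\alpha,\beta}$. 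Since a general $N$ of dimension $\beta$ has a quotient of dimension $\beta'$ exactly when it has a subrepresentation of dimension $\beta-\beta'$, the theorem is equivalent to
$$\hom(\alpha,\beta)\;=\;\max_{\delta\hookrightarrow\beta}\innerprod{\alpha,\delta}\;=\;\max_{\alpha\twoheadrightarrow\sigma}\innerprod{\sigma,\beta},$$
with $\delta$ ranging over general subrepresentation dimension vectors of $\beta$ and $\sigma$ over general quotient dimension vectors of $\alpha$.

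For the inequality $\hom(\alpha,\beta)\ge\innerprod{\alpha,\delta}$, fix $M$ general of dimension $\alpha$; for a given $\delta\hookrightarrow\beta$ choose $N$ general of dimension $\beta$ inside the dense open locus where $N$ has a subrepresentation $D$ with $\dv D=\delta$ and, simultaneously, $\dim\Hom(M,N)=\hom(\alpha,\beta)$ — such $N$ exist by a Fubini-type argument. Then $D\hookrightarrow N$ induces $\Hom(M,D)\hookrightarrow\Hom(M,N)$, so $\innerprod{\alpha,\delta}=\dim\Hom(M,D)-\dim\Ext^1(M,D)\le\dim\Hom(M,N)=\hom(\alpha,\beta)$. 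Unwound, this is the inequality $\ext(\alpha,\beta)\ge-\innerprod{\alpha,\beta'}$; it also follows directly, since applying $\Hom(M,-)$ to $0\to D\to N\to N/D\to 0$ and using $\Ext^2=0$ yields a surjection $\Ext^1(M,N)\twoheadrightarrow\Ext^1(M,N/D)$. The mirror bound $\ext(\alpha,\beta)\ge-\innerprod{\alpha',\beta}$ for $\alpha'\hookrightarrow\alpha$ follows by applying $\Hom(-,N)$ to $0\to M'\to M\to M''\to 0$ with $\dv M'=\alpha'$.

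The heart of the matter is to find a general subrepresentation dimension vector $\delta\hookrightarrow\beta$ with $\innerprod{\alpha,\delta}\ge\hom(\alpha,\beta)$. I would take $\delta:=\dv D$, where $D\subseteq N$ is the trace of $M$ in $N$, i.e.\ the subrepresentation $\sum_{g\colon M\to N}\Img g$. Fixing a basis $g_1,\dots,g_h$ of $\Hom(M,N)$ with $h=\hom(\alpha,\beta)$, the morphism $M^{\oplus h}\to N$ sending $(m_1,\dots,m_h)$ to $\sum_i g_i(m_i)$ has image $D$, so there is an exact sequence $0\to K\to M^{\oplus h}\to D\to 0$. Every homomorphism $M\to N$ factors through $D$, hence $\Hom(M,D)=\Hom(M,N)$ and $\innerprod{\alpha,\delta}=\hom(\alpha,\beta)-\dim\Ext^1(M,D)$; moreover $\delta=\dv D\hookrightarrow\beta$, since the general $N$ visibly contains such a $D$ and $\dv D$ is constant on the generic locus. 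Everything thus comes down to the single assertion \textbf{(i)}: $\Ext^1(M,D)=0$ for general $(M,N)$, and this is the principal obstacle. The displayed sequence gives only $\Ext^1(M,D)\cong\Coker\!\bigl(\Ext^1(M,K)\hookrightarrow\Ext^1(M,M)^{\oplus h}\bigr)$, which the Euler form does not force to vanish; establishing the vanishing requires genuinely using the genericity of $(M,N)$ — and accommodating the possible self-extensions of $M$ — via a transversality/dimension estimate on the incidence variety of pairs $(N,D)$ with $D\subseteq N$ an $M$-generated subrepresentation, which is the technical core of Schofield's treatment of general subrepresentations \cite{S2}. Once \textbf{(i)} is in hand, the second equality of the theorem follows by the dual construction — replacing $D$ by the image of the natural map $M\to N^{\oplus h}$ assembled from a basis of $\Hom(M,N)$, whose dimension vector is a general quotient dimension vector $\sigma$ of $\alpha$ — or formally by applying the first equality to $Q^{\mathrm{op}}$ together with the standard duality.
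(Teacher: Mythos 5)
Your reduction to $\hom(\alpha,\beta)=\max_{\delta\hookrightarrow\beta}\innerprod{\alpha,\delta}$ and the easy inequality are correct, but the construction you propose for the hard direction cannot be completed: the assertion \textbf{(i)} that $\Ext^1(M,D)=0$ for the trace $D=\sum_{g\colon M\to N}\Img g$ is not merely ``the principal obstacle'' awaiting a transversality argument --- it is false. Take $Q$ the $2$-arrow Kronecker quiver, $\alpha=(1,1)$, $\beta=(2,1)$. A general $M$ is a regular brick $R_\mu$ of dimension $(1,1)$, and a general $N$ is the rigid indecomposable (preinjective) of dimension $(2,1)$, since $(2,1)$ is a real Schur root. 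One computes $\hom(\alpha,\beta)=1$ and checks that the unique (up to scalar) nonzero map $M\to N$ is injective, so $D\cong R_\mu$ and $\dim\Ext^1(M,D)=\dim\Ext^1(R_\mu,R_\mu)=1\neq 0$. Your choice $\beta'=\dv(N/D)=(1,0)$ then gives $-\innerprod{\alpha,\beta'}=-1$, whereas $\ext(\alpha,\beta)=\hom(\alpha,\beta)-\innerprod{\alpha,\beta}=0$ is attained at $\beta'=0$, i.e.\ at $\delta=\beta$ itself. The underlying issue is that the maximizers $L$ of $\innerprod{\alpha,\dv L}$ are characterized by the \emph{two} conditions $\Hom(M,N/L)=0$ \emph{and} $\Ext^1(M,L)=0$ (compare Lemma \ref{L:sub=h}); they form a lattice with a minimal and a maximal element (Theorem \ref{T:maxminsub}), and the trace identifies the minimal one only under a rigidity hypothesis (Corollary \ref{C:univhomo}), which fails here because $\alpha$ is isotropic. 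So no genericity estimate can rescue (i) as stated; the maximizing subrepresentation must be produced differently, which is exactly the content of Schofield's induction via the criterion $\alpha'\hookrightarrow\alpha\iff\ext(\alpha',\alpha-\alpha')=0$. Since you in any case defer that step to \cite{S2}, the proposal does not constitute a proof.

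For orientation: the paper does not reprove this statement directly either --- it quotes it from \cite{S2} and then recovers it by an entirely different route, as a corollary of Theorem \ref{T:HomE}, Theorem \ref{T:genericN} and Observation \ref{O:real2FG}, namely by showing that every normal cone of the generic Newton polytope $\N(\alpha)$ contains a cluster of real $\delta$-vectors and evaluating the tropical $F$-polynomial there. If you want a self-contained argument, that is the path the paper offers; your trace-based shortcut does not close.
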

\noindent Here $\ext(\alpha,\beta) := \min\{\dim\Ext(M,N) | M \in \rep_\alpha(Q), N \in \rep_\beta(Q)\}$ and $\innerprod{-,-}$ is the Euler form of $Q$.


It seems reasonable to find an analogue for $\dim \E(d_1,d_2)$ where $d_1$ and $d_2$ are general presentations.
Recall that the space $\E(d_1,d_2)$ only depends on the cokernel of $d_2$, so it makes sense to define $\E(d,M)$ as $\E(d,d_M)$ where $d_M$ is any presentation of $M$.
In fact, we find something more general, a formula for $\dim \E(d,M)$ where $d$ is a general presentation while $M$ is an {\em arbitrary} representation. However, the statement is not as neat as Schofield's.

To state this result, we need to introduce the other main character of this paper, the tropical $F$-polynomials of representations, which interplay with the general presentations.
Let $M$ be a finite-dimensional representation of $A$. 
\begin{definition*} The {\em tropical $F$-polynomial} $f_M$ of a representation $M$ is the function $(\mb{Z}^{Q_0})^* \to \mb{Z}_{\geq 0}$ defined by
	$$\delta \mapsto \max_{L\hookrightarrow M}{\delta(\dv L)}.$$
	The {\em dual} tropical $F$-polynomial $\fc_M$ of a representation $M$ is the function $(\mb{Z}^{Q_0})^* \to \mb{Z}_{\geq 0}$ defined by
	$$\delta \mapsto \max_{M\twoheadrightarrow N}{\delta(\dv N)}.$$
\end{definition*}

We denote by $\hom(\delta,M)$ and $\e(\delta,M)$ the dimension of the kernel and cokernel of 
$$\Hom(P_+,M) \to \Hom(P_-,M)$$
which is induced from a general presentation $P_-\to P_+$ in $\PHom(\delta)$.
Similarly we can define $\hom(M,\dtc)$ and $\ec(M,\dtc)$ using a general injective presentation of weight $\dtc$.
Here is our analogue of Theorem \ref{T:introext}.
\begin{theorem}[{Theorem \ref{T:HomE}}] \label{T:introHomE} For any representation $M$ and any $\delta\in \mb{Z}^{Q_0}$, there is some $n\in\mb{N}$ such that
		\begin{align*}
		{f}_M(n\delta) &= \hom(n\delta,M), & \fc_M(-n\delta) &= {\e}(n\delta,M).
		\end{align*} 
		Similarly, for any representation $M$ and any $\dtc\in \mb{Z}^{Q_0}$, there is some $\check{n}\in\mb{N}$ such that
		\begin{align*} \fc_M(\check{n}\check{\delta}) &= \hom(M,\check{n}\check{\delta}), & {f}_M(-\check{n}\check{\delta}) &= \ec(M,\check{n}\check{\delta}).\end{align*}
\end{theorem}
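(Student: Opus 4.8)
The plan is to reduce the four identities to a single one, dispatch the "$\geq$" half for all weights, and then prove the "$\leq$" half at $n\delta$ by induction on $\dim_k M$, the sticking point being one extremal base case.

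\emph{Reduction to one equality.} Applying $\Hom(-,M)$ to a general presentation $P_-\to P_+$ of weight $\delta$ gives the exact sequence $0\to\Hom(\Coker d,M)\to\Hom(P_+,M)\to\Hom(P_-,M)$, and $\dim\Hom(P_\pm,M)=\sum_i[\pm\delta(i)]_+\dim e_iM$, so $\hom(\delta,M)-\e(\delta,M)=\delta(\dv M)$. Dually, since the quotients of $M$ are exactly the $M/L$ for $L\hookrightarrow M$, one has $f_M(\delta)-\fc_M(-\delta)=\delta(\dv M)$. Hence $f_M(n\delta)=\hom(n\delta,M)$ is equivalent to $\fc_M(-n\delta)=\e(n\delta,M)$, so the first line of the theorem needs only the first equality; and applying the $k$-duality $D=\Hom_k(-,k)$, which sends projective presentations over $A$ to injective copresentations over $A^{\mathrm{op}}$ and submodules of $M$ to quotients of $DM$, turns the second line of the theorem into the first line for $(A^{\mathrm{op}},DM)$. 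So it suffices to prove: for every $M$ and every $\delta$ there is $n\in\mathbb N$ with $\hom(n\delta,M)=nf_M(\delta)$.

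\emph{The easy inequality.} For any $\delta$, any $d\in\PHom(\delta)$, and any $L\hookrightarrow M$, left exactness of $\Hom(\Coker d,-)$ gives $\Hom(\Coker d,L)\hookrightarrow\Hom(\Coker d,M)$, while $0\to\Hom(\Coker d,L)\to\Hom(P_+,L)\to\Hom(P_-,L)$ gives $\dim\Hom(\Coker d,L)\geq\dim\Hom(P_+,L)-\dim\Hom(P_-,L)=\delta(\dv L)$. Minimising over $d$ (which computes $\hom(\delta,M)$) and maximising over $L$ yields $\hom(\delta,M)\geq f_M(\delta)$ for \emph{all} $\delta$; in particular "$\geq$" in the theorem holds for every $n$, and dually $\e(\delta,M)\geq\fc_M(-\delta)$.

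\emph{The reverse inequality, by induction on $\dim_kM$ via splitting.} Put $c=f_M(\delta)$. Suppose $M$ has a submodule $L_0$ with $0\neq L_0\subsetneq M$ and $\delta(\dv L_0)=c$, and set $N_0=M/L_0$. Then $f_{L_0}(\delta)=c$ (submodules of $L_0$ cannot beat $c$) and $f_{N_0}(\delta)=0$ (a submodule of $N_0$ is $L/L_0$ with $\delta(\dv(L/L_0))=\delta(\dv L)-c\leq0$). By the inductive hypothesis there are $m_1,m_2$ with $\hom(m_1\delta,L_0)=m_1c$ and $\hom(m_2\delta,N_0)=0$; by sub-additivity of $\hom(-\,,X)$ in the first argument together with the easy inequality, these hold for every common multiple $n$. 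On the dense open locus of $d\in\PHom(n\delta)$ where both $\dim\Hom(\Coker d,N_0)=0$ and $\dim\Hom(\Coker d,L_0)=nc$, the sequence $0\to L_0\to M\to N_0\to0$ gives $\Hom(\Coker d,M)\cong\Hom(\Coker d,L_0)$, whence $\hom(n\delta,M)\leq nc$ and we are done in this case.

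\emph{The base case --- the main obstacle.} When no such $L_0$ exists the problem reduces, up to replacing $(M,\delta)$ by its $D$-dual, to the case that $M$ is $\delta$\emph{-maximal}: $\delta(\dv L)\leq\delta(\dv M)$ for every $L\hookrightarrow M$, equivalently every quotient of $M$ has non-negative value under $\delta$. Then $c=\delta(\dv M)$ and the claim is that $\e(n\delta,M)=0$ for some $n$, i.e.\ that a general $d$ of weight $n\delta$ makes $\Hom(P_+^{(n)},M)\to\Hom(P_-^{(n)},M)$ surjective. Two easy reductions hold: if $\delta\geq0$ then $P_-=0$ and there is nothing to prove; and every \emph{proper} quotient $N$ of $M$ is again $\delta$-maximal and smaller, so by induction $\e(n\delta,N)=0$ for suitable $n$. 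Bootstrapping from the quotients to $M$: choosing a simple submodule $S\hookrightarrow M$ with $N=M/S$, the snake lemma applied to $0\to S\to M\to N\to0$ and to $\Hom(P_\pm^{(n)},-)$ reduces $\e(n\delta,M)=0$ to surjectivity, for general $d$, of the connecting map $\Hom(\Coker d,N)\to\Coker\!\big(\Hom(P_+^{(n)},S)\to\Hom(P_-^{(n)},S)\big)$, where the target is pinned down by the fact that $\Img d\subseteq\operatorname{rad}P_+$ for \emph{every} $d\in\PHom(\delta)$ (the tops of $P_-$ and $P_+$ have disjoint support, forcing $\operatorname{top}\Coker d=\operatorname{top}P_+$). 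I expect this surjectivity to be the crux, to be proved by a dimension count on the incidence variety of pairs (a presentation of weight $n\delta$, a lift), with the passage $\delta\rightsquigarrow n\delta$ providing exactly the room in the $\Hom$-spaces that makes the count tight; alternatively one can try to read $\delta$-maximality of $M$ as membership in the torsion class attached to $\delta$ and deduce the vanishing of $\e(n\delta,M)$ from the reciprocity properties of the bifunctor $\E$ already available for $\PHom$. This extremal case is where I expect the real work to lie.
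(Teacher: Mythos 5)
Your reductions are all sound: the equivalence of the two equalities on each line via $\hom(\delta,M)-\e(\delta,M)=\delta(\dv M)$ and $f_M(\delta)-\fc_M(-\delta)=\delta(\dv M)$, the passage to $A^{\mathrm{op}}$ for the injective half, the easy inequality $f_M(\delta)\le\hom(\delta,M)$ (this is Lemma \ref{L:HomE} in the paper), and the splitting step — if some proper nonzero $L_0\hookrightarrow M$ attains $f_M(\delta)$, induct on $\dim_k M$ using $f_{L_0}(\delta)=f_M(\delta)$, $f_{M/L_0}(\delta)=0$, and subadditivity to synchronize the two integers. That last reduction is a legitimate alternative to the paper's route. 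But the proposal stops exactly where the theorem lives. The extremal case you isolate — $f_M(\delta)=0$ implies $\hom(n\delta,M)=0$ for some $n$, equivalently $\e(n\delta,M)=0$ for $\delta$-maximal $M$ — is the entire content of the result, and neither of your two sketched strategies is carried out. Note that Example \ref{ex:n>1} shows one may genuinely need $n\ge 2$, so any argument pitched at a single general $d\in\PHom(\delta)$ (in particular, surjectivity of your connecting map $\Hom(\Coker d,N)\to\E(d,S)$ for general $d$ of weight $\delta$, or a naive incidence-variety count) must fail; whatever produces the $n$ is the real theorem.

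In the paper that mechanism is GIT. The condition ``$\delta(\dv L)\le 0$ for all $L\hookrightarrow M$ and $\delta(\dv M)=0$'' is King's semistability criterion (Lemma \ref{L:King}); the spanning theorem for semi-invariants of quivers with relations (Theorem \ref{T:Cdgen}) then provides a determinantal semi-invariant $c_d$ of weight $n\delta$ with $c_d(M)\neq 0$, i.e.\ $\hom(n\delta,M)=\e(n\delta,M)=0$ (Lemma \ref{L:ss}); the integer $n$ is the degree in the graded ring $\SI_\alpha^{\delta}(A)$ at which a nonvanishing section first appears. Since in general $\delta(\dv M)\neq 0$, the paper must first create a vertex where the weight can be shifted to force $\delta'(\dv M)=0$ without disturbing the values on proper submodules: this is the ``maximal vertex'' of Lemma \ref{L:max}, manufactured by the one-point extension and coextension $A^{\pm}$ of Section \ref{ss:proof}, after which Lemma \ref{L:hom=0} applies. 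No substitute for this semi-invariant input (or for the weight-shifting device) appears in your proposal, so the argument is incomplete at its essential point.
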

\noindent Moreover, $n$ can be replaced by $kn$ for any $k\in\mb{N}$.
	If $m$ is the minimum of all such $n$, then $m\delta$ can not be decomposed as $m\delta = k\delta \oplus (m-k)\delta$ for any $k$. In particular, if $\e(\delta,\delta)=0$, then $m=1$. 
	
When $A$ is the {\em Jacobian algebra} of a {\em quiver with potential}, we show that $m=1$ in the following two cases:
\begin{enumerate}	\item $M$ is {\em negative reachable} (Theorem \ref{T:HomEQP});
	\item The quiver is {\em mutation-acyclic} and $M$ is the cokernel of a general presentation (Corollary \ref{C:HomEQP}).
\end{enumerate}
\noindent The following direct consequence says that the evaluation of $f_M$ computes the asymptotic $\hom(a\delta, M)$ as $a$ increases, which generalizes a result of W. Crawley-Boevey on quivers without relations \cite{CB}.
\begin{corollary}[{Corollary \ref{C:HomE}}] The following limits exist and we have the equalities
\begin{align*} \lim_{a\to \infty}\frac{1}{a}\hom(a\delta, M) &= f_M(\delta) & \lim_{a\to \infty}\frac{1}{a}\e(a\delta, M) &= \fc_M(-\delta); \\
\lim_{a\to \infty}\frac{1}{a}\hom(M, a\dtc)&=\fc_M(\dtc) & \lim_{a\to \infty}\frac{1}{a}\ec(M, a\dtc) &= f_M(-\dtc).
\end{align*}
\end{corollary}

The tropical $F$-polynomial $f_M$ is completed determined by the Newton polytope of $M$.
\begin{definition*}
	The {\em Newton polytope ${\N}(M)$ of a representation} $M$ is the convex hull of
	$$\{ \dv L \mid L\hookrightarrow M \}$$
	in $\mb{R}^{Q_0}$.	The {\em dual} Newton polytope $\check{\N}(M)$ of a representation $M$ is the convex hull of
	$$\{ \dv N \mid M\twoheadrightarrow N \}$$
	in $\mb{R}^{Q_0}$.	
\end{definition*}

As an easy consequence of Theorem \ref{T:introHomE}, we get a presentation of $\N(M)$. 
\begin{theorem}[Theorem \ref{T:Newton}] The Newton polytope $\N(M)$ is defined by 
	$$\{\gamma\in \mb{R}^{Q_0}\mid \delta(\gamma)\leq \hom(\delta,M),\ \forall \delta \in \mb{Z}^{Q_0} \}.$$
	The dual Newton polytope $\check\N(M)$ is defined by 
	$$\{\gamma\in \mb{R}^{Q_0}\mid \check{\delta}(\gamma)\leq \hom(M,\check{\delta}),\ \forall \check{\delta} \in \mb{Z}^{Q_0} \}.$$
\end{theorem}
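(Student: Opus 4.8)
The plan is to identify $f_M$ with the support function of $\N(M)$ and then transport Theorem~\ref{T:introHomE} through the standard half-space presentation of a polytope. I would first observe that any subrepresentation $L\hookrightarrow M$ satisfies $0\le\dv L\le\dv M$, so the set $\{\dv L\mid L\hookrightarrow M\}$ is finite and $\N(M)$ is a rational (indeed lattice) polytope; moreover a linear form attains its maximum over a convex hull at one of the defining points, so $f_M(\delta)=\max_{L\hookrightarrow M}\delta(\dv L)=\max_{\gamma\in\N(M)}\delta(\gamma)$ for every $\delta\in\mb{Z}^{Q_0}$. Since a rational polytope is cut out by its finitely many facet inequalities, and these may be taken with integral normal vectors, this already yields $\N(M)=\bigcap_{\delta\in\mb{Z}^{Q_0}}\{\gamma\mid\delta(\gamma)\le f_M(\delta)\}$. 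The remaining task is thus to show that replacing $f_M(\delta)$ by $\hom(\delta,M)$ in every defining inequality does not change the intersection.

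For the inclusion ``$\subseteq$'' I would prove the pointwise bound $f_M(\delta)\le\hom(\delta,M)$ for all $\delta$, which amounts to $\delta(\dv L)\le\hom(\delta,M)$ for every $L\hookrightarrow M$. This I would split into two sub-steps. First, with $P_\pm$ the projectives attached to $\delta$, the identity $\dim\Hom(P_i,X)=\dim X_i$ gives $\dim\Hom(P_+,X)-\dim\Hom(P_-,X)=\delta(\dv X)$ for any $X$; since the kernel and cokernel of the induced map $\Hom(d,X)\colon\Hom(P_+,X)\to\Hom(P_-,X)$ therefore differ by the constant $\delta(\dv X)$, a general presentation $d\in\PHom(\delta)$ minimizes both at once, so $\hom(\delta,X)-\e(\delta,X)=\delta(\dv X)$ and in particular $\hom(\delta,L)\ge\delta(\dv L)$. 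Second, applying the exact functors $\Hom(P_\pm,-)$ to $0\to L\to M\to M/L\to 0$ and then the snake lemma to the map induced by $d$ exhibits $\Ker\Hom(d,L)$ as a subspace of $\Ker\Hom(d,M)$ for every $d$; choosing $d$ with $\dim\Ker\Hom(d,M)=\hom(\delta,M)$ gives $\hom(\delta,L)\le\hom(\delta,M)$. Combining, $\delta(\dv L)\le\hom(\delta,L)\le\hom(\delta,M)$, so each half-space from the previous paragraph is contained in its $\hom(-,M)$-analogue and hence $\N(M)$ lies in the claimed set. (If the bound $f_M\le\hom(-,M)$ already drops out of the proof of Theorem~\ref{T:introHomE}, this paragraph is unnecessary.)

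For the reverse inclusion I would invoke Theorem~\ref{T:introHomE} together with the positive homogeneity of $f_M$: given $\delta_0\in\mb{Z}^{Q_0}$, there is $n\in\mb{N}$ with $\hom(n\delta_0,M)=f_M(n\delta_0)=n\,f_M(\delta_0)$, so any $\gamma$ satisfying $\delta(\gamma)\le\hom(\delta,M)$ for all $\delta$ satisfies in particular $n\,\delta_0(\gamma)\le n\,f_M(\delta_0)$, hence $\delta_0(\gamma)\le f_M(\delta_0)$; letting $\delta_0$ range over $\mb{Z}^{Q_0}$ and using the first paragraph forces $\gamma\in\N(M)$. This proves the statement for $\N(M)$, and the statement for $\check\N(M)$ is obtained by the mirror argument: replace subrepresentations $L\hookrightarrow M$ by quotients $M\twoheadrightarrow N$, $f_M$ by $\fc_M$, projective presentations by injective ones, apply $\Hom(-,I_\pm)$ to $0\to K\to M\to N\to 0$ in place of applying $\Hom(P_\pm,-)$ to $0\to L\to M\to M/L\to 0$, and use the dual form of the Euler identity together with the dual half of Theorem~\ref{T:introHomE}.

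I expect the main obstacle to be the two generic-position inputs of the second paragraph — the Euler-type identity $\hom(\delta,X)-\e(\delta,X)=\delta(\dv X)$ and the monotonicity $\hom(\delta,L)\le\hom(\delta,M)$ for $L\hookrightarrow M$ — because these require some care about which general presentation is chosen and about upper semicontinuity of $\dim\Ker\Hom(d,-)$; both, however, should be available from the machinery already developed for Theorem~\ref{T:introHomE}, and everything else is elementary convex geometry.
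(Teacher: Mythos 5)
Your proposal is correct and follows essentially the same route as the paper: identify $f_M$ with the support function of $\N(M)$ (the paper cites the Hahn--Banach recovery of a convex body from its support function where you use the finite rational facet description, but this is cosmetic), then combine the easy bound $f_M(\delta)\leq\hom(\delta,M)$ (the paper's Lemma \ref{L:HomE}, which you re-derive via the Euler identity \eqref{eq:heform} and the monotonicity $\Hom(\delta,L)\hookrightarrow\Hom(\delta,M)$) with the equality $f_M(n\delta)=\hom(n\delta,M)$ from Theorem \ref{T:HomE} and positive homogeneity of $f_M$. No gaps; the dual statement is handled the same way in both.
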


It is then natural to study the Newton polytope of a representation.
The vertices and facets were studied in detail in \cite{Fc}. In this paper we focus on their duals - the normal vectors and normal cones.
Recall that
\begin{definition*}[\cite{DF}] A weight vector $\delta\in\mathbb{Z}^{Q_0}$ is called {\em real} if $\e(d,d)=0$ for some $d\in\PHom(\delta)$. 
A maximal set of real indecomposable weight vectors $\{\delta_1,\dots,\delta_n\}$ such that $\e(\delta_i,\delta_j)=0$ for $i\neq j$ is called a {\em cluster}. 
\end{definition*}

\begin{definition*} For a fixed algebra $A$, a weight vector $\delta$ is called {\em normal} if it is an outer normal vector of the Newton polytope of some $M\in \rep A$.
\end{definition*}

\noindent We show in Corollary \ref{C:indnormal} that any indivisible outer normal vector of $\N(M)$ must be indecomposable. It is natural to ask if the converse is true.

\begin{question*} Is any indecomposable $\delta$-vector normal?
\end{question*}
\noindent The answer is positive if $\delta$ is real. We give an equivalent condition for $\delta$ being normal (Proposition \ref{P:normalstable}).


In our setting, the {\em normal cone} $\F_\gamma(M)$ of a vertex $\gamma\in \N(M)$ is the cone spanned by $\delta$ satisfying
\begin{equation*} \delta(\gamma) = f_M(\delta).
\end{equation*}
The two most important normal cones are the ones corresponding to the vertices $0$ and $M$. 
The lattice points inside the cones are precisely
\begin{align*}
\{\delta\in \mb{Z}^{Q_0} &\mid \hom(n\delta,M)=0  \text{ for some $n\in\mb{N}$} \};\\
\{\delta\in \mb{Z}^{Q_0} &\mid {\e}(n\delta,M)=0  \text{ for some $n\in\mb{N}$} \}.
\end{align*}
Clearly $\F_0(M)$ always contains the {\em negative cluster} $(-e_1,\dots,-e_n)$ and $\F_M(M)$ always contains the {\em positive cluster} $(e_1,\dots,e_n)$.
In particular, Theorem \ref{T:introHomE} provide us a presentation for them (Corollary \ref{C:HE0}).

Our most important result about the normal cones is the following
\begin{theorem}[{Theorem \ref{T:Mcones}}] \label{T:introMcones} Let $\bs{\delta}_1, \dots, \bs{\delta}_m$ be finitely many clusters.
	Then there is some representation $M$ such that each $\bs{\delta}_i$ spans a normal cone of $\N(M)$.
\end{theorem}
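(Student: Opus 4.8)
The plan is to produce $M$ by taking a direct sum of carefully chosen representations, one batch for each cluster $\bs{\delta}_i$, exploiting the additivity of Newton polytopes under direct sum. First I would record the basic principle: since every subrepresentation of $M \oplus M'$ that is a direct sum $L \oplus L'$ contributes $\dv L + \dv L'$, and since taking convex hulls interacts well with Minkowski sums, one has $\N(M \oplus M') = \N(M) + \N(M')$ (this is essentially the content of the definition of $\N$ together with the observation that subrepresentations of a direct sum need not split, but their dimension vectors still lie in the Minkowski sum of the two Newton polytopes). Correspondingly, $f_{M\oplus M'} = f_M + f_{M'}$ as functions on $(\mb{Z}^{Q_0})^*$, and hence the normal cone of a vertex $\gamma + \gamma'$ of the sum is the intersection $\F_\gamma(M) \cap \F_{\gamma'}(M')$. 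So to make a prescribed cone $C$ appear as a normal cone of $\N(M)$, it suffices to make $C$ appear as an intersection of normal cones of the summands.

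Next I would handle a single cluster $\bs{\delta} = (\delta_1,\dots,\delta_n)$. The claim is that for each real indecomposable weight vector $\delta_j$ there is a representation $M_j$ for which $\delta_j$ spans a ray of a normal cone of $\N(M_j)$ — indeed, $\delta_j$ being real and indecomposable makes it normal by the remark following the Question in the excerpt (the answer is positive when $\delta$ is real). Using Theorem \ref{T:introHomE} and the presentation of $\N(M)$ in Theorem \ref{T:Newton}, the facet of $\N(M_j)$ with outer normal $\delta_j$ is cut out by $\delta_j(\gamma) = \hom(n\delta_j, M_j)$ for suitable $n$. The pairwise vanishing $\e(\delta_i,\delta_j)=0$ for a cluster should translate, via Theorem \ref{T:introHomE} and the interpretation of $\F_0$ and $\F_M$ in terms of $\hom$ and $\e$, into the statement that $\operatorname{cone}(\delta_1,\dots,\delta_n)$ is genuinely a full-dimensional simplicial cone in the normal fan of $\N(M_1 \oplus \cdots \oplus M_n)$ — this is really where the cluster condition is used, and it is the technical heart of the single-cluster case.

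Then for finitely many clusters $\bs{\delta}_1,\dots,\bs{\delta}_m$ I would set $M = \bigoplus_i M^{(i)}$ where $M^{(i)}$ realizes $\bs{\delta}_i$ as above. By the additivity principle, each cone $\operatorname{cone}(\bs{\delta}_i)$ is the intersection over the other summands' normal fans of the cones they contribute; the point is that each $M^{(j)}$ with $j \neq i$ does \emph{not} destroy the cone $\operatorname{cone}(\bs{\delta}_i)$ — i.e. $\operatorname{cone}(\bs{\delta}_i)$ is contained in some normal cone of $\N(M^{(j)})$. To guarantee this I would, if necessary, further enlarge each $M^{(i)}$ (or replace $\delta_j$ by a multiple) so that its normal fan is coarse enough near the directions $\bs{\delta}_i$; concretely, one can arrange that the normal fan of $\N(M^{(i)})$ refines a common fan containing all the cones $\operatorname{cone}(\bs{\delta}_1),\dots,\operatorname{cone}(\bs{\delta}_m)$ as faces, for instance by passing to a large multiple $N \cdot M^{(i)}$ if that stabilizes the relevant $\hom$-values, or by adding to $M$ extra summands whose Newton polytopes are segments or simplices aligned with the rays $\delta_j$.

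The main obstacle I expect is the last step: controlling the normal fan of a direct sum well enough to certify that intersecting the summands' fans produces \emph{exactly} the cone $\operatorname{cone}(\bs{\delta}_i)$ and does not split it into smaller pieces or merge it with neighbors. Schofield-type results (Theorem \ref{T:introHomE}, Theorem \ref{T:introext}) give us a handle on the individual facets, but assembling them into a fan requires knowing that the $\hom(\delta,M_j)$ values are, after passing to the right multiples, "linear enough" in the region of interest — equivalently that the chosen summands' Newton polytopes have faces transverse to each $\operatorname{cone}(\bs{\delta}_i)$. I would address this by choosing the $M_j$ as generic representations of the dimension vectors naturally attached to the $\delta_j$ (via the canonical decomposition and the cokernel construction), invoking Theorem \ref{T:introHomE} with $m=1$ in the real case, and then checking the fan condition cone-by-cone using the explicit presentation of $\N(M)$ from Theorem \ref{T:Newton}.
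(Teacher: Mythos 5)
Your high-level skeleton---Minkowski additivity of Newton polytopes under direct sums, so that it suffices to realize one cluster at a time---is exactly the paper's reduction (Lemmas \ref{L:refine} and \ref{L:clusterincone}), and the worry in your last paragraph is actually unfounded: by Lemma \ref{L:clusterincone} each cluster $\bs{\delta}_i$ still lies in a \emph{single} dual cone of $\N(\bigoplus_j M^{(j)})$, and by Lemma \ref{L:refine} that cone is contained in the dual cone of $\N(M^{(i)})$ containing $\bs{\delta}_i$; since a full-dimensional simplicial cone sits inside a unique maximal cone of a fan, no multiples, rescalings, or extra ``aligned'' summands are needed. The genuine gap is the single-cluster case, which you explicitly label ``the technical heart'' and never carry out. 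Knowing that each $\delta_j$ is normal only makes $\delta_j$ a ray of some normal fan; it gives no representation whose normal fan has the full cone spanned by $(\delta_1,\dots,\delta_n)$ as a maximal cone, and choosing the $M_j$ ``generically'' provides no mechanism for \emph{excluding} directions just outside that cone. (Also note that in the paper the normality of real indecomposable $\delta$-vectors is deduced \emph{from} this theorem, so invoking it here is circular unless you prove it independently, e.g.\ via Proposition \ref{P:normalstable}.)

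The missing ingredient is a separation argument built from exchange pairs. For each $j$, let $\{\delta_j,\delta_j'\}$ be the exchange pair completing $\bs{\delta}\setminus\{\delta_j\}$, ordered so that $\e(\delta_-,\delta_+)>0$, and set $N_j=\Coker(\delta_+)$. Lemma \ref{L:erigid0} shows that $\delta_+$ lies in $\F_{N_j}(N_j)$ and in no other dual cone of $\N(N_j)$, while $\e(\delta_-,N_j)>0$ forces $\delta_-(\dv N_j)<\hom(\delta_-,N_j)=f_{N_j}(\delta_-)$, so $\delta_-\notin \F_{N_j}(N_j)$; this is Lemma \ref{L:seperation}, and it separates $\delta_j$ from $\delta_j'$ into different dual cones of $\N(N_j)$. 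Taking $M=\bigoplus_j N_j$, the dual cone of $\N(M)$ containing $\bs{\delta}$ (it exists by Lemma \ref{L:clusterincone}) contains the cone spanned by $\bs{\delta}$ but, by Lemma \ref{L:refine}, excludes every $\delta_j'$, which pins it down to exactly the span of $\bs{\delta}$. Without some version of this step---a concrete family of representations certified to exclude the adjacent directions---your construction cannot be completed.
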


The normal cones of $\N(M)$ fit together into a complete fan $\F(M)$, the {\em normal fan} of $\N(M)$.
The generalized cluster fan defined below refines the cluster fan introduced in \cite{DF}.
\begin{definition*}
	Let $\F(\rep A)$ be the set of all cones spanned by $\{\delta_1,\dots,\delta_p\}$ such that each $\delta_i$ is normal and $\e(\delta_i,\delta_j)=0$ for $i\neq j$.
	It turns out that $\F(\rep A)$ forms a simplicial fan. We call it {\em generalized cluster fan}.
\end{definition*}

\begin{proposition}[Proposition \ref{P:fanCC}] \label{P:introfanCC} The fan ${\F}(M)$ is a coarsening of the generalized cluster fan $\F(\rep A)$.
\end{proposition}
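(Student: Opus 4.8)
The plan is to show that each normal cone $\F_\gamma(M)$ of a vertex $\gamma\in\N(M)$ is a union of closed cones of the generalized cluster fan $\F(\rep A)$; since the normal cones of $\N(M)$ form the fan $\F(M)$, this is exactly the statement that $\F(M)$ is coarsened by $\F(\rep A)$. The key point to establish is that if $\delta_1,\delta_2$ lie in the relative interior of a single cone $\sigma$ of $\F(M)$ — equivalently $\delta_1(\gamma)=f_M(\delta_1)$ and $\delta_2(\gamma)=f_M(\delta_2)$ for a common vertex $\gamma$ — and both $\delta_1,\delta_2$ are normal weight vectors appearing as rays of $\F(\rep A)$, then the segment between them, and more generally the subcone of $\F(\rep A)$ joining them, stays inside $\sigma$.

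First I would recall from Theorem \ref{T:introHomE} and the presentation of $\N(M)$ in Theorem \ref{T:Newton} that $f_M(\delta)=\delta(\gamma)$ for the vertex $\gamma$ means precisely that $\gamma$ is the (unique) point of $\N(M)$ on which the linear functional $\delta$ is maximized, and that along $\F_\gamma(M)$ the value $f_M$ is linear, given by evaluation at $\gamma$. So $\F_\gamma(M)=\{\delta : f_M(\delta)=\delta(\gamma)\}$, and since $f_M$ is the support function of $\N(M)$ it is piecewise linear and the domains of linearity are exactly the normal cones. Thus $\F(M)$ is genuinely the normal fan. The content is then to check that the cones of $\F(\rep A)$ do not cross the walls of $\F(M)$.

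Next I would use the characterization of the generalized cluster fan: a cone of $\F(\rep A)$ is spanned by normal vectors $\delta_1,\dots,\delta_p$ with $\e(\delta_i,\delta_j)=0$ for $i\neq j$. Given such a cone, I want to produce a single vertex $\gamma$ of $\N(M)$ with $\delta_i(\gamma)=f_M(\delta_i)$ for all $i$, so the whole cone lies in $\F_\gamma(M)$. The mechanism is: by Theorem \ref{T:introHomE}, for suitable $n$ one has $f_M(n\delta_i)=\hom(n\delta_i,M)$; the vanishing $\e(\delta_i,\delta_j)=0$ lets the corresponding general presentations be combined (taking a direct sum of general presentations of the $\delta_i$, which by the canonical-decomposition machinery of Theorem \ref{T:introCDPHom} is again general of weight $\sum n_i\delta_i$), and a submodule $L\hookrightarrow M$ simultaneously realizing $\delta_i(\dv L)=f_M(\delta_i)$ for all $i$ can be extracted — this uses that the $\Hom$-kernel governing $\hom$ is computed by the same subrepresentation structure. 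That common $L$ gives a face of $\N(M)$ on which every $\delta_i$ is maximized; picking $\gamma$ a vertex of that face, all $\delta_i$ (hence their positive span) are maximized at $\gamma$, so the cone sits in $\F_\gamma(M)$.

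The main obstacle will be exactly this last step: showing that pairwise $\e$-vanishing forces the \emph{simultaneous} existence of a single optimal subrepresentation $L$ for all the $\delta_i$ at once, rather than just one $L_i$ for each. I expect to handle it by an induction on $p$ together with the ``no self-decomposition'' refinement at the end of Theorem \ref{T:introHomE} (which controls the multiplicities $n_i$) and by intersecting the faces $\F(\delta_i)$ of $\N(M)$: each $\{\gamma: \delta_i(\gamma)=f_M(\delta_i)\}$ is a nonempty face, and I must argue their intersection is nonempty, which should follow because the direct-sum presentation of weight $\sum n_i\delta_i$ is general (by $\e(\delta_i,\delta_j)=0$) and its associated $\hom$ equals $f_M(\sum n_i\delta_i)=\sum n_i\delta_i(\gamma_0)$ for some vertex $\gamma_0$, forcing $\delta_i(\gamma_0)=f_M(\delta_i)$ for each $i$ by the separate upper bounds $\delta_i(\gamma_0)\le f_M(\delta_i)$. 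Once the common vertex exists, the conclusion that $\F(\rep A)$ refines $\F(M)$ — i.e.\ $\F(M)$ is a coarsening of $\F(\rep A)$ — is immediate.
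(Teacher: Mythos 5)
Your proposal is correct and follows essentially the same route as the paper: the paper reduces the statement to showing that any family $\{\delta_i\}$ with pairwise $\e(\delta_i,\delta_j)=0$ lies in a single dual cone of $\N(M)$ (Lemmas \ref{L:clusterincone} and \ref{L:linearity}), proved exactly by your squeeze — additivity of $\hom$ for the direct sum of general presentations, Theorem \ref{T:HomE} applied to the combined weight, and the termwise bounds $\delta_i(\gamma)\leq f_M(\delta_i)\leq \hom(\delta_i,M)$ forcing a common maximizing vertex. The only cosmetic difference is that you invoke $f_M(n\delta_i)=\hom(n\delta_i,M)$ for each individual $\delta_i$, which is not actually needed; only the equality for the sum plus the one-sided inequalities for the summands enter the argument.
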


To study the dual picture, namely the $1$-skeleton of $\N(M)$, 
we need the Schur representations, especially the real ones.
\begin{definition*}	A representation $V$ is called {\em Schur} if $\Hom(V,V)=k$.
	It is called {\em real Schur} if in addition we require $\Ext^1(V,V)=0$.
\end{definition*}

Suppose that $\{\delta_-\}\cup \bs{\delta}_0$ and $\{\delta_+\}\cup \bs{\delta}_0$ are two {\em adjacent} clusters.
We assume that $(\delta_-,\delta_+)$ is a {\em regular exchange pair}, that is, $\e(\delta_-,\delta_+)=1$.
In this case we define the sign of $\delta_-$ in the cluster $\{\delta_-\}\cup \bs{\delta}_0$ to be negative,
and the sign of $\delta_+$ in the cluster $\{\delta_+\}\cup \bs{\delta}_0$ to be positive.
Let $d_-$ and $d_+$ be general presentations of weight $\delta_-$ and $\delta_+$.
Let $L=\Coker(d_+)$ and $N=\Coker(d_-)$, then $\hom(L, \tau N)=\e(d_-,L)=1$.
We consider the exact sequence 
$$0\to K \to L \to \tau N \to C \to 0.$$
Let $I$ be the image of $L \to \tau N$.
It is not hard to show that $I$ is a real Schur representation (Lemma \ref{L:Schur}).

Let $\bs{\delta}=\{\delta_1,\delta_2,\dots,\delta_n\}$ be a cluster, and $\bs{\delta}_j'=(\bs{\delta} \setminus \{\delta_j\})\cup \{\delta_j'\} $ be the {\em adjacent cluster}. 
Let $I_j$ be defined as above for each (unordered) exchange pair $\{\delta_j, \delta_j'\}$,
and $\ep_j$ be the sign of $\delta_j$ in $\bs{\delta}$.
We say $\bs{\delta}$ is a {\em regular cluster} if each exchange pair is regular.
Below we use the upright $\updelta$ to denote the usual delta-function. We write $\delta^\perp$ for the abelian subcategory
$$\delta^\perp := \{M\in \rep A \mid \hom(\delta,M) = \e(\delta,M) =0 \}.$$

\begin{theorem}[Theorem \ref{T:dualcluster}] \label{T:introdualcluster} Let $\{\delta_i\}_{i}$ be a regular cluster and $I_j$ be defined as above. Then 
	$$\hom(\delta_i, I_j) = [\ep_j]_+ \updelta(i,j)\ \text{ and }\ \e(\delta_i, I_j)= [-\ep_j]_+ \updelta(i,j).$$
	Moreover, the simple objects in the category $\delta_I^\perp:=\bigcap_{i\in I} \delta_i^\perp$ are precisely $I_j$ ($j\notin I$).
\end{theorem}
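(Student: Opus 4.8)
The plan is to treat the two assertions of Theorem~\ref{T:introdualcluster} separately: first the pairing identities $\hom(\delta_i,I_j)=[\ep_j]_+\updelta(i,j)$ and $\e(\delta_i,I_j)=[-\ep_j]_+\updelta(i,j)$, and then the identification of the simple objects of $\delta_I^\perp$. For the pairing, the case $i=j$ is essentially Lemma~\ref{L:Schur} together with the way $I_j$ is defined: recall that $I_j$ is the image of $L=\Coker(d_+)\to\tau N=\tau\Coker(d_-)$ fitting into $0\to K\to L\to\tau N\to C\to 0$ with $\hom(L,\tau N)=\e(d_-,L)=1$. If $\ep_j$ is negative, then $d_j=d_-$ is a general presentation of weight $\delta_j$, so $\e(\delta_j,I_j)=\dim\E(d_-,I_j)$, and I would compute this via the long exact sequence obtained by applying $\Hom(P_\bullet,-)$ to the four-term sequence, reducing to the rank-one statement $\e(d_-,L)=1$ together with the vanishing $\hom(\delta_j,I_j)=0$ that comes from $I_j$ being a quotient of $L=\Coker(d_+)$ (so $\hom(d_-,I_j)$ measures maps $P([-\delta_-]_+)\to I_j$ that do not extend, which generically vanish because $\delta_-,\delta_+$ form an exchange pair and $\e(\delta_-,\delta_+)$ controls exactly this). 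Dually, when $\ep_j$ is positive one uses a general \emph{injective} presentation of weight $\delta_j=\delta_+$ and the roles of $\hom$ and $\e$ swap, giving $\hom(\delta_j,I_j)=1$, $\e(\delta_j,I_j)=0$.

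For the off-diagonal vanishing $i\neq j$, the key input is that $\{\delta_i\}_{i\neq j}=\bs\delta\cap\bs\delta_j'$ lies in \emph{both} clusters $\bs\delta$ and $\bs\delta_j'$, hence $\e(\delta_i,\delta_j)=\e(\delta_i,\delta_j')=0$ for all $i\neq j$. I would argue that $I_j$ is built functorially out of $\Coker(d_+)$ and $\tau\Coker(d_-)$, and that $\hom(\delta_i,-)$ and $\e(\delta_i,-)$ applied to the four-term exact sequence are controlled by these vanishing $\e$'s: since $\e(\delta_i,\delta_\pm)=0$, one gets $\hom(\delta_i,L)=f_L(\delta_i)$-type equalities from Theorem~\ref{T:introHomE} with $n=1$ (here is where the remark that $\e(\delta,\delta)=0\Rightarrow m=1$, and more generally the compatibility of the clusters, is used), and the additivity of $\hom(\delta_i,-)-\e(\delta_i,-)$ on short exact sequences forces both $\hom(\delta_i,I_j)$ and $\e(\delta_i,I_j)$ to vanish once one knows they vanish on $K$, $C$, and on the $\tau$-shift. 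The use of the Auslander--Reiten translate means I will need the AR-formula relating $\Hom(\delta,\tau N)$ and $\E(d,N)$-type spaces, which should already be packaged into the definition of $\E(-,-)$ and into Theorem~\ref{T:introHomE}; I would quote it rather than reprove it.

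For the second assertion, that the simple objects of $\delta_I^\perp=\bigcap_{i\in I}\delta_i^\perp$ are exactly the $I_j$ with $j\notin I$, I would proceed as follows. First, $\delta_i^\perp$ is an abelian subcategory (a perpendicular category in the sense of Geigle--Lenzing / Schofield), so $\delta_I^\perp$ is abelian, and by the first part each $I_j$ with $j\notin I$ lies in $\delta_I^\perp$ (because $\hom(\delta_i,I_j)=\e(\delta_i,I_j)=0$ for $i\in I$, as $i\neq j$). That they are simple in this subcategory, and that there are no others, I would get by a counting/induction argument on $|I|$: the category $\delta_i^\perp$ for a single real $\delta_i$ is equivalent to the module category of an algebra with one fewer simple (this is the presentation-theoretic analogue of Schofield's theorem on perpendicular categories), under which clusters restrict to clusters and the exchange graph restricts accordingly; iterating over $i\in I$ realizes $\delta_I^\perp$ as $\rep A'$ for an algebra $A'$ with exactly $n-|I|$ simples, and the images of the $\delta_j,\delta_j'$ ($j\notin I$) together with the residual cluster exhibit the $I_j$ as the simple $A'$-modules. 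The main obstacle, I expect, is precisely this last reduction: making the equivalence $\delta_I^\perp\simeq\rep A'$ work at the level of \emph{presentations} and \emph{weight vectors} (not just representations), so that the real Schur representation $I_j$ is sent to a simple and the exchange combinatorics is preserved; this is where I would need to lean hardest on the machinery of \cite{DF} and on the compatibility between the $\E$-pairing and passing to perpendicular subcategories, and I would expect to spend most of the proof there.
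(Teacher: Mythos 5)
There is a genuine gap in your argument for the pairing identities, concentrated in the off-diagonal case. You propose to deduce $\hom(\delta_i,I_j)=\e(\delta_i,I_j)=0$ for $i\neq j$ from the ``additivity of $\hom(\delta_i,-)-\e(\delta_i,-)$ on short exact sequences'' applied to $0\to K\to L\to N^\tau\to C\to 0$. This cannot work as stated: the difference $\hom(\delta_i,X)-\e(\delta_i,X)=\delta_i(\dv X)$ is just the Euler pairing, and its vanishing never forces the two terms to vanish separately; moreover the auxiliary vanishings you want on $K$, $C$ and $L$ are simply false in general ($\hom(\delta_i,L)$ for $L=\Coker(d_+)$ has no reason to be zero when $i\neq j$). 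The detour through Theorem \ref{T:HomE} and tropical $F$-polynomials is also unnecessary. The paper's mechanism (Lemma \ref{L:duality}) is a direct two-sided rigidity argument: since $\{d_-\}\cup\ind(d_0)$ is rigid, $\Hom(d_0\oplus d_-,N^\tau)\cong\E(d_-,d_0\oplus d_-)^*=0$ by Lemma \ref{L:E}(3), and $I\hookrightarrow N^\tau$ gives $\Hom(d_0\oplus d_-,I)=0$ by left exactness; dually, $\E(d_0\oplus d_+,L)=0$ because $\{d_+\}\cup\ind(d_0)$ is rigid, and $I$ being a quotient of $L$ gives $\E(d_0\oplus d_+,I)=0$ by the right exactness in Lemma \ref{L:E}(1). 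The same confusion infects your diagonal case: you attribute $\hom(\delta_j,I_j)=0$ (for $\ep_j=-$) to $I_j$ being a quotient of $L$, but the quotient direction only controls $\E$-vanishing; the $\Hom$-vanishing for $d_-$ comes from $I_j$ being a \emph{sub}representation of $\Ker(\tau_p d_-)$. The nonvanishing values $\Hom(d_+,I)=k$ and $\E(d_-,I)=k$ then follow from Lemma \ref{L:sub=h} applied to the image $I$, not from a separate dual argument with injective presentations.

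For the second assertion your plan coincides with the paper's: both reduce to the $\tau$-tilting reduction theorem (\cite[Theorem 3.8]{J}), which identifies $\delta_I^\perp$ with a module category having exactly $|Q_0|-|I|$ simples, so that the $|Q_0|-|I|$ objects $I_j$ ($j\notin I$), which lie in $\delta_I^\perp$ by the first part, exhaust the simples. You are right that this is where external machinery is needed, but the paper cites it rather than redevelops it, and your worry about transporting presentations and weight vectors across the reduction is not something the proof actually has to resolve --- only the count of simple objects is used.
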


Now we state the results about the $1$-skeleton of $\N(M)$. 
\begin{proposition}[Proposition \ref{P:edgequiver}] \label{P:introedge} If $\br{L_- L_+}$ is an edge in $\N(M)$, then either $L_- \subset L_+$ or $L_+ \subset L_-$.
	Say $L_- \subset L_+$, then 
	$$L_- = t_\dtb(M)\ \text{ and } \ L_+ = \tc_\dtb(M)\ \text{ for any $\delta$ in the interior of $\F_{\br{L_- L_+}}(M)$}.$$
\noindent Here, $t_\dtb$ and $\tc_\dtb$ are two functors introduced in \cite{Fc}, and $\F_{\br{L_- L_+}}(M)=\F_{L_-}(M)\cap \F_{L_+}(M)$.
	Moreover, we have the following \begin{enumerate}
		\item $\delta_+(L_+/L_-)\geq 0$ for any $\delta_+ \in \F_{L_+}(M)$ and $\delta_-(L_+/L_-)\leq 0$ for any $\delta_- \in \F_{L_-}(M)$ with the equality holding only when $\delta_\pm \in \F_{\br{L_- L_+}}(M)$. 
		\item If $\F_{L_-}(M)$ is spanned by a regular cluster, then $L_+/L_-$ is a direct sum of isomorphic real Schur representations.
	\end{enumerate}
\end{proposition}

\begin{definition*} We assign the orientation $L_0 \to L_1$ for each edge $\br{L_0 L_1}$ with $L_0\subset L_1$. We call the resulting oriented graph the {\em edge quiver} of $\N(M)$, denoted by $\N_1(M)$.
\end{definition*}

The above results, especially Theorem \ref{T:introMcones}, Proposition \ref{P:introfanCC} and \ref{P:introedge}, when being applied to some special cases, already produce new and non-trivial results.
For example 

\begin{proposition}[Proposition \ref{P:finitetype}] \label{P:introfinitetype} Suppose that $A$ is cluster-finite. Let $M$ be the direct sum of all $\E$-rigid representations. 
	Then the normal fan $\F(M)$ is the cluster fan of $A$, and the edge quiver $\N_1(M)$ is the exchange quiver of $A$. 
\end{proposition}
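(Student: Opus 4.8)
The plan is to reduce Proposition~\ref{P:introfinitetype} to the two general results Theorem~\ref{T:introMcones} and Proposition~\ref{P:introfanCC}, together with the combinatorial description of the cluster fan of a cluster-finite algebra. First I would recall that when $A$ is cluster-finite there are only finitely many clusters $\bs{\delta}_1,\dots,\bs{\delta}_m$, and correspondingly only finitely many $\E$-rigid indecomposable weight vectors; the $\E$-rigid indecomposable representations are exactly the cokernels of general presentations of these real indecomposable weight vectors (of positive sign), plus the shifted projectives coming from negative $e_i$. So the representation $M$ in the statement is a genuine finite direct sum, and its Newton polytope is well-defined. The combinatorial input I want to quote is that in the cluster-finite case the generalized cluster fan $\F(\rep A)$ coincides with the cluster fan of $A$, and that the cluster fan is a \emph{complete} simplicial fan whose maximal cones are precisely the cones spanned by the clusters $\bs{\delta}_i$ (this is the finite-type specialization, and is where the hypothesis is really used).

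Next I would prove the first assertion, that $\F(M)$ is the cluster fan. By Proposition~\ref{P:introfanCC}, $\F(M)$ is a coarsening of $\F(\rep A)$, which in the cluster-finite case is the cluster fan; so it suffices to show that $\F(M)$ is \emph{not} a proper coarsening, i.e.\ that every maximal cone $\cone\bs{\delta}_i$ of the cluster fan is actually a (maximal) cone of $\F(M)$. This is exactly what Theorem~\ref{T:introMcones} gives: applied to the finite list of all clusters $\bs{\delta}_1,\dots,\bs{\delta}_m$, it produces a single representation whose Newton polytope has each $\bs{\delta}_i$ spanning a normal cone. The remaining point is to check that \emph{this particular} $M$, the sum of all $\E$-rigid representations, works — for instance by noting that the $M$ produced in the proof of Theorem~\ref{T:introMcones} is (up to adding summands, which only coarsens nothing since normal fans of Minkowski sums refine both) built from precisely the $\E$-rigid data, or by a direct argument that for this $M$ and each cluster $\bs{\delta}_i$ the vertex $t_{\delta}(M)$ is constant on the relative interior of $\cone\bs{\delta}_i$. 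Since $\F(M)$ is then a coarsening of the cluster fan that contains all of its maximal cones, the two fans are equal.

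For the second assertion, that $\N_1(M)$ is the exchange quiver of $A$, I would use Proposition~\ref{P:introedge}. Edges of $\N(M)$ correspond to codimension-one cones of $\F(M)$; having just identified $\F(M)$ with the cluster fan, these are exactly the \emph{walls} separating two adjacent clusters $\bs{\delta}=\{\delta_j\}\cup\bs{\delta}_0$ and $\bs{\delta}'=\{\delta_j'\}\cup\bs{\delta}_0$, i.e.\ the mutations. For such a wall Proposition~\ref{P:introedge} says the corresponding edge is $\br{L_-L_+}$ with $L_-\subset L_+$, $L_-=t_{\dtb}(M)$, $L_+=\tc_{\dtb}(M)$, and (part (2)) $L_+/L_-$ a direct sum of copies of a real Schur representation — which by the construction recalled before Theorem~\ref{T:introdualcluster} is precisely the representation $I_j$ attached to the exchange pair $\{\delta_j,\delta_j'\}$, and by Theorem~\ref{T:introdualcluster} this $I_j$ is determined by the pair. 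The orientation convention $L_-\to L_+$ then matches the exchange-quiver orientation given by the sign $\ep_j$ of $\delta_j$ (part (1) of Proposition~\ref{P:introedge} controls the sign on each side). So vertices of $\N_1(M)$ $\leftrightarrow$ maximal cones of the cluster fan $\leftrightarrow$ clusters, and oriented edges $\leftrightarrow$ mutations with the correct orientation; that is the exchange quiver.

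The main obstacle I anticipate is the middle step: Theorem~\ref{T:introMcones} only asserts the \emph{existence} of some representation realizing all the cluster cones, whereas the proposition pins down a specific $M$. Bridging that gap — showing the sum of all $\E$-rigid representations is large enough that its normal fan already sees every cluster cone (equivalently, that no two distinct clusters get merged into one cone of $\F(M)$ for this choice) — is the real content. I expect it to follow from the fact that a cluster $\bs{\delta}_i$ contains, for each sign pattern, enough $\E$-rigid weight vectors that the supporting functional $\delta$ in the interior of $\cone\bs{\delta}_i$ already distinguishes the submodule $t_\delta(M)$ from its neighbors across every wall; but one has to run this for all walls simultaneously, and that bookkeeping is the delicate part.
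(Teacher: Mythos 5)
Your proposal is correct and follows essentially the same route as the paper: the fan statement is obtained from Theorem \ref{T:Mcones} (whose proof, as the paper remarks, produces a direct summand of this particular $M$, so Lemma \ref{L:fansum} applies), Proposition \ref{P:fanCC}, and the completeness of the cluster fan; the edge-quiver statement is obtained from Proposition \ref{P:edgequiver}. The only cosmetic difference is in fixing the orientation of each edge: you route the sign check through Theorem \ref{T:dualcluster} (via $\delta_j(\dv I_j)=\ep_j$), while the paper applies $\Hom(-,L_0)$ directly to the exchange triangle of Proposition \ref{P:+-} to get $\delta_-(\dv L_0)<0<\delta_+(\dv L_0)$ — the same underlying computation.
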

\noindent  In view of Proposition \ref{P:introfanCC} and \ref{P:introfinitetype}, the generalized cluster fan $\F(\rep A)$ can be viewed heuristically as the normal fan of the infinite dimensional representation $\bigoplus_{M \in \rep A} M$.

\begin{proposition}[Proposition \ref{P:preproj}] Suppose that $A$ is a preprojective algebra of Dynkin type. The vertices of $\N(A)$ are labelled by the ideals $I_w$, and $\F_{I_w}(A)$ is the cluster corresponding to $I_w$. 	
	So $\F(A)$ is the cluster fan $\F(\rep A)$, which is a Weyl fan.
\end{proposition}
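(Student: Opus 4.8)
The plan is to read off the vertices of $\N(A)$ and their normal cones by computing, for a generic weight $\delta$, which vertex of $\N(A)$ the functional $\delta$ attains its maximum at: this vertex is always one of the $\dv I_w$, and its normal cone is a chamber of the Weyl fan. For the structure theory of $A=\Pi$, the preprojective algebra of a Dynkin diagram with Weyl group $W$, I will use the following, to be recalled from Mizuno's classification of support $\tau$-tilting $A$-modules, the $\tau$-tilting/general-presentation dictionary of the paper, and the earlier results of the preprojective section: writing $\{C_w\mid w\in W\}$ for the chambers of the Weyl fan in $\mb{R}^{Q_0}$, the tilting ideals $I_w$ ($w\in W$) are pairwise distinct two-sided ideals, hence submodules of ${}_AA$; the (functorially finite) torsion classes of $\rep A$ are exactly $\mc{T}_w:=\op{Fac}(A/I_w)$, and the torsion class attached to a generic $\delta$ is $\mc{T}_w$ precisely when $\delta$ lies in the interior of $C_w$; the dimension vectors $\dv I_w$ are pairwise distinct (a standard fact, e.g. from the formula for $\dv(A/I_w)$ in terms of the inversions of $w$); and the generalized cluster fan $\F(\rep A)$ is the Weyl fan, with the cluster attached to $I_w$ spanning one of its chambers. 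As $W$ is finite, all of this is finite and the Weyl fan is complete.

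The heart of the argument is the following. Fix a generic $\delta$ in the interior of $C_w$. By \cite{Fc}, the vertex of $\N(A)$ selected by $\delta$ is $\dv t_\delta(A)$, where $t_\delta(A)$ is the $\delta$-torsion submodule of ${}_AA$, i.e. the torsion submodule of ${}_AA$ with respect to the torsion class $\mc{T}_\delta$; and for our generic $\delta$ one has $\mc{T}_\delta=\mc{T}_w$. It then remains to identify the torsion submodule of ${}_AA$ with respect to $\mc{T}_w$. Here I would invoke the distinguished position of the ideals $I_w$ inside ${}_AA$: the submodules of ${}_AA$ cut out by the functorially finite torsion pairs of $\rep A$ are precisely the tilting ideals — up to the bijection of $W$ matching a torsion pair with the ideal it cuts out, this is part of the Buan-Iyama-Reiten-Scott / Iyama-Reiten / Mizuno theory of tilting ideals over preprojective algebras. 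Hence $t_\delta(A)=I_{u(w)}$ for a fixed bijection $u$ of $W$. Consequently, as $\delta$ ranges over the generic locus $\bigsqcup_w\op{int}(C_w)$, the selected vertex runs over $\{\dv I_w\mid w\in W\}$; and since every vertex of $\N(A)$ has a full-dimensional normal cone and so is selected by some generic $\delta$, and the $\dv I_w$ are pairwise distinct, the vertices of $\N(A)$ are exactly the points $\dv I_w$, and $\F_{I_w}(A)\supseteq C_{u^{-1}(w)}$.

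To finish I assemble the pieces. Since the chambers $C_{u^{-1}(w)}$ are pairwise distinct and tile $\mb{R}^{Q_0}$, and each is contained in the corresponding maximal cone $\F_{I_w}(A)$ of the complete fan $\F(A)$, we must have $\F_{I_w}(A)=C_{u^{-1}(w)}$ for every $w$; hence $\F(A)$ is exactly the Weyl fan, which by the above is $\F(\rep A)$, and matching the labels identifies $\F_{I_w}(A)$ with (the span of) the cluster attached to $I_w$. This also sharpens Proposition \ref{P:introfanCC} in the present case — the coarsening it asserts is here trivial, precisely because the $\dv I_w$ are distinct. I expect the main obstacle to be the middle step: proving that the torsion submodules of ${}_AA$ are exactly the tilting ideals $I_w$, and then tracking the bijection $u$ so that each chamber is cut out by the right ideal and carries the right cluster. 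The first part is a genuine structural input on preprojective algebras (available in the references above, and presumably re-derived in the preprojective section via the functor $t_\delta$ of \cite{Fc}); the matching of $u$ is routine but must be handled with care — for instance the vertex $0=\dv I_{w_0}$ of $\N(A)$ is selected by every $\delta$ with all coordinates negative while $\dv A=\dv I_e$ is selected by every $\delta$ with all coordinates positive, so the indexing of chambers (by torsion classes) and of vertices (by ideals) genuinely differ. The remaining steps — completeness of $\F(A)$, distinctness of the $\dv I_w$, and the identification of the Weyl fan with $\F(\rep A)$ — are routine or already available.
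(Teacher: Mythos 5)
Your proposal is correct and follows essentially the same route as the paper: both identify the vertex selected by a weight $\delta$ as $\dv t_{\delta}(A)$, use the identification of the torsion pair attached to a chamber (equivalently, to the weight $\delta_w$ of Mizuno's maximal rigid presentation $d_w$) with the torsion pair cut out by the tilting ideal $I_w$, read off $t_{\delta}(A)=I_w$ from the sequence $0\to I_w\to A\to A/I_w\to 0$ with $I_w$ torsion and $A/I_w$ torsion-free, and then conclude by completeness of the Weyl/cluster fan that the coarsening $\F(A)$ of $\F(\rep A)$ is actually an equality. The only differences are bookkeeping (your bijection $u$ versus the paper's direct matching via $\delta_w$), which you correctly flag as routine.
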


Finally let us come back to the generic setting as in Schofield's original paper.
We are interested in determining the Newton polytopes of general representations. 
\begin{theorem}[Theorem \ref{T:genericN}] Let $\alpha$ be any dimension vector of $Q$.
	Each normal cone $\F_\gamma(\alpha)$ of $\N(\alpha)$ contains a cluster.
	Hence the Newton polytope $\N(\alpha)$ is completely determined by Newton polytopes of real Schur representations.
\end{theorem}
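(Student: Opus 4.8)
The plan is to study the support function $f_\alpha=f_M$ of $\N(\alpha)=\N(M)$ for a general representation $M$ of dimension vector $\alpha$, to show it is linear on every cluster cone, and then to rule out vertices of $\N(\alpha)$ whose normal cone is buried in the ``imaginary'' complement of the cluster fan.

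The first step is to prove that $f_\alpha$ is linear on the cone $\mb{R}_{\geq 0}\{\delta_1,\dots,\delta_n\}$ spanned by any cluster, with value $\hom(\delta_i,M)$ at $\delta_i$. For $\delta=\sum_i m_i\delta_i$ with $m_i\in\mb{N}$, Theorem~\ref{T:introCDPHom} identifies $\delta=\bigoplus_i m_i\delta_i$ as the canonical decomposition, because each $\delta_i$ is real, so $\e(\delta_i,\delta_i)=0$, and $\e(\delta_i,\delta_j)=0$; in particular $\e(\delta,\delta)=0$, so a general presentation of weight $\delta$ is a direct sum of copies of general presentations of weights $\delta_i$. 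Applying $\Hom(-,M)$ and taking kernels gives $\hom(\delta,M)=\sum_i m_i\hom(\delta_i,M)$, and since $\delta$ and the $\delta_i$ are real, Theorem~\ref{T:HomE} promotes this to $f_\alpha(\delta)=\sum_i m_i f_\alpha(\delta_i)$; by homogeneity and continuity this holds over $\mb{R}_{\geq 0}$. Consequently, over the support of the cluster fan, $\F(M)$ is a coarsening of it (this refines Proposition~\ref{P:fanCC} on that locus), so whenever the interior of a normal cone $\F_\gamma(\alpha)$ meets the interior of a cluster cone $C$ we get $C\subseteq\F_\gamma(\alpha)$ and the cluster spanning $C$ lies inside $\F_\gamma(\alpha)$.

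It remains to show every normal cone $\F_\gamma(\alpha)$ has interior meeting the union $U$ of the open cluster cones, equivalently that no vertex of $\N(\alpha)$ has normal cone inside the closed complement $\mc{H}:=\overline{\mb{R}^{Q_0}\setminus U}$. When $Q$ is Dynkin, $\mc{H}=\varnothing$ and there is nothing to prove; when $Q$ is tame, $\mc{H}$ is a single ray, which cannot contain an $n$-dimensional cone. When $Q$ is wild I expect to use genericity of $M$: from Theorem~\ref{T:HomE} and the identity $\hom(n\delta,M)-\e(n\delta,M)=n\,\delta(\alpha)$ one gets $f_\alpha(\delta)=\delta(\alpha)+\fc_M(-\delta)$, and for $\delta$ in the imaginary region the cokernel of a general presentation of weight $n\delta$ is a general representation of a dimension vector proportional to $n$, on which the generic value of $\e(\,\cdot\,,M)$ grows linearly — a fact I would read off from \cite{DF} — so $f_\alpha$ is linear on $\mc{H}$. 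Granting this, the face of $\N(\alpha)$ exposed by any generic direction in $\mc{H}$ is one fixed vertex $\gamma_0$, so $\mc{H}\subseteq\F_{\gamma_0}(\alpha)$ while $\operatorname{int}\F_\gamma(\alpha)\cap\mc{H}=\varnothing$ for $\gamma\neq\gamma_0$; the latter gives $\operatorname{int}\F_\gamma(\alpha)\subseteq U$ for $\gamma\neq\gamma_0$, and since $\mc{H}$ is not polyhedral while $\F_{\gamma_0}(\alpha)$ is, $\F_{\gamma_0}(\alpha)$ strictly contains $\mc{H}$ and hence its interior also meets $U$. This verification that $f_\alpha$ is linear on $\mc{H}$ is the step I expect to be the main obstacle: it is exactly where genericity of $M$ is essential, being the analogue here of Schofield's control of $\ext$ between general representations by exceptional ones in spite of the region left uncovered by the wall-and-chamber structure.

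If the linearity on $\mc{H}$ is awkward to establish head-on, I would instead induct along the edge quiver. By Corollary~\ref{C:edgequiver} every edge $L_-\to L_+$ of $\N(\alpha)$ has $L_+/L_-$ a direct sum of isomorphic real Schur representations, and Theorem~\ref{T:Schurseq} matches the maximal paths $0=L_0\to L_1\to\cdots\to L_r=M$ in $\N_1(\alpha)$ with Schur sequences summing to $\alpha$; for a vertex $\gamma=\dv L_k$ on such a path, the $\delta$-vectors of the $L_i/L_{i-1}$ (signed positively for $i\leq k$ and negatively for $i>k$) form a partial cluster lying in $\F_\gamma(\alpha)$, which one completes to a cluster inside $\F_\gamma(\alpha)$ using the exchange combinatorics of Theorem~\ref{T:dualcluster}. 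Finally, for the second assertion: once each $\F_\gamma(\alpha)$ contains a cluster $\{\delta_i^\gamma\}$, the vertex $\gamma$ is the unique solution of the linear system $\delta_i^\gamma(\gamma)=\hom(\delta_i^\gamma,M)$, whose matrix consists of $\delta$-vectors of real Schur representations and whose right-hand sides are determined — via Theorem~\ref{T:HomE} and reduction through perpendicular categories — by the Newton polytopes of those representations; together with Theorem~\ref{T:Newton} this exhibits $\N(\alpha)$ as cut out by the inequalities attached to real Schur representations.
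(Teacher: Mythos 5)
Your main route founders on its central claim, namely that $f_\alpha$ is linear on the closed complement $\mc{H}$ of the union of cluster cones. This is false already in the smallest wild example. Let $Q$ be the three-arrow Kronecker quiver and $\alpha=(1,1)$. A general $M\in\rep_{(1,1)}(Q)$ has exactly three subrepresentations, of dimensions $(0,0)$, $(0,1)$, $(1,1)$, so $f_M(\delta)=\max(0,\delta_2,\delta_1+\delta_2)$ and $\N(\alpha)$ is a triangle with facet normals $(-1,0)$, $(0,1)$, $(1,-1)$. The extremal rays of the cluster fan in the fourth quadrant are spanned by $(1,0),(3,-1),(8,-3),\dots$ and by $(0,-1),(1,-3),(3,-8),\dots$, accumulating at the irrational slopes $-(3\mp\sqrt5)/2$; hence $\mc{H}$ is a two-dimensional cone whose interior contains the ray $\mb{R}_{\geq0}(1,-1)$. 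On one side of that ray inside $\mc{H}$ one has $f_M(\delta)=\delta_1+\delta_2$ and on the other side $f_M(\delta)=0$, so $f_M$ is not linear on $\mc{H}$ and $\mc{H}$ is not contained in a single normal cone. Note that $(1,-1)$ is a genuinely wild $\delta$-vector (its general cokernel has dimension $(1,2)$, an imaginary Schur root), so the normal fan really does have an extremal ray buried in the imaginary region; the theorem survives only because the supporting value there is $f_M(1,-1)=0$. With linearity on $\mc{H}$ gone, both your deduction that all but one normal cone have interior inside the cluster-fan support and the fallback argument for the exceptional cone collapse. Your alternative route via the edge quiver has the same gap in disguise: ``completes to a cluster inside $\F_\gamma(\alpha)$ using the exchange combinatorics'' is exactly the unproved point, and the edge factors $L_i/L_{i-1}$ may have imaginary Schur dimension vectors, so their $\delta$-vectors need not belong to any cluster.

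The ingredient you are missing is Lemma~\ref{L:nonimg} (imported from \cite{Fc}): for a \emph{general} $M$, an imaginary $\delta$ with $\hom(\delta,M)>0$ exposes a face of $\N(M)$ of codimension at least $2$, hence cannot be a facet normal. Given this, the paper's proof is very short: for $\gamma=0$ the normal cone contains the negative cluster; for $\gamma\neq 0$ some facet through $\gamma$ has positive supporting value $h=f_M(\delta)>0$, so $\hom(\delta,M)>0$ and the lemma forces $\delta$ to be real; a real $\delta$-vector is completed to a cluster by Theorem~\ref{T:maxrigid}, and that cluster lies in $\F_\gamma(\alpha)$ by Lemma~\ref{L:clusterincone}. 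Your first paragraph (linearity of $f_\alpha$ on cluster cones, i.e.\ Lemma~\ref{L:linearity}) is correct and is indeed part of the mechanism, and your closing paragraph on computing the vertices matches Observation~\ref{O:vertex}; but without control of the facet normals whose supporting value is positive, the argument does not close.
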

\noindent ``Determine" here means that we can explicitly compute all vertices of $\N(\alpha)$ by what we observed in \ref{O:vertex}.
More generally, we are interested in the Newton polytope of the cokernel of a general presentation, especially for the Jacobian algebras.
In some optimistic situation (e.g., Question \ref{q:realincone} is positive), the method would work for such generic Newton polytopes (see Observation \ref{O:real2FG}).
We will explain below why this is an important problem in the cluster algebra theory.
This approach also gives an alternative proof of Schofield's Theorem \ref{T:introext}.

\subsection*{Motivation and Relation to Cluster Algebras}
The tropical $F$-polynomials and general presentations discussed in this paper are originated from the theory of cluster algebras \cite{FZ1}.
We know from \cite{FZ4} that for cluster algebras of geometric type any {\em cluster variable} can be written as 
\begin{equation} \label{eq:cv} X(\dtc) := \b{x}^{-\dtc} F_{\dtc}(\b{y}), \end{equation}
where $\b{y}$ is a certain monomial change of the initial cluster variables $\b{x}$.
Here we use $\b{x}^a$ to denote the monomial $\prod_{i} x_i^{a(i)}$.

If the cluster algebra is {\em skew-symmetric}, we have a {\em nondegenerate quiver with potential} $(Q,\mc{P})$ to model this algebra \cite{DWZ2}. Let $A$ be the {\em Jacobian algebra} associated to $(Q,\mc{P})$.
The above polynomial $F_{\dtc}$ is the {\em $F$-polynomial} of some $\E$-rigid representation $M$ of $A$ \cite{DWZ2}.
Moreover, the minimal injective presentation of $M$ has weight exactly $\dtc$.
Since the coefficients of $F$ are all positive, we can tropicalize it in the usual sense.
The tropicalization is precisely the tropical $F$-polynomial of $M$.

Moreover if $\{X(\dtc_1),\dots,X(\dtc_n)\}$ forms a {\em cluster} in the cluster algebra $\mc{C}(Q)$ then $\{\dtc_1,\dots,\dtc_n\}$ is a cluster in $\rep A$ \cite{DWZ2,DF}.
So the cluster fan $\F^r(\rep A)$ is the original cluster fan for $\mc{C}(Q)$.
In this setting the signed dimension vector $\ep_j \dv I_j$ of $I_j$ in Theorem \ref{T:introdualcluster} is the corresponding {\em $c$-vectors} of the cluster.

If the Jacobian algebra is cluster-finite, then we get an easy consequence of Proposition \ref{P:introfinitetype}.
In this case the Newton polytope is the so-called {\em generalized associahedron} \cite{FZy}.
\begin{corollary}[Corollary \ref{C:QPfinite}] Suppose that $A$ is a cluster-finite Jacobian algebra. Let $M$ be the direct sum of all $\E$-rigid representations of $A$.
	Then the dual fan $\F(M)$ is the cluster fan of $\mc{C}(Q)$, and the edge quiver $\N_1(M)$ is the exchange quiver of $\mc{C}(Q)$. 
	Moreover, the signed dimension vectors of the real Schur representations attached to the arrows from/to a fixed vertex $L$ are the signed $c$-vectors dual to the cluster $\F_L(M)$.
\end{corollary}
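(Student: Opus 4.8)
The plan is to deduce this corollary from Proposition \ref{P:finitetype} by pushing the two intrinsic statements there through the representation-theoretic dictionary for cluster algebras recalled in the Motivation section. First I would note that a cluster-finite Jacobian algebra $A=A(Q,\mc{P})$ is in particular a cluster-finite finite-dimensional basic algebra, so Proposition \ref{P:finitetype} applies verbatim: with $M$ the direct sum of all $\E$-rigid representations, the normal (= dual) fan $\F(M)$ equals the cluster fan of $A$ and the edge quiver $\N_1(M)$ equals the exchange quiver of $A$. It then remains only to identify these objects with their counterparts for $\mc{C}(Q)$, and to track the Schur representations on the edges.

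For the fan: by \cite{DWZ2} (the discussion around \eqref{eq:cv}), every cluster monomial $X(\dtc)$ of $\mc{C}(Q)$ is indexed by a $g$-vector $\dtc$ which is the weight of the minimal injective presentation of an $\E$-rigid, hence real Schur, representation of $A$, and $\{X(\dtc_1),\dots,X(\dtc_n)\}$ is a cluster of $\mc{C}(Q)$ iff $\{\dtc_1,\dots,\dtc_n\}$ is a cluster in $\rep A$; thus the real cluster fan $\F^r(\rep A)$ is, maximal cone by maximal cone, the $g$-vector fan of $\mc{C}(Q)$. In the cluster-finite case every indecomposable normal $\delta$-vector is real (this is part of the proof of Proposition \ref{P:finitetype}), so $\F(\rep A)=\F^r(\rep A)$ and the ``cluster fan of $A$'' of Proposition \ref{P:finitetype} is exactly the cluster fan of $\mc{C}(Q)$. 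For the quiver: the vertices of $\N_1(M)$ are the maximal cones of $\F(M)$, i.e. the clusters, the arrows are the mutations (Proposition \ref{P:edgequiver}(2) together with the construction of adjacent clusters), and the orientation matches the $[\ep_j]_+$ convention of Theorem \ref{T:dualcluster}; under the same dictionary this is precisely the exchange quiver of $\mc{C}(Q)$.

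For the ``moreover'' I would combine Theorem \ref{T:dualcluster} with Proposition \ref{P:edgequiver}. Fix a vertex $L$ of $\N(M)$; by Proposition \ref{P:finitetype} (and Theorem \ref{T:Mcones}) its normal cone $\F_L(M)$ is spanned by a genuine cluster $\bs{\delta}=\{\delta_1,\dots,\delta_n\}$, and each arrow at $L$ lies on an edge $\br{L L'}$ whose other endpoint has normal cone spanned by an adjacent cluster $\bs{\delta}_j'$. The real Schur representation attached to that edge by Proposition \ref{P:edgequiver}(2) is exactly the $I_j$ of Theorem \ref{T:dualcluster} for the exchange pair $\{\delta_j,\delta_j'\}$: one checks that the pair $(t_\dtb(M),\tc_\dtb(M))$ produced by the edge-quiver construction agrees with the pair of cokernels of the general presentations $d_\pm$ used before Theorem \ref{T:dualcluster}, which follows from Proposition \ref{P:edgequiver} since a $\delta$ in the interior of $\F_{\br{L L'}}(M)$ exhibits $L=t_\dtb(M)$ and $L'=\tc_\dtb(M)$. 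Then, by the last paragraph of the Motivation section, $\ep_j\dv I_j$ is the $c$-vector of $\mc{C}(Q)$ dual to $\bs{\delta}=\F_L(M)$ in direction $j$, and collecting over $j$ gives the full tuple of signed $c$-vectors, with the sign matching ``arrow into vs.\ out of $L$'' via Theorem \ref{T:dualcluster}.

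The main obstacle I anticipate is precisely this last identification: making the two a priori different constructions of $I_j$ — the one coming from the edge quiver via $t_\dtb,\tc_\dtb$ and the one from Theorem \ref{T:dualcluster} via the four-term sequence $0\to K\to L\to \tau N\to C\to 0$ — literally coincide, and then reconciling the sign and orientation conventions so that the correspondence ``arrows from/to $L$'' $\leftrightarrow$ ``signed $c$-vectors'' holds on the nose rather than just up to a global sign. Everything else is a transcription of Proposition \ref{P:finitetype} through the established cluster-theoretic dictionary, and no new computation is needed.
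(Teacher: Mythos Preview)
Your approach is correct and matches the paper's, which gives no separate proof but presents the corollary as an immediate consequence of Proposition~\ref{P:finitetype} together with the preceding DWZ2 identification of the cluster fan of $A$ with that of $\mc{C}(Q)$. The obstacle you anticipate is already dispatched in the proof of the ``moreover'' clause of Proposition~\ref{P:finitetype} (which cites Theorem~\ref{T:dualcluster} and Proposition~\ref{P:edgequiver} directly), so for the corollary nothing is left beyond the remark after Theorem~\ref{T:dualcluster} identifying the representation-theoretic $c$-vectors with the cluster-algebra $c$-vectors for reachable clusters.
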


The formula \eqref{eq:cv} has a naive generalization where we consider the $F$-polynomial $F_{\dtc}$ of the kernel of a general injective presentation of {\em any} weight $\dtc\in \mb{Z}^{Q_0}$ \cite{D}.
In many cases they are turned out be a basis of the upper cluster algebra $\br{\mc{C}}(Q)$ \cite{P}.
The Newton polytope of this $F_\dtc$ is exactly given by the generic Newton polytope $\N(\dtc)$.

In the meanwhile, a remarkable {\em positive} basis consisting of {\em theta functions} for all cluster algebras was introduced in \cite{GHKK}. 
For each $\dtc$-vector, there is a theta function $\vartheta_\dtc$, 
which is of the form $$\vartheta_\dtc = \b{x}^{-\dtc} \varphi_{\dtc}(\b{y}).$$
In general, the theta function can be a Laurent series, but let us assume it is a Laurent polynomial
so $\varphi_{\dtc}$ is a polynomial with {positive} coefficients.
Another very interesting positive (quantum) basis called {\em triangular basis} was introduced in \cite{Q}. It has a similar form 
$$T_{\dtc,q} = \b{x}^{-\dtc} \mc{F}_{\dtc,q}(\b{y}).$$
In particular, $\varphi_{\dtc}$ and $\mc{F}_{\dtc,q}$ can be tropicalized and the tropicalization is determined by its Newton polytope.	
We have the following conjecture
\begin{conjecture} The Newton polytopes of $\varphi_\dtc$ and $\mc{F}_{\dtc,q}$ are the same as the generic Newton polytope $\N(\dtc)$.
\end{conjecture}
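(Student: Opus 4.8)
Since $\varphi_\dtc$ and $\mc{F}_{\dtc,q}$ are $\dtc$-pointed --- after multiplying by $\b{x}^\dtc$ each becomes a polynomial in $\b{y}$ with constant term $1$, like $F_\dtc$, and supported in a fixed region cut out by $\dtc$ --- the claim amounts to the two inclusions $\N(\varphi_\dtc)\subseteq\N(\dtc)$ and $\N(\dtc)\subseteq\N(\varphi_\dtc)$, and likewise for $\mc{F}_{\dtc,q}$. The plan is to treat the two inclusions by different means: a unitriangularity argument for the first, and positivity for the second. The case in which $\dtc$ is the injective $g$-vector of an $\E$-rigid representation (equivalently $\dtc$ lies in a cluster chamber, which in particular covers the cluster-finite situation of Corollary \ref{C:QPfinite}) is immediate, since then $\varphi_\dtc=X(\dtc)=F_\dtc$ and $\mc{F}_{\dtc,q}$ specializes to the same $F$-polynomial by \cite{DWZ2,Q}; so the real content lies in the non-reachable $\dtc$.

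For $\N(\varphi_\dtc)\subseteq\N(\dtc)$ I would use that the change of basis from the generic basis $\{F_\dtc\}$ of \cite{D,P} to the theta basis (resp.\ the triangular basis \cite{Q}) is unitriangular with respect to the dominance order $\preceq$ on $\dtc$-vectors, so that $\varphi_\dtc=F_\dtc+\sum_{\dtc'\prec\dtc}a_{\dtc'}\,\b{x}^{\dtc-\dtc'}F_{\dtc'}$. It then suffices to prove the purely polytopal statement that $\dtc'\preceq\dtc$ implies $(\dtc-\dtc')+\N(\dtc')\subseteq\N(\dtc)$. This should follow from the homological presentation of $\N(\dtc)$ in Theorem \ref{T:Newton} together with Theorem \ref{T:introHomE}: the support function of $\N(\dtc)$ is a rescaled $\e(-,K)$ for the generic kernel $K$, a dominance relation between $\dtc$-vectors is witnessed by a short exact sequence relating the associated generic kernels, and sub-additivity of $\e(n\delta,-)$ along that sequence yields the asserted containment. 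The triangular-basis case runs identically once one invokes the compatibility of $\{\mc{F}_{\dtc,q}\}$ with $\{F_\dtc\}$ (Qin's common triangular basis) in the range where it is available.

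For the reverse inclusion $\N(\dtc)\subseteq\N(\varphi_\dtc)$ \emph{positivity is the essential input}. By Theorem \ref{T:genericN} and Observation \ref{O:real2FG}, every vertex $\gamma$ of $\N(\dtc)$ is realized by a generic subrepresentation $t_\dtb(K)$ as in Proposition \ref{P:introedge}, and the normal cone of $\gamma$ contains a cluster. I would show that the monomial $\b{y}^\gamma$ appears in $\varphi_\dtc$ with strictly positive coefficient: collect the broken lines of \cite{GHKK} with initial exponent $\dtc$ whose endpoint lies in the interior of that normal cone; each contributes a monomial $\preceq\b{y}^\gamma$, the extremal one being hit by the broken line crossing the corresponding cluster chamber, and since all wall-functions of the cluster scattering diagram --- hence all broken-line coefficients --- are positive, no cancellation can occur at the extremal monomial. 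For $\mc{F}_{\dtc,q}$ the same non-cancellation follows from the positivity of the triangular basis proved in \cite{Q}.

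The principal obstacle is the bridge between the scattering-diagram combinatorics and the homological side: one must show that the broken-line exponents attached to a chamber of the cluster scattering diagram are exactly the generic dimension vectors $\dv t_\dtb(K)$ read off from $\N(\dtc)$. This is precisely the ``generic $=$ scattering'' identification underlying Question \ref{q:realincone}, and a clean proof would plausibly factor through the identification of both with Fock--Goncharov's duality pairing indicated after Theorem \ref{T:introdualcluster}. A secondary obstacle --- and the reason the statement for $\mc{F}_{\dtc,q}$ may need hypotheses such as injective-reachability or the existence of Qin's common triangular basis --- is that the positivity and triangular structure of $\{\mc{F}_{\dtc,q}\}$ are not unconditionally known; and the Laurent-series case, excluded here by hypothesis, would in general require a convex-geometry surrogate for the Newton polytope.
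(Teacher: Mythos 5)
The statement you are addressing is not proved in the paper: it appears only as a conjecture (in the introduction and again as Conjecture \ref{C:equalNewton}), so there is no proof of record to compare against, and your text must be judged as a proposed strategy. As a strategy it identifies the right ingredients, but it is not a proof, and the two ``obstacles'' you name at the end are precisely the open content of the conjecture; neither is resolved by anything in the paper or by anything you supply. Concretely: (i) the unitriangularity of the transition from the generic basis $\{F_\dtc\}$ to the theta (or triangular) basis with respect to the dominance order is not known in the generality you need --- it is essentially equivalent to the theta basis being pointed at the generic objects, which is a form of what you are trying to prove; (ii) the polytopal step ``$\dtc'\preceq\dtc$ implies $(\dtc-\dtc')+\N(\dtc')\subseteq\N(\dtc)$'' is asserted via an unspecified short exact sequence between generic kernels, but dominance of $\dtc$-vectors does not by itself produce such a sequence, and the comparison of support functions via Theorem \ref{T:HomE} is delicate because the integer $n$ there depends on both $\delta$ and the representation, so the inequality $\delta(\dtc-\dtc')+f_{\dtc'}(\delta)\leq f_{\dtc}(\delta)$ does not follow from sub-additivity alone; (iii) the identification of broken-line exponents with the generic dimension vectors $\dv t_\dtb(K)$, which your reverse inclusion needs, is exactly the unresolved bridge tied to Question \ref{q:realincone} and Conjecture \ref{C:FGpairing} --- the paper only establishes the relevant equalities for mutation-acyclic quivers with potential (Corollary \ref{C:HomEQP}) or for negative reachable weights (Theorem \ref{T:HomEQP}).

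If you restrict to the cases where the paper's machinery does apply --- for instance mutation-acyclic $(Q,\mc{P})$, where $f_\dtc(\delta)=\hom(\delta,\dtc)=\fc_\delta(\dtc)$ holds unconditionally and Theorem \ref{T:genericN} determines $\N(\dtc)$ by clusters --- your outline could plausibly be completed into a genuine partial result, and that would be worth writing up. But as it stands the argument is circular at its core: both inclusions ultimately rest on identifications (pointedness and unitriangularity of $\varphi_\dtc$ and $\mc{F}_{\dtc,q}$ at generic $\dtc$-vectors, and the scattering-versus-generic dictionary) that are of the same strength as the conjecture itself. You should present this as a conditional reduction, stating precisely which of Question \ref{q:realincone}, Conjecture \ref{C:FGpairing}, and the positivity and triangularity hypotheses on the theta and triangular bases you are assuming, rather than as a proof.
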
 

%

Another related problem is the Fock-Goncharov duality conjecture \cite[Conjecture 4.1]{FGc}.
Recall that a skew-symmetrizable matrix $B$ gives rise to a pair of cluster varieties $(\mc{A},\mc{X})$, 
and their {\em Langlands dual} $(\mc{A}^\vee,\mc{X}^\vee)$.
The conjecture says that the tropical points $\mc{X}^\vee(\mb{Z}^t)$ of $\mc{X}^\vee$ parametrize a basis of ring of regular functions $\mc{O}(\mc{A})$ of $\mc{A}$, and we can interchange the roles of $\mc{A}$ and $\mc{X}$.
The duality conjecture fails in general, but can hold with some mild assumption, or if replaced with a certain formal version (see \cite{GHKK} for detail). 
Let us assume the parametrizations exist and we denote them by 
$$I_{\mc{A}}: \mc{A}(\mb{Z}^t) \hookrightarrow \mc{O}(\mc{X}^\vee)\ \text{ and }\ I_{\mc{X}^\vee}: \mc{X}^\vee(\mb{Z}^t) \hookrightarrow{} \mc{O}(\mc{A}).$$
The duality conjecture further asserts that we can require the parametrized bases to be {\em universally positive} and satisfy several interesting properties.
One of them concerns the pairing $$\mc{A}(\mb{Z}^t) \times \mc{X}^\vee(\mb{Z}^t) \to \mb{Z} .$$
There are two canonical (conjecturally equal) ways to define this pairing:
\begin{align*} &&  I_{\mc{A}}(a)^{\op{trop}} (x)  && I_{\mc{X}^\vee}(x)^{\op{trop}} (a) &&  \text{for } a\in \mc{A}(\mb{Z}^t),\  x\in \mc{X}^\vee(\mb{Z}^t).
\end{align*}
We give a representation-theoretic interpretation of the pairing in some special cases.
\begin{theorem}[Fock-Goncharov duality pairing] Suppose that $B$ is skew-symmetric.
	The pairings $\mc{A}(\mb{Z}^t) \times \mc{X}^\vee(\mb{Z}^t) \to  \mb{Z}$ given by $I_{\mc{A}}(a)^{\wtd{\op{trop}}}(\dtc)$ and $I_{\mc{X}^\vee}(\dtc)^{\wtd{\op{trop}}}(a)$ are both equal to 
	$\hom(aB^{\T},\dtc) - a\cdot \dtc$ in the following two situations
	\begin{enumerate}
		\item The quiver of $B$ is mutation-equivalent to an acyclic quiver;
		\item Either $I_{\mc{X}^\vee}(\dtc)$ or $I_{\mc{X}}(aB^{\T})$ is a cluster variable, or equivalently either $\dtc$ or $aB^{\T}$ is negative reachable.
	\end{enumerate}
\end{theorem}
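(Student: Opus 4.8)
The plan is to reduce the statement to the representation-theoretic machinery already developed. The key point is that both canonical pairings $I_{\mc{A}}(a)^{\wtd{\op{trop}}}(\dtc)$ and $I_{\mc{X}^\vee}(\dtc)^{\wtd{\op{trop}}}(a)$ are, by their definitions via tropicalized bases, computed by the tropical $F$-polynomial of the appropriate representation evaluated at the appropriate weight vector, together with the linear correction term $-a\cdot\dtc$ coming from the $\b{x}^{-\dtc}$ prefactor in \eqref{eq:cv}. Concretely, if $I_{\mc{X}^\vee}(\dtc)$ is expressed as $\b{x}^{-\dtc}\varphi(\b{y})$ where $\varphi$ has positive coefficients and Newton polytope $\N(\dtc)$ (the generic Newton polytope), then its tropicalization at the point of $\mc{A}(\mb{Z}^t)$ corresponding to $a$ evaluates to $\max\{\gamma\cdot(aB^{\T}) : \gamma \in \N(\dtc)\} - a\cdot\dtc$. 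So the whole problem comes down to identifying $\max_{\gamma\in\N(\dtc)}\gamma(aB^{\T})$ with $\hom(aB^{\T},\dtc)$, and symmetrically for the other pairing with $f_M$ in place of $\fc$.

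First I would set up notation carefully: fix the quiver with potential $(Q,\mc{P})$ modeling $B$ and its Jacobian algebra $A$, recall from the Motivation section that $\dtc\in\mb{Z}^{Q_0}$ indexes the weight of a general injective presentation whose kernel $M$ (an $\E$-rigid representation in the cluster variable case, or the kernel of a general injective presentation in general) has $F$-polynomial $F_{\dtc}$, and recall that $aB^{\T}$ plays the role of a weight vector $\delta$ on the $\mc{A}$-side. Then I would invoke the presentation of the Newton polytope (Theorem \ref{T:Newton}): $\N(\dtc)$ is cut out by $\delta(\gamma)\le \hom(M,\dtc)$-type inequalities, equivalently its support function is $\delta\mapsto \fc_M(\dtc)$ in the dual variable. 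Evaluating the support function at $\delta = aB^{\T}$ and applying Theorem \ref{T:introHomE} gives $\fc_M(\check n\, aB^{\T}) = \hom(M,\check n\, aB^{\T})$ for suitable $\check n$, and by homogeneity and the last remark of the introduction ($\check n$ can be taken to be $1$ when the relevant $\e$ vanishes, e.g. for negative reachable $M$ or in the mutation-acyclic case) one extracts the clean formula $\max_{\gamma\in\N(\dtc)}\gamma(aB^{\T}) = \hom(aB^{\T},\dtc)$. The $-a\cdot\dtc$ term is simply carried along from the prefactor.

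The two cases in the statement then correspond exactly to the two situations where $m=1$ (equivalently, where $n$ or $\check n$ can be dropped) have been established: case (1) is the mutation-acyclic case covered by Corollary \ref{C:HomEQP}, and case (2) is the negative-reachable case covered by Theorem \ref{T:HomEQP}, together with the standard fact that $\dtc$ negative reachable $\iff I_{\mc{X}^\vee}(\dtc)$ is a cluster variable (and dually on the $aB^{\T}$ side). I would also need to check that in these two cases the relevant basis element is genuinely a Laurent polynomial (not merely a Laurent series) with Newton polytope equal to $\N(\dtc)$ — this is where I would cite the cluster-algebra literature (\cite{GHKK}, \cite{DWZ2}) for the cluster-variable case and the acyclic-case results. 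The main obstacle, and the place where the argument is most delicate, is precisely this last point: matching the combinatorial Newton polytope of the abstractly-defined FG basis element (theta function / generic basis element) with the representation-theoretic $\N(\dtc)$. In the two listed situations this identification is available because the basis element is a cluster variable (handled by \cite{DWZ2}) or the algebra is of acyclic type (handled by the generic-Newton-polytope results, Theorem \ref{T:genericN}); outside these situations it is exactly the content of the Conjecture stated above, which is why the theorem is restricted to cases (1) and (2). The symmetry of the pairing — that the $\mc{A}$-side computation via $f$ and the $\mc{X}^\vee$-side computation via $\fc$ give the same answer — follows from the duality between $\N$ and $\check\N$ built into Theorem \ref{T:introHomE}, where $f_M(-\check n\dtc) = \ec(M,\check n\dtc)$ pairs with $\fc_M(-n\delta) = \e(n\delta,M)$; I would spell out that these two expressions, after adding $-a\cdot\dtc$, are literally equal.
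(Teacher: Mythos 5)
Your core argument is the same as the paper's: reduce each pairing to the evaluation of a tropical $F$-polynomial plus the linear term $-a\cdot\dtc$, then identify $f_{\dtc}(aB^{\T})$ and $\fc_{aB^{\T}}(\dtc)$ with $\hom(aB^{\T},\dtc)$ via Corollary \ref{C:HomEQP} in case (1) and via Theorem \ref{T:HomEQP} together with Theorem \ref{T:HomE} (with $n=\check{n}=1$ since a negative reachable weight is real, hence not wild) in case (2); the paper's proof is exactly the two displayed equalities \eqref{eq:FGpair1} and \eqref{eq:FGpair2}. Two remarks. First, the issue you single out as ``the main obstacle'' --- matching the Newton polytope of the abstractly defined basis element with $\N(\dtc)$ --- does not arise in the theorem as stated: the paper fixes $I_{\mc{A}}$ and $I_{\mc{X}^\vee}$ to be the generic basis maps and \emph{defines} $\wtd{\op{trop}}$ directly through the tropical $F$-polynomials $f_\dtc$ and $\fc_{\check{p}(a)}$, so that identification is built into the setup (it would only be needed for theta functions or triangular bases, which is Conjecture \ref{C:equalNewton}, not this theorem). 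Second, your closing claim that the agreement of the two pairings ``follows from the duality between $\N$ and $\check\N$ built into Theorem \ref{T:HomE}'' is misattributed: that duality relates $f_M$ and $\fc_M$ for a \emph{fixed} $M$, whereas the two pairings involve the distinct representations $\Ker(\dtc)$ and $\Coker(aB^{\T})$; their equality is precisely Conjecture \ref{C:FGpairing}, which fails to be automatic (cf.\ Example \ref{ex:dimfail}). It holds here only because each side separately equals $\hom(aB^{\T},\dtc)=\hom(aB^{\T},\Ker(\dtc))=\hom(\Coker(aB^{\T}),\dtc)$ under hypotheses (1) or (2) --- which you did establish earlier, so this last paragraph is a redundant (and incorrect as stated) alternative justification rather than a gap in the proof.
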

\noindent Although the main part of Fock-Goncharov duality conjecture was intensively studied, the meaning of the duality pairing is only known in few cases (e.g., \cite[Proposition 12.1]{FG}). The verification of the equality in this generality is new.

\subsection*{Organization} In Section \ref{S:GP} we review the theory of general presentations developed in \cite{DF}.
In Section \ref{S:Ftrop} we introduce the tropical $F$-polynomial of a representation and its Newton polytope.
We prove our first main result -- Theorem \ref{T:HomE}.
Then we improve this result in the case of quivers with potentials (Theorem \ref{T:HomEQP} and Corollary \ref{C:HomEQP}). 
In Section \ref{S:tf} we review the two pairs of functors considered in \cite{Fc}.
In Section \ref{S:Newton} we give a presentation of the Newton polytope -- Theorem \ref{T:Newton}.
We study the normal vectors and the normal cones, and prove another main result -- Theorem \ref{T:Mcones}.
In Section \ref{S:generic} we propose an algorithm to determine the generic Newton polytopes.
We show in Theorem \ref{T:genericN} that the algorithm works for path algebras.
Observation \ref{O:real2FG} explains why we speculate the algorithm should work more generally.
We make some connection to the cluster algebra theory, including an interpretation of the Fock-Goncharov duality pairing (Theorem \ref{T:FGpairing}). 
In Section \ref{S:dual} we give an explicit construction of dual clusters consisting of real Schur representations in Theorem \ref{T:dualcluster}.
In Section \ref{S:fan&edge} we study the normal fan and edge quiver of the Newton polytope. 
For the general case the two main results here are Propositions \ref{P:fanCC} and \ref{P:edgequiver}.
For the quiver case we state an interesting bijection in Conjecture \ref{T:Schurseq}. 
In Section \ref{S:example} we apply the above results to two special cases. One is the cluster-finite algebra (Proposition \ref{P:finitetype}) and the other is the preprojective algebra of Dynkin type (Proposition \ref{P:preproj}).

\subsection*{Notation and Conventions}
Throughout we only deal with finite-dimensional basic algebras over an algebraically closed field $k$ of characteristic $0$. 
If we write an algebra $A=kQ$, we assume implicitly that $Q$ is finite and has no oriented cycles.
For general $A=kQ/I$, we allow $Q$ to have oriented cycles.
We denote by $Q_0$ the set of vertices of $Q$.

Unless otherwise stated, unadorned $\Hom$ and other functors are all over the algebra $A$, and the superscript $*$ is the trivial dual for vector spaces.
For direct sum of $n$ copies of $M$, we write $nM$ instead of the traditional $M^{\oplus n}$.
We write $\hom,\ext$ and $\e$ for $\dim\Hom, \dim\Ext$, and $\dim \E$.

\begin{align*}
& \rep A && \text{the category of finite-dimensional representations of $A$} &\\
& \rep_\alpha(A) && \text{the space of $\alpha$-dimensional representations of $A$} &\\
& S_i && \text{the simple representation supported on the vertex $i$} &\\
& P_i && \text{the projective cover of $S_i$} &\\
& I_i && \text{the injective envelope of $S_i$} &\\
& \dv M && \text{the dimension vector of $M$} & 
\end{align*}

\section{Review on General Presentations} \label{S:GP}
\subsection{The \texorpdfstring{$\E$}{E}-invariant of Presentations} 
Let $A$ be a finite-dimensional basic algebra over an algebraically closed field $k$ of characteristic $0$.
Then $A$ can be presented as a path algebra modulo an ideal generated by {\em admissible} relations: $A=kQ/I$ \cite{ASS}.
We denote by $P_v$ (resp. $I_v$) the indecomposable projective (resp. injective) representation of $A$ corresponding the vertex $v$ of $Q$.
For $\beta \in \mb{Z}_{\geq 0}^{Q_0}$ we write $P(\beta)$ for $\bigoplus_{v\in Q_0} \beta(v)P_v$;
similarly write $I(\beta)$ for $\bigoplus_{v\in Q_0} \beta(v)I_v$.
Following \cite{DF} we call a homomorphism between two projective representations, a {\em projective presentation} (or presentation in short). 
As a full subcategory of the category of complexes in $\rep A$, the category of projective presentations is Krull-Schmidt as
well.

\begin{definition}\footnote{The $\delta$-vector is the same one defined in \cite{DF}, but is the negative of the $\g$-vector defined in \cite{DWZ2}. }
	The {\em $\delta$-vector} (or {\em reduced weight vector}) of a presentation 
	$$d: P(\beta_-)\to P(\beta_+)$$
	is the difference $\beta_+-\beta_- \in \mb{Z}^{Q_0}$.
	When working with injective presentations 
	$$\dc: I(\check{\beta}_+)\to I(\check{\beta}_-),$$
	we call the vector $\check{\beta}_+ - \check{\beta}_-$ the {\em $\check{\delta}$-vector} (or {reduced weight vector}) of $\dc$.
\end{definition}
\noindent Let $\nu$ be the Nakayama functor $\Hom(-,A)^*$.
There is a map still denoted by $\nu$ sending a projective presentation to an injective one
$$P_-\to P_+\ \mapsto\ \nu(P_-) \to \nu(P_+).$$
We say a presentation $d$ {\em nonnegative} if $d$ has no direct summands of form $P_-\to 0$.
If $d$ is nonnegative, then $\Ker(\nu d) = \tau\Coker(d)$ where $\tau$ is the classical {\em Auslander-Reiten translation} \cite{ASS}.

\begin{definition}[{\cite{DWZ2,DF}}] Given any projective presentation $d: P_-\to P_+$, we define $\Hom(d,M)$ and $\E(d,M)$ to be the kernel and cokernel of the induced map:
	\begin{equation} \label{eq:longexact} 0\to \Hom(d,M)\to \Hom(P_+,M) \xrightarrow{C(d,M)} \Hom(P_-,M) \to \E(d, M)\to 0.
	\end{equation}
	Similarly for an injective presentation $\dc: I_+\to I_-$, we define $\Hom(M,\dc)$ and $\Ec(M,\dc)$ to be the kernel and cokernel of the induced map $\Hom(M,I_+) \xrightarrow{} \Hom(M,I_-)$.
	It is clear that 
	$$\Hom(d,M) = \Hom(\Coker(d),M)\ \text{ and }\ \Hom(M,\dc) = \Hom(M,\Ker(\dc)).$$
\end{definition}	
\noindent In this paper we never use $\Hom(d,M)$ for the above $k$-linear map $C(d,M)$ induced by $d$.

\begin{lemma}[{\cite{DF}}] \label{L:E} We have the following properties \begin{enumerate}
		\item	Any exact sequence $0\to L \to M \to N \to 0$ in $\rep A$ gives the long exact sequence:
		$$0\to \Hom(d,L)\to \Hom(d,M) \to \Hom(d,N) \to \E(d, L) \to \E(d, M) \to \E(d, N)\to 0.$$
		\item	$\E(d,M) \supseteq \Ext^1(\Coker(d),M)$ for any $d$ and $M$.
		\item	$\E(d,M) \cong \Hom(M, \Ker(\nu d))^*$ for any $d$ and $M$.
	\end{enumerate}	
\end{lemma}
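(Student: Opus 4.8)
The plan is to deduce all three statements from the defining four-term exact sequence \eqref{eq:longexact} by routine homological algebra, handling each part separately. Throughout I read $\E(N,M)$ for a representation $N$ in the first slot as $\E(d_{\min},M)$, where $d_{\min}$ is the minimal projective presentation of $N$ (unlike the second slot, the first slot does see split summands, so this convention matters). For (1), I would note that since $P_-$ and $P_+$ are projective, applying $\Hom(P_\pm,-)$ to $0\to L\to M\to N\to 0$ gives two short exact sequences, which assemble into a short exact sequence of two-term complexes $0\to C^\bullet_L\to C^\bullet_M\to C^\bullet_N\to 0$, where $C^\bullet_X$ is $\Hom(P_+,X)\xrightarrow{C(d,X)}\Hom(P_-,X)$ in degrees $0$ and $1$. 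By definition $\Hom(d,X)$ and $\E(d,X)$ are the cohomology of $C^\bullet_X$ in degrees $0$ and $1$, so the long exact cohomology sequence of this short exact sequence of complexes is precisely the claimed six-term sequence; it terminates on both ends because the complexes have length two. This is the snake lemma applied to the ladder formed by the two $\Hom$-exact sequences.

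For (2), I would invoke that the category of presentations is Krull--Schmidt and that any $d\colon P_-\to P_+$ decomposes as $d_{\min}\oplus\bigl(\bigoplus_i (P_i\xrightarrow{\,\mathrm{id}\,}P_i)\bigr)\oplus\bigl(\bigoplus_j(P_j\to 0)\bigr)$ with $d_{\min}$ the minimal projective presentation of $\Coker(d)$ (this is the structural fact from \cite{DF}). Applying $\Hom(-,M)$ and taking cokernels, the identity summands contribute nothing and each $P_j\to 0$ contributes a copy of $\Hom(P_j,M)$, so $\E(d,M)\cong\E(d_{\min},M)\oplus\bigoplus_j\Hom(P_j,M)$; in particular $\E(d,M)\supseteq\E(\Coker(d),M)$. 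For the second inclusion I would extend $d_{\min}\colon P_1\to P_0$ to a projective resolution $\cdots\to P_2\to P_1\to P_0\to\Coker(d)\to 0$; applying $\Hom(-,M)$ exhibits $\Ext^1(\Coker(d),M)$ as $\Ker(\Hom(P_1,M)\to\Hom(P_2,M))/\Img(\Hom(P_0,M)\to\Hom(P_1,M))$, which maps injectively into $\E(d_{\min},M)=\Coker(\Hom(P_0,M)\to\Hom(P_1,M))$ because the image term already sits inside the kernel term.

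For (3), I would first use the natural isomorphism $\Hom(P,M)\cong\Hom(P,A)\otimes_A M$ for finitely generated projective $P$; under it $C(d,M)$ becomes $\Hom(d,A)\otimes_A M$, so by right-exactness of $-\otimes_A M$ one gets $\E(d,M)\cong\Coker(\Hom(d,A))\otimes_A M=\Tr(d)\otimes_A M$, where $\Tr(d)$ is the cokernel of $\Hom(d,A)$ viewed as a map of projective right $A$-modules. On the other side, $\tau_p d=\nu(d)$ is the $k$-dual of $\Hom(d,A)$, and since $k$-duality is exact it carries this cokernel to a kernel, giving $\Ker(\tau_p d)\cong\Tr(d)^*$ as a left $A$-module. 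Then tensor--hom adjunction yields $\Hom(M,\Tr(d)^*)=\Hom\bigl(M,\Hom_k(\Tr(d),k)\bigr)\cong\Hom_k(\Tr(d)\otimes_A M,k)=(\Tr(d)\otimes_A M)^*$, and dualizing once more (everything is finite-dimensional) gives $\Hom(M,\Ker(\tau_p d))^*\cong\Tr(d)\otimes_A M\cong\E(d,M)$.

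I expect the main obstacle to be the bookkeeping in part (3): one must carefully distinguish $\Hom$ and duals over $A$ from those over $k$, track left/right module structures so that $\Tr(d)$ is a right module and every tensor product and adjunction type-checks, and verify that $C(d,M)$ transports correctly through $\Hom(P,M)\cong\Hom(P,A)\otimes_A M$. The identification $\Ker(\tau_p d)\cong\Tr(d)^*$ is the pivot making the two sides agree; once it and the decomposition of presentations are in hand, parts (1) and (2) are routine.
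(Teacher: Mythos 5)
The paper states this lemma as a quotation from \cite{DF} and gives no proof of its own, so there is nothing to compare against directly; your argument is correct and is the standard one — the snake lemma applied to the short exact sequence of two-term complexes for (1), the Krull--Schmidt decomposition $d=d_{\min}\oplus(\bigoplus P_i\xrightarrow{\operatorname{id}}P_i)\oplus(\bigoplus P_j\to 0)$ together with the subquotient description of $\Ext^1$ for (2), and the identification $\E(d,M)\cong \Coker(\Hom(d,A))\otimes_A M$ combined with exactness of $k$-duality and tensor--hom adjunction for (3). The only caveat is the left/right bookkeeping you already flag (with the paper's right-module convention the tensor products should be written $M\otimes_A\Tr(d)$), which does not affect correctness.
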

\noindent Readers can easily formulate the analogous statements for $\Ec$. 

Sometimes it is convenient to view presentations as elements in the homotopy category $K^b(\proj A)$ of bounded complexes of projective representations of $A$.
Our convention is that $P_-$ sits in degree $-1$ and $P_+$ sits in degree $0$.
Then the $\delta$-vector of a presentation is just the corresponding element in the Grothendieck group of $K^b(\proj A)$.
Given any two presentation $d_1$ and $d_2$, we also define 
$$\E(d_1,d_2) = \Hom_{K^b(\proj A)} (d_1,d_2[1]).$$
It turns out (\cite{DF}) that
	$$\E(d_1,d_2) = \E(d_1,\Coker(d_2))\ \text{ and }\ \Ec(\dc_1,\dc_2) = \Ec(\Ker(\dc_1),\dc_2).$$

For any representation $M$, we denote by $d_M$ (resp. $\dc_M$) its minimal projective (resp. injective) presentation.
Given any two representation $M$ and $N$, we define 
$$\E(M,N):= \E(d_M,N)\ \text{ and }\ \Ec(M,N):= \Ec(M,\dc_N).$$
\subsection{General Presentations}
By a {\em general presentation} in $\Hom(P_-,P_+)$, we mean a presentation in some open (and thus dense) subset of $\Hom(P_-,P_+)$.
Any $\delta\in \mb{Z}^{Q_0}$ can be written as $\delta = \delta_+ - \delta_-$ where $\delta_+=\max(\delta,0)$ and $\delta_- = \max(-\delta,0)$. We put 
$$\PHom(\delta):=\Hom(P(\delta_-),P(\delta_+)).$$
It is well known that a general presentation in $\Hom(P(\beta_-),P(\beta_+))$ is homotopy equivalent to a general presentation in $\PHom(\beta_+-\beta_-)$ for any $\beta_-,\beta_+ \in\mb{Z}_{\geq 0}^{Q_0}$. 

There is some open subset $U$ of $\PHom(\delta)$ such that for any $d\in U$ we have
\begin{enumerate}
	\item $\Hom(d,M)$ has constant dimension for a fixed $M\in \rep A$.
	\item $\Coker(d)$ has constant subrepresentation lattice.
\end{enumerate}
Note that (1) implies that $\E(d,M)$ has constant dimension as well. 
It follows from (1) or (2) that $\Coker(d)$ has a constant dimension vector $\alpha$.
In fact, we can ask $\Coker(d)$ lie in a fixed irreducible component of $\rep_\alpha(A)$ (see \cite[Section 2]{DF}). 
We denote by $\Coker(\delta)$ the cokernel of a general presentation in $\PHom(\delta)$.
Similarly we can define the injective presentation space $\IHom(\dtc)$, 
and denote by $\Ker(\dtc)$ the kernel of a general element there.
\begin{definition} \label{D:home} We denote by $\hom(\delta,M)$ and $\e(\delta,M)$ the value of 
	$\hom(d,M)$ and $\e(d,M)$ for a general presentation $d\in \PHom(\delta)$.
	$\hom(M,\dtc)$ and $\ec(M,\dtc)$ are defined analogously.
\end{definition}
\noindent Recall the isomorphism $\Hom(P_i,P_j) \cong \Hom(I_i, I_j) = \Hom(\nu P_i, \nu P_j)$.
If $d$ is general in $\PHom(\delta)$, then $\nu d$ is general in $\IHom(-\delta)$.
We obtain the obvious relations  
\begin{equation}\label{eq:hedual} \hom(\delta,M)= \ec(M,-\delta)\ \text{ and }\ \e(\delta,M)= \hom(M,-\delta).
\end{equation}  

\begin{definition}[\cite{DF}] A weight vector $\delta\in\mathbb{Z}^{Q_0}$ is called {\em indecomposable} if a general presentation in $\PHom(\delta)$ is indecomposable. We call $\delta=\bigoplus_{i=1}^s \delta_i$ a {\em decomposition} of $\delta$ if a general element $d$ in $\PHom(\delta)$ decompose into $\bigoplus_{i=1}^s d_i$ with each $d_i\in \PHom(\delta_i)$.
It is called the {\em canonical decomposition} of $\delta$ if each $d_i$ is indecomposable.
\end{definition}

The function $\dim \E(-,-)$ is upper semi-continuous on $\PHom(\delta_1)\times \PHom(\delta_2)$.
We denote by $\e(\delta_1,\delta_2)$ the minimal value of $\dim \E(-,-)$ on $\PHom(\delta_1)\times \PHom(\delta_2)$.
One of the motivation of introducing the space $\E$ is the following theorem.
\begin{theorem}[{\cite[Theorem 4.4]{DF}}] \label{T:CDPHom} $\delta=\delta_1\oplus \delta_2\oplus\cdots\oplus\delta_s$ is the canonical decomposition of $\delta$ if and only if $\delta_1,\cdots,\delta_s$ are indecomposable, and $\e(\delta_i,\delta_j)=0$ for $i\neq j$.
\end{theorem}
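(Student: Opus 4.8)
\emph{Approach.} The plan is to imitate the proofs of Kac's canonical decomposition theorem for quivers (Kac, Schofield), turning the statement into a single dimension count on the irreducible affine space $\PHom(\delta)$ under the conjugation action of $G:=\Aut P(\delta_-)\times\Aut P(\delta_+)$, $(g_-,g_+)\cdot d=g_+dg_-^{-1}$. The key input I will use is the presentation version of the classical relation between the codimension of an orbit and self-extensions: for every $d\in\PHom(\delta)$,
$$\dim\PHom(\delta)-\dim(G\cdot d)=\dim\E(d,d).$$
Unlike Voigt's lemma for representations of $kQ/I$, this should be an honest \emph{equality}: $\PHom(\delta)=\Hom(P(\delta_-),P(\delta_+))$ is smooth (it is a vector space), so the codimension of the orbit equals the dimension of the normal space $\Hom(P(\delta_-),P(\delta_+))/\{f_+d-df_-\}$, and one checks that this cokernel is canonically $\E(d,d)$ — an elementary homological computation from the definition of $\E$ and Lemma \ref{L:E}, which in particular exhibits $\E(d,d)$ as a finite-dimensional $\Hom$-type space, so that no higher obstruction intervenes. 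I will also use the upper semicontinuity of $\dim\E(-,-)$ and Krull--Schmidt for $K^b(\proj A)$.

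\emph{The master count.} I would pick general presentations $d_i\in\PHom(\delta_i)$; since each $\PHom(\delta_i)$ is irreducible, finitely many dense-open conditions can be imposed simultaneously, so I may assume $\dim\E(d_i,d_j)=\e(\delta_i,\delta_j)$ for all $i\ne j$ (this is the general value of $\dim\E(-,-)$ on the irreducible product $\PHom(\delta_i)\times\PHom(\delta_j)$, attained at the general pair), and, whenever $\delta_i$ is assumed indecomposable, that $d_i$ is indecomposable. Embedding $\mathcal D:=\prod_i\PHom(\delta_i)$ block-diagonally into $\PHom(\delta)$, I consider $\psi\colon G\times\mathcal D\to\PHom(\delta)$, $(g,d')\mapsto g\cdot d'$. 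To find $\dim\overline{\Img\psi}$ I would take $d=\bigoplus_i d_i\in\mathcal D$ general and analyze the fibre $\psi^{-1}(d)$: its projection to $\mathcal D$ is $\{d'\in\mathcal D: d'\cong d\}$, which by Krull--Schmidt is a union of products of orbits whose top-dimensional piece is $\prod_i G_i\cdot d_i$ (with $G_i:=\Aut P(\delta_{i,-})\times\Aut P(\delta_{i,+})$), of dimension $\sum_i(\dim\PHom(\delta_i)-\dim\E(d_i,d_i))$; and over each such $d'$ the fibre is a coset of $\operatorname{Stab}_G(d)$. Feeding the orbit/self-extension equality in for $\delta$ and for each $\delta_i$, and using additivity $\dim\E(d,d)=\sum_{i,j}\dim\E(d_i,d_j)$, the diagonal terms cancel and I obtain the master formula
$$\dim\overline{\Img\psi}=\dim\PHom(\delta)-\sum_{i\ne j}\dim\E(d_i,d_j)=\dim\PHom(\delta)-\sum_{i\ne j}\e(\delta_i,\delta_j).$$

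\emph{Both implications.} Suppose $\delta_1,\dots,\delta_s$ are indecomposable with $\e(\delta_i,\delta_j)=0$ for $i\ne j$. Then the master formula gives $\dim\overline{\Img\psi}=\dim\PHom(\delta)$, so $\psi$ is dominant (as $\PHom(\delta)$ is irreducible), hence a general $d'\in\PHom(\delta)$ is $G$-conjugate to a block-diagonal $\bigoplus_i e_i$, $e_i\in\PHom(\delta_i)$. Rerunning the count with one factor of $\mathcal D$ replaced by its (proper, $G_i$-stable) decomposable locus shows the corresponding subset of $\PHom(\delta)$ has strictly smaller dimension, so each $e_i$ is in fact general, hence indecomposable; thus $\delta=\bigoplus\delta_i$ is the canonical decomposition. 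Conversely, if $\delta=\bigoplus\delta_i$ is the canonical decomposition, then each $\delta_i$ is indecomposable by definition, and a general presentation of weight $\delta$ is by definition $G$-conjugate to a block-diagonal element of $\mathcal D$, so $\Img\psi$ is dense; the master formula then forces $\sum_{i\ne j}\e(\delta_i,\delta_j)=0$, i.e. $\e(\delta_i,\delta_j)=0$ for all $i\ne j$.

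\emph{Main obstacle.} The real work is in the fibre analysis of $\psi$, and the single most load-bearing point is the orbit/self-extension equality $\dim\PHom(\delta)-\dim(G\cdot d)=\dim\E(d,d)$ used uniformly in $d$: this is where the argument genuinely relies on $\PHom(\delta)$ being smooth and on the concrete description of $\E$ in Lemma \ref{L:E} to guarantee the equality is exact rather than merely Voigt's inequality. The most easily overlooked point is that, after discarding trivial direct summands $P\xrightarrow{1}P$, $P\to 0$, $0\to P$ (i.e. after reducing $\delta$), homotopy equivalence of presentations in $\PHom(\delta)$ coincides with $G$-conjugacy, which is what makes $\{d'\in\mathcal D: d'\cong d\}$ a union of $G_i$-orbit products as used above. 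The remaining bookkeeping — simultaneity of the genericity conditions, and the estimate that "bad'' loci in $\Img\psi$ have strictly smaller dimension — is routine. If one wishes to avoid the dimension count altogether, Schofield's inductive strategy (split off one indecomposable summand at a time, using $\e(\delta_1,\delta_2)=\e(\delta_2,\delta_1)=0$ to force the general presentation of weight $\delta_1+\delta_2$ to have $d_1$ as a summand with complementary weight $\delta_2$) is an alternative, but it is longer.
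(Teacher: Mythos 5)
This statement is not proved in the present paper: it is quoted verbatim from \cite[Theorem 4.4]{DF}, so there is no in-paper argument to compare against. Your reconstruction is the standard Kac--Schofield dimension count transported to presentation spaces, and it is sound: the one load-bearing input, the exact equality $\dim\PHom(\delta)-\dim(G\cdot d)=\dim\E(d,d)$, is available because orbits are smooth, the tangent space to the orbit at $d$ is $\{f_+d-df_-\}$, and its cokernel is by definition $\Hom_{K^b(\proj A)}(d,d[1])=\E(d,d)$ --- a fact the paper itself records in Section 1.3. This matches the argument in the cited source, so nothing further is needed beyond the bookkeeping you already flag (semicontinuity of orbit dimension to bound the non-generic components of the fibre, and the identification of $G$-conjugacy with isomorphism of two-term complexes with fixed terms).
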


\begin{definition}[\cite{DF}] A weight vector $\delta\in\mathbb{Z}^{Q_0}$ is called {\em real} if $\e(d,d)=0$ for some $d\in\PHom(\delta)$; is called {\em tame} if it is not real but $\e(\delta,\delta)=0$; is called {\em wild} if $\e(\delta,\delta)>0$.
\end{definition}

\noindent If an indecomposable $\delta$ is real or tame, then by Theorem \ref{T:CDPHom}
the canonical decomposition of $m\delta$ is a sum of $m$ copies of $\delta$ for any $m\in\mathbb{N}$. In particular, $\delta$ is indivisible. 

\subsection{\texorpdfstring{$\E$}{E}-rigid Presentations}
The group $\Aut_A(P_-)\times\Aut_A(P_+)$ acts on $\Hom(P_-,P_+)$ by 
$(g_-,g_+)d =g_+ d g_-^{-1}$.
The space $\E(d,d)$ can be interpreted as the normal space to the orbit of $d$ in $\Hom(P_-, P_+)$.
\begin{definition}
A presentation $d$ is called {\em rigid} \footnote{In \cite{KV} this is called {\em presilting}, which is defined for any complex in $K^b(\proj A)$.} if $\E(d,d)=0$ ($\Ec(\dc,\dc)=0$ for an injective presentation $\dc$).
A representation $M$ is called {\em $\E$-rigid} \footnote{Due to the equation $\E(M,M)=\Hom(M,\tau M)^*$, it is also called {\em $\tau$-rigid} in \cite{AIR}.} (resp. $\Ec$-rigid) if $\E(M,M)=0$ (resp. $\Ec(M,M)=0$).
\end{definition}
\noindent So the orbit of such a presentation is dense in its ambient space.
In this case the weight vector of $d$ must be real.
The dual of Lemma \ref{L:E}.(3) says that $\Ec(M,\dc) \cong \Hom(\Coker(\nu^{-1} \dc) ,M)^*$.
So we have that 
\begin{equation}\label{eq:eec} \E(d,d) \cong \Hom(\Coker(d),\Ker(\nu d))^* \cong \Ec(\nu d, \nu d). 
\end{equation}
This implies that $d$ is rigid if and only if $\nu d$ is rigid.

One can always complete a rigid presentation $d$ to a {\em maximal rigid} one $\tilde{d}$, in the sense that 
$\E(\tilde{d}\oplus d', \tilde{d}\oplus d') \neq 0$ for any indecomposable $d'\notin \ind(d)$.
Here we denote by $\ind(d)$ the set of nonisomorphic indecomposable direct summands of $d$.
The maximal rigid presentation can be characterized as follows.
\begin{theorem}[{\cite[Theorem 5.4]{DF}}, \cite{AIR}] \label{T:maxrigid} The following are equivalent for a rigid presentation $d$. \begin{enumerate}
\item	$d$ is maximal rigid; 
\item	$|\ind(d)|=|Q_0|$; 
\item 	$\ind(d)$ generates $K^b(\proj A)$. 
\end{enumerate}
\end{theorem}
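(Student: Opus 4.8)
The plan is to view conditions (1)--(3) as statements about $d$, regarded as an object of $K^b(\proj A)$ concentrated in degrees $-1$ and $0$: that it is maximal among rigid such objects, that it has exactly $n:=|Q_0|$ indecomposable summands, and that it is a silting object. I would then reduce their equivalence to two inputs. Throughout I would use freely that the class of a presentation in the Grothendieck group $K_0(K^b(\proj A))$ is its $\delta$-vector, that this group is free abelian of rank $n$ on the $[P_v]$, and that $A=\bigoplus_v P_v$ (a stalk complex in degree $0$) is a classical generator of $K^b(\proj A)$. Hence $\ind(d)$ generates $K^b(\proj A)$ as a triangulated category precisely when its thick closure contains every $P_v$; and when it generates, the $\delta$-vectors of the summands of $d$ generate $\mb{Z}^{Q_0}$, so $|\ind(d)|\geq n$.

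The first input is a \emph{linear independence lemma}: if $d$ is rigid with pairwise non-isomorphic indecomposable summands $d_1,\dots,d_r$, then $\delta_{d_1},\dots,\delta_{d_r}$ are $\mb{Z}$-linearly independent, whence $r\leq n$. To prove it, from a relation $\sum_i a_i\delta_{d_i}=0$ I would set $e=\bigoplus_{a_i>0}a_id_i$ and $f=\bigoplus_{a_i<0}(-a_i)d_i$; biadditivity of $\E$ together with $\E(d,d)=0$ forces $\E(d_i,d_j)=0$ for all $i,j$, so $e$ and $f$ are again rigid and $\delta_e=\delta_f$. Padding $e$ and $f$ by contractible complexes $(P_v\xrightarrow{\ 1\ }P_v)$ — which alters neither their class in $K^b(\proj A)$ nor their $\E$-self-spaces — places both in a common affine space $V=\Hom(P(\beta_-),P(\beta_+))$ with $\beta_+-\beta_-=\delta_e$. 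Since $\E(-,-)$ computes the codimension of the $\Aut(P(\beta_-))\times\Aut(P(\beta_+))$-orbit and both self-$\E$'s vanish, both orbits are dense, hence open, in the irreducible variety $V$, hence equal; so $e\cong f$ in $K^b(\proj A)$. But $e$ and $f$ have disjoint sets of indecomposable summands, so Krull--Schmidt forces $e=f=0$ and therefore all $a_i=0$.

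The second input — and the step I expect to be the real obstacle — is the \emph{Bongartz-type completion} already recorded above: every rigid presentation is a direct summand of a rigid presentation whose indecomposable summands generate $K^b(\proj A)$. The argument is the familiar one from silting theory (\cite{DF}, and \cite{AIR} in the module language): if $\ind(d)$ does not yet generate, pick an indecomposable projective $P_v$ outside the thick closure of $\ind(d)$, take a minimal right approximation of $P_v$ by $\operatorname{add}(d)$, and show its cocone contributes a new indecomposable summand $d'$ with $d\oplus d'$ still rigid; the linear independence lemma bounds the number of summands, so the procedure stops, and when it stops the thick closure contains every $P_v$. Verifying that this cocone is rigid together with $d$ is where essentially all the work sits (the standard approximation/$\E$-vanishing bookkeeping), and in writing up I would invoke it from \cite{DF,AIR} rather than reprove it.

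Granting these, the equivalences follow by bookkeeping. For $(3)\Rightarrow(2)$: generating gives $|\ind(d)|\geq n$, while the lemma applied to the rigid $d$ gives $|\ind(d)|\leq n$. For $(2)\Rightarrow(1)$: complete the rigid $d$ with $|\ind(d)|=n$ to a maximal rigid $\tilde d\supseteq d$; rigidity of $\tilde d$ and the lemma give $n=|\ind(d)|\leq|\ind(\tilde d)|\leq n$, so $\ind(\tilde d)=\ind(d)$ and $d$ is already maximal rigid. For $(1)\Rightarrow(3)$: if the maximal rigid $d$ did not generate, the completion step would produce an indecomposable $d'\notin\ind(d)$ with $d\oplus d'$ rigid, contradicting maximality; hence $\ind(d)$ generates. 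This closes the cycle $(1)\Rightarrow(3)\Rightarrow(2)\Rightarrow(1)$.
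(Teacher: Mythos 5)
The paper itself does not prove this statement --- it is quoted from \cite[Theorem 5.4]{DF} and \cite{AIR} --- so there is no internal proof to compare against; I can only assess your reconstruction. Your linear independence lemma, proved by showing that two rigid presentations with equal $\delta$-vectors have dense, hence equal, orbits in a common $\Hom(P(\beta_-),P(\beta_+))$ after padding by contractibles, is exactly the Derksen--Fei argument and is correct (modulo the standing convention that $\ind(d)$ excludes the contractible summands $P_v\xrightarrow{\ 1\ }P_v$, without which the count in (2) would be meaningless anyway). The implications $(2)\Rightarrow(1)$ and $(1)\Rightarrow(3)$ are fine granting the Bongartz-type completion you import from \cite{DF,AIR}.

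The genuine gap is in $(3)\Rightarrow(2)$, namely the inequality $|\ind(d)|\geq n$. You obtain it from the principle, stated in your opening paragraph as something to be ``used freely,'' that if $\ind(d)$ generates $K^b(\proj A)$ as a thick subcategory then the $\delta_{d_i}$ generate $K_0(K^b(\proj A))\cong\mb{Z}^{Q_0}$. That principle is false for arbitrary generating collections: the triangulated subcategory built from given objects by shifts and cones (no summands) has $K_0$-image exactly the subgroup generated by their classes, but closing up under direct summands destroys all $K_0$-information --- every object $X$ is a summand of $X\oplus X[1]$, whose class is $0$, and by Thomason's classification the dense triangulated subcategories of $K^b(\proj A)$ correspond to \emph{arbitrary} subgroups of $K_0$. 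So thick generation by itself bounds nothing. The conclusion you want is true precisely because $d$ is rigid: for a presilting $d$ that generates, one shows that every $P_v$ lies in $\op{add}(d)\ast\op{add}(d[1])\ast\cdots\ast\op{add}(d[\ell])$ for some $\ell$, i.e.\ admits a finite resolution by iterated cones from $\op{add}(d)$ with no summand-taking, and only then does $[P_v]\in\langle\delta_{d_1},\dots,\delta_{d_r}\rangle$ follow. This ``silting resolution'' (Aihara--Iyama) is a second substantive input on a par with the Bongartz completion, not a formal consequence of generation, and it is where the count $\geq n$ actually comes from in \cite{DF} and \cite{AIR}. Without it, or an equivalent replacement, your cycle $(1)\Rightarrow(3)\Rightarrow(2)\Rightarrow(1)$ does not close.
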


\begin{definition}
If $d$ is maximal rigid, then we call $\ind(d)$ a {\em cluster} of presentations.
We also call the weight vectors of presentations in $\ind(d)$ a {\em cluster} of $\delta$-vectors.
A maximal set of indecomposable presentations $\{d_1,\dots,d_r\}$ satisfying $\e(d_i,d_j)= 0$ for $i\neq j$ is called a {\em generalized cluster} of presentations. 
A maximal set of indecomposable weight vectors $\{\delta_1,\dots,\delta_r\}$ satisfying $\e(\delta_i,\delta_j)= 0$ for $i\neq j$ is called a {\em generalized cluster} of $\delta$-vectors.
\end{definition}

\begin{proposition}[{\cite[Proposition 5.7]{DF}}] \label{P:+-} If a rigid presentation $d$ is {almost complete}, that is, $|\ind(d)|=|Q_0|-1$,
	then it has exactly two complements $d_-$ and $d_+$. They are related by the triangle $d_+\to d'\to d_-^e\to d_+[1]$ and $d_+^e\to d''\to d_-\to d_+^e[1]$, where $e=\dim\E(d_-,d_+)$. Moreover, both $d'\oplus d_-$ and $d''\oplus d_+$ are rigid and $\E(d_+,d_-)=\E(d_+,d')=\E(d'',d_-)=0$. In particular, $e=1$ if and only if  $d'=d''$ belongs to the subcategory generated by $\ind(d)$.
\end{proposition}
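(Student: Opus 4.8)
The natural arena for this statement is the homotopy category $K^b(\proj A)$, a $\Hom$-finite Krull--Schmidt triangulated category in which a rigid presentation is precisely a (two-term) presilting complex and, by Theorem~\ref{T:maxrigid}, a maximal rigid presentation is precisely a (two-term) silting complex, recognizable by the numerical condition $|\ind(d)|=|Q_0|$. My plan is to run the silting-mutation / $\tau$-tilting-mutation machinery of \cite{AIR} inside $K^b(\proj A)$, transporting the $\E$-invariant $\E(d_1,d_2)=\Hom_{K^b(\proj A)}(d_1,d_2[1])$ through the long exact sequences of Lemma~\ref{L:E}. Write $\mathsf{add}(d)$ for the additive closure of $\ind(d)$; recall $\E(d,d)=0$.

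By the completion statement recalled above, the almost complete $d$ has at least one complement; fix one, call it $d_+$, so $d\oplus d_+$ is maximal rigid. To produce a second, I would mutate at $d_+$: take a minimal left $\mathsf{add}(d)$-approximation $d_+\to d'$ and complete it to a triangle $d_+\to d'\to d_-\to d_+[1]$. Feeding this triangle into the $\E$-long-exact-sequences and using minimality of the approximation, one checks that $d_-$ is indecomposable, $d_-\not\cong d_+$, $d_-\notin\ind(d)$, that $d\oplus d_-$ is again maximal rigid, and that $\E(d_+,d_-)=0$. Passing to the universal extension of $d_-^{\,e}$ by $d_+$, where $e:=\dim\E(d_-,d_+)$, recasts this as $d_+\to d'\to d_-^{\,e}\to d_+[1]$, and dually one obtains $d_+^{\,e}\to d''\to d_-\to d_+^{\,e}[1]$; the universal properties then give $\E(d_-,d')=0$ and $\E(d'',d_+)=0$.

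The substantive point is that $d_+$ and $d_-$ are the \emph{only} complements: if $d\oplus X$ is maximal rigid with $X$ indecomposable and $X\notin\ind(d)$, then $X\in\{d_+,d_-\}$. The key input is \emph{comparability} --- for distinct complements $X,X'$ one cannot have both $\E(X,X')\neq 0$ and $\E(X',X)\neq 0$, since otherwise a diagram chase with the approximation triangles exhibits a rigid presentation with more than $|Q_0|$ nonisomorphic indecomposable summands, contradicting Theorem~\ref{T:maxrigid}. Granting comparability, say $\E(X,d_+)=0$; then the triangle built from a minimal left $\mathsf{add}(d)$-approximation of $X$ has its cone essentially determined, which forces $X\in\{d_+,d_-\}$. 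I expect comparability to be the main obstacle: it is where the lattice-theoretic combinatorics of \cite{AIR} genuinely enters, and the bookkeeping becomes heavier when $e\geq 2$, in which case $d'\neq d''$ and neither lies in $\mathsf{add}(d)$. Uniqueness also yields the last assertion: comparing the two exchange triangles in $K_0(K^b(\proj A))=\mb{Z}^{Q_0}$ gives $[d']=\delta_++e\delta_-$ and $[d'']=e\delta_++\delta_-$, and since an indecomposable real weight determines its general presentation up to isomorphism (so $d_+\not\cong d_-$ forces $\delta_+\neq\delta_-$), the identity $d'=d''$ is equivalent to $e=1$; when $e=1$ both triangles are built on the up-to-scalar unique nonzero connecting map $d_-\to d_+[1]$, so $d'\cong d''$, and a short comparison of expansions in the two silting bases $\ind(d)\cup\{d_+\}$ and $\ind(d)\cup\{d_-\}$ places $d'$ in $\mathsf{add}(d)$.

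The remaining rigidity and vanishing statements are then formal. With the labeling making $\E(d_+,d_-)=0$, I would feed the two exchange triangles into the long exact sequences of Lemma~\ref{L:E}, using $\E(d_\pm,d_\pm)=0$, $\E(d,d_\pm)=0$ and $\E(d_+,d_-)=0$: applying $\Hom(-,d_-)$, $\Hom(d_-,-)$, $\Hom(d_+,-)$ and $\Hom(-,d')$ to $d_+\to d'\to d_-^{\,e}\to d_+[1]$ yields $\E(d',d')=\E(d',d_-)=\E(d_-,d')=0$, hence $d'\oplus d_-$ is rigid, as well as $\E(d_+,d')=0$; the analogous computations on $d_+^{\,e}\to d''\to d_-\to d_+^{\,e}[1]$, with a little care about multiplicities when $e\geq 2$, give that $d''\oplus d_+$ is rigid and that $\E(d'',d_-)=0$. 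This would complete the proof.
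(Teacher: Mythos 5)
This proposition appears in the paper's review section and is attributed to \cite[Proposition 5.7]{DF} and \cite{AIR}; the paper itself gives no proof, so there is nothing in-text to compare your argument against line by line, and I can only judge the proposal on its merits. Your framework is the right one (two-term presilting objects in $K^b(\proj A)$, mutation via minimal $\op{add}(d)$-approximations, the long exact sequences for $\E$), and the ``formal'' part at the end --- deducing rigidity of $d'\oplus d_-$ and $d''\oplus d_+$ and the vanishings $\E(d_+,d')=\E(d'',d_-)=0$ once the two triangles and the labelling $\E(d_+,d_-)=0$ are in place --- does go through as you indicate. But the heart of the proposition, that there are \emph{exactly} two complements, is not proved. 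Your reduction of uniqueness to ``comparability'' (for distinct complements $X,X'$ at least one of $\E(X,X')$, $\E(X',X)$ vanishes) replaces one nontrivial statement by another of essentially the same depth, and the proposed justification --- that otherwise a diagram chase would exhibit a rigid presentation with more than $|Q_0|$ indecomposable summands --- is not an argument: rigidity of a direct sum requires \emph{all} cross-terms to vanish, so the non-vanishing of both $\E(X,X')$ and $\E(X',X)$ does not produce any new rigid object. In \cite{AIR} this step is the substantive Theorem 2.18, proved via the bijection with functorially finite torsion classes and the Bongartz-type completion and cocompletion; it is likewise the main content of \cite[Proposition 5.7]{DF}. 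Even granting comparability, the claim that the cone of the minimal approximation of $X$ is ``essentially determined'' needs an argument (why is that cone indecomposable, and why does $\E(X,d_+)=0$ force $X$ into that particular triangle?).

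A second, more local issue: you build $d'$ as the target of the minimal left $\op{add}(d)$-approximation of $d_+$, whose cone is a single copy of $d_-$, whereas the proposition's $d'$ is the middle term of the \emph{universal-extension} triangle $d_+\to d'\to d_-^e\to d_+[1]$. For $e\geq 2$ these are different objects, and only the latter satisfies the stated properties: for instance $\E(d_-,d')=0$ uses that $\Hom(d_-,d_-^e)\to\E(d_-,d_+)$ is surjective, which holds for the universal connecting map but not in general. You acknowledge the distinction at the end, but your verification that $d_-$ is an indecomposable complement runs on one triangle while the rigidity statements concern the other, so the two constructions must be reconciled explicitly. The $K_0$ computation showing $d'=d''\Rightarrow e=1$, and the converse when $e=1$ (both triangles built on the up-to-scalar unique map $d_-\to d_+[1]$, with $d'\in\op{add}(d)$ coming from the approximation), are fine.
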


\begin{definition} We call the above pair $(d_-,d_+)$ an {\em exchange pair} of presentations.
	If $e=1$, the exchange pair is called {\em regular}.
	The two clusters $\{d_-\}\cup \ind(d)$ and $\{d_+\}\cup \ind(d)$ are called {\em adjacent} to each other.
	A cluster $\{d_1,\dots,d_n\}$ is called {\em regular} if each $\{d_i, d_i'\}$ can be ordered to be a regular exchange pair, where $d_i'$ appears in the adjacent cluster $(d_1,\dots,d_i',\dots,d_n)$.
\end{definition}

\noindent An open problem posed in \cite{DF} is how to characterize algebras for which all clusters are regular.

\section{Tropical \texorpdfstring{$F$}{F}-polynomials and General Presentations} \label{S:Ftrop}
\subsection{Tropical \texorpdfstring{$F$}{F}-polynomials} \label{ss:Ftrop}
We keep assuming that $A=kQ/I$.
Throughout we identify the Grothendieck group $K_0(\rep A)$ with $\mb{Z}^{Q_0}$.
Let $M$ be a finite-dimensional representation of $A$. 

\begin{definition}\label{D:tropf} The {\em tropical $F$-polynomial} $f_M$ of a representation $M$ is the function $(\mb{Z}^{Q_0})^* \to \mb{Z}_{\geq 0}$ defined by
	$$\delta \mapsto \max_{L\hookrightarrow M}{\delta(\dv L)}.$$
	The {\em dual} tropical $F$-polynomial $\fc_M$ of a representation $M$ is the function $(\mb{Z}^{Q_0})^* \to \mb{Z}_{\geq 0}$ defined by
	$$\delta \mapsto \max_{M\twoheadrightarrow N}{\delta(\dv N)}.$$
\end{definition}
\noindent Clearly $f_M$ and $\fc_M$ are related by $f_M(\delta)-\fc_M(-\delta)= \delta(\dv M)$.
The definition of $f_M$ is motivated by the $F$-polynomial of $M$ defined in \cite{DWZ2}
	$$F_M(\b{y}) = \sum_{\gamma} \chi(\Gr_\gamma(M)) \b{y}^\gamma,$$
where $\Gr_\gamma(M)$ is the variety parametrizing the $\gamma$-dimensional subrepresentations of $M$, and $\chi(-)$ is the topological Euler characteristic.
In general $\chi(\Gr_\gamma(M))$ may not be a positive number. If the $F$-polynomial $F_M$ has non-negative coefficients, then the tropical $F$-polynomial $f_M$ is the usual tropicalization of $F_M$.

\begin{definition}
	The {\em Newton polytope ${\N}(M)$ of a representation} $M$ is the convex hull of
	$$\{ \dv L \mid L\hookrightarrow M \}$$
	in $\mb{R}^{Q_0}$.	The {\em dual} Newton polytope $\check{\N}(M)$ of a representation $M$ is the convex hull of
	$$\{ \dv N \mid M\twoheadrightarrow N \}$$
	in $\mb{R}^{Q_0}$.	
\end{definition}

\begin{remark} We have two remarks. \begin{enumerate} 
	\item The tropical $F$-polynomial $f_M$ is completed determined by the Newton polytope of $M$.
	\item It is shown in \cite{Fc} that the Newton polytope of $M$ is the same as the usual Newton polytope of the polynomial $F_M$.
	\end{enumerate}
\end{remark}

\begin{lemma}[{\cite[Proposition 3.2]{DWZ2}}] \label{L:directsum}  $F_{M\oplus N}= F_MF_N$ for any two representations $M$ and $N$.
	In particular, we have that $f_{M\oplus N} = f_{M} + f_{N}$.
\end{lemma}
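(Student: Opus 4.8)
The plan is to handle the polynomial identity and its tropical shadow separately. For $F_{M\oplus N}=F_MF_N$ I would recall the proof of \cite[Proposition 3.2]{DWZ2}: comparing the coefficients of $\b{y}^\gamma$ on both sides, the claim amounts to
\[
\chi\bigl(\Gr_\gamma(M\oplus N)\bigr)=\sum_{\gamma_1+\gamma_2=\gamma}\chi\bigl(\Gr_{\gamma_1}(M)\bigr)\,\chi\bigl(\Gr_{\gamma_2}(N)\bigr).
\]
To see this, let the torus $k^\times$ act on $M\oplus N$ by scaling the $N$-summand; each scalar acts as an $A$-module automorphism, so $k^\times$ acts algebraically on the projective variety $\Gr_\gamma(M\oplus N)$. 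A subrepresentation $L$ is fixed exactly when $L=(L\cap M)\oplus(L\cap N)$, so the fixed locus is $\bigsqcup_{\gamma_1+\gamma_2=\gamma}\Gr_{\gamma_1}(M)\times\Gr_{\gamma_2}(N)$. Since the topological Euler characteristic of a variety carrying a $k^\times$-action equals that of its fixed locus, and $\chi$ is multiplicative on products, the displayed identity follows, hence $F_{M\oplus N}=F_MF_N$.

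For the ``in particular'' I would argue directly from Definition \ref{D:tropf}, since positivity of the coefficients of $F_M$ is not assumed and so one cannot simply tropicalize the product. Fix $\delta$. If $L_1\hookrightarrow M$ and $L_2\hookrightarrow N$, then $L_1\oplus L_2\hookrightarrow M\oplus N$ with $\dv(L_1\oplus L_2)=\dv L_1+\dv L_2$, hence $\delta(\dv(L_1\oplus L_2))=\delta(\dv L_1)+\delta(\dv L_2)$; maximizing the two summands independently gives $f_{M\oplus N}(\delta)\ge f_M(\delta)+f_N(\delta)$. Conversely, given $L\hookrightarrow M\oplus N$, let $\pi\colon M\oplus N\to M$ be the projection; then $L':=\pi(L)$ is a subrepresentation of $M$, while $L\cap N=\ker(\pi|_L)$ is a subrepresentation of $N$, and the exact sequence $0\to L\cap N\to L\to L'\to 0$ gives $\delta(\dv L)=\delta(\dv L')+\delta(\dv(L\cap N))\le f_M(\delta)+f_N(\delta)$; taking the maximum over $L$ yields the reverse inequality, so $f_{M\oplus N}=f_M+f_N$.

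I do not expect a real obstacle here. The only substantive input is the $k^\times$-localization principle for Euler characteristics of (possibly singular) projective varieties, and in any case $F_{M\oplus N}=F_MF_N$ may be quoted verbatim from \cite[Proposition 3.2]{DWZ2}; the tropical consequence, which is the form actually used in the sequel, is then the short elementary computation above.
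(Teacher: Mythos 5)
Your proposal is correct. The paper itself offers no proof of either assertion: the identity $F_{M\oplus N}=F_MF_N$ is quoted verbatim from \cite[Proposition 3.2]{DWZ2}, and the tropical consequence is left implicit. Your localization argument for the first part is exactly the standard proof in \cite{DWZ2} (the $k^\times$-action scaling the $N$-summand, fixed locus $\bigsqcup_{\gamma_1+\gamma_2=\gamma}\Gr_{\gamma_1}(M)\times\Gr_{\gamma_2}(N)$, additivity and multiplicativity of $\chi$), so nothing is gained or lost there. Where your write-up genuinely adds value is the second part: you correctly observe that $f_M$ is \emph{not} defined as the tropicalization of $F_M$ (the coefficients $\chi(\Gr_\gamma(M))$ may be negative), so $f_{M\oplus N}=f_M+f_N$ does not follow formally from the product formula. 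The paper's implicit route would be through the nontrivial fact (quoted from \cite{Fc}) that $\N(M)$ coincides with the Newton polytope of $F_M$, combined with the Minkowski-sum behaviour of Newton polytopes under multiplication and the additivity of support functions. Your direct argument --- bounding $\delta(\dv L)$ for $L\hookrightarrow M\oplus N$ via the exact sequence $0\to L\cap N\to L\to \pi(L)\to 0$, and the reverse inequality via $L_1\oplus L_2$ --- is elementary, self-contained, and independent of both the $F$-polynomial identity and the results of \cite{Fc}; it is arguably the cleaner way to justify the statement that is actually used in the sequel.
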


When paired with a dimension vector or evaluated by some $f_M$, 
a weight $\delta$ is viewed as an element in $(\mb{Z}^{Q_0})^*$ via the usual dot product.
It follows from \eqref{eq:longexact} that for any presentation $d$ of weight $\delta$,
\begin{align} \label{eq:heform} \delta(\dv M) &= \hom(d,M) - \e(d,M);\\
\label{eq:heformdual} \check{\delta} (\dv M) &= \hom(M,\dc) - \ec(M,\dc).
\end{align}
Let $M\to N$ be a homomorphism. We fix some general presentation $d$ of weight $\delta$.
Throughout we use the notation $\Hom(\delta,M)\to \Hom(\delta,N)$ for the induced map $\Hom(d,M)\to \Hom(d,N)$.
The notation $\E(\delta,M)\to \E(\delta,N)$ has the similar meaning.

\begin{lemma} \label{L:HomE} We have the following inequalities for any representation $M$ and any $\delta\in \mb{Z}^{Q_0}$
	\begin{align*}
	{f}_M(\delta) &\leq \hom(\delta,M), & \fc_M(-\delta) &\leq {\e}(\delta,M);\\
	\fc_M(\check{\delta}) &\leq \hom(M,\check{\delta}), & {f}_M(-\check{\delta}) &\leq \ec(M,\check{\delta}).
	\end{align*}
\end{lemma}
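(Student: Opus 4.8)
The plan is to prove the four inequalities one at a time, but really only the first one requires work: the other three follow by duality. For the first inequality $f_M(\delta)\le \hom(\delta,M)$, fix a subrepresentation $L\hookrightarrow M$ and a general presentation $d: P_-\to P_+$ of weight $\delta$. The point is to compare the defining data of $\hom(d,M)$ with the value $\delta(\dv L)$ realized by $L$. Applying Lemma~\ref{L:E}.(1) to the short exact sequence $0\to L\to M\to M/L\to 0$ gives the six-term exact sequence relating $\Hom(d,-)$ and $\E(d,-)$ across $L$, $M$, and $M/L$; in particular $\hom(d,M)\ge \hom(d,L)-\hom(d,M/L)+\dots$, but more usefully I would instead use the additivity of the Euler-type identity \eqref{eq:heform}. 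From \eqref{eq:heform} applied to $L$ we get $\delta(\dv L)=\hom(d,L)-\e(d,L)\le \hom(d,L)$, so it suffices to show $\hom(d,L)\le \hom(d,M)$. That last inequality is exactly the statement that $\Hom(d,-)=\Hom(\Coker d,-)$ is left exact in the second variable applied to $L\hookrightarrow M$: the inclusion $L\hookrightarrow M$ induces an injection $\Hom(\Coker d, L)\hookrightarrow \Hom(\Coker d, M)$, hence $\hom(d,L)\le\hom(d,M)$.

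So the argument for the first inequality is: for each $L\hookrightarrow M$, $\delta(\dv L)=\hom(d,L)-\e(d,L)\le \hom(d,L)\le\hom(d,M)=\hom(\delta,M)$, where the last equality uses that $d$ is general in $\PHom(\delta)$ (Definition~\ref{D:home}); taking the max over all $L\hookrightarrow M$ gives $f_M(\delta)\le\hom(\delta,M)$. Next, the second inequality $\fc_M(-\delta)\le\e(\delta,M)$: given a quotient $M\twoheadrightarrow N$, write $-\delta$ for the weight and note $(-\delta)(\dv N) = \e(d,N)-\hom(d,N)$ by \eqref{eq:heform} applied with weight $-\delta$, i.e. using $\tau_p d$ or the negative presentation. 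Alternatively and more cleanly, I would combine the first inequality with the identity $f_M(\delta)+\fc_M(\delta)=\delta(\dv M)$ from just after Definition~\ref{D:tropf} and the identity $\hom(d,M)-\e(d,M)=\delta(\dv M)$ from \eqref{eq:heform}: subtracting $f_M(\delta)\le\hom(\delta,M)$ from $\delta(\dv M)=\delta(\dv M)$ yields $\fc_M(\delta)\ge \delta(\dv M)-\hom(\delta,M) = -\e(\delta,M)$, i.e. $-\fc_M(\delta)\le \e(\delta,M)$; applying this with $\delta$ replaced by $-\delta$ and using that $\fc_M(-\delta)=-\fc_{?}$… — here one must be careful that $\fc_M$ is not linear, so this shortcut does not literally work, and I would instead run the direct argument: for $M\twoheadrightarrow N$, by \eqref{eq:heform} for weight $\delta$, $\delta(\dv N)=\hom(\delta,N)-\e(\delta,N)\ge -\e(\delta,N)\ge -\e(\delta,M)$ — wait, we need $\e(\delta,N)\le\e(\delta,M)$, which comes from the right-exactness half of Lemma~\ref{L:E}.(1): the surjection $M\twoheadrightarrow N$ gives $\E(d,M)\twoheadrightarrow\E(d,N)$ so $\e(d,N)\le\e(d,M)$. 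Hence $(-\delta)(\dv N)=-\delta(\dv N)\le\e(\delta,N)\le\e(\delta,M)$, and taking the max over $N$ gives $\fc_M(-\delta)\le\e(\delta,M)$.

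The remaining two inequalities are obtained by the same arguments applied to injective presentations, or formally by invoking the duality \eqref{eq:hedual}, $\hom(\delta,M)=\ec(M,-\delta)$ and $\e(\delta,M)=\hom(M,-\delta)$, together with the self-duality of the polytope definitions under $A\mapsto A^{\mathrm{op}}$ (the functor $M\mapsto M^*$ swaps sub and quotient and swaps $f$ with $\fc$). Concretely, $\fc_M(\check\delta)\le\hom(M,\check\delta)$ is the ``first inequality'' for the dual, and $f_M(-\check\delta)\le\ec(M,\check\delta)$ is the ``second inequality'' for the dual, using \eqref{eq:heformdual} in place of \eqref{eq:heform}. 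I expect no genuine obstacle here; the only thing to get right is the bookkeeping of signs and the direction of the induced maps on $\Hom(d,-)$ versus $\E(d,-)$ under a sub- versus a quotient-inclusion — that is, remembering that $\Hom(d,-)$ is left exact and $\E(d,-)$ is right exact in the representation variable (Lemma~\ref{L:E}.(1)), which is what forces $\hom(d,L)\le\hom(d,M)$ for subobjects and $\e(d,N)\le\e(d,M)$ for quotients. Since the whole content is a one-line inequality per case from the six-term sequence plus \eqref{eq:heform}, the ``hard part'' is purely organizational, and I would present it as a single short paragraph handling the first case and then say ``the other three are analogous (or follow by duality).''
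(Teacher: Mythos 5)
Your proof is correct and follows essentially the same route as the paper: the first inequality comes from $\delta(\dv L)=\hom(d,L)-\e(d,L)\le\hom(d,L)\le\hom(d,M)$ via left exactness of $\Hom(d,-)$, and the rest follow by the dual/injective-presentation versions. Your derivation of $\fc_M(-\delta)\le\e(\delta,M)$ via the surjection $\E(d,M)\twoheadrightarrow\E(d,N)$ is a harmless variant of the paper's one-line deduction from \eqref{eq:heform}, and the abandoned ``shortcut'' digression could simply be deleted.
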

\begin{proof} Since $\Hom(\delta,L) \hookrightarrow \Hom(\delta,M)$ for any subrepresentation $L$ of $M$, 
	we have that $\delta(\dv L) \leq \hom(\delta,L) \leq \hom(\delta,M)$. Hence ${f}_M(\delta) \leq \hom(\delta,M)$.
	Then $\fc_M(-\delta) \leq {\e}(\delta,M)$ follows from \eqref{eq:heform}.
	The other half is proved similarly.
\end{proof}

Here is the main result of this section.
\begin{theorem} \label{T:HomE} For any representation $M$ and any $\delta\in \mb{Z}^{Q_0}$, there is some $n\in\mb{N}$ such that
	\begin{align*}
	{f}_M(n\delta) &= \hom(n\delta,M), & \fc_M(-n\delta) &= {\e}(n\delta,M).
	\end{align*}
Similarly, for any representation $M$ and any $\dtc\in \mb{Z}^{Q_0}$, there is some $\check{n}\in\mb{N}$ such that
	\begin{align*} \fc_M(\check{n}\check{\delta}) &= \hom(M,\check{n}\check{\delta}), & {f}_M(-\check{n}\check{\delta}) &= \ec(M,\check{n}\check{\delta}).\end{align*}
Moreover, $n$ can be replaced by $kn$ for any $k\in\mb{N}$.
If $m$ is the minimum of all such $n$, then $m\delta$ can not be decomposed as $m\delta = k\delta \oplus (m-k)\delta$ for any $k$.
In particular, if $\delta$ is not wild, then $m=1$.  
\end{theorem}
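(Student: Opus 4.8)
The plan is to first recall from Lemma \ref{L:HomE} that we always have ${f}_M(\delta)\leq \hom(\delta,M)$, and similarly for the other three inequalities; so the content is to produce, for each fixed $\delta$, a scalar $n$ realizing equality. I would work with a fixed general presentation $d$ of weight $\delta$ and track what happens to $nd$ (i.e.\ the direct sum of $n$ copies, which is general of weight $n\delta$) as $n$ grows. The key observation is that $\hom(nd,M)$ is \emph{not} linear in $n$ in general, but it is the dimension of the kernel of a linear map depending on $n$ copies of the matrices defining $d$; the image of this map grows, and for $n\gg 0$ the rank stabilizes in the sense that the kernel of $\Hom(P_+^n,M)\to\Hom(P_-^n,M)$ is, up to the right normalization, computed by the largest subrepresentation of $M$ killed by (a suitably generic choice of $n$ copies of) $d$. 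Concretely: $\Hom(d,M)=\Hom(\Coker d,M)$, and a homomorphism $\Coker d\to M$ is the same as a homomorphism $P_+\to M$ killing the image of $d$; taking $n$ copies, a generic element of the kernel, after projecting onto the largest common subrepresentation $L_n\hookrightarrow M$ on which all $n$ copies vanish, stabilizes once $n$ exceeds the number of generators needed to cut $M$ down to this $L$. This should give, for $n$ large, ${f}_M(n\delta)=\hom(n\delta,M)$, with $L=t_\delta(M)$ the torsion-type subobject attached to $\delta$ (the functor $t_\dtb$ of \cite{Fc}); the dual statement $\fc_M(-n\delta)={\e}(n\delta,M)$ then follows from \eqref{eq:heform}, and the injective-presentation statements follow by the duality \eqref{eq:hedual} applied to $-\delta$, or by the symmetric argument.

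Next I would establish the stability under rescaling: if $n$ works then so does $kn$ for all $k\in\mb{N}$. This is essentially because $\max_{L\hookrightarrow M}(kn)\delta(\dv L) = k\max_{L\hookrightarrow M} n\delta(\dv L)$ on the ${f}_M$ side, while on the $\hom$ side $\hom(kn\delta,M)\geq \hom(n\delta,M)$ always (more copies can only enlarge — or rather not shrink — the subobject being killed, since the optimal $L$ for $n$ is still killed by $kn$ copies), and we already have $\hom(kn\delta,M)\leq {f}_M(kn\delta)$... wait, the inequality from Lemma \ref{L:HomE} goes the other way; so I would instead argue $\hom(kn\delta,M)\leq \hom(n\delta,M)$ by a genericity/specialization argument (a general presentation of weight $kn\delta$ specializes to $k$ copies of a general one of weight $n\delta$, and $\hom(-,M)$ is upper semicontinuous), combined with $\hom(kn\delta,M)\geq {f}_M(kn\delta) = k\,{f}_M(n\delta) = k\hom(n\delta,M)$ — no, that last chain is also wrong since ${f}_M\leq\hom$. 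The correct route: from ${f}_M(kn\delta)=k\,{f}_M(n\delta)=k\hom(n\delta,M)$ and Lemma \ref{L:HomE} we get $\hom(kn\delta,M)\geq k\hom(n\delta,M)$; conversely $\hom(kn\delta,M)\leq k\hom(n\delta,M)$ because a general presentation of weight $kn\delta$ degenerates to the $k$-fold sum of a general one of weight $n\delta$ and $\hom$ is upper semicontinuous along that degeneration, while $\hom$ of a direct sum is the sum. Hence equality, so $kn$ also realizes ${f}_M$. This also shows the set of valid $n$ is closed under taking multiples, so it has a minimum $m$ dividing all of them.

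For the final assertion, suppose $m\delta=k\delta\oplus(m-k)\delta$ were a decomposition with $0<k<m$. Then a general presentation of weight $m\delta$ is the direct sum of general presentations of weights $k\delta$ and $(m-k)\delta$, so $\hom(m\delta,M)=\hom(k\delta,M)+\hom((m-k)\delta,M)$ and likewise ${f}_M(m\delta)={f}_M(k\delta)+{f}_M((m-k)\delta)$ by additivity of ${f}_M$ under the corresponding splitting of the maximizing subobject — more carefully, ${f}_M$ is subadditive in this sense and one uses $m{f}_M(\delta)={f}_M(m\delta)$ from homogeneity to force each summand to be optimal. Then equality $\hom(m\delta,M)={f}_M(m\delta)$ forces $\hom(k\delta,M)={f}_M(k\delta)$, contradicting minimality of $m$. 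Finally, if $\delta$ is not wild then by Theorem \ref{T:CDPHom} (and the remark following the definition of real/tame) the canonical decomposition of $m\delta$ is $m$ copies of $\delta$ — in particular $m\delta=\delta\oplus(m-1)\delta$ whenever $m\geq 2$ — which by what was just shown is impossible, so $m=1$. The step I expect to be the main obstacle is making rigorous the claim that $\hom(n\delta,M)$ actually drops down to meet ${f}_M(n\delta)$ for $n$ large: one must show that the extra $\Hom$'s counted by $\hom(d,M)$ beyond $\delta(\dv t_\delta(M))$ are "unstable" and get killed once enough generic copies of $d$ are superimposed, which requires a careful dimension count on $\Hom(P_+^n,M)$ versus the space of relations imposed by $n$ generic copies of $d$, or equivalently an analysis of how $\Coker(nd)$ approaches the fixed subobject structure governed by the functor $t_\delta$ of \cite{Fc}.
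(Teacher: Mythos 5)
The ``moreover'' parts of your proposal are essentially the paper's argument: the scaling claim follows from the homogeneity of $f_M$, the subadditivity $\hom(kn\delta,M)\leq k\hom(n\delta,M)$ (Lemma \ref{L:homineq}, which is exactly the semicontinuity/degeneration argument you settle on after your self-corrections), and the sandwich from Lemma \ref{L:HomE}; the minimality and non-wild statements then follow from the additivity of $\hom$ under decompositions and Theorem \ref{T:CDPHom}, just as you say.

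However, the core of the theorem --- the existence of \emph{some} $n$ with $f_M(n\delta)=\hom(n\delta,M)$ --- is precisely the step you defer to ``a careful dimension count,'' and that is a genuine gap, not a routine verification. There is no elementary rank-stabilization argument here: the only available certificate for the upper bound $\hom(n\delta,M)\leq f_M(n\delta)$ is the non-vanishing on $M$ of a Schofield determinantal semi-invariant $c_d$ with $d$ of weight $n\delta$ (after adjusting the weight so that the target value is $0$), and knowing that such a $c_d$ exists is the first fundamental theorem for semi-invariants of algebras with relations (Theorem \ref{T:Cdgen}), a deep input your sketch never invokes. The paper's proof runs as follows: form the one-point extension $A^-=A[M]$ and then the one-point coextension $A^{\pm}=A^-[P_{\bminus}^*]$, so that $\hom(\delta,M)$ becomes $\hom((0,\delta,0),I_{\bplus})$ (Lemma \ref{L:Hom_res}) and $f_{I_\bplus}((\delta_+,\delta,0))=\max(0,f_M(\delta)+\delta_+)$ (Lemma \ref{L:max}); setting $\delta_+=-f_M(\delta)$ reduces everything to showing that $f_N(\delta')=0$ forces $\hom(n\delta',N)=0$ for some $n$, which is Lemma \ref{L:hom=0} and rests on King's criterion (Lemma \ref{L:King}) together with Lemma \ref{L:ss}/Theorem \ref{T:Cdgen}. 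Your heuristic also overstates what genericity buys for a single weight: Example \ref{ex:n>1} shows $\hom(\delta,M)$ can strictly exceed $f_M(\delta)$ even for a general presentation, so the passage to multiples of $\delta$ is essential and cannot be obtained by simply ``superimposing more generic copies'' of a fixed $d$ --- a general element of $\PHom(n\delta)$ is not $n$ copies of a general element of $\PHom(\delta)$ when $\delta$ is wild, which is exactly why the minimal $m$ can exceed $1$.
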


\begin{example}\label{ex:n>1} We remark that $n$ or $\check{n}$ may not always chosen to be $1$.
	Let $Q$ be the three-arrow Kronecker quiver $\Kronthree{\bullet}{\bullet}$. 
	Consider $\delta=(1,-1)$ and $M\in \rep_{(3,3)}(Q)$ given by 
	$$M(a)=\sm{0&0&-1\\ 0&0&0\\ 1&0&0},\quad M(b)=\sm{0&1&0\\ -1&0&0\\ 0&0&0},\quad M(c)=\sm{0&0&0\\ 0&0&1\\ 0&-1&0}.$$
Then one can easily check that $f_M(\delta)=\hom(n\delta,M)=0$ for any $n\geq 2$ but $\hom(\delta,M)=1$.
\end{example}

Before giving a proof, we hasten to mention an interesting corollary (Corollary \ref{C:HomE}). It says that the evaluation of $f_M$ is related to the asymptotic  $\hom(a\delta, M)$ as $a$ increases, generalizing a result of W. Crawley-Boevey on quivers without relations \cite{CB}.

\begin{lemma} \label{L:homineq} If $\delta=\delta_1+\delta_2$, then $\hom(\delta,M) \leq \hom(\delta_1,M)+\hom(\delta_2,M)$ for any $M$.
	If $\delta=\delta_1\oplus \delta_2$, then $\hom(\delta,M) = \hom(\delta_1,M)+\hom(\delta_2,M)$ for any $M$.
	Moreover, all $\hom$ can be replaced by $\e$.	
\end{lemma}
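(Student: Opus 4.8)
The plan is to prove the inequality $\hom(\delta,M)\le\hom(\delta_1,M)+\hom(\delta_2,M)$ by a semicontinuity argument, and then upgrade it to equality under the hypothesis $\delta=\delta_1\oplus\delta_2$ via the defining property of the canonical decomposition. First I would exhibit a single (not necessarily general) presentation of weight $\delta$ whose value of $\hom(-,M)$ is at most $\hom(\delta_1,M)+\hom(\delta_2,M)$: namely, pick general presentations $d_i\in\PHom(\delta_i)$ computing $\hom(\delta_i,M)$ and form $d=d_1\oplus d_2\in\PHom(\delta)$. Since $\Hom(d,M)=\Hom(d_1,M)\oplus\Hom(d_2,M)$ and likewise for $\E$, we get $\hom(d,M)=\hom(d_1,M)+\hom(d_2,M)$ on the nose. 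Now $d$ is one point of $\PHom(\delta)$, and by definition $\hom(\delta,M)$ is the generic — hence minimal, by upper semicontinuity of $\dim\Hom(-,M)$ on $\PHom(\delta)$ — value of $\hom(-,M)$; therefore $\hom(\delta,M)\le\hom(d,M)=\hom(\delta_1,M)+\hom(\delta_2,M)$. The same argument with $\E$ in place of $\Hom$ gives $\e(\delta,M)\le\e(\delta_1,M)+\e(\delta_2,M)$, using upper semicontinuity of $\dim\E(-,M)$; alternatively this follows from the $\hom$ statement together with the identity \eqref{eq:heform}, $\delta(\dv M)=\hom(d,M)-\e(d,M)$, since $\delta=\delta_1+\delta_2$ makes the left-hand sides additive.

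For the equality statement, suppose $\delta=\delta_1\oplus\delta_2$ is a decomposition in the sense of \cite{DF}: a general $d\in\PHom(\delta)$ splits as $d_1\oplus d_2$ with $d_i$ general in $\PHom(\delta_i)$. Then for such a general $d$ we have $\Hom(d,M)=\Hom(d_1,M)\oplus\Hom(d_2,M)$, so $\hom(\delta,M)=\hom(d,M)=\hom(d_1,M)+\hom(d_2,M)=\hom(\delta_1,M)+\hom(\delta_2,M)$, the middle equalities because a general $d$ computes $\hom(\delta,M)$ and general $d_i$ compute $\hom(\delta_i,M)$. The statement for $\e$ is identical, since $\E(d,M)=\E(d_1,M)\oplus\E(d_2,M)$ for the split presentation, and again one may instead deduce it from the $\hom$ equality via \eqref{eq:heform}.

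The only genuinely delicate point is making sure that, in the decomposition case, a general element of $\PHom(\delta)$ can simultaneously be taken to split as $d_1\oplus d_2$ with $d_1$ general in $\PHom(\delta_1)$ \emph{and} $d_2$ general in $\PHom(\delta_2)$ \emph{and} to compute $\hom(\delta,M)$ — i.e.\ that the three relevant open dense subsets of $\PHom(\delta)$ have nonempty intersection. This is exactly what the definition of "decomposition" in \cite{DF} guarantees, combined with the fact that the map $\PHom(\delta_1)\times\PHom(\delta_2)\to\PHom(\delta)$, $(d_1,d_2)\mapsto d_1\oplus d_2$, sends dense subsets into dense subsets of the locus of split presentations; intersecting with the $\hom$-generic open set and using irreducibility finishes it. I expect this bookkeeping about open dense subsets to be the main (and essentially only) obstacle; the additivity of $\Hom$ and $\E$ over direct sums of presentations is immediate, and the inequality direction is a one-line semicontinuity observation.
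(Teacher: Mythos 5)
Your proposal is correct and follows essentially the same route as the paper: form $d_1\oplus d_2$ from general $d_i\in\PHom(\delta_i)$, use additivity of $\Hom(-,M)$ and $\E(-,M)$ over direct sums together with semicontinuity to get the inequality, and invoke the definition of decomposition to get equality when $\delta=\delta_1\oplus\delta_2$. The extra care you take about intersecting the relevant dense open subsets in the decomposition case is a point the paper passes over silently ("we can assume $d_1\oplus d_2$ is general"), and your justification of it is sound.
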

\begin{proof} Let $d_i$ be a general presentation of weight $\delta_i$ ($i=1,2$).
	Then the weight of $d_1\oplus d_2$ is $\delta$ and $\hom(d_1\oplus d_2,M) = \hom(d_1,M) + \hom(d_2,M) = \hom(\delta_1,M) + \hom(\delta_2,M)$.	
	By the lower semi-continuity, we have that $\hom(\delta,M) \leq  \hom(d_1\oplus d_2,M)$.	
	If $\delta=\delta_1\oplus \delta_2$, then we can assume $d_1\oplus d_2$ is general, so $\hom(\delta,M) = \hom(\delta_1,M)+\hom(\delta_2,M)$.
\end{proof}

\begin{question} Is it true that if $\hom(\delta,M) = \hom(\delta_1,M)+\hom(\delta_2,M)$ for any $M$, then $\delta=\delta_1\oplus \delta_2$?
\end{question}

\begin{corollary} \label{C:HomE} The following limits exist and we have the equalities
	\begin{align*} \lim_{a\to \infty}\frac{1}{a}\hom(a\delta, M), &= f_M(\delta) & \lim_{a\to \infty}\frac{1}{a}\e(a\delta, M) &= \fc_M(-\delta); \\
		\lim_{a\to \infty}\frac{1}{a}\hom(M, a\dtc)&=\fc_M(\dtc), & \lim_{a\to \infty}\frac{1}{a}\ec(M, a\dtc) &= f_M(-\dtc).
	\end{align*}
\end{corollary}
\begin{proof} Let $n$ be the number such that $f_M(n\delta) = \hom(n\delta,M)$ as in Theorem \ref{T:HomE}.
	For any $a\in\mb{N}$ we can write $a=qn+r$ with $q,r\in\mb{Z}_{\geq 0}$ and $r<n$.
Then by Lemma \ref{L:homineq} \begin{align*} \frac{1}{a} \hom(a\delta, M) &\leq \frac{1}{qn}(q\hom(n\delta,M)+r\hom(\delta,M)) \\
	&\leq f_M(\delta) + \frac{1}{q}\hom(\delta,M),
\end{align*}
which tends to $f_M(\delta)$ as $a\to \infty$.
On the other hand, we have by Lemma \ref{L:HomE} that 
$$\frac{1}{a}\hom(a\delta,M) \geq \frac{1}{a}f_M(a\delta) = f_M(\delta).$$
Hence the limit exists and equals to $f_M(\delta)$. The others can be proved similarly.
\end{proof}

The proof of Theorem \ref{T:HomE} requires some preparation.

\subsection{Stability and Semi-invariants}
A. King introduced Mumford's GIT into the setting of quiver representation theory \cite{Ki}.
Recall that any weight $\delta\in \mb{Z}^{Q_0}$ gives a multiplicative {\em characters} $\chi_\delta$ of $\GL_\alpha:=\prod_{v\in Q_0} \GL_{\alpha(v)}$:
\begin{equation*} \label{eq:char} \big(g(v)\big)_{v\in Q_0}\mapsto\prod_{v\in Q_0} \big(\det g(v)\big)^{\delta(v)}.
\end{equation*}
A {\em semi-invariant} function of weight $\delta$ is an element in
$$\SI_\alpha(A)_\delta:=\{s\in k[\rep_\alpha(A)]\mid g(s)=\chi_{\delta}(g)s,\ \forall g\in\GL_\alpha \}.$$
The graded semi-invariant algebra associated to $\delta$ is 
$$\SI_\alpha^\delta(A):= \bigoplus_{n\geq 0} \SI_\alpha(A)_{n\delta}.$$
A representation $M$ is called {\em $\delta$-semistable} if there is some $s\in \SI_\alpha^\delta(A)$ such that $s(M)=0$.

\begin{lemma}[{\cite[Proposition 3.1]{Ki}}] \label{L:King} A representation $M$ is {\em $\delta$-semistable} (resp. {\em $\delta$-stable}) if and only if $\delta(\dv M)=0$ and $\delta(\dv L)\leq 0$ (resp. $\delta(\dv L)<0$) for any non-trivial subrepresentation $L$ of $M$.
\end{lemma}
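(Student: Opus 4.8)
The plan is to prove this exactly as King does: run the Hilbert--Mumford numerical criterion for the action of $\GL_\alpha$ on $\rep_\alpha(A)$ linearized by the character $\chi_\delta$. (I read the defining condition as: there is some $s\in\SI_\alpha(A)_{n\delta}$ with $n\ge 1$ and $s(M)\ne 0$.) I will treat the two implications separately, doing the forward one by a direct limit argument so as to sidestep the deep half of GIT, and the converse by invoking Mumford's criterion.

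For $(\Rightarrow)$, fix such an $s$. From $(g\cdot s)(x)=s(g^{-1}x)$ one gets $s(g\cdot x)=\chi_{n\delta}(g)^{-1}s(x)$. Feeding in the diagonal one-parameter subgroup $\lambda_0(t)=(t\,\Id_{M_v})_{v\in Q_0}$, which acts trivially on $\rep_\alpha(A)$, gives $s(M)=t^{-n\delta(\dv M)}s(M)$ for all $t\in k^{*}$, hence $\delta(\dv M)=0$. Now let $L\subseteq M$ be any subrepresentation, choose a vector-space complement $C_v$ to $L_v$ in each $M_v$, and let $\lambda(t)$ act by $t$ on $L_v$ and by $1$ on $C_v$; since $L$ is a subrepresentation a block computation shows $\lim_{t\to0}\lambda(t)\cdot M$ exists and equals $L\oplus M/L$. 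Combined with $s(\lambda(t)\cdot M)=t^{-n\delta(\dv L)}s(M)$, finiteness of the limit forces $\delta(\dv L)\le0$. In the $\delta$-stable case, if moreover $\delta(\dv L)=0$ for some $0\ne L\subsetneq M$, then $M_0:=\lim_{t\to0}\lambda(t)\cdot M=L\oplus M/L$ lies in $\overline{\GL_\alpha\cdot M}$ and is $\delta$-semistable since $s(M_0)=s(M)\ne 0$; it either lies outside $\GL_\alpha\cdot M$, so that orbit is not closed in the $\delta$-semistable locus, or is isomorphic to $M$, in which case the $1$-parameter group scaling the $L$-summand lies in $\mathrm{Stab}(M)$ but not in the kernel of the action --- either way $M$ is not $\delta$-stable, so the inequality is strict.

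For $(\Leftarrow)$, assume $\delta(\dv M)=0$ and $\delta(\dv L)\le 0$ for all subrepresentations $L$, and invoke Mumford's numerical criterion in the affine-with-character setting: $M$ is $\delta$-semistable iff $\innerprod{\chi_\delta,\lambda}\le 0$ for every one-parameter subgroup $\lambda$ of $\GL_\alpha$ with $\lim_{t\to0}\lambda(t)\cdot M$ existing, and $\delta$-stable iff, in addition, this is strict whenever the image of $\lambda$ is not contained in the (diagonal-scalar) kernel of the action. Given such a $\lambda$, decompose $M_v=\bigoplus_{i\in\mb{Z}}M_v^{(i)}$ into $\lambda$-weight spaces; the limit condition is exactly that each $F^{\ge p}M:=\bigl(\bigoplus_{i\ge p}M_v^{(i)}\bigr)_{v\in Q_0}$ is a subrepresentation of $M$. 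A telescoping computation, using $\delta(\dv M)=0$, gives
$$\innerprod{\chi_\delta,\lambda}=\sum_{p\in\mb{Z}}\delta\bigl(\dv F^{\ge p}M\bigr),$$
a finite sum each of whose terms is $\le 0$ by hypothesis; hence $\innerprod{\chi_\delta,\lambda}\le 0$, which settles the semistable case. For the stable case, if $\lambda$ is not a diagonal scalar then some $F^{\ge p}M$ is a proper nonzero subrepresentation (otherwise every $M_v$ sits in a single $\lambda$-degree and, comparing degrees along the arrows of $Q$, all these degrees coincide); for that $p$ the strict hypothesis gives $\delta(\dv F^{\ge p}M)<0$, so $\innerprod{\chi_\delta,\lambda}<0$.

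\emph{Main obstacle.} The only non-formal input is the existence half of Mumford's criterion used in $(\Leftarrow)$, namely that if all the weights $\innerprod{\chi_\delta,\lambda}$ are $\le 0$ then some semi-invariant of weight $n\delta$ is nonzero on $M$; this is the affine GIT theorem, which I would simply quote. The remainder is bookkeeping: pinning down the sign/limit convention so that ``$\innerprod{\chi_\delta,\lambda}\le 0$'' lands on the correct side, verifying the telescoping weight identity, and, in the stable case, carefully excising the kernel of the action (the diagonal scalars, on which every weight vanishes identically) so that strictness is demanded only where it can hold.
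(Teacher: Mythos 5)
The paper gives no proof of this lemma --- it is quoted verbatim as \cite[Proposition 3.1]{Ki} --- and your argument is correct and is essentially King's original one: the Hilbert--Mumford criterion for a $\GL_\alpha$-action linearized by $\chi_\delta$, together with the standard dictionary between one-parameter subgroups whose limit on $M$ exists and $\mb{Z}$-filtrations of $M$ by subrepresentations, plus the telescoping identity $\innerprod{\chi_\delta,\lambda}=\sum_p\delta(\dv F^{\geq p}M)$. You were also right to read the paper's definition of $\delta$-semistability as requiring $s(M)\neq 0$ (the ``$s(M)=0$'' in Section 2.2 is evidently a typo), and the one genuinely non-formal input --- the existence half of the numerical criterion in the affine-with-character setting --- is correctly isolated and quoted rather than reproved.
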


For any projective presentation $d$ of weight $\delta$ such that $\delta(\alpha)=0$, Schofield constructed the following semi-invariant function of weight $\delta$ on $\rep_\alpha (A)$. 
We apply the functor $\Hom(-,M)$ to $d$ for $M\in\rep_\alpha(A)$
\begin{equation*} \label{eq:canseq} \Hom(P_+,M)\xrightarrow{C(d,M)}\Hom_Q(P_-,M).
\end{equation*}
Since $\delta(\alpha)=0$, $C(d,M)$ is a square matrix. We define
$$c_d(M):=\det C(d,M).$$

\begin{theorem}[{\cite[Theorem 1]{DW1},\cite{DWrel},\cite{SV,DZ}}] \label{T:Cdgen} The space $\SI_\alpha(A)_\delta$ is spanned by semi-invariants of the form  $c_d$ where $d$ has weight $\delta$.
\end{theorem}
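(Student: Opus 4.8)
The plan is to prove the two halves of the statement separately — that each $c_d$ with $d$ of weight $\delta$ lies in $\SI_\alpha(A)_\delta$, and that these span — with a reduction to path algebras in between. For the membership, first observe that $c_d=\det C(d,-)$ is defined as a function on $\rep_\alpha(A)$ only when $\delta(\alpha)=0$, so that $C(d,M)$ is square for every $M$; if $\delta(\alpha)\neq 0$ there is nothing to prove, because then $\SI_\alpha(A)_\delta=0$ — the diagonal torus $t\mapsto(t\,\Id_{\alpha(v)})_{v}$ acts trivially on $\rep_\alpha(A)$ while $\chi_\delta$ sends it to $t^{\delta(\alpha)}$, so any nonzero semi-invariant of weight $\delta$ forces $\delta(\alpha)=0$. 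Assuming $\delta(\alpha)=0$ and writing $d:P(\delta_-)\to P(\delta_+)$, the functor $\Hom_A(-,M)$ identifies $\Hom_A(P(\beta),M)$ with $\bigoplus_{v}M_v^{\oplus\beta(v)}$, a $\GL_\alpha$-module whose top exterior power is the character $\chi_\beta$. The matrices $C(d,M)$ and $C(d,gM)$ fit into a commuting square with the natural isomorphisms $\Hom_A(P(\delta_\pm),M)\xrightarrow{\sim}\Hom_A(P(\delta_\pm),gM)$, whose determinants are $\chi_{\delta_+}(g)$ and $\chi_{\delta_-}(g)$; taking determinants shows $c_d$ transforms under $\GL_\alpha$ by $\chi_\delta$ as in the definition of $\SI_\alpha(A)_\delta$, so $c_d\in\SI_\alpha(A)_\delta$.

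For the spanning I would reduce to the case $A=kQ$ a path algebra. Since $\rep_\alpha(A)\subseteq\rep_\alpha(kQ)$ is a closed $\GL_\alpha$-stable subvariety, restriction of functions $k[\rep_\alpha(kQ)]\twoheadrightarrow k[\rep_\alpha(A)]$ is a surjection of rational modules over the reductive group $\GL_\alpha$, hence surjective on $\chi_\delta$-isotypic pieces: $\SI_\alpha(kQ)_\delta\twoheadrightarrow\SI_\alpha(A)_\delta$. Moreover, for $M\in\rep_\alpha(A)$ the matrix $C(d,M)$ attached to a $kQ$-presentation $d$ — a matrix of paths — depends only on the $A$-module structure of $M$, hence only on the image $\bar d$ of $d$ in $\Hom_A(P(\delta_-),P(\delta_+))$; and every $A$-presentation of weight $\delta$ arises this way because $e_w(kQ)e_v\twoheadrightarrow e_wAe_v$. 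Thus restriction carries $\{c_d:d\in\PHom_{kQ}(\delta)\}$ onto $\{c_e:e\in\PHom_{A}(\delta)\}$, and the path-algebra case implies the general one.

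What remains is the first fundamental theorem for semi-invariants of a quiver without relations — that the $c_d$ span $\SI_\alpha(kQ)_\delta$ — which is \cite[Theorem 1]{DW1} (see also \cite{SV,DZ,DWrel}); I expect this to be the main obstacle, and in the present paper it is legitimate to quote it rather than reprove it. One route to it runs through the Cauchy formula: writing $V_v=k^{\alpha(v)}$,
\[
k[\rep_\alpha(kQ)]=\bigotimes_{a:i\to j}\Sym(V_i\otimes V_j^{*})=\bigoplus_{\underline{\lambda}}\ \bigotimes_{v\in Q_0}\Bigl(\bigotimes_{a:v\to\bullet}S_{\lambda_a}(V_v)\ \otimes\ \bigotimes_{b:\bullet\to v}S_{\lambda_b}(V_v^{*})\Bigr),
\]
so that $\SI_\alpha(kQ)_\delta$ is the sum over tuples $\underline{\lambda}=(\lambda_a)_a$ of the product over vertices $v$ of the multiplicity of $\det_{V_v}^{\delta(v)}$ in the $v$-th tensor factor. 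One then exhibits, for each contributing $\underline{\lambda}$, explicit semi-invariants spanning these multiplicity spaces and identifies them (or products of them) with $c_d$'s: the $\det^{\delta(v)}$-isotypic part of a tensor product of Schur functors of $V_v$ and $V_v^{*}$ is cut out by antisymmetrizations and maximal minors, and assembling this vertex by vertex reproduces the block structure of $C(d,M)$, realizing $c_d$ as the corresponding Laplace-type expansion. An alternative is the Cayley trick: absorb the character $\chi_\delta$ into extra vertices and arrows of an enlarged quiver so that $\SI_\alpha(kQ)_\delta$ becomes a multihomogeneous component of an ordinary quiver invariant ring, which by the Le~Bruyn--Procesi theorem is spanned by traces of oriented cycles, and a cycle passing through the new vertices restricts on $\rep_\alpha(kQ)$ to a determinantal semi-invariant $c_d$. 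In either route the delicate point — which is exactly what the cited theorems supply — is that these explicit semi-invariants already exhaust the relevant multiplicity; the membership statement and the reduction above are the parts I would write out in full.
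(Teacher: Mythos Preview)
The paper does not prove this theorem at all: it is stated with a citation to \cite{DW1,DWrel,SV,DZ} and immediately used. So there is no ``paper's own proof'' to compare against; the result is imported wholesale.

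Your sketch is a correct outline of the standard argument, and in fact mirrors the structure of the cited literature: the membership $c_d\in\SI_\alpha(A)_\delta$ is the elementary computation you give; the reduction from $A=kQ/I$ to $kQ$ via surjectivity of restriction on isotypic components of a reductive group is exactly the content of \cite{DWrel}; and the hard core is the spanning statement for path algebras, which is precisely \cite[Theorem~1]{DW1} (with independent proofs in \cite{SV,DZ}). Your two suggested routes to that core---Cauchy decomposition plus identification of the multiplicity spaces, and the reduction to Le~Bruyn--Procesi invariants---are both genuine strategies from those papers, though each requires nontrivial work you have only gestured at. One small point: in the reduction step, you should be slightly careful that $\rep_\alpha(A)$ may not be reduced as a scheme, so one either works with the reduced structure or notes that the surjectivity argument is insensitive to this. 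Otherwise there is nothing to correct; you have simply supplied what the paper chose to cite.
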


\begin{lemma} \label{L:ss} A representation $M\in \rep_\alpha(A)$ is $\delta$-semistable if and only if 
	$$\hom(n\delta,M) = \delta(\alpha) =0\ \text{ for some $n\in\mb{N}$}.$$
	Moreover, $n$ can be replaced by $kn$ for any $k\in\mb{N}$.
	If $m$ is the minimum of all such $n$, then $m\delta$ can not be decomposed as $m\delta = k\delta \oplus (m-k)\delta$ for any $k$.
	In particular, if $\delta$ is not wild, then $m=1$.
\end{lemma}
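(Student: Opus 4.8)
\textbf{Proof proposal for Lemma \ref{L:ss}.}

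The plan is to connect the semi-invariant description of $\delta$-semistability (Lemma \ref{L:King} combined with Theorem \ref{T:Cdgen}) to the homological quantity $\hom(n\delta,M)$. First I would record that the condition $\delta(\alpha)=0$ is forced: if $\hom(n\delta,M)=\delta(\alpha)=0$ then by \eqref{eq:heform} we also get $\e(n\delta,M)=0$, and conversely $\delta$-semistability requires $\delta(\dv M)=0$ by Lemma \ref{L:King}. So throughout we may assume $\delta(\alpha)=0$, in which case for any presentation $d$ of weight $n\delta$ the map $C(d,M)$ is a square matrix, and $\hom(d,M)=0$ is equivalent to $C(d,M)$ being invertible, i.e. $c_d(M)=\det C(d,M)\neq 0$.

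The forward direction: suppose $M$ is $\delta$-semistable, so there is a semi-invariant $s\in\SI_\alpha(A)_{n\delta}$ with $s(M)\neq 0$ for some $n\geq 1$. By Theorem \ref{T:Cdgen}, $\SI_\alpha(A)_{n\delta}$ is spanned by the functions $c_d$ with $d$ of weight $n\delta$, so some $c_d(M)\neq 0$; thus $\hom(d,M)=0$ for this particular $d$, and hence $\hom(n\delta,M)=0$ by lower semi-continuity (the general presentation has $\hom$ no larger than this $d$). This gives $\hom(n\delta,M)=\delta(\alpha)=0$. The reverse direction: if $\hom(n\delta,M)=\delta(\alpha)=0$, then a general $d\in\PHom(n\delta)$ has $C(d,M)$ invertible, so $c_d(M)\neq 0$; since $c_d\in\SI_\alpha(A)_{n\delta}$, the representation $M$ is $\delta$-semistable by definition.

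For the ``moreover'' clause, note that once $\hom(n\delta,M)=0$ we may take a general $d\in\PHom(n\delta)$ and a general $d'\in\PHom(kn\delta)$; since $d'$ is homotopy-equivalent to a summand-like construction, the natural thing is to use that $\hom$ is subadditive under adding weights (Lemma \ref{L:homineq}): $\hom(kn\delta,M)\leq k\,\hom(n\delta,M)=0$, so $kn$ works too. For the statement about $m$: if $m\delta=k\delta\oplus(m-k)\delta$ were a decomposition with $0<k<m$, then by the additivity part of Lemma \ref{L:homineq} we would have $\hom(m\delta,M)=\hom(k\delta,M)+\hom((m-k)\delta,M)$, and since the left side is $0$ and both summands are non-negative, both $\hom(k\delta,M)$ and $\hom((m-k)\delta,M)$ vanish; as $\delta(\alpha)=0$ still holds, this shows $k$ is a smaller valid exponent, contradicting minimality of $m$. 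Finally, if $\delta$ is not wild, i.e. $\e(\delta,\delta)=0$, then by Theorem \ref{T:CDPHom} the canonical decomposition of $m\delta$ is $\delta\oplus\cdots\oplus\delta$ ($m$ copies), so for $m>1$ it does admit the decomposition $m\delta=\delta\oplus(m-1)\delta$, forcing $m=1$.

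The main obstacle I anticipate is the forward direction's passage from ``$M$ is $\delta$-semistable'' to ``$\hom(n\delta,M)=0$'': one must be careful that the semi-invariant $s$ with $s(M)\neq 0$ can genuinely be replaced by some $c_d$ with $c_d(M)\neq 0$. This is exactly where Theorem \ref{T:Cdgen} is essential — it is not enough that semi-invariants detect semistability abstractly; we need the specific spanning set by determinantal semi-invariants $c_d$ so that non-vanishing translates into invertibility of $C(d,M)$, hence into $\hom(d,M)=0$. Once that translation is in hand, everything else is lower semi-continuity and the bookkeeping in Lemma \ref{L:homineq}.
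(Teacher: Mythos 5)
Your proof is correct and follows essentially the same route as the paper's: Theorem \ref{T:Cdgen} translates $\delta$-semistability into non-vanishing of some $c_d$, hence invertibility of $C(d,M)$, hence $\hom(d,M)=\e(d,M)=0$ for some $d$ of weight $n\delta$, which by semi-continuity is equivalent to $\hom(n\delta,M)=\delta(\alpha)=0$; the ``moreover'' clauses then follow from Lemma \ref{L:homineq} and Theorem \ref{T:CDPHom} exactly as you argue. Your write-up is in fact more explicit than the paper's about the two directions and about why minimality of $m$ forbids a decomposition $m\delta=k\delta\oplus(m-k)\delta$.
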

\begin{proof} By Theorem \ref{T:Cdgen} $M$ is $\delta$-semistable if and only if $c_d(M)\neq 0$ for some $d$ of weight $n\delta$ for some $n\in\mb{N}$.
	This happens if and only if the matrix $C(d,M)$ is invertible, which is equivalent to that $\hom(d,M)=\e(d,M)=0$.
	The condition that $\hom(d,M)=\e(d,M)=0$ for some $d$ is clearly equivalent to that $\hom(n\delta,M) = \delta(\alpha) =0$ for some $n\in\mb{N}$.
	
	The moreover part follows easily from Lemma \ref{L:homineq}.
	If $\delta$ is not wild, then $n\delta = \delta\oplus \cdots \oplus \delta$ by Theorem \ref{T:CDPHom}.
\end{proof}

We remark that the first statement of \cite[Theorem 1.1]{BST} follows from a special case of this lemma when the $\delta$ is real or equivalently the general presentation of weight $\delta$ is rigid.

\subsection{Proof of Theorem \ref{T:HomE}} \label{ss:proof}
We need to review the notion of one-point extension of $A$.
Let $M$ be a right $A$-module. Treating $M$ as a $k$-$A$-bimodule, the triangular algebra $A[M]:=\sm{A & 0\\ M & k}$ is called (trivial) {\em one-point extension} of $A$ by $M$. There is an obvious dual notion of one-point coextension $A[M^*]:=\sm{k & 0\\ M^* & A}$.

Suppose that $M\in\rep A$ is presented by $P(\beta_-)\xrightarrow{d} P(\beta_+)\to M\to 0$.
Then the algebra $\wtd{A}=A[M]$ can be presented by a new quiver $Q(M)$, which is obtained from $Q$ by adjoining a new vertex $\boxminus$ and $\beta_+(v)$ new arrows from $\boxminus$ to the vertex $v\in Q$. The relations are clearly given by the presentation $d$. In reality, the presentation is always chosen to be minimal. 
The one-point coextension $A[M^*]$ can be similarly described using injective presentation of $M$. By convention, the newly adjoined vertex is denoted by $\boxplus$. 
\footnote{The notation suggest that $\boxminus$ and $\boxplus$ should be visualized as the frozen vertices with label $-$ and $+$.}
By construction, we have the following exact sequences
\begin{align} &0\to (M,0)\to P_\bminus\to S_\bminus \to 0 && \text{for one-point extensions,} \\
&0\to S_\bplus \to I_\bplus \to (0,M)\to 0 && \text{for one-point coextensions.}
\end{align}

Let $B$ either be the algebra $A[M]$ or the algebra $A[M^*]$.
We have a restriction functor $\res_A: \rep B \to \rep A$ sending $M$ to $Me$ where $e=1-e_{\pm}$ and $e_{\pm}$ is the idempotent corresponding to the vertex $\boxplus$ or $\boxminus$.
The restriction functor has two induction functors $T_B:=-\otimes_A eB$ and $L_B:=\Hom_A(Be,-)$.
\begin{lemma}[{\cite[Theorem I.6.8]{ASS}}] \label{L:adjoint} $T_B$ (resp. $L_B$) is left (resp. right) adjoint to $\res_A$.
	Moreover, they satisfy $\res_A T_B \cong \Id_{\rep A} \cong \res_A L_B$.
\end{lemma}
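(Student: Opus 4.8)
The plan is to deduce the lemma from the standard idempotent formalism, treating $B$ as an arbitrary finite-dimensional $k$-algebra equipped with an idempotent $e\in B$ for which $eBe\cong A$. Both cases of interest are of this shape: for $B=A[M]$ one takes $e=1-e_\bminus$ and for $B=A[M^*]$ one takes $e=1-e_\bplus$, and a direct computation with the triangular matrix presentation gives $eBe\cong A$ in each case, e.g.\ $\sm{1&0\\0&0}\sm{A&0\\M&k}\sm{1&0\\0&0}=\sm{A&0\\0&0}$. Under this identification the three functors are realized by the $A$-$B$-bimodule $eB$ and the $B$-$A$-bimodule $Be$ as $\res_A\cong(-)e$, $T_B=-\otimes_A eB$, and $L_B=\Hom_A(Be,-)$; all three clearly restrict to functors between the finite-dimensional categories $\rep A$ and $\rep B$, so no subtlety about size arises.

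The first step is to record two alternative descriptions of the restriction functor. For $N\in\rep B$, evaluation $f\mapsto f(e)$ is a natural isomorphism $\Hom_B(eB,N)\xrightarrow{\ \sim\ }Ne$ (it lands in $Ne$ because $f(e)=f(e\cdot e)=f(e)e$, and it is bijective by inspection), while multiplication is a natural isomorphism $N\otimes_B Be\xrightarrow{\ \sim\ }Ne$, using the left-module decomposition $B=Be\oplus B(1-e)$. Hence $\res_A\cong\Hom_B(eB,-)\cong-\otimes_B Be$. Now invoke the ordinary tensor--hom adjunction twice. From $\res_A\cong\Hom_B(eB,-)$ one gets natural isomorphisms $\Hom_B(T_BL,N)=\Hom_B(L\otimes_AeB,N)\cong\Hom_A(L,\Hom_B(eB,N))\cong\Hom_A(L,\res_AN)$, so $T_B$ is left adjoint to $\res_A$; from $\res_A\cong-\otimes_BBe$ one gets $\Hom_A(\res_AN,L)=\Hom_A(N\otimes_BBe,L)\cong\Hom_B(N,\Hom_A(Be,L))=\Hom_B(N,L_BL)$, so $L_B$ is right adjoint to $\res_A$.

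For the identities $\res_AT_B\cong\Id_{\rep A}\cong\res_AL_B$ the only computational input is the isomorphism of $A$-$A$-bimodules $eB\otimes_BBe\xrightarrow{\ \sim\ }eBe=A$ given by multiplication: surjectivity is obvious, and injectivity holds because, under the right-module decomposition $B=eB\oplus(1-e)B$, this map is the restriction to the summand $eB\otimes_BBe$ of the canonical isomorphism $B\otimes_BBe\xrightarrow{\ \sim\ }Be$, and its image is exactly $eBe$. Granting this, associativity of $\otimes$ and tensor--hom give natural isomorphisms $\res_AT_BL\cong(L\otimes_AeB)\otimes_BBe\cong L\otimes_A(eB\otimes_BBe)\cong L\otimes_AA\cong L$ and $\res_AL_BL\cong\Hom_B(eB,\Hom_A(Be,L))\cong\Hom_A(eB\otimes_BBe,L)\cong\Hom_A(A,L)\cong L$, and these are implemented respectively by the unit of $T_B\dashv\res_A$ and the counit of $\res_A\dashv L_B$, which is what gets used later.

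The argument is entirely formal, so there is no genuine obstacle; the one thing requiring care is bookkeeping — keeping the left/right conventions straight and making sure that in each $\Hom$ and $\otimes$ the intended bimodule structure is the one being used, so that the units and counits coincide with the natural maps referred to elsewhere. I would also note that this is precisely \cite[Theorem~I.6.8]{ASS}, so one may simply cite it; the sketch above is recorded only for self-containedness.
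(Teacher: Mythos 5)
Your argument is correct. The paper gives no proof of this lemma at all --- it is quoted verbatim from \cite[Theorem I.6.8]{ASS} --- so there is nothing to compare against; your idempotent-formalism derivation (identifying $eBe\cong A$, rewriting $\res_A\cong\Hom_B(eB,-)\cong -\otimes_B Be$, applying tensor--hom adjunction twice, and using $eB\otimes_B Be\cong eBe$ for the identities $\res_A T_B\cong\Id\cong\res_A L_B$) is exactly the standard proof of the cited result, and every step, including the left/right bookkeeping under the paper's right-module convention, checks out.
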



\begin{corollary} \label{L:Tgen} If $d$ is a general presentation of weight $\delta$, then $T_{B}(d)$ is a general presentation of weight $(\delta,0)$ or $(0,\delta)$ (depending on $B=A[M]$ or $A[M^*]$).
	Moreover, $\Coker(T_B(d)) = T_B(\Coker(d))$.
\end{corollary}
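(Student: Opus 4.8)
The plan is to unwind the definitions of the induction functor $T_B$ and the adjunction of Lemma~\ref{L:adjoint}, and to check that genericity is preserved. Recall $T_B = -\otimes_A eB$ where $e = 1-e_\pm$; since $eB$ is projective as a right $B$-module (it is a summand of $B$), $T_B$ is exact and sends projectives to projectives. First I would compute $T_B(P_v)$ for $v\in Q_0$: because $P_v = e_v A$ and $e_v A \otimes_A eB = e_v B$, and by the construction of the quiver $Q(M)$ the indecomposable projective $B$-module at the old vertex $v$ is exactly $e_v B$, one gets $T_B(P_v) \cong P_v^B$, the indecomposable projective of $B$ at $v$. Hence $T_B$ takes a projective presentation $P(\beta_-)\xrightarrow{d} P(\beta_+)$ over $A$ to the presentation $P^B(\beta_-)\to P^B(\beta_+)$ over $B$, where $\beta_\pm$ are supported on $Q_0\subset Q(M)_0$; this shows the $\delta$-vector of $T_B(d)$ is $(\delta,0)$ (or $(0,\delta)$ in the coextension case), with zero component at the new vertex $\boxminus$ (resp.\ $\boxplus$).

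Next I would verify $\Coker(T_B(d)) = T_B(\Coker(d))$: this is immediate from right-exactness of $T_B$ applied to $P(\beta_-)\xrightarrow{d}P(\beta_+)\to \Coker(d)\to 0$, together with $T_B(P(\beta_\pm)) = P^B(\beta_\pm)$ as above.

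The one genuinely substantive point is that $T_B(d)$ is \emph{general} in $\PHom(\delta,0)$ when $d$ is general in $\PHom(\delta)$. The key observation is that $T_B$ induces an isomorphism of affine spaces
$$\Hom_A(P(\beta_-),P(\beta_+)) \xrightarrow{\ \sim\ } \Hom_B\big(T_B P(\beta_-), T_B P(\beta_+)\big) = \Hom_B\big(P^B(\beta_-),P^B(\beta_+)\big).$$
Indeed, by the adjunction $\res_A T_B \cong \Id$ together with $\Hom_B(T_B X, Y)\cong \Hom_A(X,\res_A Y)$, applied to $X = P(\beta_-)$ and $Y = T_B P(\beta_+)$, we get $\Hom_B(T_BP(\beta_-),T_BP(\beta_+))\cong \Hom_A(P(\beta_-),\res_A T_B P(\beta_+)) = \Hom_A(P(\beta_-),P(\beta_+))$, and one checks this bijection is $T_B$ itself (it is $k$-linear, hence an isomorphism of affine spaces, in fact linear). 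Since $\PHom(\delta,0)$ is, by definition, $\Hom_B(P^B(\delta_-),P^B(\delta_+))$ (the new vertex contributes nothing because $\delta$ has zero component there), any dense open $U\subseteq \PHom(\delta) = \Hom_A(P(\delta_-),P(\delta_+))$ is carried to a dense open $T_B(U)\subseteq \PHom(\delta,0)$. So a general $d\in\PHom(\delta)$ has $T_B(d)$ general in $\PHom(\delta,0)$, and in particular $\Coker(T_B(d)) = T_B(\Coker(d)) = T_B(\Coker(\delta))$ computes $\Coker(\delta,0)$.

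The main obstacle I anticipate is bookkeeping around the homotopy-equivalence conventions: one must make sure that ``general presentation of weight $\delta$'' over $A$ and over $B$ are compared after reducing to the normalized form $P(\delta_-)\to P(\delta_+)$, since $T_B$ need not send a minimal presentation to a minimal one only if there were new relations at $\boxminus$ involved — but here the new vertex carries no arrows into the old part from the $A$-side of the presentation, so no cancellation occurs and minimality is in fact preserved. Once the identification of the affine $\Hom$-spaces above is in hand, the rest is formal.
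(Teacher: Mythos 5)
Your proposal is correct and follows essentially the same route as the paper: the paper's two-line proof invokes exactly the adjunction isomorphism $\Hom_B(T_B(V),T_B(W))\cong\Hom_A(V,\res_AT_B(W))\cong\Hom_A(V,W)$ for the genericity claim and right-exactness of $T_B$ for the cokernel claim, and your argument just fills in the details (that $T_B$ sends $P_v$ to the projective at the old vertex $v$, and that the linear isomorphism of $\Hom$-spaces carries dense opens to dense opens). The closing worry about minimal presentations is unnecessary, since the statement concerns general elements of $\PHom(\delta)$ rather than minimal presentations, but it does not affect correctness.
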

\begin{proof} The first statement follows from the equality 
	$$\Hom_B(T_B(V),T_B(W)) \cong \Hom_A(V,\res_AT_B(W)) \cong \Hom_A(V,W) \text{ for any $V,W\in\rep A$}.$$
	The second statement is due to the right exactness of $T_B$.
\end{proof}

Let $M$ be a representation of $A$. We extend $A$ by $M$ and obtain the algebra $A^-:=A[M]$.
Then we coextend $A[M]$ by the indecomposable projective representation $P_\bminus$ of $A[M]$, and obtain the algebra $(A[M])[P_\bminus^*]$.
We denote $A^\pm:=(A[M])[P_\bminus^*]=A^-[P_\bminus^*]$.
Note that 
$$A^\pm = \left(\begin{matrix} k & 0 & 0\\ M^* & A & 0\\ k & M & k
\end{matrix} \right)$$
Throughout we use $P_\bminus$ to denote the above indecomposable projective representation of $A[M]$ rather than the indecomposable projective representation of $A^\pm$.


\begin{lemma} \label{L:Hom_res} We have that 
	$$\Hom_{A^\pm}\big(T_{A^\pm}(T_{A^-}(V)), I_\bplus \big) \cong \Hom_A(V, M).$$
\end{lemma}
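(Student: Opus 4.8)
The plan is to peel off the two successive one-point (co)extensions in the chain $A$, $A^-=A[M]$, $A^\pm=A^-[P_\bminus^*]$ one at a time, using the adjunctions recorded in Lemma~\ref{L:adjoint} to transport the $\Hom$ on the left-hand side from $\rep A^\pm$ down to $\rep A^-$ and then to $\rep A$.

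First I would use that $T_{A^\pm}$ is left adjoint to $\res_{A^-}\colon \rep A^\pm \to \rep A^-$, giving
$$\Hom_{A^\pm}\bigl(T_{A^\pm}(T_{A^-}(V)),\, I_\bplus\bigr) \;\cong\; \Hom_{A^-}\bigl(T_{A^-}(V),\, \res_{A^-}(I_\bplus)\bigr).$$
So the first real task is to identify $\res_{A^-}(I_\bplus)$. For this I would invoke the defining short exact sequence of the one-point coextension $A^\pm = A^-[P_\bminus^*]$, namely $0 \to S_\bplus \to I_\bplus \to (0, P_\bminus) \to 0$ in $\rep A^\pm$, and apply the exact functor $\res_{A^-}$: the term $S_\bplus$ is supported only at the frozen vertex $\bplus$, so it restricts to $0$, and $\res_{A^-}(0,P_\bminus) = P_\bminus$, whence $\res_{A^-}(I_\bplus) \cong P_\bminus$ — here $P_\bminus$ is the indecomposable projective of $A[M]$, per the convention fixed above, not that of $A^\pm$.

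Having reduced to $\Hom_{A^-}(T_{A^-}(V), P_\bminus)$, I would apply the same mechanism once more: $T_{A^-}$ is left adjoint to $\res_A\colon \rep A^- \to \rep A$, so this equals $\Hom_A(V, \res_A(P_\bminus))$. It then remains to see $\res_A(P_\bminus) \cong M$, which falls out of the defining short exact sequence $0 \to (M,0) \to P_\bminus \to S_\bminus \to 0$ of the one-point extension $A^- = A[M]$ after applying the exact functor $\res_A$ (again $S_\bminus$ restricts to $0$). Chaining the three isomorphisms yields the claimed $\Hom_{A^\pm}(T_{A^\pm}(T_{A^-}(V)), I_\bplus) \cong \Hom_A(V,M)$.

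The argument is essentially bookkeeping, and the only place that needs care is the middle step: one must restrict $I_\bplus$ only to $A^-$ rather than all the way to $A$, must use the coextension sequence relative to the correct base (coextending $A^-$ by the $A^-$-module $P_\bminus$), and must track that $\res_{A^-}$ annihilates exactly the socle term $S_\bplus$. Getting these identifications right — and keeping the two possible meanings of ``$P_\bminus$'' separate — is the main, if modest, obstacle.
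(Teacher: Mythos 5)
Your proof is correct and follows exactly the paper's route: apply the adjunction $T \dashv \res$ from Lemma~\ref{L:adjoint} twice and identify $\res_{A^-}(I_\bplus)=P_\bminus$ and $\res_A(P_\bminus)=M$. You merely spell out these two restriction identities (via the defining short exact sequences) where the paper asserts them directly.
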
	
\begin{proof} We have that $\res_{A^-}(I_\bplus) = P_\bminus$ and $\res_A(P_\bminus) = M$.
	So apply Lemma \ref{L:adjoint} twice, and we get
	$$\Hom_{A^\pm}\big(T_{A^\pm}(T_{A^-}(V)), I_\bplus \big) \cong \Hom_{A^-}(T_{A^-}(V), P_\bminus)\cong \Hom_{A}(V, M).$$
\end{proof}

\begin{definition} A vertex $v$ is called {\em maximal} in a representation $M$
	if $\dim M(v)=1$ and all strict subrepresentations of $M$ are not supported on $v$.
\end{definition}

Let ${f}_\bplus$ (resp. ${f}_{\bminus}$) be the tropical $F$-polynomial of $I_\bplus$ (resp. $P_\bminus$).
\begin{lemma} \label{L:max} $\boxminus$ is a maximal vertex of $I_\bplus \in \rep A^\pm$.
	Moreover, we have that 
	\begin{align*}
	{f}_{\bminus}((\delta,\delta_-)) &= \max({f}_M(\delta), \delta(\dv M)+\delta_- ),\\
	{f}_{\bplus}((\delta_+,\delta,\delta_-)) &= \max(0,{f}_{\bminus}((\delta,\delta_-)) + \delta_+).
	\end{align*}
\end{lemma}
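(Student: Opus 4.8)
The plan is to pin down the subrepresentation lattice of $I_\bplus$ explicitly, read off maximality of $\boxminus$, and then compute both tropical $F$-polynomials straight from Definition \ref{D:tropf}. First I would record the structure of $I_\bplus$ as a representation of $A^\pm=(A[M])[P_\bminus^*]$. As the indecomposable injective at the coextension vertex $\boxplus$ it has simple socle $S_\bplus$, and the defining exact sequence of a one-point coextension reads $0\to S_\bplus\to I_\bplus\xrightarrow{\pi}(0,P_\bminus)\to 0$, where $(0,P_\bminus)$ is $P_\bminus$ regarded as an $A^\pm$-representation supported on the $A^-$-part; restricting once more gives $\res_A I_\bplus=\res_A P_\bminus=M$ and $\dim I_\bplus(\boxminus)=\dim P_\bminus(\boxminus)=1$. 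I would also recall that $P_\bminus$ is the projective cover of $S_\bminus$, so $\op{rad}P_\bminus=(M,0)=\ker(P_\bminus\to S_\bminus)$ is unsupported on $\boxminus$; that $\boxplus$ is a sink of the quiver while $\boxminus$ is a source, so $S_\bplus$ is a subrepresentation of $I_\bplus$; and that every arrow of $A^\pm$ incident to $\boxminus$ or $\boxplus$ acts as zero on $(M,0)$.

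Next I would classify subrepresentations at both levels. If $L\subseteq P_\bminus$ has $L(\boxminus)=0$ then $L$ maps to $0$ in $P_\bminus/\op{rad}P_\bminus=S_\bminus$, hence $L\subseteq(M,0)$, and conversely the subrepresentations contained in $(M,0)$ are exactly the $A$-subrepresentations of $M$ extended by zero; if $L(\boxminus)\neq 0$ then $L$ surjects onto $S_\bminus$ and Nakayama's lemma gives $L=P_\bminus$. Passing to $I_\bplus$: its socle $S_\bplus$ is simple, so every nonzero subrepresentation $L$ contains $S_\bplus$, whence $L=\pi^{-1}(\bar L)$ for $\bar L:=\pi(L)\subseteq P_\bminus$ and $\dv L=\dv S_\bplus+\dv\bar L$. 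In particular $L(\boxminus)\neq 0$ forces $\bar L=P_\bminus$, i.e. $L=I_\bplus$; together with $\dim I_\bplus(\boxminus)=1$ this shows $\boxminus$ is a maximal vertex of $I_\bplus$.

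Finally I would evaluate. For $f_\bminus$, the two families of subrepresentations of $P_\bminus$ give maxima $\max_{L\hookrightarrow M}\delta(\dv L)=f_M(\delta)$ and $(\delta,\delta_-)(\dv P_\bminus)=\delta(\dv M)+\delta_-$ (the zero subrepresentation is absorbed since $f_M(\delta)\geq 0$), so $f_\bminus((\delta,\delta_-))=\max(f_M(\delta),\delta(\dv M)+\delta_-)$. For $f_\bplus$, every nonzero subrepresentation is $\pi^{-1}(\bar L)$ of value $(\delta_+,\delta,\delta_-)(\dv S_\bplus+\dv\bar L)=\delta_++(\delta,\delta_-)(\dv\bar L)$, whose maximum over $\bar L\subseteq P_\bminus$ is $\delta_++f_\bminus((\delta,\delta_-))$; adjoining the zero subrepresentation yields $f_\bplus((\delta_+,\delta,\delta_-))=\max\big(0,f_\bminus((\delta,\delta_-))+\delta_+\big)$.

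The step needing the most care is the first one — identifying $I_\bplus$ with the extension $0\to S_\bplus\to I_\bplus\to(0,P_\bminus)\to 0$ and checking that the identifications respect the $Q_0\sqcup\{\boxminus,\boxplus\}$-grading, since the dimension-vector bookkeeping above (in particular the single $1$ appearing in the $\boxplus$- and $\boxminus$-slots, and the fact that $\dv L$ picks up exactly $\dv S_\bplus$) depends on it. Once that is in place, the rest is a direct inspection of the subrepresentation lattice.
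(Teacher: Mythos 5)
Your proposal is correct and follows essentially the same route as the paper: both classify the subrepresentations of $P_\bminus$ (either contained in $(M,0)$ or equal to $P_\bminus$, since the one-dimensional space at $\boxminus$ generates) and then lift this to $I_\bplus$ via the coextension sequence $0\to S_\bplus\to I_\bplus\to(0,P_\bminus)\to 0$, using that every nonzero subrepresentation of $I_\bplus$ contains $S_\bplus$. Your write-up merely makes explicit (via Nakayama and the simple-socle argument) the two classification steps that the paper states more tersely.
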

\begin{proof} Recall that we have two exact sequences
	\begin{align*}0\to (M,0)  \to P_\bminus \to S_\bminus \to 0, \\
	0\to S_\bplus  \to I_\bplus \to (0,P_\bminus) \to 0. \end{align*}
	Since the $1$-dimensional subspace of $P_\bminus$ at vertex $\boxminus$ generates $P_\bminus$, 
	we see that a subrepresentation of $P_\bminus$ is either a subrepresentation of $M$ or $P_\bminus$ itself.
	Hence, $${f}_{\bminus}((\delta,\delta_-)) = \max({f}_M(\delta), \delta(\dv M)+\delta_- ).$$
	Next, whenever there is a subrepresentation $S$ of $P_\bminus$, we have a subrepresentation $(k,S)$ of $I_\bplus$.
    Conversely, any nonzero subrepresentation of $I_\bplus$ must be supported on $\boxplus$.
    Hence, $${f}_{\bplus}((\delta_+,\delta,\delta_-)) = \max(0,{f}_{\bminus}((\delta,\delta_-)) + \delta_+).$$
\end{proof}

\begin{lemma}\label{L:hom=0} Suppose that a representation $M$ contains a maximal vertex $v$. Then $f_M(\delta)=0$
if and only if $\hom(n\delta,M)=0$ for some $n\in \mb{N}$.
\end{lemma}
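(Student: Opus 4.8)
The plan is to use the one-point extension $A^\pm$ and Lemma \ref{L:Hom_res} to convert the statement about $M$ over $A$ into a statement about semistability of the representation $I_\bplus$ over $A^\pm$, then invoke Lemma \ref{L:ss}. First I would observe that by Lemma \ref{L:HomE} one always has $f_M(\delta)\leq\hom(\delta,M)$, so the content is the reverse implication: if $f_M(\delta)=0$ we must produce $n$ with $\hom(n\delta,M)=0$. Since $v$ is a maximal vertex of $M$, by the description in Lemma \ref{L:max} (applied with $M$ in place of $I_\bplus$, or directly from the definition of maximal vertex) a subrepresentation of $M$ is either supported away from $v$ or is all of $M$; consequently $f_M$ on $M$ behaves like $\max(0,\text{something}+\delta(v))$ where the ``something'' ranges over dimension vectors of proper subrepresentations. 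The key point is that $f_M(\delta)=0$ forces $\delta(\dv M)\leq 0$ together with $\delta(\dv L)\leq 0$ for every proper $L$, i.e. exactly King's semistability criterion (Lemma \ref{L:King}) — except possibly for the normalization $\delta(\dv M)=0$, which is where the maximal vertex is used to absorb the slack.

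The main steps, in order, would be: (1) reduce to showing $f_M(\delta)=0\Rightarrow M$ is $\delta'$-semistable for a suitably modified weight $\delta'$ on the one-point extension; (2) set up $B=A[M]$ so that $M$ sits inside $\rep B$ via $P_\bminus$ — recall $\res_A P_\bminus=M$ and the exact sequence $0\to(M,0)\to P_\bminus\to S_\bminus\to 0$ — and choose the weight on the new vertex $\boxminus$ to make the total weight pair to zero on $\dv P_\bminus$; (3) check via Lemma \ref{L:max} and the subrepresentation structure of $P_\bminus$ that $f_{P_\bminus}$ of this weight is zero precisely when $f_M(\delta)=0$, so that $P_\bminus$ is semistable for this weight; (4) apply Lemma \ref{L:ss} to conclude $\hom(n\delta',P_\bminus)=0$ for some $n$; (5) transport back to $A$ using Lemma \ref{L:Tgen} / Corollary \ref{L:Tgen}: a general presentation of weight $n\delta$ over $A$ induces a general presentation of weight $(n\delta,0)$ over $B$, and $\Hom_B((n\delta,0),P_\bminus)\cong\Hom_A(n\delta,\res_A P_\bminus)=\Hom_A(n\delta,M)$ by adjunction (this is the identity $\Hom_B(T_B(V),W)\cong\Hom_A(V,\res_A W)$), so $\hom(n\delta,M)=\hom((n\delta,0),P_\bminus)=0$.

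The step I expect to be the main obstacle is step (3): carefully matching the normalization. King's criterion requires $\delta(\dv M)=0$, but here $\delta$ is arbitrary with only $f_M(\delta)=0$, which allows $\delta(\dv M)<0$. The role of the maximal vertex $v$ is to enlarge the representation by one dimension at a new vertex so that the extra coordinate of the weight can be tuned to make the pairing with the enlarged dimension vector vanish while not creating any new proper subrepresentation that violates the inequality — precisely because every subrepresentation of $P_\bminus$ is either a subrepresentation of $M$ or all of $P_\bminus$. I would need to verify that with the weight $(\delta,\delta_-)$ where $\delta_-=-\delta(\dv M)\geq 0$, one gets $f_{P_\bminus}((\delta,\delta_-))=\max(f_M(\delta),\delta(\dv M)+\delta_-)=\max(0,0)=0$, hence $\delta(\dv P_\bminus)=0$ and $P_\bminus$ is $(\delta,\delta_-)$-semistable, and then that the sign $\delta_-\geq 0$ is compatible with working in $\rep B$ (so that $(\delta,\delta_-)$ is an honest weight vector on the quiver $Q(M)$). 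Once the bookkeeping of this normalization is pinned down, the rest is a formal application of the already-established Lemma \ref{L:ss} together with the adjunction identities, and the ``moreover'' clauses ($n$ replaceable by $kn$, minimality and indecomposability of $m\delta$) descend immediately from the corresponding clauses in Lemma \ref{L:ss}.
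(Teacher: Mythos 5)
Your argument is correct, but it takes a genuinely different route from the paper's, and in doing so it never actually uses the hypothesis of the lemma. The paper's proof stays entirely inside $\rep A$: since $v$ is a maximal vertex of $M$ itself, one simply adds $c=-\delta(\dv M)\geq 0$ copies of $P_v$ to the target $P_+$, so the new weight $\delta'=\delta+c\e_v$ satisfies $\delta'(\dv M)=0$ (because $\dim M(v)=1$) while $\delta'(\dv L)=\delta(\dv L)\leq 0$ for every proper $L$ (because proper subrepresentations avoid $v$); King's criterion and Lemma \ref{L:ss} applied directly to $M$ give $\hom(n\delta',M)=0$, and dropping the $ncP_v$ summands from the target of a general presentation recovers $\hom(n\delta,M)=0$. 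You instead manufacture a fresh maximal vertex by passing to $B=A[M]$ and replacing $M$ by $P_{\bminus}$, which works but is heavier: you need Lemma \ref{L:ss} over $B$, the adjunction of Lemma \ref{L:adjoint}/Corollary \ref{L:Tgen}, and the formula of Lemma \ref{L:max}. What your route buys is a version of the lemma with no maximal-vertex hypothesis at all (the hypothesized $v$ plays no role in your argument); what the paper's route buys is brevity, and the hypothesis is harmless there because Theorem \ref{T:HomE} performs the one-point (co)extension once, externally, precisely to put itself in the situation of the lemma. One step you assert but do not justify: passing from $\hom\bigl(n(\delta,\delta_-),P_{\bminus}\bigr)=0$ to $\hom\bigl((n\delta,0),P_{\bminus}\bigr)=0$. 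Since $\delta_-\geq 0$, the $n\delta_-$ copies of the new projective sit in the \emph{target}, and for $d'':P_-\to P_+\oplus P'$ one has an inclusion $\Hom(d,N)\hookrightarrow \Hom(d'',N)$, $\phi\mapsto(\phi,0)$, where $d$ is the (still general) component $P_-\to P_+$; this is exactly the bookkeeping in the last two sentences of the paper's proof and should be spelled out. With that filled in, the rest of your outline (including the descent of the ``moreover'' clauses from Lemma \ref{L:ss}) is sound.
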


\begin{proof} If $\hom(n\delta,M)=0$, then $\hom(n\delta,L)=0$, and thus $\delta(\dv L)\leq 0$ for all subrepresentations $L$ of $M$.
Conversely, suppose that $\PHom(\delta)=\Hom(P_-,P_+)$.
	We add $c=-\delta(\dv M)\geq 0$ copies of $P_v$'s to $P_+$ so that a presentation in $\Hom(P_-,P_+ \oplus cP_v)$ has weight $\delta'=\delta+ c \e_v$.
	It satisfies that $\delta'(\dv M)=0$ and $\delta'(\dv L)=\delta(\dv L)\leq 0$ for all subrepresentations $L \subsetneq M$.
	By King's criterion (Lemma \ref{L:King}), we see that $M$ is $\delta'$-semistable, and thus $\hom(n\delta',M)=0$ for some $n\in\mb{N}$ by Lemma \ref{L:ss}.
	Now a general presentation $nP_-\xrightarrow{(d,d')} nP_+\oplus ncP_v$ must have $d$ general in $\Hom(nP_-,nP_+)$.
	Hence, $\hom(n\delta',M)=0$ implies $\hom(n\delta,M)=0$.
\end{proof}

\begin{remark} This lemma was proved in \cite[Lemma 6.6]{Fs1} for $A$ being a Jacobian algebra. The argument actually works for any finite-dimensional algebras. Unfortunately, there is a gap in the proof where we assume that $n$ can always be $1$.
	However, each representation $T_{i,j}$ in \cite[Section 6]{Fs1} is negative reachable.
	So this would not affect the main results there due to Theorem \ref{T:HomEQP} below.
\end{remark}

\begin{proof}[Proof of Theorem \ref{T:HomE}] (1). Let $d$ be general presentation of any weight $\delta$ in $\rep A$ and $V=\Coker(d)$. 	By Corollary \ref{L:Tgen}, $T_{A^\pm}(T_{A^-}(V)) = \Coker(T_{A^\pm}(T_{A^-}(d)))$ and $T_{A^\pm}(T_{A^-}(d))$ is a general presentation of weight $\tilde{\delta}=(0,\delta,0)$.
	By Lemma \ref{L:Hom_res}, we have that 
	$$\hom(\tilde{\delta},I_\bplus)=\hom(\delta,M).$$
	
	$\Coker(\tilde{\delta})$ may not be supported on the original quiver $Q$. We are going to put an appropriate negative weight $\delta_+$ on the vertex $\bplus$.
	By Lemma \ref{L:max}, we have that 
	$${f}_\bplus((\delta_+,\delta,0))= \max(0,{f}_{\bminus}((\delta,0)) + \delta_+) = \max(0, {f}_M(\delta) +\delta_{+}).$$
	Let $\delta_+ = -f_M(\delta)$, then ${f}_\bplus((\delta_+,\delta,0))=0$.
	By Lemma \ref{L:hom=0}, $\hom(n(\delta_+,\delta,0),I_\bplus) = 0$ for some $n\in \mb{N}$.
	But $\hom((n\delta_+,n\delta,0),I_\bplus) = 0$ implies that $\hom((0,n\delta,0),I_\bplus) \leq -n\delta_+$ because $\dim I_{\bplus}(\bplus) = 1$.
	Hence we have
	$$f_M(n\delta) \geq \hom((0,n\delta,0),I_\bplus)=\hom(n\delta,M).$$
	We get the equality by Lemma \ref{L:HomE}.
	Then the equality $\fc_M(-n\delta) = {\e}(n\delta,M)$ follows from the relation \eqref{eq:heform}.
	
	The other half can be proved by the dual argument.	
	The moreover part follows from the corresponding part in Lemma \ref{L:ss}.
\end{proof}

\subsection{The Case of Quivers with Potentials}
We refer the readers to the original papers \cite{DWZ1,DWZ2} for the theory of the quivers with potential. 
In this subsection, $(Q,\mc{P})$ is a quiver with potential such that its {\em Jacobian algebra} $A=J(Q,\mc{P})$ is finite-dimensional. 
The key notion introduced in \cite{DWZ1} is the {\em mutation} of a quiver of potential and its {decorated} representations.

A vertex is called {\em admissible} if it is not involved in any oriented cycle of length $\leq 2$.
For each admissible vertex $u \in Q_0$, there is an operation $\mu_u$, which yields a new quiver with potential $\mu_u(Q,\mc{P})$.
A {\em decorated} representation $\mc{M}=(M,V)$ consists of two parts: $M$ is a usual representation and the decorated part $V$ is a $k^{Q_0}$-module. A usual representation $M$ can be regarded as a decorated representation $(M,0)$.
For each (decorated) representation $\mc{M}$, there is a mutated representation $\mu_u(\mc{M})$ of $\mu_u(Q,\mc{P})$.
For any weight vector $\delta\in\mb{Z}^{Q_0}$, there is a mutated weight vector $\mu_u(\delta)$ defined by \cite[(2.11)]{DWZ2}.
For a decorated representation $\mc{M}=(M,V)$, its tropical $F$-polynomial and related functors, such as $\Hom(-,M)$ and $\E(-,M)$, are all defined to be those of $M$.  

\begin{lemma} \label{L:HomEQP} Let $\bs{\mu}$ be a sequence of mutations (at admissible vertices). We denote $M':=\bs{\mu}(M)$ and $\delta' := \bs{\mu}(\delta)$.
	We have the following relation for any representation $M$ and $\delta\in \mb{Z}^{Q_0}$:
	\begin{align*}
	&{f}_{M'}(\delta') - {f}_M(\delta) = \hom(\delta',M') - \hom(\delta,M);\\
	&\fc_{M'}(-\delta') - \fc_M(-\delta) = {\e}(\delta',M') - {\e}(\delta,M).
	\end{align*}
There are similar relations for $\fc_M(\dtc)$, $\hom(M,\dtc)$ and $\ec(M,\dtc)$.
\end{lemma}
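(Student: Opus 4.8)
The plan is to reduce Lemma \ref{L:HomEQP} to two facts that are already available: Theorem \ref{T:HomE}, which says that for a suitable $n$ the tropical $F$-polynomial value $f_M(n\delta)$ coincides with $\hom(n\delta,M)$, and the compatibility of mutation with the relevant homological invariants. More precisely, I would first recall from \cite{DWZ2} that mutation of decorated representations and $\delta$-vectors is compatible with the $\E$-invariant: for any decorated representation $\mc{M}$ and any $\delta$, one has $\e(\delta',M') = \e(\delta,M)$ where $M' = \bs\mu(M)$, $\delta' = \bs\mu(\delta)$ (this is \cite[Theorem 3.2]{DWZ2}, the mutation invariance of the $E$-invariant, once one checks the decorated parts do not interfere; here all our representations are honest, i.e. $\mc{M} = (M,0)$, and after mutation the decorated part is controlled by the negative part of $\delta$). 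Dually one has $\ec(\delta',M') = \ec(\delta,M)$, hence by \eqref{eq:hedual} also $\e(-\delta',M') = \e(-\delta,M)$ and so on. So the right-hand sides of the two displayed equations are statements purely about $f$ and $\fc$, and the claim becomes: mutation changes $f_M(\delta)$ and $\hom(\delta,M)$ by the same amount.

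Next I would invoke \eqref{eq:heform}: for any presentation of weight $\delta$ we have $\delta(\dv M) = \hom(\delta,M) - \e(\delta,M)$, and similarly $\delta'(\dv M') = \hom(\delta',M') - \e(\delta',M')$. Subtracting, and using $\e(\delta',M') = \e(\delta,M)$, gives
\[
\hom(\delta',M') - \hom(\delta,M) = \delta'(\dv M') - \delta(\dv M).
\]
So it suffices to show the \emph{same} identity for $f$, namely $f_{M'}(\delta') - f_M(\delta) = \delta'(\dv M') - \delta(\dv M)$. Here I would use the $F$-polynomial mutation formula of \cite[Theorem 5.1 / Proposition 5.3]{DWZ2}, which relates $F_{M'}$ and $F_M$ by a monomial change of variables together with multiplication by a monomial; tropically this becomes exactly an affine-linear shift. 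Concretely, the combinatorics of the $F$-polynomial mutation at an admissible vertex $u$ says that the Newton polytope $\N(M')$ is obtained from $\N(M)$ by an integral piecewise-linear transformation (the tropical cluster transformation), and $f_{M'}(\delta') = f_M(\delta) + (\text{the monomial correction})(\delta)$, where the correction is precisely $\delta'(\dv M') - \delta(\dv M)$ because both $f$ and $\hom$ satisfy \eqref{eq:heform}-type identities with the same $\e$. Alternatively — and this is cleaner — I would argue: by Theorem \ref{T:HomE}, choose $n$ large enough (replacing $n$ by a common multiple so it works for both $\delta$ and $\delta'$, using the ``$n$ can be replaced by $kn$'' clause) so that $f_M(n\delta) = \hom(n\delta,M)$ and $f_{M'}(n\delta') = \hom(n\delta',M')$. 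Then
\[
f_{M'}(n\delta') - f_M(n\delta) = \hom(n\delta',M') - \hom(n\delta,M),
\]
and since $f_M$ and $\hom(\cdot,M)$ are both positively homogeneous of degree $1$ along the ray $\mb{R}_{\geq 0}\delta$ \emph{once they agree there} (this is exactly the content of the Remark after Theorem \ref{T:HomE}: $\hom(n\delta,M) = n\hom(\delta,M)$ iff $f_M(\delta) = \hom(\delta,M)$), I would divide by $n$. But wait — the subtlety is that $f_M(\delta) = \hom(\delta,M)$ need not hold; the $n>1$ phenomenon of Example \ref{ex:n>1} is real. So dividing by $n$ is not automatically legitimate for the individual terms. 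The safe route is therefore the first one: establish the identity at level $\delta$ directly via \eqref{eq:heform} plus $\e$-mutation-invariance for the $\hom$ side, and via the explicit $F$-polynomial mutation rule for the $f$ side, then note the two corrections coincide because they are both $\delta'(\dv M') - \delta(\dv M)$.

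The remaining work is to assemble these: (i) cite/derive $\e(\delta',M') = \e(\delta,M)$ and its $\ec$-analogue from \cite{DWZ2}; (ii) apply \eqref{eq:heform} and \eqref{eq:heformdual} to both sides to convert $\hom$-differences and $\e$-differences into $\delta$-$\dv$ pairings; (iii) verify that the tropicalization of the $F$-polynomial mutation formula gives $f_{M'}(\delta') - f_M(\delta) = \delta'(\dv M') - \delta(\dv M)$ — equivalently, that $f$ satisfies the analogue of \eqref{eq:heform} with $\e$ replaced by $\fc$, namely $f_M(\delta) = \delta(\dv M) + \fc_M(\delta) \cdot(-1)$... actually the cleanest statement is the identity $f_M(\delta) + \fc_M(-\delta)$ is \emph{not} $\delta(\dv M)$ in general, so I must be careful and instead use the $F$-polynomial mutation rule of \cite{DWZ2} directly; (iv) for the dual/injective statements, run the Nakayama-dual version using \eqref{eq:hedual} and the injective presentation space $\IHom(\dtc)$. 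The main obstacle, and the step I expect to need the most care, is step (iii): matching the tropical $F$-polynomial mutation correction term to $\delta'(\dv M') - \delta(\dv M)$ on the nose, including the sign conventions for $\delta$-vectors versus $g$-vectors flagged in the footnote to Definition~\ref{D:tropf}'s vicinity, and handling the decorated part that appears after mutation (so that ``$M'$'' really is $\bs\mu(M)$ as a representation, with its decoration accounted for when it is nonzero). Everything else is bookkeeping with the long exact sequence \eqref{eq:longexact} and the adjunctions already recorded in Section \ref{S:GP}.
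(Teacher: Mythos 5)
Your plan has a genuine gap at its linchpin. The claim in step (i) that $\e(\delta',M')=\e(\delta,M)$ is false: the invariant that \cite{DWZ2} proves to be preserved by mutation is the \emph{symmetrized} $E$-invariant (and its diagonal), not the one-sided quantity $\e(\delta,M)\cong\hom(M,\Ker(\tau_p d))$. Concretely, take $Q\colon 1\to 2$ with zero potential, $M=P_1$, $\delta=-e_1$, and mutate at the vertex $1$: then $M'=\mu_1(P_1)=S_2$ over the reversed quiver and $\delta'=\mu_1(-e_1)=e_1$ (the negative simple at $1$ mutates to $S_1$, which is projective over the new quiver), so $\e(\delta,M)=\hom(P_1,P_1)=1$ while $\e(\delta',M')=0$. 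Consequently the identity you derive from \eqref{eq:heform}, namely $\hom(\delta',M')-\hom(\delta,M)=\delta'(\dv M')-\delta(\dv M)$, is also false ($0$ versus $1$ in this example), and so is the target identity $f_{M'}(\delta')-f_M(\delta)=\delta'(\dv M')-\delta(\dv M)$ to which you reduce in step (iii). The correct common value of the two differences for a one-step mutation at $u$ is the correction term $[\delta'(u)]_+[\dtc_{M'}(u)]_+-[\delta(u)]_+[\dtc_M(u)]_+$ (which vanishes in the example above, matching the actual differences $0$ and $0$): for $\hom$ this is exactly \cite[Proposition 6.1]{DWZ2}, and for $f$ it is obtained by taking Newton polytopes in the $F$-polynomial mutation rule $(1+y_u)^{h_u}F_M(\b{y})=(1+y_u')^{h_u'}F_{M'}(\b{y}')$ of \cite[Lemma 5.1]{DWZ2} with $h_u=-[\dtc_M(u)]_+$. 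Your proposal gestures at both inputs but never identifies this correction term; you yourself flag step (iii) as unresolved, and as written the argument there is circular because it presupposes the (false) invariance of $\e$.

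The parts of your plan that do work, and that agree with the paper, are the reduction to a one-step mutation by induction and the deduction of the second displayed relation from the first: since $\fc_M(-\delta)=f_M(\delta)-\delta(\dv M)$ and $\e(\delta,M)=\hom(\delta,M)-\delta(\dv M)$ by \eqref{eq:heform}, the two relations are equivalent once the first is established. The same bookkeeping handles the injective versions via \eqref{eq:hedual}. But the heart of the proof is the matching of the two explicit correction terms from \cite{DWZ2}, and that step is missing from your proposal.
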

\begin{proof} By induction it suffices to show for any one-step mutation $\mu_u$.
		We knew from \cite[Proposition 6.1]{DWZ2} that 
	\begin{equation} \label{eq:homdiff} \hom(\delta',M') - \hom(\delta,M) = [\delta'(u)]_+[\dtc_{M'}(u)]_+ - [\delta(u)]_+[\dtc_M(u)]_+.
	\end{equation}
	Recall from \cite[Lemma 5.2]{DWZ2} that 
	$$(1+y_u)^{h_u}F_M(\b{y}) = (1+y_u')^{h_u'}F_{M'}(\b{y}'),$$ 
	where $h_u = -[\dtc_M(u)]_+$ and $h_u' = -[\dtc_{M'}(u)]_+$.
Taking the Newton polytope (see the remark after Definition \ref{D:tropf}) \footnote{Equivalently we can take tropicalization here but we need the nontrivial positivity results.}, we get
	\begin{align*} &-[\delta(u)]_+[\dtc_M(u)]_+ + f_M(\delta) = -[\delta'(u)]_+[\dtc_{M'}(u)]_+ + f_{M'}(\delta'), \\
	\Rightarrow \quad & \quad f_{M'}(\delta') - f_M(\delta) = [\delta'(u)]_+[\dtc_{M'}(u)]_+ - [\delta(u)]_+[\dtc_{M}(u)]_+.
	\end{align*}
Compare with \eqref{eq:homdiff}, and we obtain the first relation.
The other relation follows easily from \eqref{eq:heform}.
\end{proof}

We say a representation $M$ of $(Q,\mc{P})$ {\em negative reachable} if there is a sequence of mutations $\bs{\mu}$ such that $\bs{\mu}(M)$ is {\em negative}, i.e., $\bs{\mu}(M)$ has only the decorated part.
\begin{theorem} \label{T:HomEQP} If $M$ is negative reachable, then for any $\delta,\dtc \in\mb{Z}^{Q_0}$ we have that
	\begin{align}
\label{eq:HomEQP}	{f}_M(\delta) &= \hom(\delta,M), & \fc_M(-\delta) &= {\e}(\delta,M);\\
\label{eq:HomEQPdual}	\fc_M(\check{\delta}) &= \hom(M,\check{\delta}), & {f}_M(-\check{\delta}) &= \ec(M,\check{\delta}).
	\end{align}
\end{theorem}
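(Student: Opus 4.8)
The plan is to bootstrap from the single case of a negative representation by means of Lemma \ref{L:HomEQP}. Since $M$ is negative reachable, I would fix once and for all a sequence of mutations $\bs{\mu}$ at admissible vertices for which $M' := \bs{\mu}(M)$ is negative, i.e.\ the underlying undecorated representation of $M'$ is the zero representation. The first thing to record is that every quantity occurring in the theorem vanishes identically at $M'$: the zero representation has no nonzero sub- or quotient representations, so $f_{M'}\equiv 0$ and $\fc_{M'}\equiv 0$ (equivalently $\N(M')$ and $\check{\N}(M')$ are both the single point $\{0\}$), and likewise the induced maps are isomorphisms, so $\hom(-,M')=\e(-,M')=\hom(M',-)=\ec(M',-)=0$.

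Next, for an arbitrary $\delta\in\mb{Z}^{Q_0}$ set $\delta':=\bs{\mu}(\delta)$, the mutated weight vector (the one used in Lemma \ref{L:HomEQP}, cf.\ \cite[(2.11)]{DWZ2}), and invoke the first relation of that lemma,
\[ f_{M'}(\delta') - f_M(\delta) = \hom(\delta',M') - \hom(\delta,M). \]
Substituting $f_{M'}(\delta')=0$ and $\hom(\delta',M')=0$ gives $f_M(\delta)=\hom(\delta,M)$. Running the identical argument with the other three relations supplied by Lemma \ref{L:HomEQP} — the one relating $\fc_M(-\delta)$ to $\e(\delta,M)$, and the two dual relations for $\hom(M,\dtc)$ and $\ec(M,\dtc)$ obtained from a general injective presentation — produces the remaining identities. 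Since $\delta$ (resp.\ $\dtc$) was arbitrary, this establishes \eqref{eq:HomEQP} and \eqref{eq:HomEQPdual} in full.

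I do not expect a genuine obstacle at this stage: the substantive work has already been isolated in Lemma \ref{L:HomEQP}, whose proof rests on the DWZ mutation formula \cite[Proposition 6.1]{DWZ2} together with the $F$-polynomial transformation rule \cite[Lemma 5.1]{DWZ2} (plus the nontrivial positivity used to pass to Newton polytopes). What remains is bookkeeping, and the only two points deserving a moment of care are: (i) that the sequence $\bs{\mu}$ witnessing negative reachability consists of admissible mutations at every intermediate step, so that each $\mu_u$ — and hence Lemma \ref{L:HomEQP} — legitimately applies; and (ii) that one consistently mutates the weight vectors $\delta$, $\dtc$ along the \emph{same} $\bs{\mu}$, so that the telescoping in the induction underlying Lemma \ref{L:HomEQP} matches up with the mutation of $M$.
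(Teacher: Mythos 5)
Your argument is correct and is exactly the paper's proof: apply Lemma \ref{L:HomEQP} along the mutation sequence witnessing negative reachability and observe that all four quantities vanish when $M$ is negative. The extra care you note about admissibility and mutating $\delta,\dtc$ along the same sequence is implicit in the paper but does not change the argument.
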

\begin{proof} By Lemma \ref{L:HomEQP}, it is enough to notice that if $M$ is negative, then $f_{M}(\delta) = \hom(\delta,M) = 0$ and $\fc_{M}(\dtc) = \hom(M,\dtc) = 0$ for any $\delta$ and $\dtc$.
\end{proof}

\begin{corollary} \label{C:dim} 
	If $I_i$ is negative reachable, then the dimension vector $\alpha$ of $\Coker(\delta)$ can be computed by
	$$\alpha(i)=f_{I_i}(\delta).$$
	If $P_i$ is negative reachable, then the dimension vector $\check{\alpha}$ of $\Ker(\dtc)$ can be computed by
	$$\check{\alpha}(i)=\fc_{P_i}(\check{\delta}).$$
\end{corollary}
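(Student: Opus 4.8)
The plan is to deduce this from Theorem \ref{T:HomEQP} by identifying the multiplicity spaces of the relevant representations with $\Hom$-spaces into injectives (resp.\ out of projectives), which is where the hypothesis of negative reachability is used.

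First I would record the two standard identities. On the one hand, for any $N\in\rep A$ one has $\dim\Hom_A(N,I_i)=\dim N(i)$: since $I_i$ represents the functor $N\mapsto D(e_iN)$ (tensor--hom adjunction, writing $I_i=D(e_iA)$ and $e_iN\cong N(i)$), the spaces $\Hom_A(N,I_i)$ and $N(i)$ are dual. On the other hand, by definition $\Coker(\delta)=\Coker(d)$ for a general $d\in\PHom(\delta)$, and $\Hom(d,M)=\Hom(\Coker(d),M)$, so $\hom(\delta,M)=\dim\Hom(\Coker(\delta),M)$ for every $M$ (the dimension being constant on a dense open set by the discussion in Section \ref{S:GP}). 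Taking $M=I_i$ and $N=\Coker(\delta)$ gives
$$\hom(\delta,I_i)=\dim\Coker(\delta)(i)=\alpha(i).$$
Now if $I_i$ is negative reachable, \eqref{eq:HomEQP} of Theorem \ref{T:HomEQP} gives $f_{I_i}(\delta)=\hom(\delta,I_i)$, and combining with the previous display yields $\alpha(i)=f_{I_i}(\delta)$, as claimed.

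The second statement is proved by the dual argument. One has $\Ker(\dtc)=\Ker(\dc)$ for a general $\dc\in\IHom(\dtc)$ and $\Hom(M,\dc)=\Hom(M,\Ker(\dc))$, so $\hom(M,\dtc)=\dim\Hom(M,\Ker(\dtc))$; applying the projective analogue $\dim\Hom_A(P_i,N)=\dim N(i)$ (that is, $\Hom_A(P_i,N)\cong e_iN$) with $N=\Ker(\dtc)$ gives $\hom(P_i,\dtc)=\check\alpha(i)$. If $P_i$ is negative reachable, then \eqref{eq:HomEQPdual} of Theorem \ref{T:HomEQP} identifies $\fc_{P_i}(\dtc)$ with $\hom(P_i,\dtc)$, hence $\check\alpha(i)=\fc_{P_i}(\dtc)$.

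There is no real obstacle here: the identity $\hom(\delta,I_i)=\alpha(i)$ holds unconditionally, and all the substance lies in Theorem \ref{T:HomEQP}. I should note that the negative-reachability hypothesis cannot simply be dropped: in general Lemma \ref{L:HomE} only gives $f_{I_i}(\delta)\le\alpha(i)$, and Theorem \ref{T:HomE} gives equality only after replacing $\delta$ by a suitable multiple $n\delta$, at which point $\hom(n\delta,I_i)$ is the $i$-th entry of $\dv\Coker(n\delta)$ and need not equal $n\alpha(i)$. So the statement really is special to the negative-reachable case.
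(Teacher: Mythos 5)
Your proof is correct and is exactly the intended argument: the paper states this as an immediate consequence of Theorem \ref{T:HomEQP} (with no written proof), relying on precisely the identities $\hom(\delta,I_i)=\dim\Coker(\delta)(i)$ and $\hom(P_i,\dtc)=\dim\Ker(\dtc)(i)$ that you supply. Your closing remark on why negative reachability cannot be dropped is also consistent with the paper's Example \ref{ex:dimfail}.
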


\begin{example}\label{ex:dimfail} If $A$ is not a Jacobian algebra, then Corollary \ref{C:dim} may fail.
	We plug $M$ and $A=kQ$ in Example \ref{ex:n>1} into the construction of Section \ref{ss:proof}.
	Then from the proof of Theorem \ref{T:HomE} we see that $f_{I_\bplus}(\tilde{\delta}) = f_M(\delta)=0$ but $\hom(\tilde{\delta}, I_{\bplus}) = \hom(\delta,M) =1$.
	We also note that $\fc_{\Coker{\tilde{\delta}}}(e_{\bplus}) = \hom(\tilde{\delta}, I_{\bplus})=1$.
\end{example}

\begin{question} \label{q:HomEQP} Does the conclusion of Theorem \ref{T:HomEQP} still hold if $M=\Ker(\dtc)$ in \eqref{eq:HomEQP} and $M=\Coker(\delta)$ in \eqref{eq:HomEQPdual}?
\end{question}
\noindent This is certainly true for acyclic quivers due to Schofield's result (Theorem \ref{T:introext}).
By Lemma \ref{L:HomEQP} this is also true for mutation-acyclic QPs. 
Moreover, in this case $M=\Ker(\dtc)$ for some $\dtc$ iff $M=\Coker(\delta)$ for some $\delta$.
\begin{corollary} \label{C:HomEQP} If $(Q,\mc{P})$ is mutation-equivalent to an acyclic quiver, then 
	the conclusion of Theorem \ref{T:HomEQP} holds for $M=\Ker(\dtc)$ or $M=\Coker(\delta)$.
\end{corollary}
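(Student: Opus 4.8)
The plan is to reduce the four equalities asserted in Theorem~\ref{T:HomEQP} to a single master equality, transport that equality along a mutation sequence joining $(Q,\mc P)$ to an acyclic quiver by means of Lemma~\ref{L:HomEQP}, and settle the resulting acyclic case with Schofield's Theorem~\ref{T:introext}. For the reduction: for a fixed $M$, the identities $f_M(\delta)=\hom(\delta,M)$, $\fc_M(-\delta)=\e(\delta,M)$, $\fc_M(\check\delta)=\hom(M,\check\delta)$ and $f_M(-\check\delta)=\ec(M,\check\delta)$ (each quantified over all $\delta$, resp.\ all $\check\delta$) are mutually equivalent --- a formal consequence of \eqref{eq:heform}, \eqref{eq:heformdual} and \eqref{eq:hedual} together with the elementary relation between $f_M$ and $\fc_M$ recorded in Section~\ref{S:Ftrop}. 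So it is enough to prove $(\star_M)$: that $f_M(\eta)=\hom(\eta,M)$ for every $\eta\in\mb{Z}^{Q_0}$; and by the observation stated just before the corollary (or by the dual argument with injective presentations) I may assume $M=\Coker(\delta_0)$ for some $\delta_0$.

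Next I would fix a sequence $\bs\mu$ of mutations at admissible vertices carrying $(Q,\mc P)$ to an acyclic quiver $Q^\circ$; its potential is necessarily zero, so $J(Q^\circ,0)=kQ^\circ$ is a path algebra. Put $M'=\bs\mu(M)$ and $\eta'=\bs\mu(\eta)$. Since $\eta\mapsto\eta'$ is a bijection of $\mb{Z}^{Q_0}$ onto $\mb{Z}^{Q^\circ_0}$, Lemma~\ref{L:HomEQP} gives $f_{M'}(\eta')-\hom(\eta',M')=f_M(\eta)-\hom(\eta,M)$ for all $\eta$, so that $(\star_M)$ holds if and only if $(\star_{M'})$ holds. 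For this I need $M'$ to be, again, the underlying representation of a cokernel of a general presentation over $kQ^\circ$; that this is so rests on the compatibility of QP-mutation with general presentations --- $\mu_u$ sends the generic decorated representation of weight $\delta$ to the generic decorated representation of weight $\mu_u(\delta)$, whose underlying representation is $\Coker(\mu_u(\delta))$, and the decorated part plays no role for $f_{M'}$ or for $\hom(-,M')$.

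Finally, over the path algebra $kQ^\circ$ the equality $(\star_{M'})$ is essentially a restatement of Theorem~\ref{T:introext}. For a hereditary algebra $\E(d,-)$ agrees with $\Ext^1(\Coker(d),-)$ up to summands of $d$ of the form $P_w\to 0$ or $0\to P_w$ (which contribute matching terms to both sides of the equalities being proved), and $\eta'$ is then the Euler-form functional $\innerprod{\dv\Coker(\eta'),-}$; after this unwinding, the form $\e(\eta',M')=\fc_{M'}(-\eta')$ of $(\star_{M'})$ becomes exactly Schofield's identity $\ext(\gamma,\alpha)=\max_{\alpha\twoheadrightarrow\beta}\{-\innerprod{\gamma,\beta}\}$, using $\hom-\ext=\innerprod{-,-}$. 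Running back through the previous two paragraphs gives $(\star_M)$, hence the corollary for $M=\Coker(\delta_0)$; the case $M=\Ker(\check\delta_0)$ is obtained by the mirror argument, or from the coincidence of the two classes of representations noted above.

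I expect the main obstacle to be the middle step: making precise, and carefully tracking the decorated summands in, the claim that QP-mutation intertwines the operation ``take the cokernel of a general presentation of weight $\delta$'' with its counterpart over $kQ^\circ$, the weights being transported by $\bs\mu$. Once that is granted, the two outer reductions are purely formal. (If one prefers to avoid Schofield's theorem as an input, an alternative is to establish $(\star)$ for cokernels directly via the generic Newton polytope algorithm of Section~\ref{S:generic} --- the alternative proof of Schofield's Theorem alluded to in the introduction.)
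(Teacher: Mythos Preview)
Your proposal is correct and follows essentially the same route as the paper: reduce to the acyclic case via Lemma~\ref{L:HomEQP}, then invoke Schofield's Theorem~\ref{T:introext}. The paper's argument is the two-line remark immediately preceding the corollary, and you have unpacked it faithfully; you are also right to flag the compatibility of QP-mutation with general presentations as the one step requiring care, a point the paper leaves implicit.
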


\section{Functors Associated to \texorpdfstring{$\delta$}{delta}} \label{S:tf}
In this section, we briefly review the two pairs of functors considered in \cite{Fc}.
\begin{lemma}[{\cite[Lemma 3.3]{Fc}}]\label{L:sub=h} Let $L$ be any subrepresentation of $M$. Then $\delta(\dv L)= \hom(\delta,M)$ if and only if $\hom(\delta,M/L)=\e(\delta,L)=0$.
	Moreover, if $L'$ is another such subrepresentation, that is,  $\delta(\dv L')=\hom(\delta,M)$, then both 
	$L\cap L'$ and $L+ L'$ are such subrepresentations.
\end{lemma}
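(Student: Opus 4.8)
The plan is to unwind the definition of $\hom(\delta,-)$ via the defining exact sequence \eqref{eq:longexact} and exploit its long-exact-sequence behavior from Lemma \ref{L:E}.(1). Fix a general presentation $d$ of weight $\delta$, so $\hom(\delta,-)=\hom(d,-)$ and $\e(\delta,-)=\e(d,-)$ on a dense open set, and apply the functors $\Hom(d,-)$ and $\E(d,-)$ to the short exact sequence $0\to L\to M\to M/L\to 0$. This yields the six-term exact sequence
$$0\to \Hom(d,L)\to \Hom(d,M)\to \Hom(d,M/L)\to \E(d,L)\to \E(d,M)\to \E(d,M/L)\to 0.$$
From \eqref{eq:heform} we have $\delta(\dv L)=\hom(d,L)-\e(d,L)$, and similarly for $M$; since $\Hom(d,L)\hookrightarrow\Hom(d,M)$ we always have $\delta(\dv L)\le\hom(d,L)\le\hom(d,M)$ (Lemma \ref{L:HomE}). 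The first step is then to observe that $\delta(\dv L)=\hom(\delta,M)$ forces $\hom(d,L)=\hom(d,M)$ and $\e(d,L)=0$: the first equality because $\delta(\dv L)\le\hom(d,L)\le\hom(d,M)=\delta(\dv L)$, and then $\e(d,L)=\hom(d,L)-\delta(\dv L)=0$. Conversely, if $\e(d,L)=0$ and $\hom(d,M/L)=0$, the six-term sequence collapses to $\hom(d,L)=\hom(d,M)$, and then $\delta(\dv L)=\hom(d,L)=\hom(d,M)=\hom(\delta,M)$ (using $\delta(\dv L)=\hom(d,L)$ since $\e(d,L)=0$). I should double-check the equivalence is exactly with "$\hom(\delta,M/L)=\e(\delta,L)=0$" and not an asymmetric variant; the collapse of the six-term sequence makes the exactness bookkeeping routine.

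For the "moreover" part, suppose $L,L'\subseteq M$ both satisfy $\delta(\dv L)=\delta(\dv L')=\hom(\delta,M)$. The idea is to compare $L\cap L'$ and $L+L'$ using the standard short exact sequence
$$0\to L\cap L'\to L\oplus L'\to L+L'\to 0,$$
together with additivity of dimension vectors: $\dv(L\cap L')+\dv(L+L')=\dv L+\dv L'$, hence $\delta(\dv(L\cap L'))+\delta(\dv(L+L'))=2\hom(\delta,M)$. Since both $L\cap L'$ and $L+L'$ are subrepresentations of $M$, Lemma \ref{L:HomE} gives $\delta(\dv(L\cap L'))\le\hom(\delta,M)$ and $\delta(\dv(L+L'))\le\hom(\delta,M)$; combined with the equation summing to $2\hom(\delta,M)$, both inequalities must be equalities. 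So $L\cap L'$ and $L+L'$ are again subrepresentations realizing the maximum.

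The main obstacle is not conceptual but a matter of care in the first part: ensuring that a single general $d$ simultaneously computes $\hom(\delta,-)$ and $\e(\delta,-)$ on all the finitely many representations $L$, $M$, $M/L$ involved (this is fine since one can intersect the relevant dense opens), and getting the direction of the equivalence exactly right — in particular that the vanishing condition is precisely $\hom(\delta,M/L)=0$ and $\e(\delta,L)=0$, with no hidden extra hypothesis. Everything else is a direct consequence of the long exact sequence in Lemma \ref{L:E}.(1) and the numerical identity \eqref{eq:heform}.
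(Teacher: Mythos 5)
The paper does not actually prove this lemma here --- it is quoted from \cite{Fc} --- so there is no in-text proof to compare against; judged on its own, your argument is correct and is surely the intended one (the six-term sequence of Lemma \ref{L:E}.(1) plus the Euler identity \eqref{eq:heform}, with a single $d$ chosen general for the finitely many representations involved). The only step you leave implicit is the remaining half of the forward direction: having shown $\hom(d,L)=\hom(d,M)$ and $\e(d,L)=0$, you still need $\hom(\delta,M/L)=0$, which follows at once since the first map of the six-term sequence is then an isomorphism, so exactness gives $\Hom(d,M/L)\hookrightarrow \E(d,L)=0$. The ``moreover'' part via $0\to L\cap L'\to L\oplus L'\to L+L'\to 0$ and the two inequalities summing to $2\hom(\delta,M)$ is clean and complete.
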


Let $\mc{L}(\delta,M)$ be the set of all subrepresentations $L$ of $M$ such that $\delta(\dv L)=f_M(\delta)$.
\begin{theorem}[{\cite[Thereom 3.4]{Fc}}] \label{T:maxminsub} The set $\mc{L}(\delta,M)$ contains a unique minimal element $L_{\min}$ and a unique maximal element $L_{\max}$.
	Moreover, $L_1/L_0$ is $\delta$-semistable for any $L_0\subset L_1$ in $\mc{L}(\delta,M)$.
\end{theorem}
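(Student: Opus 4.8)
The plan is to deduce both assertions from two elementary ingredients: additivity of dimension vectors along short exact sequences, and King's criterion (Lemma \ref{L:King}); the $\hom$-theoretic reformulation in Lemma \ref{L:sub=h} is convenient but not essential.

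First I would note that $\mc{L}(\delta,M)$ is nonempty, since the dimension vectors of subrepresentations of $M$ all lie in the finite set $\{\gamma\in\mb{Z}^{Q_0}\mid 0\le\gamma\le\dv M\}$, so the maximum defining $f_M(\delta)$ is attained. The heart of the argument is that $\mc{L}(\delta,M)$ is closed under intersections and sums. Given $L,L'\in\mc{L}(\delta,M)$, the short exact sequence
$$0\to L\cap L'\to L\oplus L'\to L+L'\to 0$$
yields $\dv(L\cap L')+\dv(L+L')=\dv L+\dv L'$, so that $\delta(\dv(L\cap L'))+\delta(\dv(L+L'))=2f_M(\delta)$. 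As $L\cap L'$ and $L+L'$ are subrepresentations of $M$, each term on the left is at most $f_M(\delta)$, hence both equal $f_M(\delta)$; that is, $L\cap L',\,L+L'\in\mc{L}(\delta,M)$. (Alternatively, one may invoke Theorem \ref{T:HomE} to choose $n$ with $f_M(n\delta)=\hom(n\delta,M)$; since $\mc{L}(\delta,M)=\mc{L}(n\delta,M)$, this closure is precisely the ``moreover'' clause of Lemma \ref{L:sub=h} applied to $n\delta$.)

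Next I would produce $L_{\min}$ and $L_{\max}$. Because only finitely many dimension vectors occur, I can pick $L_{\min}\in\mc{L}(\delta,M)$ of minimal dimension and $L_{\max}\in\mc{L}(\delta,M)$ of maximal dimension. For any $L\in\mc{L}(\delta,M)$, closure under intersection gives $L_{\min}\cap L\in\mc{L}(\delta,M)$ with $\dim(L_{\min}\cap L)\le\dim L_{\min}$, forcing equality and hence $L_{\min}\subseteq L$; dually $L\subseteq L_{\max}$ using closure under sums. Thus $L_{\min}$ is contained in every element of $\mc{L}(\delta,M)$ and $L_{\max}$ contains every element, so they are the unique minimal and maximal members.

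Finally, for $L_0\subseteq L_1$ in $\mc{L}(\delta,M)$ I would check King's criterion for $N:=L_1/L_0$. Since $\dv N=\dv L_1-\dv L_0$ and $\delta(\dv L_0)=\delta(\dv L_1)=f_M(\delta)$, we get $\delta(\dv N)=0$. A subrepresentation of $N$ has the form $L/L_0$ for a unique $L$ with $L_0\subseteq L\subseteq L_1$, and $\delta(\dv(L/L_0))=\delta(\dv L)-f_M(\delta)\le 0$ because $L$ is a subrepresentation of $M$. By Lemma \ref{L:King}, $N$ is $\delta$-semistable. I do not expect a serious obstacle: the proof is short and formal. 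The only points needing a little care are the distinction between $f_M(\delta)$ and $\hom(\delta,M)$ --- these genuinely differ (Example \ref{ex:n>1}), so one argues directly with dimension vectors as above or first passes to a multiple of $\delta$ --- and the remark that $M$ may have infinitely many subrepresentations while only finitely many dimension vectors arise, which is what legitimizes choosing members of extremal dimension.
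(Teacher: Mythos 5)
Your proof is correct and self-contained. Note that the paper does not actually prove this theorem: it is imported from \cite{Fc} without argument, so there is no in-text proof to compare against. Your route is the natural one, and you handle the one genuine subtlety correctly: the paper's Lemma \ref{L:sub=h} gives closure of $\{L \mid \delta(\dv L)=\hom(\delta,M)\}$ under $\cap$ and $+$, but $\mc{L}(\delta,M)$ is defined via $f_M(\delta)$, which can be strictly smaller than $\hom(\delta,M)$ (Example \ref{ex:n>1}); your direct modularity argument $\dv(L\cap L')+\dv(L+L')=\dv L+\dv L'$ sidesteps this entirely, and your parenthetical alternative (pass to $n\delta$ via Theorem \ref{T:HomE}, using $\mc{L}(\delta,M)=\mc{L}(n\delta,M)$) is also valid. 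The extraction of $L_{\min}$, $L_{\max}$ from elements of extremal dimension and the verification of King's criterion for $L_1/L_0$ are both sound.
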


\begin{definition}[\cite{Fc}] \label{D:torsion} Let $(t_{\dtb},f_{\dtb})$ and $(\tc_{\dtb},\fc_{\dtb})$ be the pairs of functors associated to the torsion pair $({\mc{T}}(\dtb),{\mc{F}}(\dtb))$ and $(\check{\mc{T}}(\dtb),\check{\mc{F}}(\dtb))$, where
\begin{align*} {\mc{F}}(\dtb) = \{N\in \rep(A)&\mid \hom(n\delta,N) =0 \text{ for some } n\in\mb{N} \},\\
\check{\mc{T}}(\dtb) = \{L\in \rep(A) &\mid \e(n\delta, L) =0   \text{ for some } n\in\mb{N}  \}.
\end{align*}	
\end{definition}
\noindent If $\delta$ is not wild, by Lemma \ref{L:homineq} we can let $n=1$ in the definition of ${\mc{F}}(\dtb)$ and $\check{\mc{T}}(\dtb)$. In this case, the functors will be denoted by $(t_{{\delta}},f_{{\delta}})$ and $(\tc_{{\delta}},\fc_{{\delta}})$.

\begin{theorem}[{\cite[Theorem 3.10]{Fc}}] \label{T:torsion} We have that for any representation $M$ and any $\delta \in \mb{Z}^{Q_0}$,
	\begin{align*}t_{\dtb}(M)=L_{\min}\ &\text{ and }\  f_{\dtb}(M)=M/L_{\min};\\
	\tc_{\dtb}(M)=L_{\max}\ &\text{ and }\ \fc_{\dtb}(M)=M/L_{\max}.\end{align*}
	In particular, we have for any $L\in\mc{L}(\delta,M)$ that
	$$\Hom(t_\dtb(M),M/L) = 0\ \text{ and }\ \Hom(L,\fc_\dtb(M)) = 0.$$
\end{theorem}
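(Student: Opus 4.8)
The plan is to show that $L_{\min}$ and $L_{\max}$ from Theorem~\ref{T:maxminsub} are exactly the torsion subrepresentations for the two torsion pairs of Definition~\ref{D:torsion}, the only real input being Lemma~\ref{L:sub=h} together with the numerical identity \eqref{eq:heform}. Fix once and for all an $n\in\mb{N}$ with $f_M(n\delta)=\hom(n\delta,M)$ (Theorem~\ref{T:HomE}); since $f_M(n\delta)=nf_M(\delta)$ by definition of $f_M$, we have $\mc{L}(\delta,M)=\mc{L}(n\delta,M)$, so there is no loss in testing membership against the weight $n\delta$. I will use the standard torsion-theoretic facts that $t_\dtb(M)$ is the \emph{smallest} subrepresentation $L\subseteq M$ with $M/L\in\mc{F}(\dtb)$, and dually that $\tc_\dtb(M)$ is the \emph{largest} subrepresentation of $M$ lying in $\check{\mc{T}}(\dtb)$, equivalently the unique $L$ with $L\in\check{\mc{T}}(\dtb)$ and $M/L\in\check{\mc{F}}(\dtb)$.

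First I would record that \emph{every} $L\in\mc{L}(\delta,M)$ satisfies $M/L\in\mc{F}(\dtb)$ and $L\in\check{\mc{T}}(\dtb)$: indeed $(n\delta)(\dv L)=nf_M(\delta)=f_M(n\delta)=\hom(n\delta,M)$, so Lemma~\ref{L:sub=h} applied to the weight $n\delta$ gives $\hom(n\delta,M/L)=0$ and $\e(n\delta,L)=0$. In particular $M/L_{\min}\in\mc{F}(\dtb)$, whence $t_\dtb(M)\subseteq L_{\min}$, and $L_{\max}\in\check{\mc{T}}(\dtb)$, whence $L_{\max}\subseteq\tc_\dtb(M)$.

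The heart of the argument is the two reverse inclusions, and this is the step I expect to carry all the content. For $L_{\min}\subseteq t_\dtb(M)$ it suffices to prove: if $L\subseteq M$ and $M/L\in\mc{F}(\dtb)$ then $L_{\min}\subseteq L$ (then apply this with $L=t_\dtb(M)$). Pick $m$ with $\hom(m\delta,M/L)=0$; the quotient map realizes $L_{\min}/(L_{\min}\cap L)$ as a subrepresentation of $M/L$, so by left-exactness of $\Hom(d,-)$ in \eqref{eq:longexact} we get $\hom(m\delta,L_{\min}/(L_{\min}\cap L))=0$, and then \eqref{eq:heform} forces $\delta\bigl(\dv(L_{\min}/(L_{\min}\cap L))\bigr)\le 0$, i.e. $\delta(\dv(L_{\min}\cap L))\ge\delta(\dv L_{\min})=f_M(\delta)$. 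The reverse inequality is automatic since $L_{\min}\cap L\subseteq M$, so $L_{\min}\cap L\in\mc{L}(\delta,M)$, and minimality of $L_{\min}$ gives $L_{\min}\subseteq L$. This yields $t_\dtb(M)=L_{\min}$ and $f_\dtb(M)=M/L_{\min}$. Dually, for $\tc_\dtb(M)\subseteq L_{\max}$ I would check that $M/L_{\max}$ has no nonzero subrepresentation in $\check{\mc{T}}(\dtb)$: if $L_{\max}\subseteq L\subseteq M$ with $L/L_{\max}\in\check{\mc{T}}(\dtb)$, then $\e(m\delta,L/L_{\max})=0$ for some $m$, so \eqref{eq:heform} gives $\delta(\dv(L/L_{\max}))\ge 0$, hence $\delta(\dv L)\ge f_M(\delta)$ and therefore $L\in\mc{L}(\delta,M)$; maximality of $L_{\max}$ forces $L=L_{\max}$. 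So $M/L_{\max}\in\check{\mc{F}}(\dtb)$, and $\tc_\dtb(M)=L_{\max}$, $\fc_\dtb(M)=M/L_{\max}$.

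Finally the ``in particular'' clause is a formality: for $L\in\mc{L}(\delta,M)$ we have $t_\dtb(M)=L_{\min}\subseteq L$ and $M/L\in\mc{F}(\dtb)$ by the second paragraph, so $\Hom(t_\dtb(M),M/L)=0$ since $t_\dtb(M)\in\mc{T}(\dtb)$; symmetrically $L\in\check{\mc{T}}(\dtb)$ and $\fc_\dtb(M)=M/L_{\max}\in\check{\mc{F}}(\dtb)$ give $\Hom(L,\fc_\dtb(M))=0$. Everything outside the third paragraph is bookkeeping with Lemma~\ref{L:sub=h}, \eqref{eq:heform}, left-exactness in \eqref{eq:longexact}, and the description of the torsion radical; the one point worth double-checking along the way is that $\mc{F}(\dtb)$ and $\check{\mc{T}}(\dtb)$ are genuinely torsion-free, resp.\ torsion, classes (closed under subobjects, resp.\ quotients, and both under extensions after passing to a common multiple of the relevant $n$'s), which follows from Lemma~\ref{L:E}(1) and is in any case already built into Definition~\ref{D:torsion}.
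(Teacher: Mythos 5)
The paper states Theorem \ref{T:torsion} as an import from \cite{Fc} and gives no proof of it here, so there is no in-paper argument to compare against; judged on its own, your proof is correct. You correctly reduce everything to the characterizations of the torsion radical, and the two reverse inclusions are handled exactly as they should be: the combination of left-exactness of $\Hom(d,-)$, the identity \eqref{eq:heform}, and the closure of $\mc{L}(\delta,M)$ under intersection/extremality does all the work, and you are careful about the one genuinely delicate point, namely that membership in $\mc{F}(\dtb)$ and $\check{\mc{T}}(\dtb)$ is only tested after passing to a multiple $n\delta$ (via Theorem \ref{T:HomE} and the homogeneity $f_M(n\delta)=nf_M(\delta)$, which keeps $\mc{L}(\delta,M)=\mc{L}(n\delta,M)$). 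The only minor remark is that the closure properties in your last paragraph use Lemma \ref{L:homineq} (to pass to common multiples) in addition to Lemma \ref{L:E}(1), but as you note, the torsion-pair structure is already asserted in Definition \ref{D:torsion}, so nothing is at stake there.
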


Suppose that $\hom(M,N)=h$. We choose a basis of $\Hom(M,N)$ and take $hM \to N$ to be the canonical map with respect to this basis.
We call this map a {\em universal homomorphism} from $\op{add}(M)$ to $N$.

\begin{corollary}[{\cite[Corollary 3.13]{Fc}}] \label{C:univhomo} Suppose that $d$ is a rigid presentation with weight $\delta$.
	Then $t_\delta(M)$ is the image of the universal homomorphism $h\Coker(d) \to M$ while $\tc_\delta(M)$ is the kernel of the universal homomorphism $M \to e \Ker(\nu d)$,
	where $h=\hom(\delta,M)$ and $e=\e(\delta,M)$.
\end{corollary}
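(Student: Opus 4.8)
The plan is to identify the trace and the reject directly with $t_\delta(M)=L_{\min}$ and $\tc_\delta(M)=L_{\max}$, using Theorem~\ref{T:torsion}. Since $d$ is rigid, $\e(\delta,\delta)=0$, so $\delta$ is real and in particular not wild; by Theorem~\ref{T:HomE} and Lemma~\ref{L:homineq} we may take $n=1$ throughout, so $f_M(\delta)=\hom(\delta,M)=:h$, $\check{\mc{T}}(\dtb)=\{L\mid\e(\delta,L)=0\}$, $\mc{F}(\dtb)=\{N\mid\hom(\delta,N)=0\}$, and by Lemma~\ref{L:sub=h} a subrepresentation $L\subseteq M$ lies in $\mc{L}(\delta,M)$ exactly when $\hom(\delta,M/L)=0$ and $\e(\delta,L)=0$. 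Write $N:=\Coker(d)$ and $N':=\Ker(\tau_p d)$. Because the orbit of a rigid presentation is dense in $\PHom(\delta)$, the invariants of $d$ coincide with the generic ones, so $\hom(\delta,X)=\dim\Hom(N,X)$ and $\e(\delta,X)=\dim\E(d,X)=\dim\Hom(X,N')$ for every $X$ (the last equality by Lemma~\ref{L:E}(3)); moreover $\E(d,d)=0$ together with \eqref{eq:eec} gives $\Hom(N,N')=0$. These, along with Lemmas~\ref{L:E} and \ref{L:sub=h} and Theorem~\ref{T:torsion}, are the only inputs.

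For the first assertion, let $\phi\colon hN\to M$ be a universal homomorphism and $I:=\Img\phi$. First I would show $I\subseteq L$ for every $L\in\mc{L}(\delta,M)$: since $\hom(\delta,M/L)=\dim\Hom(N,M/L)=0$, applying $\Hom(N,-)$ to $0\to L\to M\to M/L\to0$ forces every map $N\to M$ to factor through $L$, so $I\subseteq L$. Next I would check $I\in\mc{L}(\delta,M)$. Since $\E(d,hN)=h\,\E(d,N)=h\,\E(d,d)=0$, the module $hN$ lies in the torsion class $\check{\mc{T}}(\dtb)$, which is closed under quotients, so $I\in\check{\mc{T}}(\dtb)$, i.e.\ $\e(\delta,I)=0$; hence $\E(d,I)=0$, and the six-term exact sequence of Lemma~\ref{L:E}(1) for $0\to I\to M\to M/I\to0$ shows $\Hom(N,M)\to\Hom(N,M/I)$ is surjective. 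But by the construction of $I$ this map is zero, so $\hom(\delta,M/I)=0$, and Lemma~\ref{L:sub=h} gives $I\in\mc{L}(\delta,M)$. Together with the previous paragraph's inclusion and the minimality of $L_{\min}$ this yields $I=L_{\min}=t_\delta(M)$.

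The second assertion is dual and I would run the mirror argument. Let $\psi\colon M\to eN'$ be a universal homomorphism and $K:=\Ker\psi$. For $L\in\mc{L}(\delta,M)$ one has $\dim\Hom(L,N')=\e(\delta,L)=0$, so every map $M\to N'$ vanishes on $L$, hence $L\subseteq K$. To see $K\in\mc{L}(\delta,M)$: $M/K\cong\Img\psi\hookrightarrow eN'$ and $\hom(\delta,eN')=e\dim\Hom(N,N')=0$, so $M/K\in\mc{F}(\dtb)$ (closed under subrepresentations), giving $\hom(\delta,M/K)=0$; and since $K$ is the intersection of the kernels of a spanning family of $\Hom(M,N')$, the restriction $\Hom(M,N')\to\Hom(K,N')$ is zero, so $0\to K\to M\to\Img\psi\to0$ gives $\Hom(\Img\psi,N')\cong\Hom(M,N')$, whence $\dim\E(d,\Img\psi)=e=\dim\E(d,M)$; feeding this, together with $\Hom(N,\Img\psi)\hookrightarrow\Hom(N,eN')=0$, into the six-term sequence of Lemma~\ref{L:E}(1) forces $\E(d,K)=0$, i.e.\ $\e(\delta,K)=0$, so $K\in\mc{L}(\delta,M)$ by Lemma~\ref{L:sub=h}. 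With the maximality of $L_{\max}$ this gives $K=L_{\max}=\tc_\delta(M)$.

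The step I expect to be the real content is showing that the trace (resp.\ the reject) actually belongs to $\mc{L}(\delta,M)$, rather than merely being sandwiched against it — this is exactly where rigidity of $d$ is used, since for a general $d$ the trace of $\Coker(d)$ need not be the torsion submodule $t_\delta(M)$. The mechanism is always the same: rigidity kills the relevant $\E(d,-)$ groups, turning the six-term sequences of Lemma~\ref{L:E}(1) into the surjectivity, resp.\ short-exactness, statements one needs. Of the two, the $\tc_\delta$ computation is the more delicate, because one must pin down $\dim\E(d,\Img\psi)$, and the key observation there is that restriction along $K\hookrightarrow M$ annihilates all of $\Hom(M,N')$.
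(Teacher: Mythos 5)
Your proof is correct, and it follows the natural (and surely intended) route: since $d$ is rigid its orbit is dense, so $\hom(\delta,-)$ and $\e(\delta,-)$ are computed by $\Hom(\Coker(d),-)$ and $\Hom(-,\Ker(\tau_p d))^*$, and then Lemma \ref{L:sub=h} plus Theorem \ref{T:torsion} identify the trace and reject with $L_{\min}$ and $L_{\max}$. The paper itself gives no proof here (the corollary is imported from \cite{Fc}), but your two-sided sandwich argument --- showing the trace/reject both lies in $\mc{L}(\delta,M)$ and is contained in (resp.\ contains) every member of it --- is complete and uses only the cited ingredients.
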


\section{Newton Polytopes of Representations} \label{S:Newton}

\subsection{A Presentation of \texorpdfstring{$\N(M)$}{N(M)}}
In this subsection we mostly follow \cite[Section 4.2]{BK}.
Let $V$ be a $\mathbb R$-vector space.
To a non-empty compact convex subset ${\sf P}$ of $V$, we associate its support function $\psi_{\sf P}:V^*\to\mathbb R$, 
which maps a linear function $f\in V^*$ to the maximal value $f$ takes on ${\sf P}$.
Then $\psi_{\sf P}$ is a sublinear function on $V^*$. 
One can recover ${\sf P}$ from the datum of $\psi_{\sf P}$ by the Hahn-Banach theorem \cite{HL}
$${\sf P}=\{v\in V\mid \alpha(v)\leq \psi_{\sf P}(\alpha),\ \forall\alpha\in V^*\},$$
and the map ${\sf P}\mapsto\psi_{\sf P}$ is a bijection from the set of all
non-empty compact convex subsets of $V$ onto the set of all sublinear functions on $V^*$. 

\begin{theorem}\label{T:Newton} The Newton polytope $\N(M)$ is defined by 
	$$\{\gamma\in \mb{R}^{Q_0}\mid \delta(\gamma)\leq \hom(\delta,M),\ \forall \delta \in \mb{Z}^{Q_0} \}.$$
	The dual Newton polytope $\check\N(M)$ is defined by 
	$$\{\gamma\in \mb{R}^{Q_0}\mid \check{\delta}(\gamma)\leq \hom(M,\check{\delta}),\ \forall \check{\delta} \in \mb{Z}^{Q_0} \}.$$
\end{theorem}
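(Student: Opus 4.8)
The plan is to recognize that both statements are instances of the general reconstruction principle recalled just above: a non-empty compact convex set $\sf P$ in a real vector space $V$ equals $\{v \mid \alpha(v) \le \psi_{\sf P}(\alpha),\ \forall \alpha \in V^*\}$, where $\psi_{\sf P}$ is its support function. So it suffices to compute the support function of $\N(M)$ and identify it. Take $V = \mb{R}^{Q_0}$ and $V^* = (\mb{R}^{Q_0})^*$. By definition $\N(M) = \conv\{\dv L \mid L \hookrightarrow M\}$ is a lattice polytope (the subrepresentation dimension vectors form a finite set), hence non-empty and compact. Its support function evaluated at $\delta \in V^*$ is $\psi_{\N(M)}(\delta) = \max_{\gamma \in \N(M)} \delta(\gamma)$; since a linear function on a polytope attains its maximum at a vertex and all vertices are among the $\dv L$, this equals $\max_{L \hookrightarrow M} \delta(\dv L) = f_M(\delta)$. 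Thus $\N(M) = \{\gamma \in \mb{R}^{Q_0} \mid \delta(\gamma) \le f_M(\delta),\ \forall \delta \in (\mb{R}^{Q_0})^*\}$ directly from Hahn--Banach.

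The remaining point is to replace the constraint function $f_M(\delta)$ by $\hom(\delta,M)$ while only ranging $\delta$ over the lattice $\mb{Z}^{Q_0}$. First I would reduce to integral $\delta$: since $f_M$ is positively homogeneous of degree $1$ and the constraints scale, and since $f_M(\delta)$ for $\delta$ with rational (hence, after clearing denominators, integral) coordinates already cuts out the same polytope as all real $\delta$ (the polytope is rational, so its facet normals can be taken integral), it is enough to impose $\delta(\gamma) \le f_M(\delta)$ for all $\delta \in \mb{Z}^{Q_0}$. Now compare the two half-space systems $\{\delta(\gamma) \le f_M(\delta)\}$ and $\{\delta(\gamma) \le \hom(\delta,M)\}$ over $\delta \in \mb{Z}^{Q_0}$. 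By Lemma~\ref{L:HomE} we have $f_M(\delta) \le \hom(\delta,M)$ for every $\delta$, so the first system is a priori tighter and defines a possibly smaller set; hence $\N(M) \subseteq \{\gamma \mid \delta(\gamma) \le \hom(\delta,M)\ \forall \delta \in \mb{Z}^{Q_0}\}$. For the reverse inclusion, suppose $\gamma$ satisfies $\delta(\gamma) \le \hom(\delta,M)$ for all integral $\delta$; I must show $\delta(\gamma) \le f_M(\delta)$ for all integral $\delta$. Fix $\delta$. By Theorem~\ref{T:HomE} there is $n \in \mb{N}$ with $f_M(n\delta) = \hom(n\delta,M)$. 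Applying the hypothesis to the integral vector $n\delta$ gives $n\,\delta(\gamma) = (n\delta)(\gamma) \le \hom(n\delta,M) = f_M(n\delta) = n f_M(\delta)$, using homogeneity of $f_M$; dividing by $n$ yields $\delta(\gamma) \le f_M(\delta)$, as required. This proves equality, and the dual statement follows by the identical argument with $f_M$ replaced by $\fc_M$, subrepresentations by quotient representations, $\N(M)$ by $\check\N(M)$, and $\hom(\delta,M)$ by $\hom(M,\dtc)$, invoking the second halves of Lemma~\ref{L:HomE} and Theorem~\ref{T:HomE}.

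The only genuinely nontrivial input is Theorem~\ref{T:HomE}, which bridges the combinatorial quantity $f_M$ (defined via the subrepresentation lattice) and the homological quantity $\hom(\delta,M)$ (defined via general presentations); everything else is the Hahn--Banach reconstruction plus the elementary observation that the extra slack in Lemma~\ref{L:HomE} is harmless after passing to a multiple $n\delta$. The main obstacle, if any, is the passage between real and integral $\delta$: one must make sure that restricting to $\mb{Z}^{Q_0}$ does not lose any facets, which is clear because $\N(M)$ is a rational polytope, so its facet-defining inequalities can be chosen with integral normal vectors, and a rational normal can be scaled to an integral one without changing the half-space.
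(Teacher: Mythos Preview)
Your proof is correct and follows essentially the same approach as the paper: identify the support function of $\N(M)$ as $f_M$ via Hahn--Banach, then use Lemma~\ref{L:HomE} for one inclusion and Theorem~\ref{T:HomE} (passing to $n\delta$ and dividing by $n$) for the other. You are simply more explicit than the paper about the reduction from real to integral $\delta$ via rationality of the polytope, which the paper leaves implicit.
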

\begin{proof} In our setting of ${\sf P}=\N(M)$, the support function is given by $\delta \mapsto f_M(\delta)$.
So $\N(M)$ is defined by 
$$\{\gamma\in \mb{R}^{Q_0}\mid \delta(\gamma)\leq f_M(\delta),\ \forall \delta \in \mb{R}^{Q_0} \}.$$
We know in priori that $\N(M)$ have integral vertices so its normal vector can be choosen as integral as well. It is enough to consider all $\delta\in \mb{Z}^{Q_0}$.
In general, we have that $f_M(\delta) \leq \hom(\delta,M)$.	
But $f_M(n\delta) = \hom(n\delta,M)$ for some $n\in \mb{N}$ by Theorem \ref{T:HomE}.
So $\delta(\gamma)\leq f_M(\delta)$ is equivalent to that $n\delta(\gamma)\leq f_M(n\delta) = \hom(n\delta,M)$. 
The presentation for $\N(M)$ follows.
\end{proof}

\noindent We know a priori that the Newton polytope has a (finite) hyperplane representation. 
In fact we only need those $\delta$-vectors which are outer normal vectors of $\N(M)$.
It is an interesting problem to find a finite set of $\delta$-vectors determining the Newton polytope.
This is achieved for general representations of any acyclic quiver in \cite{Fc}.


\subsection{Facets and Normals}
Recall that a $\delta$-vector is called {indecomposable} if a general presentation in $\PHom(\delta)$ is indecomposable. 
\begin{lemma} \label{L:linearity} Suppose that $\{\delta_1,\cdots, \delta_r\}$ satisfies $\e(\delta_i,\delta_j)=0$ for $i\neq j$. Then 
	$$f_M(\sum_i c_i\delta_i) = \sum_i c_if_M(\delta_i).$$
\end{lemma}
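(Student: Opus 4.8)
The plan is to reduce the statement to the additivity of $\hom(-,M)$ under the operation $\oplus$ of $\delta$-vectors, using Theorem~\ref{T:HomE} to pass between $f_M$ and $\hom$. First I would treat the case of nonnegative integer coefficients $c_i\in\mb{Z}_{\geq 0}$. Here the key observation is that, under the hypothesis $\e(\delta_i,\delta_j)=0$ for $i\neq j$, one also has $\e(c_i\delta_i,c_j\delta_j)=0$ for $i\neq j$ (by Lemma~\ref{L:homineq} applied to $\E$, since $c_i\delta_i = \delta_i\oplus\cdots\oplus\delta_i$ forces $\e(c_i\delta_i, c_j\delta_j)\leq \sum \e(\delta_i,\delta_j)=0$). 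Hence by Theorem~\ref{T:CDPHom} the vector $\sum_i c_i\delta_i$ decomposes as $\bigoplus_i c_i\delta_i$ — more precisely, as the direct sum of the canonical decompositions of each $c_i\delta_i$. Iterating Lemma~\ref{L:homineq}'s equality case, $\hom(\sum_i c_i\delta_i, M) = \sum_i \hom(c_i\delta_i,M)$ for every $M$.

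Next I would invoke Theorem~\ref{T:HomE}: choose $n\in\mb{N}$ large enough (working with $kn$ in place of $n$, which the theorem permits) so that simultaneously $f_M(n\delta_i) = \hom(n\delta_i,M)$ for all $i$ and $f_M(n\sum_i c_i\delta_i) = \hom(n\sum_i c_i\delta_i, M)$. Then
\[
f_M\Bigl(n\sum_i c_i\delta_i\Bigr) = \hom\Bigl(n\sum_i c_i\delta_i, M\Bigr) = \sum_i \hom(nc_i\delta_i,M) = \sum_i f_M(nc_i\delta_i).
\]
Since $f_M$ is positively homogeneous of degree $1$ (immediate from Definition~\ref{D:tropf}: scaling $\delta$ scales $\delta(\dv L)$), dividing by $n$ gives $f_M(\sum_i c_i\delta_i) = \sum_i c_i f_M(\delta_i)$ in the case $c_i\geq 0$.

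Finally I would remove the positivity hypothesis on the $c_i$. For general $c_i\in\mb{Z}$ (or $\mb{R}$, by homogeneity and density), split the index set into $I_+=\{i: c_i\geq 0\}$ and $I_-=\{i: c_i<0\}$, and set $\delta = \sum_i c_i\delta_i$. Writing $\delta + \sum_{i\in I_-}(-c_i)\delta_i = \sum_{i\in I_+}c_i\delta_i$, I want to apply the nonnegative case to the right-hand side and to rearrange. The subtlety is that the nonnegative case only directly addresses $f_M$ of a nonnegative combination of the $\delta_i$; to handle $f_M(\delta + \sum_{i\in I_-}(-c_i)\delta_i)$ I would need $\e(\delta, \delta_i)=0$ for $i\in I_-$ (and $\e(\delta_i,\delta)=0$), which does \emph{not} obviously follow from the hypotheses — $\delta$ is a signed combination. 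I expect this last step to be the main obstacle. The resolution I anticipate is to avoid signed $\delta$-vectors entirely: instead argue that the function $g(c_1,\dots,c_r):=f_M(\sum c_i\delta_i)$ is convex and positively homogeneous on $\mb{R}^r$, that it agrees with the linear function $\sum c_i f_M(\delta_i)$ on the full-dimensional cone $\{c_i\geq 0\}$, and that a convex positively-homogeneous function which is linear on a full-dimensional cone and finite everywhere must be linear globally — or, more likely in the author's spirit, to replace each $\delta_i$ by $\delta_i + m e$ for a suitable common positive vector to make all coefficients effectively nonnegative, using that the $\delta$-vectors $\{\delta_i\}$ of a generalized cluster can be shifted. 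I would present the nonnegative case in full and then indicate the reduction of the general case to it via one of these devices.
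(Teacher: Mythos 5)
Your argument for the case $c_i\in\mb{Z}_{\geq 0}$ is correct and is essentially the paper's proof: the paper likewise deduces $\e(a\delta_i,b\delta_j)=0$ from Lemma \ref{L:homineq}, decomposes $\delta=\bigoplus_i c_i\delta_i$, and combines the additivity $\hom(n\delta,M)=\sum_i\hom(nc_i\delta_i,M)$ with Theorem \ref{T:HomE}. The only cosmetic difference is that the paper routes the computation through a vertex $\gamma\in\N(M)$ achieving $n\delta(\gamma)=\hom(n\delta,M)$ and a sandwich argument, which has the side benefit of producing a single $\gamma$ with $\delta_i(\gamma)=f_M(\delta_i)$ for all $i$ (this is exactly what gets reused as Lemma \ref{L:clusterincone}); your version replaces this by positive homogeneity of $f_M$, which is fine for the statement as written.

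The issue you spend the last third of the proposal on is not a real obstacle, because the lemma is only meant for nonnegative coefficients, and is in fact \emph{false} otherwise: already for $r=1$ and $c_1=-1$ it would assert $f_M(-\delta)=-f_M(\delta)$, which is impossible unless both sides vanish, since $f_M\geq 0$ (take $L=0$ in Definition \ref{D:tropf}). Every application in the paper (Corollary \ref{C:indnormal}, Lemma \ref{L:clusterincone}) uses nonnegative combinations. Also, the convexity device you suggest as a fallback is incorrect as stated: a finite, convex, positively homogeneous function that is linear on a full-dimensional cone need not be linear globally (e.g.\ $\max(0,x)$ on $\mb{R}$). So you should simply state the lemma for $c_i\in\mb{Z}_{\geq 0}$ and stop after your second paragraph; what you have there is a complete proof.
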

\begin{proof}     By Lemma \ref{L:homineq} we have that $\e(a\delta_i, b\delta_j)=0$ for $i\neq j$ and any $a,b\in\mb{N}$.
	We set $\delta := \sum_i c_i\delta_i$, then $\delta$ decomposes as $\delta = \bigoplus_i c_i\delta_i$ ($c_i\delta_i$ may be decomposable).
	By Theorem \ref{T:HomE}, there is some $\gamma\in\N(M)$ and $n\in\mb{N}$ such that 
    $n\delta(\gamma) = \hom(n\delta,M)$ for some $n\in \mb{N}$. 
    Note that $n\delta$ decomposes as $n\delta = \bigoplus_i nc_i\delta_i$.
	So
	$$\sum_i nc_i\delta_i(\gamma) =n\delta(\gamma)= \hom(n\delta,M)= \sum_i \hom(nc_i\delta_i,M),$$
	but each $nc_i\delta_i(\gamma) \leq \hom(nc_i\delta_i,M)$.
	Hence $nc_i\delta_i(\gamma)=\hom(nc_i\delta_i, M)=f_M(nc_i\delta_i)$ for each $i$, so $\delta_i(\gamma)=f_M(\delta_i)$.
	Then 
	$$\sum_i c_if_M(\delta_i) = \sum_i c_i\delta_i(\gamma) = \delta(\gamma)\leq f_M(\delta).$$
	Finally, the equality follows from the sublinearity of $f_M$.
\end{proof}

\begin{corollary} \label{C:indnormal} Let $\delta$ be an indivisible outer normal vector of $\N(M)$. Then in any decomposition $n\delta = \delta_1 \oplus \delta_2$, $\delta_i$ must be a multiple of $\delta$.
In particular, $\delta$ is indecomposable.
\end{corollary}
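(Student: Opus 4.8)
The plan is to leverage Lemma \ref{L:linearity} together with Theorem \ref{T:HomE}. Let $\delta$ be an indivisible outer normal vector of $\N(M)$, so there is a vertex (or more generally a face) $\gamma \in \N(M)$ with $\delta(\gamma) = f_M(\delta)$, and by Theorem \ref{T:HomE} we may rescale so that $f_M(n\delta) = \hom(n\delta, M)$ for a suitable $n \in \mb{N}$; note $\gamma$ still realizes the maximum, i.e. $n\delta(\gamma) = f_M(n\delta) = \hom(n\delta,M)$. Suppose $n\delta = \delta_1 \oplus \delta_2$ is any decomposition; by Theorem \ref{T:CDPHom} this forces $\e(\delta_1,\delta_2) = \e(\delta_2,\delta_1) = 0$, so the pair $\{\delta_1,\delta_2\}$ satisfies the hypothesis of Lemma \ref{L:linearity}.

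First I would run the same argument as in the proof of Lemma \ref{L:linearity}: from
$$n\delta(\gamma) = \delta_1(\gamma) + \delta_2(\gamma) \leq \hom(\delta_1,M) + \hom(\delta_2,M) = \hom(n\delta,M) = n\delta(\gamma),$$
where the middle equality is Lemma \ref{L:homineq} applied to the direct sum decomposition, I conclude $\delta_i(\gamma) = \hom(\delta_i,M) = f_M(\delta_i)$ for $i = 1,2$. Hence each $\delta_i$ is also an outer normal vector of $\N(M)$ (indeed $\gamma$ lies on the face cut out by $\delta_i$), and moreover $\gamma$ is a common point where both $\delta_1$ and $\delta_2$ attain their maxima on $\N(M)$.

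Next I would show $\delta_1$ and $\delta_2$ are both (nonnegative rational, hence after clearing denominators integer) multiples of $\delta$. The key point is that $\delta$ is the \emph{unique} indivisible normal direction of its face up to the span of that face's normal cone — but to keep the argument elementary, I would instead use the following: the support function $f_M$ is sublinear, $f_M(\delta_1) + f_M(\delta_2) = f_M(n\delta) = f_M(\delta_1 + \delta_2)$, and the set of linear functionals achieving their max at the fixed point $\gamma$ forms a convex cone $\F_\gamma(M)$; both $\delta_1,\delta_2$ and their sum $n\delta$ lie in this cone. Since $n\delta$ is indivisible-up-to-scaling (it is $n$ times the indivisible vector $\delta$) and lies in the relative interior of a ray or a higher-dimensional cone, I need that $\delta_1,\delta_2$ lie on the ray $\mb{R}_{\geq 0}\delta$. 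This is exactly where I expect the main obstacle: a priori $\F_\gamma(M)$ could be higher-dimensional, so being an outer normal vector with supporting point $\gamma$ does not immediately pin down the direction. The resolution is that "$\delta$ is an outer normal vector of $\N(M)$" should be read as: $\delta$ is normal to a \emph{facet} (codimension-one face), equivalently the face $F_\delta = \{\gamma \in \N(M) : \delta(\gamma) = f_M(\delta)\}$ has codimension one, so its normal cone $\F_{F_\delta}(M)$ is one-dimensional; then $\delta_1(\gamma) = f_M(\delta_1)$ for \emph{every} $\gamma \in F_\delta$ (not just one), because the inequality chain above holds for each such $\gamma$, which forces $\delta_1$ to be constant on the affine hull of $F_\delta$ and hence $\delta_1 \in \mb{R}\,\delta$; positivity of the scaling follows since $\hom(\delta_1,M) \geq 0$ and $\delta_1 \neq 0$ (or $\delta_1$ is a genuine summand). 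Finally, indivisibility of $\delta$ and $n\delta = \delta_1 \oplus \delta_2$ with each $\delta_i$ a positive multiple of $\delta$ gives $\delta_i = k_i \delta$ with $k_1 + k_2 = n$; in particular taking $n = 1$ (possible when $\delta$ is not wild, but in general for the minimal such $n$ one still concludes each summand is a multiple of $\delta$), no decomposition of $\delta$ itself is possible, so $\delta$ is indecomposable. I would close by remarking that the "in particular" follows by specializing $n = 1$, which is legitimate whenever $\e(\delta,\delta) = 0$, and otherwise the statement about multiples of $\delta$ is what is asserted.
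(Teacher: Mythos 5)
Your proposal is correct and follows essentially the same route as the paper: Lemma \ref{L:linearity} (whose proof you inline) yields $\delta_i(\gamma)=f_M(\delta_i)$ for every $\gamma$ on the facet cut out by $\delta$, and the uniqueness of the normal direction to a codimension-one face then forces each $\delta_i$ into $\mb{R}_{>0}\,\delta$, with indivisibility finishing the argument. The only streamlining worth making is to cite Lemma \ref{L:linearity} directly for $f_M(n\delta)=f_M(\delta_1)+f_M(\delta_2)$ instead of re-running its proof through $\hom(n\delta,M)$; that sidesteps the need to match the $n$ of the given decomposition with the scaling factor supplied by Theorem \ref{T:HomE}.
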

\begin{proof} 	Suppose that none of $\delta_1$ and $\delta_2$ is a multiple of $\delta$.	
	For any $\gamma \in \N(M)$ on this facet, we have that
	$$\delta_1(\gamma)+\delta_2(\gamma) = n\delta(\gamma)= f_M(n\delta)= f_M(\delta_1)+f_M(\delta_2).$$
	Since $\delta_i(\gamma)\leq f_M(\delta_i)$, we must have that $\delta_i(\gamma)= f_M(\delta_i)$ for $i=1,2$.
	This implies that both $\delta_1$ and $\delta_2$ are out normal vectors of this facet.
	A contradiction.
\end{proof}

\begin{definition} For a fixed algebra $A$, a weight vector $\delta$ is called {\em normal} if it is an outer normal vector of the Newton polytope of some $M\in \rep A$. 
\end{definition}

\begin{question} Is any indecomposable $\delta$-vector normal?
\end{question}
\noindent Later we shall see that each real indecomposable $\delta$-vector is normal. Moreover, if $A$ has no relations, then each indecomposable $\delta$-vector is normal.
 

\begin{definition} Suppose that $\delta = \bigoplus_i \delta_i$ is the canonical decomposition of $\delta$, and the dimension of the subspace spanned by $\{\delta_i\}_i$ is $r$. We say that $\rep A$ has enough $\delta$-stable representations if there are $|Q_0|-r$ $\delta$-stable representations with linearly independent dimension vectors.	
\end{definition}
\noindent This is equivalent to say that the dimension vectors of $\delta$-semistable representations span a codimension $r$ subspace in $K_0(\rep A)$.

\begin{proposition}\label{P:normalstable} An indecomposable $\delta$ is normal if and only if $\rep A$ has enough $\delta$-stable representations.
\end{proposition}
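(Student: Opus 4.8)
The plan is to reduce both implications to one statement about the hyperplane $\delta^{\perp}=\{\gamma\in\mb{R}^{Q_0}\mid\delta(\gamma)=0\}$. Since $\delta$ is indecomposable, its canonical decomposition is $\delta$ itself, so $r=1$ in the definition of having enough $\delta$-stable representations, and by the remark after that definition the condition is equivalent to: the dimension vectors of the $\delta$-semistable representations span $\delta^{\perp}$. On the other side, ``$\delta$ is an outer normal vector of $\N(M)$'' means that $\F_{\delta}(M):=\{\gamma\in\N(M)\mid\delta(\gamma)=f_M(\delta)\}$ is a facet of the (then full-dimensional) polytope $\N(M)$, i.e.\ $\dim\F_{\delta}(M)=|Q_0|-1$. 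So I must prove: $\delta$ is a facet normal of $\N(M)$ for some $M$ if and only if the $\delta$-stable representations span $\delta^{\perp}$. The bridge between the two sides is Theorem~\ref{T:maxminsub}, together with the behaviour of Newton polytopes under direct sums (Lemma~\ref{L:directsum}).

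For ``enough $\delta$-stables $\Rightarrow$ normal'' I would take $\delta$-stable representations $M_1,\dots,M_{|Q_0|-1}$ with linearly independent dimension vectors $\gamma_i=\dv M_i\in\delta^{\perp}$ and set $M:=\bigoplus_i M_i\oplus\bigoplus_{v\in Q_0}S_v$. By Lemma~\ref{L:directsum} the polytope $\N(M)$ is the Minkowski sum $\sum_i\N(M_i)+[0,1]^{Q_0}$, so it is full-dimensional. For each $\delta$-stable $M_i$, King's criterion (Lemma~\ref{L:King}) gives $f_{M_i}(\delta)=0$ and $\delta(\dv L)<0$ for every subrepresentation $0\neq L\subsetneq M_i$; a short convexity argument then identifies the face of $\N(M_i)$ in the direction $\delta$ with the segment $[0,\gamma_i]$. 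Hence the face of $\N(M)$ in the direction $\delta$ is $\sum_i[0,\gamma_i]$ plus a face of the cube; its direction space lies in $\delta^{\perp}$ and contains each $\gamma_i$, hence equals $\delta^{\perp}$, so that face has dimension $|Q_0|-1$ and $\delta$ is an outer normal vector of $\N(M)$.

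For the converse, suppose $\delta$ is an outer normal vector of $\N(M)$, so $\F_{\delta}(M)$ is a facet. I would pick $|Q_0|$ affinely independent points $\dv L_0,\dots,\dv L_{|Q_0|-1}$ of this facet with each $L_j$ a subrepresentation of $M$ realizing $f_M(\delta)$, i.e.\ $L_j\in\mc{L}(\delta,M)$. By Theorem~\ref{T:maxminsub} the set $\mc{L}(\delta,M)$ has a least element $L_{\min}$, contained in every $L_j$, and each quotient $L_j/L_{\min}$ is $\delta$-semistable with dimension vector $\dv L_j-\dv L_{\min}$. Affine independence of the $\dv L_j$ forces these $|Q_0|$ vectors to span a $(|Q_0|-1)$-dimensional subspace of $\delta^{\perp}$, so among the $L_j/L_{\min}$ there are $|Q_0|-1$ $\delta$-semistable representations with linearly independent dimension vectors. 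Finally, the $\delta$-semistable representations form an abelian subcategory of $\rep A$ whose simple objects are exactly the $\delta$-stable ones (a standard consequence of King's criterion); so each $L_j/L_{\min}$ has a finite filtration with $\delta$-stable subquotients, whence the $\delta$-stable representations already span $\delta^{\perp}$ --- which is exactly the assertion that $\rep A$ has enough $\delta$-stable representations.

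The step I expect to take the most care is keeping the distinction between $\delta$-stable and $\delta$-semistable straight at the two points where it bites: in the first implication one needs the \emph{strict} inequality for proper subrepresentations of a stable $M_i$ to be sure the relevant face is exactly the edge $[0,\gamma_i]$ and nothing larger; in the converse one extracts only $\delta$-\emph{semi}stable representations from Theorem~\ref{T:maxminsub}, and passing to $\delta$-stable ones via Jordan--Hölder factors relies on the $\delta$-semistable category being abelian with stable simple objects (so those factors still lie in $\delta^{\perp}$). A secondary bookkeeping point is the reduction to $\delta^{\perp}$ via $r=1$, i.e.\ the indecomposability hypothesis.
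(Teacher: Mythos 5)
Your proof is correct and follows essentially the same route as the paper's: one direction extracts $\delta$-semistable quotients $L_j/L_{\min}$ from affinely independent points of the facet via Theorem~\ref{T:maxminsub}, and the other realizes $\delta$ as a facet normal of the Newton polytope of a direct sum of $\delta$-stable representations. The one genuine refinement is your extra summand $\bigoplus_{v}S_v$ forcing $\N(M)$ to be full-dimensional: the paper's converse takes $M=\bigoplus_i L_i$ only, which degenerates when all the $L_i$ are simple (then $\N(M)\subseteq\delta^{\perp}$ and the supported ``face'' is not a proper facet), so your addition patches a small gap rather than being mere caution.
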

\begin{proof} If $\delta$ is a normal vector of $\N(M)$, then the convex hull of dimension vectors in $\mc{L}(\delta,M)$ has codimension $1$.
By Theorem \ref{T:maxminsub} $L/L_{\min}$ is $\delta$-semistable for any $L\in \mc{L}(\delta,M)$,
and the dimension vectors of $L/L_{\min}$ span a codimension $1$ subspace.

Conversely, if $\rep A$ has $|Q_0|-1$ $\delta$-stable representations $\{L_i\}_i$ with linearly independent dimension vectors, then let $M=\bigoplus_i L_i$.
We claim that $\delta$ is a normal vector of $\N(M)$.
Since $M$ is $\delta$-semistable, we have that $\hom(n\delta,M)=0$ for some $n\in\mb{N}$. 
So $\{\gamma\mid \delta(\gamma)=0\}$ supports a face of $\N(M)$. 
Since each $L_i$ lies on this face, its codimension is exactly $1$.
\end{proof}


One of the main results in \cite{Fc} gives an explicit formula for the restriction of the $F$-polynomial $F_M$ to a facet of its Newton polytope.
In particular, this result specializes to the tropical setting.
Roughly speaking, any facet of Newton polytope $\N(M)$ is a shifted Newton polytope $\N(M')$ for a representation $M'$ of another algebra. We refer the readers to \cite[Section 6]{Fc} for more details.

\subsection{Vertices and Dual Cones} \label{ss:Vcones}
If ${\sf P}$ is a polytope, then its support function is piecewise linear. 
The maximal regions of linearity of $\psi_{\sf P}$ are exactly the {\em dual cones} of the vertices of ${\sf P}$: 
for each vertex $v$ of ${\sf P}$, the support function $\psi_{\sf P}$ is linear on
$\{\alpha\in V^*\mid  \alpha(v) = \psi_{\sf P}(\alpha)\}$.
The extremal rays of the dual cone are precisely the normal vectors of all facets of ${\sf P}$ containing $v$.
For this reason it is also called the {\em normal cone} of $v$.
In our setting, the dual cone $\F_\gamma(M)$ of a vertex $\gamma\in \N(M)$ is the cone spanned by $\delta$ satisfying
\begin{equation} \label{eq:v2fan} \delta(\gamma) = f_M(\delta)
\end{equation}
Similarly, the dual cone $\Fc_\gamma(M)$ of a vertex $\gamma\in \Nc(M)$ is the cone spanned by $\dtc$ satisfying 
$$\dtc(\gamma) = \fc_M(\dtc).$$

Let ${\sf V}(M)$ and $\check{\sf V}(M)$ be the set of vertices in $\N(M)$ and $\check{\N}(M)$.
We first recall some results in \cite{Fc}.
\begin{proposition}[{\cite[Lemma 4.1, Proposition 4.7]{Fc}}] \label{P:vertex} $\gamma\in {\V}(M)$ if and only if it is the dimension vector of $t_\dtb(M)$ or $\check{t}_\dtb(M)$ for some weight $\delta \in \mb{Z}^{Q_0}$.
In particular, there is a unique subrepresentation $L$ of $M$ of dimension $\gamma$, and it satisfies $\Hom(L,M/L)=0$.	
\end{proposition}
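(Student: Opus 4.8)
The plan is to characterize the vertices of $\N(M)$ as exactly the dimension vectors realized by the torsion functors $t_\dtb$ and $\tc_\dtb$, using Theorem \ref{T:torsion} to transfer the geometric statement about the Newton polytope into a representation-theoretic one. First I would recall the general fact from the preceding subsection: a point $\gamma\in\N(M)$ is a vertex if and only if there is some $\delta\in\mb{Z}^{Q_0}$ for which the linear form $\delta$ attains its maximum on $\N(M)$ at the single point $\gamma$, i.e.\ $\gamma$ is the unique element of $\N(M)$ with $\delta(\gamma)=f_M(\delta)$. Since the extreme points of $\N(M)$ are among the lattice points $\dv L$ for $L\hookrightarrow M$, this amounts to: $\gamma=\dv L$ for some subrepresentation $L$, and $L$ is the unique subrepresentation of $M$ whose dimension vector lies in $\mc{L}(\delta,M)$ after maximizing over that face. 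Actually the cleaner route is: $\gamma$ is a vertex iff the face $\{\eta\in\N(M)\mid \delta(\eta)=f_M(\delta)\}$ is a single point for some $\delta$.

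For the ``if'' direction I would argue as follows. Given $\delta$, set $L_{\min}=t_\dtb(M)$ and $L_{\max}=\tc_\dtb(M)$. By Theorem \ref{T:torsion} and Theorem \ref{T:maxminsub}, every $L\in\mc{L}(\delta,M)$ satisfies $L_{\min}\subseteq L\subseteq L_{\max}$, and the dimension vectors occurring are exactly those of such $L$. If $L_{\min}=L_{\max}$ then $\mc{L}(\delta,M)=\{L_{\min}\}$, so the $\delta$-face of $\N(M)$ is the single point $\dv L_{\min}$, hence $\dv(t_\dtb(M))=\dv(\tc_\dtb(M))$ is a vertex. More generally, to see that $\dv(t_\dtb(M))$ is always a vertex for \emph{any} $\delta$, I would perturb: replace $\delta$ by $\delta'=\delta + \epsilon\delta''$ where $\delta''$ is chosen (with $\epsilon>0$ small, then cleared of denominators) so that $\delta''$ strictly separates $L_{\min}$ from the larger members of $\mc{L}(\delta,M)$ — concretely, pick $\delta''$ negative on the simple quotients appearing in $L/L_{\min}$ for $L\in\mc{L}(\delta,M)$, which is possible because $\Hom(t_\dtb(M),M/t_\dtb(M))=0$ forces $t_\dtb(M)$ to be the minimal element and the quotients $L/L_{\min}$ are $\delta$-semistable hence sit in a half-space cut out by a suitable $\delta''$. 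Then $t_{\dtb'}(M)=t_\dtb(M)$ and the $\delta'$-face collapses to the point $\dv(t_\dtb(M))$. The argument for $\tc_\dtb(M)$ is dual (maximize rather than minimize).

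For the ``only if'' direction: if $\gamma$ is a vertex, choose $\delta$ with $\gamma$ the unique maximizer; then $\mc{L}(\delta,M)$ has a unique dimension vector $\gamma$, and since $L_{\min}=t_\dtb(M)$ and $L_{\max}=\tc_\dtb(M)$ both lie in $\mc{L}(\delta,M)$ they have the same dimension vector $\gamma$, so in particular $\gamma=\dv(t_\dtb(M))$. For the last two assertions, uniqueness of the subrepresentation $L$ of dimension $\gamma$: if $L'$ were another, then by Lemma \ref{L:sub=h} (or Lemma \ref{L:HomE} together with the maximality) $L'\in\mc{L}(\delta,M)$ too, and by the lattice property in Lemma \ref{L:sub=h} $L\cap L'$ and $L+L'$ also lie in $\mc{L}(\delta,M)$; comparing dimension vectors forces $L=L'=L\cap L'=L+L'$. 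Finally $\Hom(L,M/L)=0$ is exactly the conclusion $\Hom(t_\dtb(M),M/t_\dtb(M))=0$ from Theorem \ref{T:torsion}.

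The main obstacle I anticipate is the perturbation step in the ``if'' direction: one must verify that $\dv(t_\dtb(M))$ is a vertex even when $\delta$ itself does not single it out (i.e.\ when $L_{\min}\subsetneq L_{\max}$), and that the perturbed weight $\delta'$ genuinely has $t_{\dtb'}(M)=t_\dtb(M)$. This requires knowing that on the face $\mc{L}(\delta,M)$ the functor $t$ behaves compatibly under small generic perturbations — which should follow from the semistability of the quotients $L/L_{\min}$ (Theorem \ref{T:maxminsub}) plus openness of the defining inequalities, but getting the bookkeeping of signs and the choice of $\delta''$ exactly right is the delicate part. Everything else is a formal consequence of Theorems \ref{T:maxminsub} and \ref{T:torsion} and the support-function description of vertices.
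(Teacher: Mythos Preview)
The paper does not actually prove Proposition \ref{P:vertex}; it is recalled from \cite{Fc} without proof, so there is no paper argument to compare against.

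Your proposal is essentially correct, and the overall structure---reducing to Theorems \ref{T:maxminsub} and \ref{T:torsion}---is the natural one. The ``only if'' direction and the uniqueness/$\Hom(L,M/L)=0$ parts are fine as written. Your concern about the perturbation step in the ``if'' direction is overblown: the argument can be made completely elementary by noting that every $L\in\mc{L}(\delta,M)$ satisfies $L_{\min}\subseteq L$, so $\dv L - \dv L_{\min}=\dv(L/L_{\min})$ is a nonnegative nonzero vector whenever $L\neq L_{\min}$. Thus $\dv L_{\min}$ is the unique coordinatewise minimum among the lattice points spanning the $\delta$-face $F$, hence a vertex of $F$, hence a vertex of $\N(M)$. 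Concretely, $\delta'=N\delta-(1,\dots,1)$ for any $N>\dim M$ singles out $\dv L_{\min}$ uniquely: for $L'\notin\mc{L}(\delta,M)$ one has $\delta'(\dv L')\leq N(f_M(\delta)-1)<Nf_M(\delta)-|\gamma|=\delta'(\gamma)$, and for $L'\in\mc{L}(\delta,M)\setminus\{L_{\min}\}$ one has $\delta'(\dv L')=Nf_M(\delta)-|\dv L'|<Nf_M(\delta)-|\gamma|$. So there is no need to invoke semistability of $L/L_{\min}$ or to worry about which simple quotients appear; the choice $\delta''=-(1,\dots,1)$ works uniformly. The dual argument with $\delta''=(1,\dots,1)$ handles $\tc_\dtb(M)$.
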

\noindent It is quite clear that $\delta$ can be any weight in the interior of $\F_\gamma(M)$.
The converse of the last statement is not true. 

\begin{definition} For any $\gamma\in \V(M)$, we call the unique subrepresentation $L$ with $\dv L = \gamma$ a {\em vertex subrepresentation} of $M$.
	The {\em vertex quotient} representation is defined analogously.
\end{definition}
\noindent In particular, we can label the vertices of $\N(M)$ by the vertex subrepresentations of $M$.

\begin{corollary} \label{C:sumvertex} Suppose that $M=\bigoplus_i M_i$. Then 
each vertex subrepresentation $L$ of $M$ is of the form
$L=\bigoplus_i L_i$ where each $L_i$ is a vertex subrepresentation of $M_i$. 
In particular, $\N(M) = \sum_i \N(M_i)$ where the sum on the right side is the Minkowski sum.
\end{corollary}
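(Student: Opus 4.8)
The plan is to reduce everything to the support-function picture developed earlier, using Proposition \ref{P:vertex} together with Lemma \ref{L:directsum} (equivalently, the additivity $f_{M\oplus N}=f_M+f_N$). First I would recall that by Proposition \ref{P:vertex}, a vertex subrepresentation of $M$ is exactly a subrepresentation of the form $t_\dtb(M)$ (or $\tc_\dtb(M)$) for some weight $\delta$, and that $\delta$ may be taken in the interior of the normal cone $\F_\gamma(M)$. The key structural input I would establish is that the torsion functor respects direct sums: $t_\dtb(M_1\oplus M_2)=t_\dtb(M_1)\oplus t_\dtb(M_2)$. This follows from the fact that the torsion class $\mc{T}(\dtb)$ and torsion-free class $\mc{F}(\dtb)$ are each closed under direct sums and summands (being defined by the vanishing condition $\hom(n\delta,N)=0$, which is additive on direct sums by Lemma \ref{L:homineq}), so the torsion filtration of a direct sum is the direct sum of the torsion filtrations. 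Granting this, if $L$ is a vertex subrepresentation of $M=\bigoplus_i M_i$, write $L=t_\dtb(M)$ for a suitable $\delta$; then $L=\bigoplus_i t_\dtb(M_i)$, and each $t_\dtb(M_i)$ is a vertex subrepresentation of $M_i$ by the same Proposition \ref{P:vertex} (its dimension vector is a vertex of $\N(M_i)$ since $\delta$ is linear there). This proves the first assertion.

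For the Minkowski sum statement, I would argue at the level of support functions. The support function of $\N(M)$ is $\delta\mapsto f_M(\delta)$ (as recorded in the proof of Theorem \ref{T:Newton}), and the support function of a Minkowski sum is the sum of the support functions. So the identity $f_{M}=\sum_i f_{M_i}$ from Lemma \ref{L:directsum} gives $\psi_{\N(M)}=\sum_i\psi_{\N(M_i)}=\psi_{\sum_i\N(M_i)}$, and since the map from nonempty compact convex sets to sublinear functions is a bijection (the Hahn–Banach recovery recalled in Section \ref{S:Newton}), we conclude $\N(M)=\sum_i\N(M_i)$. Alternatively, and perhaps more in keeping with the combinatorial flavor, one can deduce the Minkowski identity directly from the first part: every vertex of $\sum_i\N(M_i)$ is a sum of vertices $\gamma_i\in\V(M_i)$ selected by a common generic linear functional $\delta$, and the first part identifies $\sum_i\gamma_i$ with the dimension vector of the vertex subrepresentation $\bigoplus_i t_\dtb(M_i)=t_\dtb(M)$ of $M$, hence a vertex of $\N(M)$; conversely every vertex of $\N(M)$ arises this way.

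The main obstacle, and the only point requiring genuine care, is the claim that $t_\dtb$ commutes with direct sums. One must check that the torsion pair $(\mc{T}(\dtb),\mc{F}(\dtb))$ is componentwise, i.e. that both classes are closed under taking direct sums and direct summands. Closure under summands is immediate from the defining inequalities; closure under (finite) direct sums uses the additivity in Lemma \ref{L:homineq} and its $\e$-analogue, noting that if $\hom(n_j\delta,M_j)=0$ for each $j$ then $\hom(n\delta,\bigoplus_j M_j)=0$ for $n=\prod_j n_j$ (or any common multiple). Once this is in hand, the uniqueness clause of Proposition \ref{P:vertex}—that a vertex dimension vector is realized by a unique subrepresentation—lets us identify the direct-sum decomposition of $L$ unambiguously, so there is no ambiguity in writing $L=\bigoplus_i L_i$. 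Everything else is a formal consequence of the support-function dictionary already set up in Sections \ref{S:Ftrop} and \ref{S:Newton}.
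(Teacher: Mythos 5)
Your proof is correct and follows exactly the route the paper intends: Proposition \ref{P:vertex} identifies vertex subrepresentations with $t_\dtb(M)$ (or $\tc_\dtb(M)$), torsion functors commute with finite direct sums, and the Minkowski identity is the support-function restatement of Lemma \ref{L:directsum}. The only cosmetic remarks are that the $\tc_\dtb$ case should be mentioned as symmetric, and that the additivity $\hom(\delta,M_1\oplus M_2)=\hom(\delta,M_1)+\hom(\delta,M_2)$ is immediate from \eqref{eq:longexact} rather than from Lemma \ref{L:homineq} (which you do need, correctly, only for the common-multiple step in the ``for some $n$'' condition).
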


Consider the sets 
\begin{align*}
{\Delta}_0(M)&=\{\delta\in \mb{Z}^{Q_0} \mid \hom(n\delta,M)=0 \text{ for some $n\in\mb{N}$} \}, \\
{\Delta}_1(M)&=\{\delta\in \mb{Z}^{Q_0} \mid {\e}(n\delta,M)=0 \text{ for some $n\in\mb{N}$} \}.
\end{align*}
\noindent They span the two most important dual cones, namely $\F_0(M)$ and $\F_M(M)$.
We call them the major cones of $\N(M)$.
Clearly $\F_0(M)$ always contains the negative cluster $(-e_1,\dots,-e_n)$ and $\F_M(M)$ always contains the positive cluster $(e_1,\dots,e_n)$.
Moreover, $\Delta(M):=\Delta_0(M)\cap \Delta_1(M)$ consists of all weights $\delta$ such that $M$ is $\delta$-semistable.
Due to the relation $f_M(\delta)- \fc_M(-\delta)=\delta(\dv M)$, we have the obvious duality 
$$\F_L(M) = -\check{\F}_{M/L}(M).$$
It follows from Theorem \ref{T:HomE} that 
\begin{corollary} \label{C:HE0} ${\Delta}_0(M)$ $($resp. ${\Delta}_1(M)$$)$ are precisely the lattice points in the polyhedral cone defined by $\delta(v) \leq 0$ for all $v \in {\V}(M)$ $($resp. $\delta(v) \geq 0$ for all $v \in {\sf \check{V}}(M)$$)$.
\end{corollary}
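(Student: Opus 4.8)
The plan is to reduce the statement to the support‑function description of $f_M$ together with the trivial but crucial observation that the origin is always a vertex of $\N(M)$.

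First I would identify $\Delta_0(M)$ with the zero locus of $f_M$: namely $\Delta_0(M)=\{\delta\in\mb{Z}^{Q_0}\mid f_M(\delta)=0\}$. One inclusion is immediate from Lemma \ref{L:HomE}: if $\hom(n\delta,M)=0$ for some $n$, then $0\le f_M(n\delta)\le\hom(n\delta,M)=0$, and since $f_M$ is positively homogeneous of degree one (being the support function of $\N(M)$), $f_M(\delta)=0$. For the converse I would invoke Theorem \ref{T:HomE}: if $f_M(\delta)=0$ then $f_M(n\delta)=0$ for every $n$, and Theorem \ref{T:HomE} provides an $n$ with $\hom(n\delta,M)=f_M(n\delta)=0$, so $\delta\in\Delta_0(M)$.

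Next I would use that $f_M(\delta)=\max_{\gamma\in\N(M)}\delta(\gamma)=\max_{v\in\V(M)}\delta(v)$, since a linear functional on the polytope $\N(M)=\conv\{\dv L\mid L\hookrightarrow M\}$ attains its maximum at a vertex. The remaining point is that $0\in\V(M)$: every $\dv L$ has nonnegative entries, so $\N(M)\subseteq\mb{R}_{\geq 0}^{Q_0}$, and the origin, being a vertex of the orthant that lies in $\N(M)$, is a vertex of $\N(M)$. Consequently $\max_{v\in\V(M)}\delta(v)\ge\delta(0)=0$ for every $\delta$, so $f_M(\delta)=0$ if and only if $\delta(v)\le 0$ for all $v\in\V(M)$. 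Combining this with the previous paragraph yields exactly that $\Delta_0(M)$ is the set of lattice points of the polyhedral cone $\{\delta\mid\delta(v)\le 0,\ \forall v\in\V(M)\}$ (a genuine polyhedral cone, as $\V(M)$ is finite).

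Finally, the statement for $\Delta_1(M)$ follows by the mirror argument: applying Lemma \ref{L:HomE} and Theorem \ref{T:HomE} to $-\delta$ and using that $\fc_M$ is the (positively homogeneous) support function of $\check{\N}(M)$, one gets $\e(n\delta,M)=0$ for some $n$ iff $\fc_M(-\delta)=0$; and since $0\in{\sf\check{V}}(M)$ for the same reason as above, this is equivalent to $(-\delta)(v)\le 0$, i.e. $\delta(v)\ge 0$, for all $v\in{\sf\check{V}}(M)$. I do not expect a real obstacle here; the only care needed is in combining the two inequalities of Theorem \ref{T:HomE} with homogeneity, and in noticing that because the origin is always a vertex the condition $f_M(\delta)=0$ collapses to the one‑sided system $\delta(v)\le 0$ rather than a system of equalities.
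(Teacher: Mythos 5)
Your proof is correct and takes exactly the route the paper intends: the paper gives no separate argument for this corollary beyond the remark that it ``follows from Theorem \ref{T:HomE}'', and your deduction (identifying $\Delta_0(M)$ with the zero set of the positively homogeneous support function $f_M$, then using that $0$ is always a vertex to turn $f_M(\delta)=0$ into the one-sided system $\delta(v)\le 0$, and dually for $\Delta_1(M)$) is precisely the intended fleshing-out.
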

\noindent 
One interesting result in \cite{Fc} says that if $M$ is a general representation of an acyclic quiver, 
then the normal vectors of $\N(M)$ are precisely given by the extremal rays in $\F_0(M)$ and $\F_M(M)$.

The following proposition says that other dual cones are intersections of the major cones.
\begin{proposition} \label{P:othercones} Suppose that $L$ is a vertex subrepresentation of $M$. We have that 
	$$\F_L(M) = \F_0(M/L) \cap \F_{L}(L).$$
\end{proposition}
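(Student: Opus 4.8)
The plan is to prove the two inclusions separately, using the combinatorial description of a normal cone via the equation $\delta(\gamma) = f_M(\gamma)$ together with the decomposition $f_M = f_L + f_{M/L}$ on the relevant weights. Note first that since $L$ is a vertex subrepresentation, for every $\delta$ in the interior of $\F_L(M)$ we have $L = t_\dtb(M)$ (Proposition \ref{P:vertex}), and hence by Theorem \ref{T:torsion} the quotient $M/L = f_\dtb(M)$ satisfies $\hom(n\delta, M/L) = 0$ for some $n$, i.e.\ $\delta \in \Delta_0(M/L) = \F_0(M/L)$; and trivially $\delta(\dv L) = f_M(\delta) = f_L(\delta)$ (the inequality $f_L(\delta) \le f_M(\delta)$ is automatic since $L \hookrightarrow M$, and the reverse follows because $L \in \mc{L}(\delta, M)$), so $\delta \in \F_L(L)$. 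This handles $\F_L(M) \subseteq \F_0(M/L) \cap \F_L(L)$ for interior $\delta$; passing to closures gives the full inclusion since both sides are closed cones.

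For the reverse inclusion, suppose $\delta \in \F_0(M/L) \cap \F_L(L)$. From $\delta \in \F_0(M/L)$ we get $f_{M/L}(\delta) = 0$ (the support function of $\N(M/L)$ vanishes at $\delta$, since $0$ is the $\delta$-maximizing vertex). From $\delta \in \F_L(L)$ we get $\delta(\dv L) = f_L(\delta)$. Now I would use the short exact sequence $0 \to L \to M \to M/L \to 0$ and the additivity $f_M = f_L + f_{M/L}$ — wait, that additivity only holds for direct sums (Lemma \ref{L:directsum}), so instead I would argue directly: for any subrepresentation $L' \hookrightarrow M$, the image $\br{L'} $ of $L'$ in $M/L$ and the intersection $L' \cap L$ give $\delta(\dv L') = \delta(\dv(L'\cap L)) + \delta(\dv \br{L'}) \le f_L(\delta) + f_{M/L}(\delta) = f_L(\delta) = \delta(\dv L)$. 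Hence $f_M(\delta) \le \delta(\dv L)$, and since $L$ itself is a subrepresentation the reverse holds, so $f_M(\delta) = \delta(\dv L)$, which says exactly $\delta \in \F_L(M)$ (recalling $\gamma = \dv L$ is the vertex).

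The main obstacle — and the step to be careful about — is the manipulation of $f_M$ under the non-split sequence: one must justify $\delta(\dv(L' \cap L)) \le f_L(\delta)$ (clear, since $L' \cap L \hookrightarrow L$) and $\delta(\dv \br{L'}) \le f_{M/L}(\delta)$ (clear, since $\br{L'} \hookrightarrow M/L$), together with the additivity of $\dv$ on the exact sequence $0 \to L'\cap L \to L' \to \br{L'} \to 0$. These are all straightforward, so the genuine content is really the identification $f_{M/L}(\delta) = 0$ from $\delta \in \F_0(M/L)$ and the passage-to-closure argument in the first inclusion (one should check that the equalities characterizing $\F_L(L)$ and $\F_0(M/L)$ are preserved in the limit, which they are since $f$ is continuous and piecewise linear). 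I expect no serious difficulty, but the direct-sum-versus-exact-sequence subtlety in the additivity of $f$ is the spot most likely to trip up a careless write-up.
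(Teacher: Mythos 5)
Your proof is correct, but it takes a genuinely different route from the paper's on the harder inclusion. For $\F_0(M/L)\cap \F_{L}(L)\subseteq \F_L(M)$ the paper translates the tropical conditions into representation theory: by Theorem \ref{T:HomE} it finds $n$ with $\hom(n\delta,M/L)=\e(n\delta,L)=0$ and then invokes Lemma \ref{L:sub=h} to get $n\delta(\dv L)=\hom(n\delta,M)=f_M(n\delta)$. You instead stay entirely at the level of the support function: for an arbitrary $L'\hookrightarrow M$ you split $\dv L'=\dv(L'\cap L)+\dv\bigl(L'/(L'\cap L)\bigr)$ and bound the two pieces by $f_L(\delta)$ and by $f_{M/L}(\delta)=0$. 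This is valid and more elementary --- it amounts to the subadditivity $f_M\le f_L+f_{M/L}$ for a short exact sequence, which the paper only records later in Section \ref{S:fan&edge} --- though it does not produce the identity $n\delta(\dv L)=\hom(n\delta,M)$ that falls out of the paper's argument. On the easy inclusion your treatment of the $\F_L(L)$ half coincides with the paper's, but for the $\F_0(M/L)$ half you detour through $L=t_\dtb(M)$, the torsion pair, and a density-of-interior-lattice-points plus closure step; the paper's ``similarly'' is the one-line observation that any $N\hookrightarrow M/L$ lifts to $L\subseteq L'\subseteq M$ with $\delta(\dv L')\le f_M(\delta)=\delta(\dv L)$, hence $\delta(\dv N)\le 0$, which works directly for every $\delta\in\F_L(M)$ rather than only interior ones. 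Your detour is correct as written, just heavier than necessary.
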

\begin{proof} If $\delta\in \F_L(M)$, then $\delta(\dv L) = f_M(\delta)$.
	Since every subrepresentation of $L$ is a subrepresentation of $M$, $\delta(\dv L)\leq f_L(\delta)\leq f_M(\delta)$.
	So $\delta\in \F_L(L)$. Similarly we can show that $\delta\in \F_0(M/L)$.
	Conversely, if $\delta \in \F_0(M/L)\cap \F_L(L)$, then by Theorem \ref{T:HomE} there is some $n\in \mb{N}$ such that
	$\e(n\delta,L)=0$ and $\hom(n\delta,M/L)=0$. 
	By Lemma \ref{L:sub=h} we have that $n\delta(\dv L) = \hom(n\delta,M) = f_M(n\delta)$, i.e., $\delta\in \F_L(M)$.
\end{proof}

\begin{lemma} \label{L:refine} Suppose that $M=\bigoplus_i M_i$, and $\delta \in \F_L (M)$.
	Then $\delta \in \F_{L_i} (M_i)$ for each $i$ where $L_i = L\cap M_i$.
	So each $\F_{L_i}(M_i)$ is a union of dual cones of $\N(M)$. 
\end{lemma}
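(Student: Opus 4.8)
The plan is to reduce everything to the Minkowski-sum decomposition of Newton polytopes already recorded in Corollary \ref{C:sumvertex}, and to the characterization of dual cones via the support function in \eqref{eq:v2fan}. First I would recall that, since $M=\bigoplus_i M_i$, we have $\N(M)=\sum_i \N(M_i)$ (Minkowski sum) and that every vertex subrepresentation $L$ of $M$ decomposes uniquely as $L=\bigoplus_i L_i$ with $L_i$ a vertex subrepresentation of $M_i$; by Proposition \ref{P:vertex} the summand $L_i$ is exactly $L\cap M_i$. In particular $\dv L=\sum_i \dv L_i$.

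The core of the argument is the standard fact about support functions of Minkowski sums: $f_M=\sum_i f_{M_i}$ (this is just Lemma \ref{L:directsum} in the tropical form $f_{M\oplus N}=f_M+f_N$), together with the pointwise bound $f_{M_i}(\delta)\geq \delta(\dv L_i)$ coming from the fact that $L_i$ is an actual subrepresentation of $M_i$. Now suppose $\delta\in \F_L(M)$, i.e. $\delta(\dv L)=f_M(\delta)$. Writing this out,
\[
\sum_i \delta(\dv L_i) = \delta(\dv L) = f_M(\delta) = \sum_i f_{M_i}(\delta),
\]
and since each term on the left is $\leq$ the corresponding term on the right, all the inequalities must be equalities: $\delta(\dv L_i) = f_{M_i}(\delta)$ for every $i$. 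By the definition \eqref{eq:v2fan} of the dual cone (and the fact that $\dv L_i$ is a vertex of $\N(M_i)$, which is where we use Corollary \ref{C:sumvertex}), this says precisely $\delta\in \F_{L_i}(M_i)$. This gives the containment $\F_L(M)\subseteq \F_{L_i}(M_i)$ for each $i$.

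For the final sentence — that each $\F_{L_i}(M_i)$ is a union of dual cones of $\N(M)$ — I would argue that the normal fan of $\N(M)$ refines the normal fan of each $\N(M_i)$, since $\N(M)$ is a Minkowski summand-sum of the $\N(M_i)$. Concretely: the face of $\N(M_i)$ selected by a weight $\delta$ depends only on the linearity region of $f_{M_i}$ containing $\delta$, hence is constant on each maximal cone of $\F(M)$ (on which $f_M=\sum f_{M_j}$, and thus each $f_{M_j}$, is linear); so $\F_{L_i}(M_i)$ is the union of all maximal cones of $\F(M)$ whose interior weights select the vertex $\dv L_i$ of $\N(M_i)$. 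I do not expect any real obstacle here — the only thing to be slightly careful about is that when $M_i=0$ or when several $M_i$ are isomorphic the statement $L_i=L\cap M_i$ must be read as ``the $i$-th component of the canonical decomposition of $L$,'' which is exactly the content of Corollary \ref{C:sumvertex}; once that bookkeeping is fixed, the proof is the two-line equality-forcing argument above.
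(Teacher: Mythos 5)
Your proof is correct and follows essentially the same route as the paper: decompose $L=\bigoplus_i L_i$ via Corollary \ref{C:sumvertex}, use $f_M=\sum_i f_{M_i}$ together with $\delta(\dv L_i)\leq f_{M_i}(\delta)$, and force termwise equality in $\sum_i \delta(\dv L_i)=\sum_i f_{M_i}(\delta)$. The final refinement statement is also handled as in the paper (the paper simply notes $\F_{L_i}(M_i)$ is the union of the cones $\F_L(M)$ with $L\cap M_i=L_i$, which is the same observation you make via linearity regions).
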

\begin{proof} By Corollary \ref{C:sumvertex} each $L_i$ is a vertex subrepresentation. 
	$\delta \in \F_L (M)$ implies that $\delta(\dv L)= f_M(\delta)$.
	So we have that
	$$\sum_i \delta(\dv L_i) = \delta(\dv L) = f_M(\delta) = \sum_i f_{M_i}(\delta).$$	
	But $\delta(\dv L_i)\leq f_{M_i}(\delta)$ for each $i$.
	We must have that $\delta(\dv L_i)= f_{M_i}(\delta)$ for each $i$.
	Hence $\delta \in \F_{L_i} (M_i)$.
	Conversely, suppose that $\delta \in \F_{L_i} (M_i)$ for each $i$. Then $\delta\in F_{\oplus_i L_i}(M)$.
	It follows that each $\F_{L_i}(M_i)$ is a union of $\F_L(M)$ where $L\cap M_i = L_i$.
\end{proof}

\begin{lemma} \label{L:clusterincone} Suppose that $\{\delta_1,\cdots, \delta_r\}$ satisfies $\e(\delta_i,\delta_j)=0$ for $i\neq j$.
Then all $\delta_i$'s are contained in some dual cone of ${\N}(M)$.
\end{lemma}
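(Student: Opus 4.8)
The plan is to deduce the lemma from Lemma~\ref{L:linearity} together with an elementary observation about support functions. First I would put $\delta := \delta_1+\cdots+\delta_r$. Since $\{\delta_1,\dots,\delta_r\}$ satisfies exactly the hypothesis of Lemma~\ref{L:linearity}, taking all the coefficients equal to $1$ in that lemma gives
$$f_M(\delta) = f_M(\delta_1) + \cdots + f_M(\delta_r).$$

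Next I would use that $f_M$ is, by its very definition, the support function of the polytope $\N(M)$, so $f_M(\delta)=\max_{\gamma\in\N(M)}\delta(\gamma)$ and the set $F:=\{\gamma\in\N(M)\mid \delta(\gamma)=f_M(\delta)\}$ is a nonempty face of $\N(M)$; being a nonempty face of a polytope, $F$ contains a vertex $\gamma_0$. For this $\gamma_0$ one has $\delta_1(\gamma_0)+\cdots+\delta_r(\gamma_0)=\delta(\gamma_0)=f_M(\delta)=f_M(\delta_1)+\cdots+f_M(\delta_r)$, whereas $\delta_i(\gamma_0)\le f_M(\delta_i)$ for every $i$ because $\gamma_0\in\N(M)$. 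Comparing the two sums term by term forces $\delta_i(\gamma_0)=f_M(\delta_i)$ for all $i$, which is precisely the condition \eqref{eq:v2fan} defining membership in the dual cone of $\gamma_0$; hence every $\delta_i$ lies in the single dual cone $\F_{\gamma_0}(M)$, as desired.

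I do not anticipate a genuine obstacle here: all of the representation-theoretic content is already packaged into Lemma~\ref{L:linearity} (which in turn rests on Theorem~\ref{T:HomE}), and what remains is the standard fact that sublinearity of a support function together with the additivity $f_M(\sum_i\delta_i)=\sum_i f_M(\delta_i)$ confines the $\delta_i$ to a common face of the normal fan. The one point that requires a little care is that a dual cone is attached to a \emph{vertex} of $\N(M)$, so one must not stop at an arbitrary point of the face $F$ but pass to a vertex $\gamma_0\in F$ before invoking \eqref{eq:v2fan}.
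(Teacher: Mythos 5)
Your argument is correct and is essentially the paper's: the paper disposes of this lemma with the single line ``this is just a reformulation of Lemma~\ref{L:linearity},'' and your proof is exactly the careful unpacking of that reformulation (additivity of $f_M$ on $\sum_i\delta_i$ plus sublinearity forces a common maximizing vertex). Your extra care in passing from the face $F$ to a vertex $\gamma_0$ of $F$ is the right thing to do, since the dual cones in \eqref{eq:v2fan} are attached to vertices.
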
 
\begin{proof} This is just a reformulation of Lemma \ref{L:linearity}.
\end{proof}

\begin{lemma} \label{L:erigid0} Let $M$ be an $\E$-rigid representation with weight vector $\delta$, and $N$ is a quotient representation of $M$.	Then $\delta(\dv N)=0$ if and only if $N=0$.
\end{lemma}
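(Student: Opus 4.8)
The plan is to reduce the statement to the Euler-type identity \eqref{eq:heform} applied to the minimal projective presentation $d_M$ of $M$. Write $\delta$ for the $\delta$-vector of $d_M$ (this is what "weight vector of $M$" means). For any quotient $M \twoheadrightarrow N$, identity \eqref{eq:heform} gives
$$\delta(\dv N) = \hom(d_M,N) - \e(d_M,N) = \hom(M,N) - \e(M,N),$$
where I have used $\hom(d_M,N)=\hom(\Coker(d_M),N)=\hom(M,N)$ and the defining equality $\E(d_M,N)=\E(M,N)$. So everything comes down to controlling $\hom(M,N)$ and $\e(M,N)$ for $N$ a quotient of $M$.

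First I would kill the $\E$-term using $\E$-rigidity. Pick an exact sequence $0 \to L \to M \to N \to 0$ realizing $N$ as a quotient of $M$, and feed it into the long exact sequence of Lemma \ref{L:E}.(1) taken with $d = d_M$; its tail is $\E(d_M,M) \to \E(d_M,N) \to 0$. Since $M$ is $\E$-rigid, $\E(d_M,M)=\E(M,M)=0$, hence $\E(M,N)=\E(d_M,N)=0$, i.e. $\e(M,N)=0$. Plugging this back in yields $\delta(\dv N) = \hom(M,N)$.

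The "if" direction is immediate: $N=0$ forces $\delta(\dv N)=0$. For the converse I argue by contraposition: if $N \neq 0$, then the quotient map $M \twoheadrightarrow N$ is a nonzero homomorphism, so $\hom(M,N)\geq 1$, and therefore $\delta(\dv N)=\hom(M,N)\geq 1>0$. Hence $\delta(\dv N)=0$ implies $N=0$.

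There is no serious obstacle here; the proof is short once the right presentation is chosen. The only points requiring care are bookkeeping: reading the hypothesis so that $\delta$ is exactly the $\delta$-vector of $d_M$ (so that \eqref{eq:heform} and the identifications $\hom(d_M,-)=\hom(M,-)$, $\E(d_M,-)=\E(M,-)$ apply verbatim), and applying the long exact sequence of Lemma \ref{L:E}.(1) with the correct complex $d=d_M$ so that the $\E$-rigidity hypothesis is the one that enters.
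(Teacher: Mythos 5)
Your proof is correct and follows essentially the same route as the paper's: the paper also deduces $\e(M,N)\leq\e(M,M)=0$ from the surjection $\E(d_M,M)\twoheadrightarrow\E(d_M,N)$ and then concludes $\delta(\dv N)=\hom(M,N)>0$ for $N\neq 0$ via \eqref{eq:heform}. You merely spell out the long exact sequence step that the paper states in one line.
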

\begin{proof} Suppose that $N\neq 0$, then $\hom(M,N)>0$. 	Since $M$ is $\E$-rigid, $\e(M,N)\leq \e(M,M)=0$.
	We have that $\delta(\dv N) = \hom(M,N) - \e(M,N) > 0$. 
\end{proof}


\begin{lemma} \label{L:seperation} Suppose that $\delta_-$ and $\delta_+$ are real, and $\e(\delta_-,\delta_+)>0$. Then $\delta_-$ and $\delta_+$ cannot lie in the same dual cone of $\F(M)$ where $M=\Coker(\delta_+)$.
\end{lemma}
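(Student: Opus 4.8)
The plan is to argue by contradiction: suppose $\delta_-$ and $\delta_+$ both lie in a common dual cone $\F_L(M)$ of $\N(M)$, where $M = \Coker(\delta_+)$. By Lemma \ref{L:linearity} (applied in reverse, or rather its proof), membership in a common dual cone forces $f_M(\delta_-+\delta_+) = f_M(\delta_-) + f_M(\delta_+)$ and in fact $\delta_\pm(\dv L) = f_M(\delta_\pm)$, so $L \in \mc{L}(\delta_-,M) \cap \mc{L}(\delta_+,M)$. First I would record what each of these memberships says via Lemma \ref{L:sub=h}: $\hom(\delta_+, M/L) = \e(\delta_+, L) = 0$ and $\hom(\delta_-, M/L) = \e(\delta_-, L) = 0$ (after replacing $\delta_\pm$ by a suitable multiple, but since $\delta_\pm$ are real we may take $n=1$ by Lemma \ref{L:homineq} and the remark following it).

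Next I would exploit that $M = \Coker(\delta_+)$. Since a general presentation $d_+$ of weight $\delta_+$ is rigid, $M$ is the cokernel of a rigid presentation, so $\e(\delta_+, M) = \e(d_+, M) = \e(d_+, \Coker(d_+)) = 0$ (or one invokes that $\E(d_+,d_+)=0$ and $\E(d_+,M)=\E(d_+,\Coker(d_+))$). Combined with $\e(\delta_+, L) = 0$ from the previous paragraph and the long exact sequence of Lemma \ref{L:E}.(1) applied to $0 \to L \to M \to M/L \to 0$, one gets a short exact sequence $0 \to \Hom(\delta_+,L) \to \Hom(\delta_+,M) \to \Hom(\delta_+,M/L) \to 0$ together with $\E(\delta_+, M/L) = 0$; but we also have $\hom(\delta_+,M/L)=0$, hence $\delta_+(\dv(M/L)) = 0$. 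Meanwhile $\delta_+(\dv M) = \hom(\delta_+,M) - \e(\delta_+,M) = \hom(d_+,M)$, and since $d_+$ is rigid presenting $M$ this equals $\dim\End(M) - (\text{something})$; more directly, a rigid presentation $d_+$ of $\delta_+$ has $\hom(d_+,M) > 0$ because $\Hom(\Coker(d_+),M) = \Hom(M,M) \ni \Id$, so $\delta_+(\dv M) = \hom(\delta_+,M) > 0$. Therefore $\delta_+(\dv L) = \delta_+(\dv M) - \delta_+(\dv(M/L)) = \delta_+(\dv M) > 0$.

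Now I would derive the contradiction from $\e(\delta_-,\delta_+) > 0$. By definition $\e(\delta_-,\delta_+) = \e(\delta_-, \Coker(\delta_+)) = \e(\delta_-, M)$, so $\e(\delta_-, M) > 0$. Using the long exact sequence for $0 \to L \to M \to M/L \to 0$ and the vanishing $\e(\delta_-, L) = 0$, we get $\E(\delta_-,M) \hookrightarrow \E(\delta_-, M/L)$, so $\e(\delta_-, M/L) > 0$. But $\hom(\delta_-, M/L) = 0$, hence $\delta_-(\dv(M/L)) = \hom(\delta_-,M/L) - \e(\delta_-,M/L) = -\e(\delta_-,M/L) < 0$. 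On the other hand $\delta_-(\dv L) = f_M(\delta_-) \geq 0$ and $\delta_-(\dv M) = \delta_-(\dv L) + \delta_-(\dv(M/L))$. The tension I want to exploit is: $\delta_+(\dv L) > 0$ (shown above) with $L$ a vertex subrepresentation, versus $\delta_-$ being real with $\e(\delta_-,M) > 0$. The cleanest route is probably to observe that since $L \in \mc{L}(\delta_+, M)$ with $M = \Coker(\delta_+)$, and $\hom(\delta_+, M/L) = 0$ forces $M/L$ to be ``too small'' — in fact $\delta_+(\dv(M/L)) = 0$ with $\delta_+$ real and $M/L$ a quotient of $M$ could be pushed toward $M/L = 0$ via an $\E$-rigidity argument like Lemma \ref{L:erigid0}, giving $L = M$; but then $\delta_+ \in \F_M(M)$ and $\delta_- \in \F_M(M)$ too, i.e. both make $M$ lie on the face where $\e(\cdot, M)$ vanishes for multiples — contradicting $\e(\delta_-, M) > 0$ directly since $\delta_- \in \F_M(M)$ means $\e(n\delta_-, M) = 0$ for some $n$, hence (as $\delta_-$ is real) $\e(\delta_-,M)=0$.

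\emph{The main obstacle} I anticipate is making the step "$\delta_+(\dv(M/L)) = 0 \Rightarrow M/L = 0$" rigorous: Lemma \ref{L:erigid0} is stated for $M$ itself $\E$-rigid with $N$ a quotient, so I need to check that $M = \Coker(\delta_+)$ for a rigid presentation is genuinely $\E$-rigid (i.e. $\E(M,M) = 0$, not merely that its minimal presentation is rigid), and that the quotient $M/L$ inherits the needed vanishing. If $M$ need not be $\E$-rigid, the fallback is to work with the presentation-level statement: use $\E(d_+, d_+) = 0$, the fact that $M/L$ has a presentation obtained from $d_+$, and Proposition \ref{P:othercones} ($\F_L(M) = \F_0(M/L) \cap \F_L(L)$) to conclude $\delta_+ \in \F_0(M/L)$, i.e. $\hom(n\delta_+, M/L) = 0$; then combined with $\delta_+$ real this gives the torsion-pair statement $M/L \in \mc{F}(\bar{\delta_+})$, and pairing with $\delta_+ \in \F_L(L)$ plus $\delta_+(\dv M) > 0$ should close the argument without needing $\E$-rigidity of $M$ as a whole. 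Once that link is secured, everything else is the routine long-exact-sequence bookkeeping sketched above.
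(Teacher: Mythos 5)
Your argument is correct and is essentially the paper's proof: both reduce to showing that $\delta_+$ lies only in the cone $\F_M(M)$ (via Lemma \ref{L:erigid0} applied to the $\E$-rigid $M=\Coker(\delta_+)$), while $\e(\delta_-,M)=\e(\delta_-,\delta_+)>0$ together with the realness of $\delta_-$ (so $f_M(\delta_-)=\hom(\delta_-,M)$) excludes $\delta_-$ from $\F_M(M)$. The one gap you flag — whether $M$ is genuinely $\E$-rigid rather than merely the cokernel of a rigid presentation — is closed immediately by Lemma \ref{L:E}(2), which gives $\E(M,M)\subseteq\E(d_+,M)=\E(d_+,d_+)=0$.
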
 
\begin{proof} Since $\delta_+$ is real, by Lemma \ref{L:erigid0} $\delta_+ \in \F_{M}(M)$ and $\delta_+ \notin \F_{L}(M)$ if $L\neq M$.
	But $\delta_-(\dv M) < \hom(\delta_-, M)$ because $\e(\delta_-, M)>0$.
	Since $\delta_-$ is real, $f_M(\delta_-)=\hom(\delta_-, M)$.
	Hence $\delta_- \notin \F_{M}(M)$.
\end{proof}

\begin{theorem} \label{T:Mcones} Let $\bs{\delta}_1, \dots, \bs{\delta}_m$ be finitely many clusters.
	Then there is some representation $M$ such that each $\bs{\delta}_i$ spans a dual cone of $\N(M)$.
\end{theorem}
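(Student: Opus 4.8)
The plan is to realize a single cluster first and then pass to the general case by a direct sum. I record how $\N$ interacts with $\oplus$: if $M=\bigoplus_i M_i$ then $\N(M)=\sum_i\N(M_i)$ and the normal fan $\F(M)$ refines each $\F(M_i)$ (Corollary~\ref{C:sumvertex} and Lemma~\ref{L:refine}), while $\F(M)$ and each $\F(M_i)$ are coarsenings of the generalized cluster fan $\F(\rep A)$ (Proposition~\ref{P:fanCC}). I also record that the $\delta$-vectors of a cluster $\bs\delta$ form a $\mb Z$-basis of $\mb Z^{Q_0}$ (Theorem~\ref{T:maxrigid}) and are real indecomposable, hence normal, so $\mathrm{cone}(\bs\delta)$ is a full-dimensional — in particular maximal — cone of $\F(\rep A)$, and therefore lies inside a unique maximal cone of any fan coarsening $\F(\rep A)$. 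Granting the single-cluster statement, the theorem follows: pick $M_i$ with $\bs\delta_i$ spanning a dual cone of $\N(M_i)$ and put $M=\bigoplus_i M_i$; by Lemma~\ref{L:clusterincone} the cluster $\bs\delta_i$ lies in one dual cone $C$ of $\N(M)$ (unique, since $\mathrm{cone}(\bs\delta_i)$ is full-dimensional and $\F(M)$ coarsens $\F(\rep A)$), and since $\F(M)$ refines $\F(M_i)$, $C$ is contained in a cone of $\F(M_i)$ through $\mathrm{cone}(\bs\delta_i)$, which by full-dimensionality is forced to be the dual cone $\bs\delta_i$ spans, i.e.\ $\mathrm{cone}(\bs\delta_i)$ itself; thus $C=\mathrm{cone}(\bs\delta_i)$.

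\textbf{The single-cluster construction.} Let $\bs\delta=\{\delta_1,\dots,\delta_n\}$ be a cluster and let $d_j\in\PHom(\delta_j)$ be a general (rigid) presentation. For each $j$ the presentation $\bigoplus_{i\ne j}d_i$ is almost complete rigid, so by Proposition~\ref{P:+-} it has exactly two complements, $d_j$ and a second one $d_j'$ of some weight $\delta_j'$, with $\{d_j,d_j'\}$ an exchange pair; relabel $\{\delta_j,\delta_j'\}=\{\delta_-^{(j)},\delta_+^{(j)}\}$ so that $\e(\delta_-^{(j)},\delta_+^{(j)})\ge 1$, and set $N_j:=\Coker(\delta_+^{(j)})$. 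Note also that $\mathrm{cone}\big((\bs\delta\setminus\{\delta_j\})\cup\{\delta_j'\}\big)$ is precisely the maximal cone of $\F(\rep A)$ adjacent to $\mathrm{cone}(\bs\delta)$ across its facet $\mathrm{cone}(\bs\delta\setminus\{\delta_j\})$, the two being exchanged by the pair $(d_j,d_j')$. By Lemma~\ref{L:seperation}, $\delta_j$ and $\delta_j'$ lie in distinct dual cones of $\F(N_j)$; since $\F(N_j)$ coarsens $\F(\rep A)$, the full-dimensional cone $\mathrm{cone}(\bs\delta)$ lies in a unique dual cone $C_j$ of $\F(N_j)$, which, containing $\delta_j$, cannot contain $\delta_j'$.

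\textbf{Why it works.} Take $M:=\bigoplus_{j=1}^n N_j$ and let $C$ be the dual cone of $\N(M)$ containing $\bs\delta$ (unique, as above). Then $\mathrm{cone}(\bs\delta)\subseteq C$, and since $\F(M)$ refines each $\F(N_j)$ (Lemma~\ref{L:refine}) we get $C\subseteq C_j$, hence $\delta_j'\notin C$ for every $j$. On the other hand $C$ is full-dimensional and, $\F(M)$ being a coarsening of $\F(\rep A)$, is a union of maximal cones of $\F(\rep A)$, one of which is $\mathrm{cone}(\bs\delta)$. If $C\supsetneq\mathrm{cone}(\bs\delta)$, a standard connectedness argument — the maximal cells of a fan whose union is a given convex cone form a subcomplex connected through codimension-one walls — would force $C$ to contain a maximal cone of $\F(\rep A)$ sharing a facet with $\mathrm{cone}(\bs\delta)$, namely $\mathrm{cone}\big((\bs\delta\setminus\{\delta_j\})\cup\{\delta_j'\}\big)$ for some $j$, whence $\delta_j'\in C$ — a contradiction. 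Therefore $C=\mathrm{cone}(\bs\delta)$, which proves the single-cluster case and completes the argument.

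\textbf{Main obstacle.} The delicate step is the last one: ruling out any enlargement of the dual cone. Its essential inputs are Lemma~\ref{L:seperation} (an exchange pair of real weights is separated, in the appropriate order, by the normal fan of an explicit cokernel) together with the simultaneous facts that $\F(M)$ refines each $\F(N_j)$ and coarsens $\F(\rep A)$ — without the coarsening statement a convex cone could ``slip past'' a facet of $\mathrm{cone}(\bs\delta)$ while still avoiding all the $\delta_j'$, so $\F(\rep A)$ really is needed here. One must also verify carefully the two supporting claims: that the cone of $\F(\rep A)$ adjacent to a cluster cone across a given facet is the cone of the cluster obtained by the corresponding exchange (via the uniqueness of complements in Proposition~\ref{P:+-}), and the connectedness fact for convex unions of fan cells. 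The direct-sum bookkeeping and the reduction to a single cluster are then routine given Corollary~\ref{C:sumvertex}, Lemma~\ref{L:refine}, Lemma~\ref{L:clusterincone} and Proposition~\ref{P:fanCC}.
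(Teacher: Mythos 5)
Your proof is correct and follows essentially the same route as the paper: reduce to a single cluster, use Lemma~\ref{L:seperation} to separate each exchange pair $\{\delta_j,\delta_j'\}$ by the cokernel of the suitably ordered complement, and take the direct sum of the separating representations, invoking Lemmas~\ref{L:refine} and~\ref{L:clusterincone}. Your additional verification that the resulting dual cone equals $\mathrm{cone}(\bs{\delta})$ (a step the paper leaves implicit) is sound in substance, though the assertion that a maximal cone of $\F(M)$ is a union of maximal cones of $\F(\rep A)$ does not follow from coarsening alone unless $\F(\rep A)$ is complete; the cleaner phrasing is local at each facet, where the adjacent cluster cone supplied by Proposition~\ref{P:+-} covers a neighbourhood of the facet's relative interior and sits inside a single maximal cone of $\F(M)$, so any convex enlargement of $\mathrm{cone}(\bs{\delta})$ would absorb that cone and hence contain $\delta_j'$, a contradiction.
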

\begin{proof} By Lemmas \ref{L:refine} and \ref{L:clusterincone} it suffices to show for a single cluster, say $\bs{\delta}=(\delta_1,\dots,\delta_n)$.
Let $(\delta_-, \delta_+)$ be an exchange pair. In particular, we have that 
$\e(\delta_-,\delta_+)>0$.
By Lemma \ref{L:seperation} there is a representation $N$ separating $(\delta_-,\delta_+)$ in the sense that they lie in two different dual cones of $\N(N)$.	
Let $N_i$ be the representation separating the (unordered) exchange pair $\{\delta_i,\delta_i'\}$ wrt. $\bs{\delta}\setminus \{\delta_i\}$.
By Lemma \ref{L:refine}	and \ref{L:clusterincone} $M=\bigoplus_i N_i$ is the desired representation. 
\end{proof}

\begin{remark} The proof shows that $M$ can be chosen to be a direct sum of $\E$-rigid representations. Later we will see in Corollary \ref{C:CS} that $M$ can be a direct sum of real Schur representations in the dual clusters under some mild assumption.
This theorem also implies that in particular each real indecomposable weight vector is normal. 	
\end{remark}

%
%

\section{Generic Newton Polytopes} \label{S:generic}

\subsection{Generic Newton Polytopes of \texorpdfstring{$\delta$}{delta}}
We first extend the notation $\hom(\delta,M)$ and $\hom(M,\dtc)$ in Definition \ref{D:home} in an obvious manner.
We write 
$$\hom(\delta,\dtc) := \hom(\delta,\Ker(\dtc)) = \hom(\Coker(\delta),\dtc).$$
Similarly, we write
$$f_\dtc(\delta) := f_{\Ker(\dtc)}(\delta)\ \text{ and }\ \fc_{\delta}(\dtc) = \fc_{\Coker(\delta)}(\dtc).
\footnote{It is not so easy to confuse this notation with the functors introduced in Section \ref{S:tf} under approprieate context.}$$
As we have seen in Example \ref{ex:dimfail} that $f_\dtc(\delta)\neq \fc_{\delta}(\dtc)$ in general even if one of $\delta$ and $\dtc$ is real.
We denote by $\N(\dtc)$ the Newton polytope of the kernel of a general presentation in $\IHom(\dtc)$.
We hope to determine $\N(\dtc)$ when $A$ is the Jacobian algebra of a quiver with potential.
The idea is based on the following observation.
In the rest of this section we assume $A$ is a Jacobian algebra of some QP.
\begin{observation} \label{O:vertex} According to Lemma \ref{L:clusterincone}, any cluster $\{\delta_i\}_i$ lies in some dual cone $\F_L(M)$ of $\N(M)$.
	Such a cluster determines the vertex $\dv L$ by the formula $\delta_i(\dv L) = \hom(\delta_i,M)$ for each $i$.
	The vertex can be explicitly computed as 
	$$\dv L = \underline{h} \bs{\delta}^{-1}, $$
	where $\bs{\delta}$ is the matrix $(\delta_1^{\T},\delta_2^{\T},\dots,\delta_n^{\T})$ and $\underline{h}(i)= \hom(\delta_i,M)$.
	In general, computing $\hom(\delta_i,M)$ is not easy. 
	However, when the cluster is negative reachable and $M$ is a generic kernel of $\IHom(\dtc)$, Theorem \ref{T:HomEQP} implies that
	$$\hom(\delta_i,M) = \hom(\delta_i,\dtc) = \fc_{\delta_i}(\dtc).$$
	Moreover, the tropical $F$-polynomial $\fc_{\delta_i}$ or equivalently $\Nc(\delta_i)$, the dual Newton polytope of $\Coker(\delta_i)$, may be computed by the mutation algorithm \cite{FZ4,DWZ2}.
\end{observation}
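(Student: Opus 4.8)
The plan is to read off the three assertions of the observation from structural results already in hand. First I would recall that a cluster $\{\delta_1,\dots,\delta_n\}$ consists, by definition, of real indecomposable $\delta$-vectors with $\e(\delta_i,\delta_j)=0$ for $i\neq j$, and that by Theorem \ref{T:maxrigid} such a collection has exactly $n=|Q_0|$ members whose underlying presentations generate $K^b(\proj A)$; hence the $\delta_i$ span $\mb{Z}^{Q_0}$ and, being $|Q_0|$ in number, form a $\mb{Z}$-basis, so the matrix $\bs{\delta}$ of the statement lies in $\GL_n(\mb{Z})$. Lemma \ref{L:clusterincone} then puts all the $\delta_i$ into a common dual cone $\F_\gamma(M)$ of $\N(M)$, and by Proposition \ref{P:vertex} that vertex $\gamma$ is the dimension vector $\dv L$ of a unique vertex subrepresentation $L\subseteq M$.

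Next I would extract $\dv L$. Membership $\delta_i\in\F_\gamma(M)$ means $\delta_i(\gamma)=f_M(\delta_i)$ by \eqref{eq:v2fan}. Each $\delta_i$ is real, hence not wild, so the last clause of Theorem \ref{T:HomE} gives $m=1$, that is $f_M(\delta_i)=\hom(\delta_i,M)=\underline{h}(i)$. The equations $\delta_i(\gamma)=\underline{h}(i)$ for $1\le i\le n$ then form an invertible linear system with solution $\gamma=\underline{h}\,\bs{\delta}^{-1}$; in particular this value is independent of which dual cone one uses, so the formula is unambiguous. This disposes of the first part of the observation for an arbitrary $M$.

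For the generic statement I would specialize to $M=\Ker(\dtc)$ with the cluster negative reachable. By the notational conventions of this section, $\hom(\delta_i,M)=\hom(\delta_i,\dtc)=\hom(\Coker(\delta_i),\dtc)$. Negative reachability of the cluster means each representation $\Coker(\delta_i)$ is negative reachable, so Theorem \ref{T:HomEQP} applied to $\Coker(\delta_i)$ (the identity $\fc_{M'}(\dtc)=\hom(M',\dtc)$) gives $\hom(\Coker(\delta_i),\dtc)=\fc_{\Coker(\delta_i)}(\dtc)=\fc_{\delta_i}(\dtc)$. Finally $\fc_{\delta_i}$, equivalently the dual Newton polytope $\Nc(\delta_i)=\Nc(\Coker(\delta_i))$, is obtained by following the tropical $F$-polynomial along a mutation sequence that carries $\Coker(\delta_i)$ to a negative representation, where it vanishes, and running it backwards via Lemma \ref{L:HomEQP}; this is precisely the $F$-polynomial mutation algorithm of \cite{FZ4,DWZ2}. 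I expect the one point needing care to be the translation between ``$\delta_i$ negative reachable'' as a cluster $\delta$-vector and ``$\Coker(\delta_i)$ negative reachable'' as a representation, which is a routine unwinding of definitions; the rest is a direct assembly of the cited statements, so no genuine obstacle is anticipated.
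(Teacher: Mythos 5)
Your proposal is correct and follows essentially the same route the paper takes: Lemma \ref{L:clusterincone} plus Proposition \ref{P:vertex} to place the cluster in a common normal cone of a vertex subrepresentation, the ``not wild $\Rightarrow m=1$'' clause of Theorem \ref{T:HomE} to convert $f_M(\delta_i)$ into $\hom(\delta_i,M)$, invertibility of $\bs{\delta}$ from Theorem \ref{T:maxrigid} to solve the linear system, and Theorem \ref{T:HomEQP} together with the notational identities of Section \ref{S:generic} for the generic/negative-reachable case. The details you add (why $\bs{\delta}\in\GL_n(\mb{Z})$, why $m=1$) are exactly the ones the paper leaves implicit, and your flagged point about identifying negative reachability of the cluster with that of the cokernels is indeed just an unwinding of definitions.
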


Now the question is whether each dual cone of $\N(\dtc)$ contains a cluster.
In fact, according to Theorem \ref{T:maxrigid} and Lemma \ref{L:clusterincone}, the question is equivalent to 
whether each dual cone of $\N(\dtc)$ contains a real $\delta$-vector.

\begin{question} \label{q:realincone} Does each dual cone of $\N(\dtc)$ contain a real $\delta$-vector?
\end{question}
\noindent We shall give a positive answer for acyclic quivers. This is based on the following lemma.

Recall that for any acyclic quiver $Q$ we can associate each dimension vector $\alpha$ a weight $\dtc_\alpha:=-\innerprod{-,\alpha}\in(\mb{Z}^{Q_0})^{*}$ where $\innerprod{-,-}$ is the Euler form of $Q$. 
We denote by $\N(\alpha)$ the Newton polytope of a general $\alpha$-dimensional representation.
Since there is an open set of $\rep_\alpha(Q)$ in which the representations have minimal injective presentations of weight $\dtc_\alpha$,
we have that $\N(\dtc_\alpha)=\N(\alpha)$.

\begin{lemma}[{\cite[Lemma 6.17]{Fc}}] \label{L:nonimg} Let $M$ be a general representation in $\rep_\alpha(Q)$.
	If $\delta$ corresponds to an imaginary root and $\hom(\delta,M)>0$, then
	the convex hull of the dimension vectors in $\mc{L}(\delta,M)$ has codimension at least $2$.
	In particular, such a $\delta$-vector cannot be a normal vector of $\N(M)$.
\end{lemma}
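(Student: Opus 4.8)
The plan is to translate the geometric claim into a statement about a single auxiliary $\delta$-semistable representation attached to $M$, and then to show that the imaginary hypothesis forces that representation to be too ``thin'' to cut out a facet. First I would localize $\mc{L}(\delta,M)$: by Theorem \ref{T:maxminsub} it has a unique minimal element $L_0=t_\dtb(M)$ and a unique maximal element $L_1=\tc_\dtb(M)$, every $L\in\mc{L}(\delta,M)$ lies between them, and $W:=L_1/L_0$ is $\delta$-semistable with $\delta(\dv W)=0$. A subrepresentation $L_0\subseteq L\subseteq L_1$ lies in $\mc{L}(\delta,M)$ precisely when $\delta(\dv(L/L_0))=0$, i.e. when $L/L_0$ is a slope-$0$ $\delta$-semistable subrepresentation of $W$ (any $W'\subseteq W$ with $\delta(\dv W')=0$ is automatically $\delta$-semistable, since its subrepresentations are subrepresentations of $W$, cf. Lemma \ref{L:King}). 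Hence
$$\conv\{\dv L\mid L\in\mc{L}(\delta,M)\}=\dv L_0+\conv\{\dv W'\mid W'\subseteq W,\ \delta(\dv W')=0\}.$$
Choosing a composition series of $W$ in the abelian category $\mc{C}$ of slope-$0$ $\delta$-semistable representations, one sees that the affine span of the right-hand side is $\dv L_0+U$, where $U$ is the $\mb{R}$-span of the dimension vectors of the $\delta$-stable composition factors of $W$. Since each such vector is killed by $\delta$, we have $U\subseteq\delta^\perp$, a codimension-$1$ subspace of $\mb{R}^{Q_0}$; so the lemma reduces to proving $U\subsetneq\delta^\perp$, i.e. that the $\delta$-stable composition factors of $W$ span a subspace of dimension at most $|Q_0|-2$.

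Next I would use that $M$ is generic and $Q$ acyclic. Write $\delta=\delta_\beta$, so ``$\delta$ corresponds to an imaginary root'' means the cokernel $N:=\Coker(\delta)$ is a general representation whose dimension vector $\beta$ is a positive imaginary root. By Schofield's theory of general subrepresentations (and Theorem \ref{T:HomE}, cf.\ Theorem \ref{T:introext}) one has $f_M(\delta)=\hom(\delta,M)=\hom(\beta,\alpha)>0$ for general $M$, and the dimension vectors occurring as subrepresentations of general $M$ are exactly the $\ell\leq\alpha$ with $\ext(\ell,\alpha-\ell)=0$. Feeding these genericity constraints through the (upper semicontinuous) construction of $t_\dtb$ and $\tc_\dtb$ — along the lines of Corollary \ref{C:univhomo} and the arguments of \cite{DF,Fc} — shows that the generic $W$ so obtained is a generic object of $\mc{C}$ of its dimension vector $\gamma:=\dv W$, so its $\delta$-stable composition factors are precisely those occurring in the canonical decomposition of $\gamma$ inside $\mc{C}$. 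The decisive point is that, because $\delta$ is imaginary, $\gamma$ lies on a proper face of the effective cone of $\mc{C}$: the relation $\hom(\beta,\alpha)>0$ with $\beta$ imaginary, together with the $\ext$-vanishing constraints, is exactly the setting in which the classical Kac/Crawley-Boevey estimate applies and forces the admissible subrepresentation dimension vectors (here, inside $W$) to form a family of codimension at least $2$ — equivalently, the $\delta$-stable constituents of $W$ cannot span all of $\delta^\perp$. Granting this, $U\subsetneq\delta^\perp$, so $\conv\{\dv L\mid L\in\mc{L}(\delta,M)\}$ has codimension at least $2$; since the face of $\N(M)$ maximized by $\delta$ is contained in this convex hull, it is not a facet, and $\delta$ is not an outer normal vector of $\N(M)$.

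The main obstacle is precisely this last step: making rigorous why a slope-$0$ $\delta$-semistable subquotient of a \emph{general} representation cannot, when $\delta$ is \emph{imaginary}, realize enough $\delta$-stable composition factors to span $\delta^\perp$. This is where the imaginary hypothesis is genuinely used — for real $\delta$ one does obtain a genuine facet — and it requires transporting Schofield's general-subrepresentation and canonical-decomposition machinery, together with the codimension-$2$ estimate for imaginary roots, into the wide subcategory $\mc{C}$. By contrast, the reduction to $W$ and the identification of $U$ carried out above are routine, as is the final observation that a codimension-$\geq 2$ face cannot be normal-defining.
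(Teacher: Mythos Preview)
The paper does not prove this lemma; it is quoted verbatim from \cite{Fc} and used as a black box in the proof of Theorem~\ref{T:genericN}. So there is no ``paper's own proof'' to compare against here.

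As for your proposal on its own merits: the reduction in your first paragraph is correct and useful. Shifting by $\dv L_0$ and passing to $W=\tc_\dtb(M)/t_\dtb(M)$ is exactly the right move, and your identification of the affine span with the span $U$ of the $\delta$-stable Jordan--H\"older constituents of $W$ is fine (this is essentially the content of Theorem~\ref{T:maxminsub}). The problem is that your second paragraph does not actually prove anything: you assert that ``the classical Kac/Crawley-Boevey estimate applies and forces'' $U\subsetneq\delta^\perp$, but you never say what estimate you mean, nor why it transports to the abelian subcategory of $\delta$-semistables. There is no off-the-shelf ``codimension-$2$ estimate for imaginary roots'' that does this job; the statement you need is precisely the lemma you are trying to prove. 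You also claim that the generic $W$ is generic in its dimension class inside the semistable category, and that its composition factors are read off from a canonical decomposition there --- neither assertion is justified, and the second is delicate since the semistable category for an imaginary $\delta$ is typically not the module category of a hereditary algebra on $|Q_0|-1$ vertices (that equivalence is exactly what fails outside the real case).

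In short, you have correctly isolated where the content lies and then skipped it. Your own final paragraph concedes this. To close the gap you would need an honest argument that for $M$ general and $\delta$ imaginary with $\hom(\delta,M)>0$, the dimension vectors of the $\delta$-stable constituents of $\tc_\dtb(M)/t_\dtb(M)$ cannot span $\delta^\perp$; this is what \cite{Fc} supplies, and you should either reproduce that argument or cite it.
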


\begin{theorem}\label{T:genericN} Let $\alpha$ be any dimension vector of $Q$.
	Each normal cone $\F_\gamma(\alpha)$ of $\N(\alpha)$ contains a cluster.
	Hence the Newton polytope $\N(\alpha)$ is completely determined by the Newton polytopes of real Schur representations.
\end{theorem}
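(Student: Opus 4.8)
The plan is to deduce the second assertion from the first, and to prove the first by first disposing of the two extremal vertices and then analysing a general vertex through the fact that $\F(\alpha)$ coarsens the complete simplicial fan $\F(\rep A)$, with Lemma \ref{L:nonimg} as the key obstruction.

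\emph{Second assertion from the first.} Let $M$ be a general $\alpha$-dimensional representation, so $\N(M)=\N(\alpha)=\N(\dtc_\alpha)$. Assume $\F_\gamma(\alpha)$ contains a cluster $\bs\delta=\{\delta_1,\dots,\delta_n\}$; by Observation \ref{O:vertex} the corresponding vertex subrepresentation $L$ (with $\dv L=\gamma$) is recovered by $\dv L=\underline h\,\bs\delta^{-1}$, where $\underline h(i)=\hom(\delta_i,M)$. Since each $\delta_i$ is real it is not wild, so $m=1$ in Theorem \ref{T:HomE} and $\hom(\delta_i,M)=f_M(\delta_i)=\fc_{\delta_i}(\dtc_\alpha)=\fc_{\Coker(\delta_i)}(\dtc_\alpha)$. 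Moreover $\Coker(\delta_i)$ is $\E$-rigid, because $\E(\Coker(d_i),\Coker(d_i))\subseteq\E(d_i,\Coker(d_i))=\E(d_i,d_i)=0$ for a rigid $d_i\in\PHom(\delta_i)$, hence (as $A=kQ$ is hereditary) is zero or a real Schur representation. Thus every vertex of $\N(\alpha)$, and therefore the whole polytope, is computed from the dual Newton polytopes of real Schur representations, which is the sense in which $\N(\alpha)$ is ``determined.''

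\emph{First assertion, extremal vertices.} Fix a vertex $\gamma$ of $\N(M)$ with vertex subrepresentation $L$, so $\dv L=\gamma$ and $\Hom(L,M/L)=0$. If $L=0$ then $\F_0(M)$ contains the negative cluster $(-e_1,\dots,-e_n)$, and if $L=M$ then $\F_M(M)$ contains the positive cluster $(e_1,\dots,e_n)$. So we may assume $0\neq L\subsetneq M$, and we may also, by Corollary \ref{C:sumvertex}, Proposition \ref{P:othercones} and Lemma \ref{L:clusterincone}, localise the problem near such a vertex.

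\emph{First assertion, general vertex.} The cone $\F:=\F_\gamma(M)$ is full-dimensional. Since $A=kQ$ is hereditary every indecomposable $\delta$-vector is normal, so $\F(\rep A)$ is a \emph{complete} simplicial fan, and by Proposition \ref{P:introfanCC} the cone $\F$ is a union of cones of $\F(\rep A)$. Choose a maximal (i.e.\ $|Q_0|$-dimensional) cone $\sigma$ of $\F(\rep A)$ with $\sigma\subseteq\F$ and $\mathrm{int}\,\sigma\cap\mathrm{int}\,\F\neq\emptyset$, spanned by $\{\delta_1,\dots,\delta_n\}$ with $\e(\delta_i,\delta_j)=0$ for $i\neq j$ and each $\delta_i$ normal, hence indecomposable by Corollary \ref{C:indnormal}. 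If every $\delta_i$ is real then $\{\delta_1,\dots,\delta_n\}$ is a cluster (its maximality is forced by Theorem \ref{T:maxrigid}) lying in $\F$, and we are done. To exclude a non-real $\delta_i$, note that it corresponds to an imaginary root; from $\delta_i\in\F_L(M)$ one has $\delta_i(\gamma)=f_M(\delta_i)$, and one wants to conclude that $\delta_i$ (possibly after replacing $\sigma$ by a neighbouring maximal cone of $\F(\rep A)$, crossing the wall opposite $\delta_i$) supports a facet of $\N(M)$ with $\hom(\delta_i,M)>0$, contradicting Lemma \ref{L:nonimg}. Running this down cleanly is most naturally organised as an induction on $|Q_0|$: when the vertex subrepresentation $L$ is exceptional one passes to the perpendicular category $L^\perp\cong\rep(Q')$ with $|Q'_0|<|Q_0|$ (Schofield), reduces $\F_L(M)=\F_0(M/L)\cap\F_L(L)$ to a major cone there, and lifts the resulting cluster of $Q'$ back; the facet description of $\N(\alpha)$ from \cite{Fc} feeds the inductive step.

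\emph{Main obstacle.} The crux is exactly this last point: guaranteeing that some maximal cone of $\F(\rep A)$ inside the full-dimensional cone $\F_\gamma(\alpha)$ has only \emph{real} rays — equivalently, that the ``interior walls'' that get merged when passing from $\F(\rep A)$ to $\F(\alpha)$ never carry imaginary $\delta$-vectors. Lemma \ref{L:nonimg} only forbids imaginary $\delta$-vectors from being \emph{facet} normals, so making it bite requires the inductive/wall-crossing bookkeeping above (and, as a side point, verifying that the vertex subrepresentation $L$ of a general $M$ is exceptional, so that the perpendicular-category reduction is available in the first place). The real-root case, although conceptually routine, needs the same care about which perpendicular category to use.
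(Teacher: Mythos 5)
Your reduction of the theorem to the single claim ``each normal cone contains a real $\delta$-vector'' (via Theorem \ref{T:maxrigid} and Lemma \ref{L:clusterincone}), your treatment of the two extremal vertices, and your derivation of the second assertion from the first via Observation \ref{O:vertex} all match the paper. But for a general vertex you stop exactly where the proof has to happen: you say yourself that the crux is ``guaranteeing that some maximal cone of $\F(\rep A)$ inside $\F_\gamma(\alpha)$ has only real rays'' and that carrying this out ``requires the inductive/wall-crossing bookkeeping above,'' which you only sketch. That is a genuine gap, and it stems from asking for too much: you do not need a full simplicial cone of $\F(\rep A)$ all of whose rays are real, nor any induction on $|Q_0|$ or passage to perpendicular categories. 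You only need \emph{one} real $\delta$-vector somewhere in $\F_\gamma(\alpha)$.

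The paper produces that vector in one step. Since $\gamma\neq 0$, the vertex $\gamma$ lies on at least one facet of $\N(\alpha)$ whose supporting hyperplane $\{\delta(\cdot)=h\}$ has $h=f_M(\delta)>0$ (if every facet through $\gamma$ passed through the origin, $\gamma$ would be $0$). The normal $\delta$ of such a facet is an extremal ray of $\F_\gamma(\alpha)$, is indecomposable by Corollary \ref{C:indnormal}, and satisfies $\hom(n\delta,M)\geq f_M(n\delta)>0$; Lemma \ref{L:nonimg} then rules out $\delta$ being imaginary, so $\delta$ is real and the remark before the theorem finishes the proof. Your worry that Lemma \ref{L:nonimg} ``only forbids imaginary $\delta$-vectors from being facet normals'' is precisely the point, not an obstacle: facet normals at a nonzero vertex with positive support level are exactly the vectors you need, and they already lie in $\F_\gamma(\alpha)$. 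So the route you propose is not wrong in spirit, but as written it does not close, and the closing step is much more elementary than the machinery you set up for it.
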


\begin{proof} If $\gamma=0$, $\F_\gamma(\alpha)$ contains the negative cluster.
	If $\gamma\neq 0$, it must be contained in some facet supported by $\{\gamma\in \mb{R}^{Q_0} \mid \delta(\gamma)= h>0\}$.
	Its normal vector $\delta$ can not be imaginary by Lemma \ref{L:nonimg}.
	So one ray of the cone is real.
	By the above remark it must contain a cluster.
\end{proof}

\begin{algorithm} For a fixed dimension vector $\alpha$ of $Q$, the (primitive) normal vectors of $\N(\alpha)$ are bounded.
It may be hard to give a sharp bound, but it is easy to estimate some rough bound.
Let $\Delta_\alpha$ be the set of all real $\delta$-vectors within this bound. 
We can use the mutation algorithm (\cite[Proposition 5.1]{FZ4}) to find the tropical $F$-polynomial of any $\delta\in \Delta_\alpha$.
In general, using the mutation algorithm to compute the $F$-polynomial of $\delta$ is very expensive, 
but it is much cheaper to find the tropical one by the tropical version of \cite[Proposition 5.1]{FZ4}.
Since the exchange graph of acyclic quivers are connected \cite{HU}, searching for all $\delta$ in $\Delta_\alpha$ can be terminated in finite steps.
Finally according to the above theorem, the generic Newton polytope $\N(\alpha)$ are determined by these tropical $F$-polynomials.
\end{algorithm}

\begin{example} \label{ex:genacyclic} Let $Q$ be the quiver $\twoone{1}{2}{3}$, and $\alpha$ be the dimension vector $(3,5,2)$.
	Except for zero and itself, the Newton polytope $\N(\alpha)$ has 4 vertices, which are listed in the left column.
	The middle column is one of the clusters determining the vertex, and the right column is the sequence of mutations to reach this cluster.
	\begin{align*} &(0,3,0) && (-e_1,e_2-e_3,-e_3) && 2\\
	&(0,0,2) && (-e_1,-e_2,e_3) && 3\\
	&(0,5,2) && (-e_1,e_2-e_3,e_2) && (2,3)\\
	&(2,3,2) && (3e_1-2e_2,2e_1-e_2,e_3) && (3,2,1,2,1)
	\end{align*}
\end{example}

Let us test Question \ref{q:realincone} in a very simple example.
\begin{example} \label{ex:QP4} Consider the quiver $\cyclicfourone{1}{4}{2}{3}{a}{b}{c}$ with potential $abc$.
	Let $\dtc=(0,-3,1,1)$. One can check that $\dtc$ is not real, and $M=\Ker(\dtc)$ has dimension vector $(1,1,1,2)$.
	Except for zero and itself, the Newton polytope $\N(\dtc)$ has 6 vertices.
		\begin{align*} &(0,0,0,1) && (-e_1,-e_2,-e_3,e_4-e_3) && 4\\ 
		&(0,0,1,0) && (-e_1,-e_2,e_3-e_1,-e_4) && 3\\  
		&(0,0,1,2) && (-e_1,-e_2,e_4,e_4-e_3) && (4,3) \\ 
		&(1,0,0,1) && (e_1,-e_2,-e_3,e_4-e_3) && (4,1)\\ 
		&(1,0,1,1) && (e_1,-e_2,e_3,e_1-e_4) && (1,4,3)\\ 
		&(1,0,1,2) && (e_1,-e_2,e_4,e_4-e_3) && (4,1,3) 
		\end{align*}
\end{example}

\begin{conjecture} \label{C:FGpairing} We have that $f_{\dtc}(\delta) = \fc_{\delta} (\dtc)$ for any $\delta$ and $\dtc$.
\end{conjecture}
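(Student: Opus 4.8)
\emph{Proof proposal.} Since $f_\dtc$ and $\fc_\delta$ are tropicalizations (support functions of the Newton polytopes $\N(\dtc)=\N(\Ker(\dtc))$ and $\Nc(\delta)=\Nc(\Coker(\delta))$), they are positively homogeneous of degree one; combining this with Theorem \ref{T:HomE} one gets the limit formulas
\[
f_\dtc(\delta)=\lim_{k\to\infty}\tfrac1k\hom(k\delta,\dtc),\qquad \fc_\delta(\dtc)=\lim_{k\to\infty}\tfrac1k\hom(\delta,k\dtc),
\]
where throughout $\hom(\delta,\dtc):=\dim\Hom(\Coker(\delta),\Ker(\dtc))$. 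By Lemma \ref{L:HomE} both sides are at most $\hom(\delta,\dtc)$, and by Theorem \ref{T:HomE} the bound is attained once the scaled weight is not wild; in particular the identity already holds whenever $\delta$ and $\dtc$ are both non-wild. Moreover, using Lemma \ref{L:directsum} together with the fact that a general presentation splits along its canonical decomposition, both $f_\dtc(\delta)$ and $\fc_\delta(\dtc)$ are additive over the canonical decompositions of $\delta$ and of $\dtc$; so it suffices to treat $\delta$ and $\dtc$ both indecomposable, and hence (by the previous sentence) both indecomposable wild.

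The first step I would carry out is a mutation reduction. Set $D(\delta,\dtc):=\fc_\delta(\dtc)-f_\dtc(\delta)$. Applying Lemma \ref{L:HomEQP} to $M=\Ker(\dtc)$ gives $\hom(\delta',\dtc')-f_{\dtc'}(\delta')=\hom(\delta,\dtc)-f_\dtc(\delta)$, and applying its $\fc$-version to $M=\Coker(\delta)$ gives $\hom(\delta',\dtc')-\fc_{\delta'}(\dtc')=\hom(\delta,\dtc)-\fc_\delta(\dtc)$, where $\delta'=\bs{\mu}(\delta)$, $\dtc'=\bs{\mu}(\dtc)$ and one uses the (standard) compatibility of generic kernels and cokernels with mutation up to decorated summands; subtracting, $D(\delta',\dtc')=D(\delta,\dtc)$. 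Since wildness and indecomposability are mutation invariant, $D\equiv 0$ for every mutation-acyclic Jacobian algebra (this recovers Corollary \ref{C:HomEQP}), and more generally $D(\delta,\dtc)=0$ whenever a mutation sequence can be chosen so that $\Ker(\dtc)$ or $\Coker(\delta)$ becomes negative: in the first situation $\dtc'\le 0$ forces $\fc_\bullet(\dtc')\equiv 0$ while $\Ker(\dtc')=0$ forces $f_{\dtc'}\equiv 0$, and the second situation is symmetric. This leaves only mutation-cyclic Jacobian algebras together with pairs $(\delta,\dtc)$ for which neither $\Ker(\dtc)$ nor $\Coker(\delta)$ is negative reachable.

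For these remaining pairs the plan is to imitate the proof of Theorem \ref{T:genericN} via Observation \ref{O:vertex}. Fix $\dtc$ and suppose that every normal cone of $\N(\dtc)$ contains a \emph{negative-reachable} cluster $\bs{\delta}=\{\delta_1,\dots,\delta_n\}$; this is the strengthening of Question \ref{q:realincone} one needs. For such a cluster each $\Coker(\delta_i)$ is negative reachable, so $\fc_{\delta_i}(\dtc)=\hom(\delta_i,\dtc)$ by Theorem \ref{T:HomEQP}, while $f_\dtc(\delta_i)=\hom(\delta_i,\dtc)$ by Theorem \ref{T:HomE} since $\delta_i$ is real; hence $\fc_{\delta_i}(\dtc)=f_\dtc(\delta_i)$. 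Now an arbitrary $\delta$ lies in some maximal cone $\F_L(\dtc)$ spanned by such a cluster, say $\delta=\sum_i c_i\delta_i$ with $c_i\ge 0$; then $f_\dtc(\delta)=\sum_i c_i f_\dtc(\delta_i)$ by Lemma \ref{L:linearity}, while $\Coker(\delta)=\bigoplus_i c_i\Coker(\delta_i)$ (the $\delta_i$ being real) gives $\fc_\delta(\dtc)=\sum_i c_i\fc_{\delta_i}(\dtc)$ by Lemma \ref{L:directsum}. Comparing, $\fc_\delta(\dtc)=f_\dtc(\delta)$.

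The hard part is exactly the input highlighted above: the strengthened form of Question \ref{q:realincone} — that each normal cone of $\N(\dtc)$ contains a negative-reachable cluster — for mutation-cyclic Jacobian algebras. This is acknowledged to be open in the paper, and it is made harder by the absence of a quiver-with-potential analogue of Lemma \ref{L:nonimg} controlling which $\delta$-vectors can occur as normal vectors of $\N(\dtc)$. I therefore expect the conjecture to be essentially of the same depth as Question \ref{q:realincone}.
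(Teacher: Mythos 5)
This statement is a \emph{conjecture} in the paper: the paper offers no proof of it, only the conditional Observation \ref{O:real2FG} (a positive answer to Question \ref{q:realincone} implies the conjecture) together with the special cases covered by Corollary \ref{C:HomEQP} and Theorem \ref{T:HomEQP}. Your proposal correctly recognizes this. What you actually establish — the case where both weights are non-wild, the mutation-invariance of the defect $D(\delta,\dtc)$ and hence the negative-reachable and mutation-acyclic cases, and the final conditional argument via a cluster in each normal cone of $\N(\dtc)$ — coincides in substance with what the paper proves, and your last paragraph is essentially the paper's own proof of Observation \ref{O:real2FG}. You even add a point of care the paper elides: Observation \ref{O:real2FG} invokes Theorem \ref{T:HomEQP} to get $\fc_{\delta_i}(\dtc)=\hom(\delta_i,\dtc)$, which requires $\Coker(\delta_i)$ to be negative reachable, whereas Question \ref{q:realincone} only asks for a real $\delta$-vector in the cone; your strengthened hypothesis is what that argument genuinely needs. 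So the proposal is an honest reduction to an open problem, not a proof, and your assessment of where the difficulty sits is accurate.

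Two corrections to the parts you do claim. First, the reduction at the end of your opening paragraph is off: having the identity when $\delta$ and $\dtc$ are \emph{both} non-wild, the cases surviving the additivity reduction are those where \emph{at least one} of the two indecomposable weights is wild (e.g.\ $\delta$ real but $\dtc$ wild is not covered, since Theorem \ref{T:HomE} then gives $f_{\dtc}(\delta)=\hom(\delta,\dtc)$ but not $\fc_{\delta}(\dtc)=\hom(\delta,\dtc)$), not necessarily both wild. Second, mutation-invariance of $D$ alone does not give $D\equiv 0$ for mutation-acyclic algebras: after mutating to an acyclic seed, neither weight need be negative reachable, and the base case there is Schofield's Theorem \ref{T:introext} — which is exactly how the paper justifies Corollary \ref{C:HomEQP}; the remark that ``wildness and indecomposability are mutation invariant'' is not the relevant input. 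Finally, the mutation-invariance of $D$ itself rests on the compatibility of generic kernels and cokernels with mutation, which you flag as ``standard'' but which should be sourced (it follows from \cite{DWZ2} and \cite{P}); with that reference and the two corrections above, your partial results are correct and consistent with the paper.
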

\noindent A more optimistic conjecture is that $f_{\dtc}(\delta) = \fc_{\delta} (\dtc) = \hom(\delta,\dtc)$ (see Question \ref{q:HomEQP}).

\begin{observation}\label{O:real2FG} The positive answer to Question \ref{q:realincone} implies Conjecture \ref{C:FGpairing}.
	If this is the case, we can determine each vertex of $\N(\dtc)$ using the method described in Observation \ref{O:vertex}.
\end{observation}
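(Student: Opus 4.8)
The plan is to prove the two inequalities $\fc_\delta(\dtc)\le f_\dtc(\delta)$ and $f_\dtc(\delta)\le\fc_\delta(\dtc)$ separately; they follow from one and the same argument, with the roles of $\delta$ (projective presentations, $\N(\dtc)=\N(\Ker\dtc)$) and $\dtc$ (injective presentations, $\Nc(\delta)=\Nc(\Coker\delta)$) interchanged — a legitimate move because the whole theory, Question \ref{q:realincone} included, is self-dual under the Nakayama functor. The engine is the remark that a real weight vector is in particular not wild, so by the ``$m=1$'' clause of Theorem \ref{T:HomE} one has $f_N(\delta_0)=\hom(\delta_0,N)$ and $\fc_N(\dtc_0)=\hom(N,\dtc_0)$ \emph{for every} representation $N$ whenever $\delta_0$, resp.\ $\dtc_0$, is real.

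To get $\fc_\delta(\dtc)\le f_\dtc(\delta)$, let $\gamma$ be the vertex of $\N(\dtc)=\N(\Ker\dtc)$ in whose normal cone $\delta$ lies, so $f_\dtc(\delta)=f_{\Ker\dtc}(\delta)$. Question \ref{q:realincone} says this cone contains a real $\delta$-vector; completing it to a cluster (Theorem \ref{T:maxrigid}) and keeping the cluster inside the cone (Lemma \ref{L:clusterincone}), exactly as in the proof of Theorem \ref{T:genericN}, exhibits the cone as the span of a cluster $\{\delta_1,\dots,\delta_n\}$. Their $\delta$-vectors form a $\mb{Z}$-basis of $\mb{Z}^{Q_0}$ (Theorem \ref{T:maxrigid}(3)), so $\delta=\sum_i c_i\delta_i$ with $c_i\in\mb{Z}_{\ge 0}$; since each $\delta_i$ is indecomposable and $\e(\delta_i,\delta_j)=0$ for all $i,j$, Theorem \ref{T:CDPHom} makes $\delta=\bigoplus_i c_i\delta_i$ the canonical decomposition and hence $\Coker(\delta)=\bigoplus_i c_i\Coker(\delta_i)$. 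Then $\fc_\delta(\dtc)=\fc_{\Coker\delta}(\dtc)=\sum_i c_i\,\fc_{\Coker\delta_i}(\dtc)$ by Lemma \ref{L:directsum} (and its dual), this is $\le\sum_i c_i\,\hom(\Coker\delta_i,\dtc)$ by Lemma \ref{L:HomE}, which equals $\sum_i c_i\,\hom(\delta_i,\dtc)=\sum_i c_i\,f_{\Ker\dtc}(\delta_i)$ because $\hom(\Coker\delta_i,\dtc)=\hom(\delta_i,\dtc)$ and $\delta_i$ is real, and finally $\sum_i c_i\,f_{\Ker\dtc}(\delta_i)=f_{\Ker\dtc}(\delta)=f_\dtc(\delta)$ by Lemma \ref{L:linearity}.

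Running the identical computation with $\delta\leftrightarrow\dtc$, $\Ker\dtc\leftrightarrow\Coker\delta$, and $\N\leftrightarrow\Nc$ (using the dual form of Question \ref{q:realincone} for $\Nc(\delta)$, the dual of Theorem \ref{T:HomE}, and the dual of Lemma \ref{L:linearity}) yields $f_\dtc(\delta)\le\fc_\delta(\dtc)$, hence $f_\dtc(\delta)=\fc_\delta(\dtc)$, which is Conjecture \ref{C:FGpairing}. In fact the computation shows $f_\dtc(\delta)=\fc_\delta(\dtc)=\sum_i c_i\hom(\delta_i,\dtc)$ for the cluster $\bs{\delta}=\{\delta_i\}$ spanning any normal cone of $\N(\dtc)$, so each vertex equals $\underline{h}\,\bs{\delta}^{-1}$ with $\underline{h}(i)=\hom(\delta_i,\dtc)=\fc_{\delta_i}(\dtc)$ — the quantity delivered by the mutation algorithm — which is precisely the recipe of Observation \ref{O:vertex}.

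The hard part will not be any of the displayed manipulations but the first reduction: upgrading Question \ref{q:realincone} as stated (``the normal cone contains a real $\delta$-vector'') to what the argument actually consumes (``the normal cone is the span of a cluster, and the given $\delta$ is a nonnegative integral combination of its rays''). This is the step that, as in the proof of Theorem \ref{T:genericN}, completes the real vector to a cluster via Theorem \ref{T:maxrigid}, confines that cluster to the normal cone via Lemma \ref{L:clusterincone}, and invokes Proposition \ref{P:fanCC} (that $\F(\dtc)$ is coarsened by the generalized cluster fan) to conclude the cluster cone exhausts the normal cone; lower-dimensional normal cones require the extra care of working modulo the lineality space, i.e.\ on the support of $\Ker\dtc$. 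Once this geometric point is in place, the rest is the assembly of the cited lemmas above.
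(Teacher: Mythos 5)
Your proof is essentially correct but takes a genuinely different route from the paper's. The paper establishes the equality in a single pass: for each real $\delta_i$ in the cluster it chains $\delta_i(\gamma)=f_{\dtc}(\delta_i)=\hom(\delta_i,\dtc)=\fc_{\delta_i}(\dtc)$, where the second equality is the $m=1$ clause of Theorem \ref{T:HomE} and the \emph{third} is Theorem \ref{T:HomEQP}; summing with Lemmas \ref{L:clusterincone} and \ref{L:directsum} then gives $f_{\dtc}(\delta)=\fc_{\delta}(\dtc)$ directly. You instead prove two one-sided inequalities, each consuming only the universal bound of Lemma \ref{L:HomE} together with the $m=1$ clause, and obtain the reverse inequality by dualizing the whole argument. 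What your route buys is that you never invoke Theorem \ref{T:HomEQP}, so you do not need the cokernels $\Coker(\delta_i)$ of the cluster to be negative reachable (a hypothesis the paper's citation of that theorem implicitly carries, cf.\ the restriction in Observation \ref{O:vertex}). What it costs is that the reverse inequality requires the \emph{dual} form of Question \ref{q:realincone}, about the normal cones of $\check\N(\delta)=\check\N(\Coker\delta)$ rather than of $\N(\dtc)=\N(\Ker\dtc)$; since in general $\Coker(\delta)$ need not be of the form $\Ker(\dtc')$ (the paper notes this equivalence only for mutation-acyclic QPs), this is formally a stronger hypothesis than the one the Observation states. It is recoverable if one reads the Question as posed for all finite-dimensional Jacobian algebras (the class is closed under passing to the opposite QP), but you should say this explicitly rather than appeal to self-duality ``under the Nakayama functor.'' Your closing concern about the reduction from ``the cone contains a real vector'' to ``$\delta$ is a nonnegative integral combination of a cluster inside the cone'' is well placed — the paper's own proof is equally terse there, writing $\delta=\sum_i c_i\delta_i$ as merely an \emph{integral} combination while Lemma \ref{L:directsum} needs $c_i\geq 0$ — and your proposed fix via Proposition \ref{P:fanCC} is the right one.
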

\begin{proof} Let $M=\Ker(\dtc)$. Suppose that $\delta \in \F_{\gamma}(M)$ and $\{\delta_i\}_i$ is a cluster in $\F_{\gamma}(M)$.
By Theorem \ref{T:maxrigid} we can write $\delta$ as an integral linear combination of $\delta_i$'s: $\delta = \sum_i c_i \delta_i$.
Then we have the following equalities, where the second one and the last one are due to Theorems \ref{T:HomE} and \ref{T:HomEQP} respectively.
$$\delta_i(\gamma) = f_{\dtc}(\delta_i) =\hom(\delta_i,\dtc)= \fc_{\delta_i} (\dtc).$$ 
Then we have the following equalities, where the fourth one is due to Lemma \ref{T:CDPHom} and Lemma \ref{L:directsum}.
$$f_{\dtc}(\delta) = \delta(\gamma) = \sum_{i} c_i\delta_i(\gamma) = \sum_i c_i \fc_{\delta_i} (\dtc) =  \fc_{\sum_i c_i \delta_i} (\dtc) = \fc_{\delta} (\dtc).$$
In fact, they both equal to $\hom(\delta,\dtc)$ by Lemma \ref{L:homineq}.
\end{proof}
\noindent Due to Theorem \ref{T:genericN} we have Schofield's Theorem \ref{T:introext} as a corollary of our Theorem \ref{T:HomE} and Observation \ref{O:real2FG}.

\begin{remark}[Relation to the cluster algebras]\label{R:sameNewton}	Determine $\N(\dtc)$ when $A$ is a Jacobian algebra is an important problem in the cluster algebra theory. Let $Q$ be a {\em 2-acyclic} quiver, and $B$ be its associated skew-symmetric matrix given by
	$$B(u,v) = |\text{arrows } u\to v| - |\text{arrows } v\to u|.$$ 
We denote by $\br{\mc{C}}(Q)$ the associated {\em upper cluster algebra} \cite{BFZ}. Let $(Q,\mc{P})$ be a nondegenerate quiver with potential. We still keep the assumption that $A=J(Q,\mc{P})$ is finite-dimensional.
	
In \cite{D} the author introduced a set of elements $\{X_\dtc\}$ indexed by the $\dtc$-vectors (or $\delta$-vectors), of the form 
$$X_\dtc = \b{x}^{-\dtc} F_{\dtc}(\b{y}),$$
where $F_{\dtc}$ is the $F$-polynomial of $\Ker(\dtc)$ \cite{DWZ2} and $\b{y}$ is a monomial change of variables from $\b{x}$: 
$y_u = \prod_{v} x_v^{B(u,v)}$.
In many cases they are turned out be a basis of $\br{\mc{C}}(Q)$ \cite{P}.	 
The Newton polytope of this polynomial $F_\dtc$ is exactly given by the generic Newton polytope $\N(\dtc)$.

In the meanwhile, a remarkable {\em positive} basis consisting of {\em theta functions} for all cluster algebras was introduced in \cite{GHKK}. 
For each $\dtc$-vector, there is a theta function $\vartheta_\dtc$, 
which is of the form $$\vartheta_\dtc = \b{x}^{-\dtc} \varphi_{\dtc}(\b{y}).$$
In general, the theta function can be a Laurent series, but let us assume it is a Laurent polynomial
so $\varphi_{\dtc}$ is a polynomial with {positive} coefficients.
Another very interesting positive (quantum) basis called {\em triangular basis} was introduced in \cite{Q} as a far-reaching generalization of \cite{BZ}. It has a similar form 
$$T_{\dtc,q} = \b{x}^{-\dtc} \psi_{\dtc,q}(\b{y}).$$
In particular, $\varphi_{\dtc}$ and $\psi_{\dtc,q}$ can be tropicalized and the tropicalization is determined by its Newton polytope.
We expect that all ``interesting" bases of cluster algebras should contain the cluster monomials and their tropicalizations satisfy the Conjecture \ref{C:FGpairing}. In light of Theorem \ref{T:genericN}, we have the following conjecture
\end{remark}
\begin{conjecture}\label{C:equalNewton} The Newton polytopes of $\varphi_\dtc$ and $\psi_{\dtc,q}$ are the same as the generic Newton polytope $\N(\dtc)$. Moreover, the coefficients in $F_{\dtc},\ \varphi_\dtc$, and $\psi_{\dtc,q}$ corresponding to the vertices of $\N(\dtc)$ are all $1$'s (the statement for $F_{\dtc}$ has been settled in \cite{Fc}).
\end{conjecture}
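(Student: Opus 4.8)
\emph{Proof proposal.} The plan is to treat the theta--function polynomial $\varphi_\dtc$ and the specialization $\psi_{\dtc,q}|_{q=1}$ of the triangular--basis polynomial on the same footing as the generic $F$--polynomial $F_\dtc$, exploiting that all three families $\{\b{x}^{-\dtc}F_\dtc\}$, $\{\vartheta_\dtc\}$, $\{T_{\dtc,q}|_{q=1}\}$ are $\dtc$--pointed bases of $\br{\mc{C}}(Q)$ containing every cluster monomial and agreeing with it whenever $\dtc$ is the $\g$--vector of a cluster monomial. On the cluster--monomial cone $\Ker(\dtc)$ is $\E$--rigid, so both the polytope statement and the vertex--coefficient statement are already known there, the $F_\dtc$ case being \cite{Fc}. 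Hence everything reduces to the two inclusions $\N(\varphi_\dtc),\N(\psi_{\dtc,1})\subseteq\N(\dtc)$ and $\N(\varphi_\dtc),\N(\psi_{\dtc,1})\supseteq\N(\dtc)$, together with the value $1$ at each vertex of $\N(\dtc)$.

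For the inclusion $\supseteq$ I would use the structural description of $\N(\dtc)$ from Theorem \ref{T:genericN} and Observation \ref{O:real2FG}: under the standing hypotheses (and in general once Question \ref{q:realincone} is known) each vertex $\gamma$ of $\N(\dtc)$ is cut out by a cluster $\{\delta_i\}_i$ lying in the normal cone $\F_\gamma(\Ker\dtc)$ via $\delta_i(\gamma)=\fc_{\delta_i}(\dtc)$. One then shows that the $\b{x}$--monomial of degree $-\dtc+B^{\T}\gamma$ actually occurs in $\vartheta_\dtc$ (resp.\ in $T_{\dtc,1}$): after passing to the cluster chart attached to $\{\delta_i\}_i$, the broken--line sum computing $\vartheta_\dtc$ localizes, and its leading term in that chart is the cluster--monomial--type term indexed by the torsion truncation $t_{\delta_i}\!\circ\cdots$ of $\Ker(\dtc)$, whose dimension vector is exactly $\gamma$; the same localization applies to the triangular basis through its triangularity with the cluster monomials of that chart. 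This forces $\gamma\in\N(\varphi_\dtc)\cap\N(\psi_{\dtc,1})$, and since these Newton polytopes are already contained in the coarsened--cluster--fan polytope of Proposition \ref{P:introfanCC}, they contain all of $\N(\dtc)$.

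For the inclusion $\subseteq$ I would invoke the unitriangular comparison of the generic basis $\{X_\dtc\}$ with the theta basis \cite{GHKK} and with the triangular basis \cite{Q}, with respect to the dominance order on $\dtc$--vectors: writing $\varphi_\dtc$ and $\psi_{\dtc,1}$ as finite $\mb{Z}_{\geq0}$--combinations of the $F_{\dtc'}$ with $\dtc'$ dominated by $\dtc$ and with leading term $F_\dtc$, each $F_{\dtc'}$ has support, after the substitution $y_u=\prod_v x_v^{B(u,v)}$, inside the $\b{x}$--support of $X_\dtc$, because the dominance order is precisely the order induced by inclusion of these Newton polytopes. For the vertex coefficients, at a vertex $\gamma$ the associated $\b{x}$--monomial sits on a facet of the cluster fan, where the scattering diagram contributes trivially in the relevant direction, so the broken--line count is $1$ and the coefficient of $\vartheta_\dtc$ there equals $1$; for $\psi_{\dtc,1}$ the triangularity forces the extreme coefficient to match that of the corresponding cluster monomial, namely $1$; for $F_\dtc$ this is the statement already proved in \cite{Fc}.

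The main obstacle is that, unlike $F_\dtc$, neither $\varphi_\dtc$ nor $\psi_{\dtc,q}$ carries a representation--theoretic model once $\dtc$ is not negative reachable, so the localization argument in the $\supseteq$ step has to be carried out inside the broken--line/scattering--diagram formalism of \cite{GHKK} (respectively Qin's inductive construction), and one must make precise, in sufficient generality, that the leading term of a theta function in a non--initial cluster chart is governed by the iterated torsion truncation of the generic kernel. Establishing this identification, together with the unitriangular comparison of the three bases beyond the injective--reachability setting, is where the real work lies; the remainder is bookkeeping with the fan structures already developed in this paper.
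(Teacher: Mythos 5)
The statement you are trying to prove is stated in the paper as Conjecture \ref{C:equalNewton}; the paper offers no proof of it, and only the $F_{\dtc}$ part of the vertex-coefficient claim is settled (in \cite{Fc}). So there is no argument of the author's to compare against, and the relevant question is whether your proposal actually closes the conjecture. It does not: what you have written is a strategy in which every step that is not already known is precisely one of the open problems surrounding this circle of ideas. Concretely: (i) your $\supseteq$ direction rests on each normal cone of $\N(\dtc)$ containing a cluster, which is Theorem \ref{T:genericN} only for acyclic quivers and is otherwise exactly the open Question \ref{q:realincone}; Observation \ref{O:real2FG} is explicitly conditional on that question. (ii) Your $\subseteq$ direction assumes a unitriangular expansion of $\varphi_\dtc$ and $\psi_{\dtc,1}$ as finite $\mb{Z}_{\geq 0}$-combinations of the generic characters $F_{\dtc'}$. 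The existence, finiteness, positivity, and unitriangularity of such an expansion is itself a major open comparison problem between the theta/triangular bases and the generic basis (which is only known to be a basis ``in many cases'' per \cite{P}); you cannot invoke it as known. (iii) The assertion that the dominance order on $\dtc$-vectors ``is precisely the order induced by inclusion of these Newton polytopes'' is stated without justification and is not established anywhere in the paper or its references.

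Beyond these, the step you yourself flag as ``where the real work lies'' --- that the leading term of a theta function in a non-initial cluster chart is governed by the iterated torsion truncation of $\Ker(\dtc)$, and that the broken-line count at a vertex is $1$ --- is the entire mathematical content of the conjecture once $\dtc$ is not negative reachable. Identifying the hard step is not the same as carrying it out. As written, the proposal reduces the conjecture to a list of other conjectures and unproved identifications, so it should not be presented as a proof; at best it is a plausible roadmap, and an honest write-up would label each of the inputs above as conjectural hypotheses.
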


\subsection{Application to the Fock-Goncharov Duality Pairing}

We first briefly recall the Fock-Goncharnov's duality pairing \cite{FGc}.
Recall that a skew-symmetrizable matrix $B$ gives rise to a pair of cluster varieties $(\mc{A},\mc{X})$, 
and their {\em Langlands dual} $(\mc{A}^\vee,\mc{X}^\vee)$.
Fock-Goncharov duality conjecture \cite[Conjecture 4.1]{FGc} says that
the tropical points $\mc{X}^\vee(\mb{Z}^t)$ of $\mc{X}^\vee$ parametrize a basis of ring of regular functions $\mc{O}(\mc{A})$ of $\mc{A}$, and we can interchange the roles of $\mc{A}$ and $\mc{X}$.
The duality conjecture fails in general, but can hold with some mild assumption, or if we replace it with a formal version (see \cite{GHKK} for detail). 
From now on let us assume the duality conjecture holds, and denote the parametrizations by 
$$I_{\mc{A}}: \mc{A}(\mb{Z}^t) \hookrightarrow \mc{O}(\mc{X}^\vee)\ \text{ and }\ I_{\mc{X}^\vee}: \mc{X}^\vee(\mb{Z}^t) \hookrightarrow{} \mc{O}(\mc{A}).$$

The duality conjecture further asserts that we can require the parametrized bases to be {\em universally positive} and satisfy several interesting properties.
One of them concerns the pairing $$\mc{A}(\mb{Z}^t) \times \mc{X}^\vee(\mb{Z}^t) \to \mb{Z} .$$
There are two canonical ways to define this pairing:
\begin{align*} &&  I_{\mc{A}}(a)^{\op{trop}} (x)\ \text{ and }\   I_{\mc{X}^\vee}(x)^{\op{trop}} (a) &&  \text{for } a\in \mc{A}(\mb{Z}^t),\  x\in \mc{X}^\vee(\mb{Z}^t).
\end{align*}
The conjecture says that they are equal.
We are going to give a representation-theoretic interpretation of the above pairings in some special cases.
As a consequence, we shall see that the two ways of pairings are equal.
Recall that there is a canonical map $\check{p}:\mc{A}^\vee \to \mc{X}^\vee$ given by $\check{p}^*(y_u) = \prod_v x_v^{B(v,u)}$,
where $\b{x}$ and $\b{y}$ are the coordinates of $\mc{A}^\vee$ and $\mc{X}^\vee$.
At the level of tropical points, this is given by $\mc{A}^\vee(\mb{Z}^t) \to \mc{X}^\vee(\mb{Z}^t),\ a \mapsto a B^{\T}$.
Note that if $B$ is invertible, then $\check{p}^*$ is injective.

As one can see immediately, the two pairings depend on the map $I_{\mc{A}}$ and $I_{\mc{X}^\vee}$.
According to Conjecture \ref{C:equalNewton}, this may not be an issue for the known interesting bases.
At this stage, let us first resolve this issue by letting $I_{\mc{A}}$ and $I_{\mc{X}^\vee}$ be the {\em generic basis map}.
More precisely, $I_{\mc{X}^\vee}$ and $I_{\mc{A}}$ are given by 
$$I_{\mc{X}^\vee}(\dtc) = \b{x}^{-\dtc} F_\dtc(\b{x})\ \text{ and }\ I_{\mc{A}}(a) = \b{y}^{-a} \check{F}_{\check{p}(a)}(\b{y}),$$ 
where $F_\dtc$ is the $F$-polynomial of $\Ker(\dtc)$ in the $\b{x}$-coordinate, and $\check{F}_\delta$ is the dual $F$-polynomial of $\Coker(\delta)$ in the $\b{y}$-coordinate.
The reason why we switch to the dual $F$-polynomial is due to the transposition of $B$ in the Langlands dual.

It is known that the $F$-polynomials may have negative coefficients so the usual tropicalization is not well-defined.
However, we can modify the usual tropicalization by considering the tropical $F$-polynomials.
Besides Remark \ref{R:sameNewton} this approach is further justified in \cite[Remark 1.4]{Fc}.
At least when the $F$-polynomial has positive coefficients, the two notions agree.  
So let us define 
$$I_{\mc{X}^\vee}(\dtc)^{\wtd{\op{trop}}} = f_\dtc\circ B^{\T} - \dtc\ \text{ and }\ I_{\mc{A}}(a)^{\wtd{\op{trop}}} = \fc_{\check{p}(a)} - a,$$
where $B^{\T}$ is the map of multiplication by the matrix $B^{\T}$.

\begin{theorem}[Fock-Goncharov duality pairing] \label{T:FGpairing} Suppose that $B$ is skew-symmetric.
The pairings $\mc{A}(\mb{Z}^t) \times \mc{X}^\vee(\mb{Z}^t) \to  \mb{Z}$ given by $I_{\mc{A}}(a)^{\wtd{\op{trop}}}(\dtc)$ and $I_{\mc{X}^\vee}(\dtc)^{\wtd{\op{trop}}}(a)$ are both equal to 
$\hom(aB^{\T},\dtc) - a\cdot \dtc$ in the following two situations
\begin{enumerate}
	\item The quiver of $B$ is mutation-equivalent to an acyclic quiver.
	\item Either $I_{\mc{X}^\vee}(\dtc)$ or $I_{\mc{A}}(aB^{\T})$ is a cluster variable, or equivalently either $\dtc$ or $aB^{\T}$ is negative reachable.
\end{enumerate}
\end{theorem}
\begin{proof}
Due to Corollary \ref{C:HomEQP} for (1) and Theorem \ref{T:HomE} and \ref{T:HomEQP} for (2), we have that
	\begin{align} \label{eq:FGpair1} I_{\mc{X}^\vee}(\dtc)^{\wtd{\op{trop}}}(a) &= f_\dtc  (a B^{\T}) - a \cdot \dtc = \hom(aB^{\T},\dtc) - a\cdot \dtc; \\
	\label{eq:FGpair2}	I_{\mc{A}}(a)^{\wtd{\op{trop}}}(\dtc) &= \fc_{aB^{\T}} (\dtc) - a\cdot \dtc = \hom(aB^{\T},\dtc) - a\cdot \dtc.
	\end{align}
\end{proof}
 
\begin{remark} It is clear that Conjecture \ref{C:FGpairing} implies the equality of the two pairings in all skew-symmetric cases.	
	If $B$ is invertible, we can set $\delta=aB^{\T}$ and write $\hom(aB^{\T},\dtc) - a\cdot \dtc$ in a more symmetric form
	$$\hom(\delta,\dtc) + \delta B^{-1} \dtc^{\T}.$$
One can check this pairing is mutation-invariant using \cite[Proposition 6.1 and (2.11)]{DWZ2}.
	
Although the main part of Fock-Goncharov duality conjecture was intensively studied, the meaning of the duality pairing is only known in few cases. 
For the moduli space of the $\op{PGL}_2 / \SL_2$-local systems of surfaces, the duality pairing can be interpreted as the intersection pairing of laminations \cite[Proposition 12.1]{FG}.
The verification of the equality in this generality is new.
\end{remark}


\section{Schur Representations and Dual Clusters} \label{S:dual}
\begin{definition}
	A representation $V$ is called {\em Schur} if $\Hom(V,V)=k$.
	It is called {\em real Schur} if in addition we require $\Ext^1(V,V)=0$.
\end{definition}

Here is a method to produce such $V$. We start with any representation $M$.
\begin{lemma} \label{L:Schur} Suppose that $\delta$ is $\E$-rigid such that $\hom(\delta,M)=1$.
	Then $t_\delta(M)$ is Schur. 	
	Dually,	suppose that $\dtc$ is $\Ec$-rigid such that $\hom(M,\dtc)=1$.
	Then $\fc_\delta(M)$ is Schur. 
	
	Moreover, if $M$ is $\Ec$-rigid (resp. $\E$-rigid), then $t_\delta(M)$ (resp. $\fc_\delta(M)$) is real Schur.
\end{lemma}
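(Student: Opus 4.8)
The plan is to reduce everything to Theorem \ref{T:torsion} and Corollary \ref{C:univhomo}, which identify $t_\delta(M)$ as an image of a universal homomorphism when $\delta$ is $\E$-rigid. First I would establish the Schur property. Since $\delta$ is $\E$-rigid, the general presentation $d$ of weight $\delta$ is rigid, so by Corollary \ref{C:univhomo}, $t_\delta(M)$ is the image of the universal homomorphism $h\,\Coker(d)\to M$; here $h=\hom(\delta,M)=1$, so $t_\delta(M)$ is simply the image of a single nonzero map $C:=\Coker(d)\to M$. Write $I:=t_\delta(M)$. I would argue that any endomorphism $\phi$ of $I$ lifts along $C\twoheadrightarrow I$ and $I\hookrightarrow M$: because $\Hom(\delta,M)=\Hom(C,M)$ is one-dimensional and spanned by the composite $C\twoheadrightarrow I\hookrightarrow M$, and $\Hom(C,I)\hookrightarrow\Hom(C,M)$ (as $I\subset M$ and we may use $\Hom(C,-)$ left exactness), any nonzero element of $\Hom(C,I)$ is a scalar multiple of $C\twoheadrightarrow I$. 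Then $\phi$, precomposed with $C\twoheadrightarrow I$, gives an element of $\Hom(C,I)$, hence is a scalar multiple of $C\twoheadrightarrow I$; since $C\twoheadrightarrow I$ is epi this forces $\phi$ to be that scalar, proving $\End(I)=k$. I should double-check the edge case that $I\neq 0$, which holds because $\hom(\delta,M)=1>0$ forces $C\to M$ nonzero.

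Next, for the "moreover" part, I would assume $M$ is $\Ec$-rigid and show $\Ext^1(I,I)=0$. The natural route is to use the short exact sequences $0\to K\to C\to I\to 0$ and $0\to I\to M\to M/I\to 0$ associated to the factorization $C\twoheadrightarrow I\hookrightarrow M$, together with the vanishing statements from Theorem \ref{T:torsion}: for the subrepresentation $L=t_\delta(M)=L_{\min}\in\mc{L}(\delta,M)$ we have $\Hom(t_\delta(M),M/L)=0$, i.e. $\Hom(I,M/I)=0$. Combined with Lemma \ref{L:sub=h}, which gives $\hom(\delta,M/I)=\e(\delta,I)=0$, and the long exact sequence from Lemma \ref{L:E}.(1), I would extract the needed $\Ext^1$ vanishing. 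Concretely, applying $\Hom(I,-)$ (or rather $\E(d_I,-)$ and then comparing with $\Ext^1$ via Lemma \ref{L:E}.(2)) to $0\to I\to M\to M/I\to 0$ and using $\e(\delta,I)=0$ plus rigidity/$\Ec$-rigidity of $M$, I expect to get $\Ext^1(I,I)\hookrightarrow$ something that vanishes. I would also invoke the relation $\Ec(M,M)=0$ and the fact that $I$ is a subrepresentation of $M$ with $\Hom(I,M/I)=0$, which tends to propagate rigidity downward to $I$.

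The dual statement — $\dtc$ being $\Ec$-rigid with $\hom(M,\dtc)=1$ implies $\fc_\delta(M)$ (I believe this should read $\fc_\dtc(M)$, the cokernel $M/\tc_\dtb(M)=M/L_{\max}$) is Schur, and real Schur when $M$ is $\E$-rigid — follows by the dual argument, using the injective-presentation versions of Corollary \ref{C:univhomo} ($\tc_\dtc(M)$ is the kernel of the universal map $M\to e\,\Ker(\tau_p\dc)$) and the vanishing $\Hom(L,\fc_\dtb(M))=0$ from Theorem \ref{T:torsion}; I would simply remark that it is obtained by applying the established argument in the opposite category, or via the duality $\hom(\delta,M)=\ec(M,-\delta)$ of \eqref{eq:hedual}.

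The main obstacle I anticipate is the $\Ext^1$-vanishing in the "moreover" part: unwinding which exact sequence and which functor ($\Ext^1(I,-)$ versus $\E(d_I,-)$) makes the hypotheses $\e(\delta,I)=0$, $\Hom(I,M/I)=0$, and $\Ec(M,M)=0$ combine cleanly, without an unwanted $\Ext^1$ or $\E$ term surviving. In particular one must be careful that $\Ext^1(I,I)$ — not merely $\E(d_I,I)$ — is the thing that vanishes, so the reduction through Lemma \ref{L:E}.(2) needs to go the right direction; I expect this is where the real content of the lemma sits.
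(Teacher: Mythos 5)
Your argument for the Schur property is correct and is exactly the paper's: both amount to the chain $k \subseteq \Hom(I,I) \subseteq \Hom(C,I) \subseteq \Hom(C,M)=k$, where the first inclusion uses that $C\twoheadrightarrow I$ is epi and the last space is one-dimensional because $h=\hom(\delta,M)=1$ in Corollary \ref{C:univhomo}. You are also right that ``$\fc_\delta(M)$'' in the statement should be read as $\fc_{\dtc}(M)$.

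The ``moreover'' part is where your proposal has a genuine gap, which you yourself flag. (For the record, the paper does not argue it either; it cites \cite[Proposition 4.8]{Fc}.) Your first step is fine: applying $\Hom(I,-)$ to $0\to I\to M\to M/I\to 0$ and using $\Hom(I,M/I)=0$ from Theorem \ref{T:torsion} gives $\Ext^1(I,I)\hookrightarrow \Ext^1(I,M)$. But none of the vanishings you then list closes the argument as stated: $\e(\delta,I)=0$ controls $\E(d,I)\supseteq\Ext^1(C,I)$, which has $C$ (not $I$) in the contravariant slot; applying $\Hom(-,M)$ to the short exact sequence leaves an uncontrolled $\Ext^2(M/I,M)$ term; and the route through $\E(d_I,M)\cong\Hom(M,\tau I)^*$ would need $\E$-rigidity of $M$, whereas the hypothesis is $\Ec$-rigidity. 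The missing idea is Auslander--Reiten duality in the other direction: $\Ext^1(I,M)$ is dual to $\Hom(\tau^{-1}M,I)$ modulo projectives, and $\Hom(\tau^{-1}M,I)\subseteq \Hom(\tau^{-1}M,M)\subseteq \Hom(\Coker(\tau_p^{-1}\dc_M),M)\cong \Ec(M,M)^*=0$ by the dual of Lemma \ref{L:E}.(3). Inserting this one step makes your sketch a complete proof; without it, the ``real'' half of ``real Schur'' is not established.
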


\begin{proof} By Corollary \ref{C:univhomo} $L=t_\delta(M)$ is the image of the nonzero homomorphism $C\to M$ where $C=\Coker(\delta)$.
	Since $L$ is a quotient of $C$, we have that 
	$$k \subseteq  \Hom(L,L) \subseteq \Hom(C,L) = k.$$
	Hence $\Hom(L,L)=k$.	
	If $M$ is $\Ec$-rigid, then by \cite[Proposition 4.8]{Fc} we have $\Ext^1(L,L)=0$.
	
\end{proof}

Let $d_0$ be an $\E$-rigid presentation with $\ind(d_0)=|Q_0|-1$.
Then by Proposition \ref{P:+-}, there are two complements $d_-$ and $d_+$ of $d_0$ satisfying
$\e(d_-,d_+)=e>0$ and $\e(d_+,d_-)=0$.
In this case we define the {\em sign} of $d_\pm$ in the cluster $\{d_\pm\}\cup\ind(d_0)$ to be $\pm$.
Throughout this section we will always assume that $e=1$. 
In other words, $(d_-,d_+)$ is a regular exchange pair.
\footnote{It is known that this assumption is always satisfied if the algebra is the Jacobian algebra of some quiver with generic potential and the cluster is (negative or positive) reachable.}
In this case, $d_-$ and $d_+$ fit into the triangle in $K^b(\proj A)$
$$d_+ \to d \to d_- \to d_+[1],$$
where $d\in \op{add}(d_0)$.
Let $\delta_0$ and $\delta_\pm$ be the weight vectors of $d_0$ and $d_\pm$ respectively.

Let $L=\Coker(d_+),\ N=\Coker(d_-),$ and $N^{\tau} = \Ker (\nu d_-)$.
Note that if $d_-$ is nonnegative (i.e., $\neq (P_i \to 0)$), then $N^\tau = \tau N$.
We have that $\hom(L,N^\tau)=\e(d_-,L)=1$.
We consider the exact sequence 
$$0\to K \to L \to N^\tau \to C \to 0,$$
where $L\to N^\tau$ spans $\Hom(L,N^\tau)$. 
Let $I$ be the image of $L \to N^\tau$.

\begin{definition} 
	We called $\dv I$, the {\em $c$-vector} of the exchange pair $(d_-,d_+)$.
	We also called $\pm \dv I$ the {\em signed $c$-vector} of $d_\pm$ for the cluster $\{d_\pm\} \cup \ind(d_0)$.
\end{definition}

According to Corollary \ref{C:univhomo}, we have that
$$K=\check{t}_{\delta_-}(L),\ \ C=f_{\delta_+}(N^\tau), \text{ and }\ I=\fc_{\delta_-}(L)=t_{\delta_+}(N^\tau).$$  

\begin{lemma} \label{L:duality} We have that  
	\begin{align*} \Hom(d_0, I)&=0,\ \E(d_0, I)=0;\\
	\Hom(d_+, I)&=k,\ \E(d_+, I)=0;\\
	\Hom(d_-, I)&=0,\ \E(d_-, I)=k.
	\end{align*}
	Moreover, $I$ is real Schur. 
\end{lemma}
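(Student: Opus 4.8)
The plan is to exploit the two descriptions of $I$ furnished by Corollary \ref{C:univhomo}. The identity $I=\fc_{\delta_-}(L)$ realizes $I$ as a quotient of $L=\Coker(d_+)$, fitting into $0\to K\to L\to I\to 0$; the identity $I=t_{\delta_+}(N^\tau)$ realizes $I$ as a subrepresentation of $N^\tau=\Ker(\tau_p d_-)$, fitting into $0\to I\to N^\tau\to C\to 0$ with $C=f_{\delta_+}(N^\tau)$ — these are just the two halves of the four-term sequence $0\to K\to L\to N^\tau\to C\to 0$ already set up, since $I$ is its middle image. With these in hand, all six (co)homology groups fall out by combining the rigidity of $d_0,d_+,d_-$ with the duality $\E(d,X)\cong\Hom(X,\Ker(\tau_p d))^*$ of Lemma \ref{L:E}.(3) and its special case \eqref{eq:eec}, $\E(d,d)\cong\Hom(\Coker(d),\Ker(\tau_p d))^*$, together with the fact that $\hom(L,N^\tau)=\e(d_-,L)=1$ is one-dimensional.

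Concretely, I would proceed in the order $d_0$, then $d_+$, then $d_-$. For $d_0$: since $\Hom(d_0,I)=\Hom(\Coker(d_0),I)$ embeds (via $I\hookrightarrow N^\tau$) into $\Hom(\Coker(d_0),\Ker(\tau_p d_-))\cong\E(d_-,d_0)^*=0$ — the last equality because $d_-\oplus d_0$ is rigid — we get $\Hom(d_0,I)=0$; and applying $\E(d_0,-)$ to $0\to K\to L\to I\to 0$, right-exactness at the end of the long exact sequence of Lemma \ref{L:E}.(1) plus $\E(d_0,L)=\E(d_0,\Coker(d_+))=\E(d_0,d_+)=0$ gives $\E(d_0,I)=0$. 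For $d_+$: $\Hom(d_+,I)=\Hom(L,I)$ contains the projection $L\twoheadrightarrow I$ and embeds into the one-dimensional $\Hom(L,N^\tau)$, so it equals $k$; and $\E(d_+,I)\cong\Hom(I,\Ker(\tau_p d_+))^*$ embeds, via $L\twoheadrightarrow I$, into $\Hom(L,\Ker(\tau_p d_+))\cong\E(d_+,d_+)^*=0$ by \eqref{eq:eec}, so $\E(d_+,I)=0$. For $d_-$: $\Hom(d_-,I)=\Hom(N,I)$ embeds, via $I\hookrightarrow N^\tau$, into $\Hom(N,N^\tau)=\Hom(\Coker(d_-),\Ker(\tau_p d_-))\cong\E(d_-,d_-)^*=0$, so $\Hom(d_-,I)=0$; and $\E(d_-,I)\cong\Hom(I,N^\tau)^*$, where the left-exact sequence $0\to\Hom(I,I)\to\Hom(I,N^\tau)\to\Hom(I,C)$, together with $\Hom(I,I)=k$ ($I$ is Schur by the first part of Lemma \ref{L:Schur}) and $\Hom(I,C)=\Hom(t_{\delta_+}(N^\tau),N^\tau/I)=0$ (Theorem \ref{T:torsion}), forces $\Hom(I,N^\tau)=k$, hence $\E(d_-,I)=k$.

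It remains to establish that $I$ is real Schur, and this is where I expect the only genuine content to lie. Since $d_-$ is rigid, $\tau_p d_-$ is rigid, so $N^\tau=\Ker(\tau_p d_-)$ is $\Ec$-rigid (using the dual of Lemma \ref{L:E}.(2) together with $\Ec(\dc_1,\dc_2)=\Ec(\Ker(\dc_1),\dc_2)$); and $\delta_+$ is real with $\hom(\delta_+,N^\tau)=\hom(L,N^\tau)=1$. The ``moreover'' clause of Lemma \ref{L:Schur}, applied with $M=N^\tau$, then gives that $I=t_{\delta_+}(N^\tau)$ is real Schur. The six displayed (co)homology identities are essentially formal once the two short exact sequences are in place, so the hard part — to the extent there is one — is this last invocation of Lemma \ref{L:Schur} (which itself rests on \cite[Proposition 4.8]{Fc}) and the small verification that $N^\tau$ really is $\Ec$-rigid.
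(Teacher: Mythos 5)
Your proof is correct and follows essentially the same route as the paper: realize $I$ as a quotient of $L$ and a subrepresentation of $N^\tau$ via the two halves of the four-term sequence, then combine rigidity of $d_0\oplus d_\pm$ with the duality $\E(d,M)\cong\Hom(M,\Ker(\tau_p d))^*$, and get the real Schur property from Lemma \ref{L:Schur}. The only (harmless) variations are in the two nonzero computations, where the paper invokes Lemma \ref{L:sub=h} to get $\Hom(d_+,I)=\Hom(d_+,N^\tau)$ and $\E(d_-,I)=\E(d_-,L)$ directly, whereas you use a dimension sandwich for the former and Schurness of $I$ plus Theorem \ref{T:torsion} for the latter.
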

\begin{proof} Since $\Hom(d_0\oplus d_-, N^\tau) = \E(d_-, d_0\oplus d_-)=0$ and $I$ is a subrepresentation of $N^\tau$, 
	we get $\Hom(d_0\oplus d_-, I) =0$.
	On the other hand, $\E(d_0\oplus d_+, L)=0$ and $I$ is a quotient of $L$, 
	so we have that $\E(d_0\oplus d_+, I) = 0$.
	By Lemma \ref{L:sub=h}, we have that 
	\begin{align*} \Hom(d_+,I) &= \Hom(d_+, N^\tau) = k;\\
	\E(d_-, I) &= \E(d_-, L) = k.
	\end{align*}
	Moreover, $I$ is real Schur follows from Lemma \ref{L:Schur}.
\end{proof}
\noindent We remark that Lemma \ref{L:sub=h} also tells us $\Hom(L,C)=0$ and $\E(L,\tau N)\cong \E(L,C)$.
Dually we have that $\E(N,K)=0$ and $\Hom(N,K)\cong \Hom(N,L)$.

Let $\bs{d}=\{d_1,d_2,\dots,d_n\}$ be a cluster of presentations, and $\bs{d}_j'=(\bs{d} \setminus \{d_j\})\cup \{d_j'\} $ be the adjacent cluster. Let $I_j$ be defined as above for each (unordered) exchange pair $\{d_j, d_j'\}$, and $\ep_j$ be the sign of $d_j$ in $\bs{d}$.
\begin{definition} 
For $I\in \rep A$ and a sign $\ep=\pm$, we denote $\ep I := \begin{cases} I & \text{if $\ep=+$} \\ I[1] & \text{if $\ep=-$} \end{cases}$ as an element in the bounded derived category $D^b(\rep A)$. 	
We define the {\em dual cluster} of $\bs{d}$ as the ordered elements $(\ep_1 I_1,\cdots,\ep_n I_n)$ in $D^b(\rep A)$.
\end{definition}
\noindent In this notation, we can rephrase Lemma \ref{L:duality} as 
$$\Hom(d_0, \pm I)=\Hom(d_-, I)=\Hom(d_+, -I)=0\ \text{ and }\ \Hom(d_-, -I)=\Hom(d_+, I)=k.$$

We use the upright $\updelta$ to denote the usual delta-function. We write $\delta^\perp$ for the abelian subcategory of $\rep A$
$$\delta^\perp := \{M\in \rep A \mid \hom(\delta,M) = \e(\delta,M) =0 \}.$$
\begin{theorem}[{{\em cf}. \cite[Lemma 5.3]{KY}}] \label{T:dualcluster} Let $\{\delta_i\}_{i}$ be a regular cluster and $I_j$ be defined as above. Then 
	\begin{equation}\label{eq:homdual} \hom(\delta_i, \ep_j I_j) = \updelta(i,j)\ \text{ and }\ \hom(\delta_i, -\ep_j I_j) = 0.	\end{equation}
	Moreover, the simple objects in category $\delta_I^\perp:=\bigcap_{i\in I} \delta_i^\perp$ are precisely $I_j$ for $j\notin I$.
\end{theorem}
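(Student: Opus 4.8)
The statement has two parts: the explicit computation of $\hom(\delta_i, \ep_j I_j)$ and $\hom(\delta_i,-\ep_j I_j)$, and the identification of the simple objects of $\delta_I^\perp$. The first part I would obtain essentially for free from Lemma \ref{L:duality}. Indeed, for the cluster $\{\delta_i\}_i$ and a fixed $j$, consider the almost-complete rigid presentation $d_0$ obtained by deleting $d_j$ (the presentation of weight $\delta_j$); its two complements are $d_j$ itself and $d_j'$, and the pair $(d_-,d_+)$ in the setup is $(d_j,d_j')$ or $(d_j',d_j)$ according to the sign $\ep_j$. Lemma \ref{L:duality} then says $\Hom(d_i,I_j)=0$ and $\E(d_i,I_j)=0$ for all $i\neq j$ (they are summands of $d_0$), while for $i=j$ we get either $\Hom(d_j,I_j)=k,\ \E(d_j,I_j)=0$ (when $\ep_j=+$) or $\Hom(d_j,I_j)=0,\ \E(d_j,I_j)=k$ (when $\ep_j=-$). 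Unwinding the definition $\ep_j I_j = I_j$ or $I_j[1]$, and using that $\Hom_{D^b}(d_j, I_j[1]) = \E(d_j,I_j)$, this is exactly \eqref{eq:homdual}: $\hom(\delta_i,\ep_j I_j)=\updelta(i,j)$ and $\hom(\delta_i,-\ep_j I_j)=0$. I would also record that $\E(\delta_i,\ep_j I_j)=0$ for all $i$, which comes out of the same bookkeeping and is needed below.

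**The subcategory $\delta_I^\perp$.** Fix $I\subseteq\{1,\dots,n\}$ and write $\mc{C}=\delta_I^\perp=\bigcap_{i\in I}\delta_i^\perp$. This is an abelian (in fact exact abelian extension-closed) subcategory of $\rep A$: each $\delta_i^\perp$ is abelian by the remark preceding the definition of $\delta^\perp$ (this is the standard "perpendicular category" fact, using the long exact sequences of Lemma \ref{L:E}), and an intersection of such is again abelian. From the first part, each $I_j$ with $j\notin I$ lies in $\mc{C}$ because $\hom(\delta_i,I_j)=\e(\delta_i,I_j)=0$ for all $i\in I$ (here I use that $I_j\in\rep A$, not a shifted complex, so $\ep_j I_j$ and $I_j$ differ only by a shift that does not affect membership; one must check that the relevant $\Hom$ and $\E$ vanishings hold for the unshifted $I_j$, which again follows from Lemma \ref{L:duality}). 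Moreover $I_j$ is Schur by Lemma \ref{L:Schur}/\ref{L:duality}, so $\End_{\mc C}(I_j)=k$; I must also check there are no nonzero maps $I_j\to I_k$ for $j\neq k$ in $\mc C$, i.e. that the $I_j$ ($j\notin I$) are pairwise Hom-orthogonal, which I expect to follow from the rigidity relations $\e(d_j,d_k)=0$ together with the descriptions $I_j=\fc_{\delta_j^-}(L_j)$ as images, much as in Lemma \ref{L:duality}. Granting that, the $I_j$ are non-isomorphic simple objects of $\mc C$.

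**Exhausting the simples.** The remaining point — and the main obstacle — is to show there are no other simple objects, equivalently that every object of $\mc C$ has a filtration by the $I_j$, $j\notin I$. The cleanest route is a rank/dimension count: the Grothendieck group $K_0(\mc C)$ should have rank $n-|I|$, and the classes $[I_j]$, $j\notin I$, are linearly independent (their dimension vectors are the relevant $c$-vectors, which are unimodular by the theory of $c$-vectors, or can be checked directly from the triangle defining $I_j$). Concretely, $\mc C$ is the perpendicular category to the rigid presentation $\bigoplus_{i\in I}d_i$, and by Theorem \ref{T:maxrigid} this presentation has $|I|$ indecomposable summands and can be completed within a cluster; the perpendicular category of a rigid object with $|I|$ summands in a category with $n$ "simples" has $n-|I|$ simples — this is the presentation-theoretic analogue of Schofield's result on perpendicular categories, and I would invoke the description of $\mc C$ as (equivalent to) the module category of a smaller algebra $B$ with $n-|I|$ simples, with the $I_j$ corresponding to the simple $B$-modules. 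The technical work is in setting up this equivalence correctly in the presentation (rather than module) setting; once it is in place, any simple of $\mc C$ is one of the $n-|I|$ simple $B$-modules, hence one of the $I_j$. I would carry out the argument in the order: (1) deduce \eqref{eq:homdual} and the auxiliary $\E$-vanishings from Lemma \ref{L:duality}; (2) verify $I_j\in\mc C$ and that the $I_j$ are pairwise Hom-orthogonal Schur objects; (3) establish that $\mc C$ has exactly $n-|I|$ simples via the perpendicular-category/smaller-algebra description; (4) conclude that the $I_j$ ($j\notin I$) are precisely the simples.
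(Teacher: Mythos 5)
Your proposal follows the paper's proof essentially step for step: the identities \eqref{eq:homdual} are read off from Lemma \ref{L:duality} exactly as you describe, and the exhaustion of the simples is obtained by the same counting argument, using that $\delta_I^\perp$ is equivalent to the module category of a basic algebra whose quiver has $|I|$ fewer vertices than $Q_0$ and hence has exactly $|Q_0|-|I|$ simple objects. The ``technical work'' you defer --- setting up that equivalence in the presentation setting --- is in the paper just a citation of \cite[Theorem 3.8]{J}, so nothing beyond your outline is required.
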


\begin{proof} The first statement is a direct consequence of Lemma \ref{L:duality}.
	For the second statement, we already have that $I_j$ ($j\notin I$) are the simple objects in the category $\delta_I^\perp$.
	\cite[Theorem 3.8]{J} says that the category $\delta_I^\perp$ is equivalent to the module category of some (basic) algebra whose quiver has $|I|$ vertices less than $Q_0$.
	In particular, there are exactly $|Q_0|-|I|$ simple objects in $\delta_I^\perp$.
\end{proof}

\begin{remark} If we embed $K^b(\proj A)$ canonically into $D^b(\rep A)$, then the Euler form	
	$$\langle d,C\rangle=\sum (-1)^p \Hom_{D^b(\rep A)}\left(d,C[p]\right)\ \text{ on }\ K^b(\proj A)\times D^b(\rep A)$$
	gives us a non-degenerate pairing. 
	This theorem shows in particular that the classes dual to the basis $\{\delta_i\}_i$ is given by
	$\{[\ep_i I_i]\}_i = \{\ep_i \dv I_i\}_i$.
				
	When $A$ is a finite dimensional Jacobian algebra associated to a nondegenerate QP $(Q,\mc{P})$,
the $c$-vectors in \cite{N} are defined as such dual basis.
For those reachable clusters, the $c$-vectors defined this way agree with the $c$-vectors of the corresponding clusters in the cluster algebra $\mc{C}(Q)$. This duality was further studied in skew-symmetrizable cases in \cite{NZ}.	
Here we gave an explicit construction of the real Schur representations corresponding to the $c$-vectors for any regular cluster. The {\em sign coherence} of the $c$-vectors is thus obvious from our construction.
\end{remark}


\begin{corollary} \label{C:pairing_mu} Suppose that we have the exchange triangle
		$$d_i^+ \to \bigoplus_j b_{ij}d_j \to d_i^- \to d_i^+[1].$$	
Let $\{\ep_jI_j\}_j$ $($resp. $\{\ep_j' I_j'\}_j$$)$ be the dual cluster of $\{d_1,\dots,d_i^+,\dots,d_n\}$ $($resp. $\{d_1,\dots,d_i^-,\dots,d_n\}$$)$.
Then $b_{ij} = \delta_i^-([\ep_j I_j]) = \hom(\delta_i^-, \ep_j I_j) = \delta_i^+ ([\ep_j' I_j']) = \hom(\delta_i^+, \ep_j' I_j')$, and $\delta_i^\pm([I_i]) = \pm 1$.
\end{corollary}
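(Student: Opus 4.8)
\noindent The plan is to combine the exchange triangle with the non-degenerate Euler pairing $\langle d,C\rangle=\sum_{p}(-1)^{p}\dim\Hom_{D^b(\rep A)}(d,C[p])$ on $K^b(\proj A)\times D^b(\rep A)$ from the remark after Theorem~\ref{T:dualcluster}. I will use that this pairing is additive on triangles in each variable, descends to the Grothendieck groups (so $\langle d,C\rangle=\delta(\dv C)$ when $d$ has weight $\delta$ and $C\in\rep A$), and that for a two-term $d$ and a module $C$ it equals $\hom(d,C)-\e(d,C)$, with $\langle d,C[1]\rangle=-\langle d,C\rangle$. Recall $\ep C$ means $C$ if $\ep=+$ and $C[1]$ if $\ep=-$, so $\hom(\delta,\ep C)$ is $\hom(d,C)$ or $\e(d,C)$ accordingly. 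In this language Theorem~\ref{T:dualcluster} and Lemma~\ref{L:duality} say exactly that, for a cluster $\{d_l\}_l$ with dual cluster $\{\ep_l I_l\}_l$, the groups $\Hom(d_m,I_l)$ and $\E(d_m,I_l)$ all vanish unless $m=l$, in which case exactly one of them is one-dimensional according to $\ep_l$; and the common summand of the exchange pair producing $I_l$ is perpendicular to $I_l$ in both $\Hom$ and $\E$.

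\smallskip\noindent First I would dispose of the diagonal case $j=i$. Here $\ep_i=+$ in $\bs d^+$, $\ep_i'=-$ in $\bs d^-$, and $I_i=I_i'$ is the $c$-vector module of $\{d_i^-,d_i^+\}$, so Lemma~\ref{L:duality} gives $\hom(d_i^+,I_i)=1=\e(d_i^-,I_i)$ and $\e(d_i^+,I_i)=0=\hom(d_i^-,I_i)$; hence $\delta_i^+([I_i])=\langle d_i^+,I_i\rangle=1$ and $\delta_i^-([I_i])=\langle d_i^-,I_i\rangle=-1$.

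\smallskip\noindent For $j\neq i$ the heart of the argument is to apply the cohomological functor $\Hom_{D^b(\rep A)}(-,I_j)$ to the exchange triangle $d_i^+\to\bigoplus_k b_{ik}d_k\to d_i^-\to d_i^+[1]$, whose middle term involves only the common summands $d_k$ ($k\neq i$), so that effectively $b_{ii}=0$. Since $I_j\in\delta_i^{+\perp}$ — the common summand of the exchange pair producing $I_j$, taken inside $\bs d^+$, contains $d_i^+$ — and every object here is a two-term complex, all $\Hom_{D^b}(d_i^+,I_j[p])$ vanish, and the long exact sequence collapses to isomorphisms $\Hom_{D^b}(d_i^-,I_j[p])\cong\bigoplus_k b_{ik}\Hom_{D^b}(d_k,I_j[p])$ for every $p$. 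Substituting the values of $\Hom(d_k,I_j)$ and $\E(d_k,I_j)$ from Theorem~\ref{T:dualcluster} — all zero except a single one-dimensional group located in cohomological degree $0$ or $1$ according to $\ep_j$ — one finds that exactly one of $\Hom(d_i^-,I_j)$, $\E(d_i^-,I_j)$ has dimension $b_{ij}$ and the other vanishes; this is precisely what forces $\hom(\delta_i^-,\ep_j I_j)=b_{ij}$ and, via the two-term formula for $\langle d_i^-,\ep_j I_j\rangle$, also $\delta_i^-([\ep_j I_j])=b_{ij}$. Running the identical computation with $I_j'$ (dual cluster of $\bs d^-$, so $I_j'\in\delta_i^{-\perp}$) against the same triangle gives $\hom(\delta_i^+,\ep_j' I_j')=\delta_i^+([\ep_j' I_j'])=b_{ij}$, which closes the four-term equality.

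\smallskip\noindent The one point demanding real care is the bookkeeping of the shift $[1]$ and the sign $\ep_j$: together they fix the cohomological degree in which the surviving one-dimensional group lives, hence whether the quantity recorded on the $d_i^-$ (resp.\ $d_i^+$) side is a $\Hom$ or an $\E$, and likewise whether $[\ep_j I_j]$ is $\dv I_j$ or $-\dv I_j$. Beyond this sign accounting there is no obstacle: all the triangulated-category input is already packaged in Theorem~\ref{T:dualcluster} and Lemma~\ref{L:duality}, and the rest is a degenerate long exact sequence.
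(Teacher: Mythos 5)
Your proof is correct, and it reaches the statement by a mildly different route than the paper's. The paper only takes Euler characteristics: pairing the exchange triangle with the dual basis $[\ep_j I_j]$ in the Grothendieck group immediately gives $b_{ij}=\delta_i^-([\ep_j I_j])$, and then a \emph{separate} argument kills one of $\hom(\delta_i^-,I_j)$, $\e(\delta_i^-,I_j)$, using the compatibility of $d_i^-$ with $d_j$ inside the cluster $\bs{d}^-$ together with the realization of $I_j$ as a quotient of $\Coker(d_j)$ (if $\ep_j=+$) or a subrepresentation of $\Ker(\tau_p d_j)$ (if $\ep_j=-$). You instead run the full long exact sequence of $\Hom_{D^b(\rep A)}(-,I_j)$ on the triangle, using the total vanishing $\Hom_{D^b}(d_i^+,I_j[p])=0$ (the $m=i$, $j\neq i$ case of Theorem \ref{T:dualcluster} for the cluster containing $d_i^+$, plus degree reasons for $p\neq 0,1$); the collapsed sequence then yields $\hom(d_i^-,I_j)$ and $\e(d_i^-,I_j)$ separately in one stroke, so the Euler-characteristic identity and the vanishing of one of the two groups come out together. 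Both arguments draw only on inputs already established (Lemma \ref{L:duality}, Theorem \ref{T:dualcluster}, and Proposition \ref{P:+-} for the fact that the middle term omits $d_i$), and your treatment of the diagonal case $j=i$ and of the symmetric computation with $I_j'$ is likewise correct.
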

\begin{proof} We pair the triangle with the dual basis $[\ep_j I_j]$, and we obtain
	$$b_{ij} = \delta_i^-([\ep_j I_j])\ \text{ and }\ \delta_i^-([I_i]) = -1.$$
	If $\ep_j$ is positive, then $I_j$ is a quotient of $\Coker(\delta_j)$.
	We have that $\e(\delta_i^-, I_j) = 0$ so $b_{ij} = \hom(\delta_i^-, I_j)$.
	If $\ep_j$ is negative, then $I_j$ is a subrepresentation of $\Ker(\nu \delta_j)$.
	We have that $\hom(\delta_i^-, I_j) = 0$ so $b_{ij} = \e(\delta_i^-, I_j) = \hom(\delta_i^-, -I_j)$.
The rest can be proved similarly using the dual basis $[\ep_j' I_j']$.
\end{proof}

\begin{corollary} \label{C:CS} Let $\{\ep_i I_i\}_i$ be the dual cluster of $\{\delta_i\}_i$, and $M$ be the direct sum $\bigoplus_{i} I_i$.
	Then one of dual cone of $\N(M)$ is precisely spanned by this cluster. 
\end{corollary}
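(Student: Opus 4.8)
The plan is to realize $\N(M)$ as a Minkowski sum of the Newton polytopes $\N(I_i)$ and to pin down the dual cone at one explicit vertex directly from Theorem~\ref{T:dualcluster}. Write $\bs{\delta}=\{\delta_1,\dots,\delta_n\}$; since $\bs{\delta}$ is a cluster, the $\delta_k$ form a $\mb{Z}$-basis of $\mb{Z}^{Q_0}$ (Theorem~\ref{T:maxrigid}), so I may write a weight uniquely as $\delta=\sum_k c_k\delta_k$. The key numerical input is the identity $\delta_k(\dv I_i)=\ep_i\updelta(k,i)$, equivalently $\delta(\dv I_i)=c_i\ep_i$ for $\delta=\sum_k c_k\delta_k$, which follows from Theorem~\ref{T:dualcluster} (giving $\hom(\delta_k,I_i)=[\ep_i]_+\updelta(k,i)$ and $\e(\delta_k,I_i)=[-\ep_i]_+\updelta(k,i)$) together with \eqref{eq:heform}.

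Next, because $\e(\delta_k,\delta_l)=0$ for $k\ne l$, Lemma~\ref{L:clusterincone} places all of $\bs{\delta}$ in a single dual cone $\F_{L_i}(I_i)$ of $\N(I_i)$; and since each $\delta_k$ is real, Theorem~\ref{T:HomE} gives $\delta_k(\dv L_i)=f_{I_i}(\delta_k)=\hom(\delta_k,I_i)=[\ep_i]_+\updelta(k,i)$. Comparing pairings against the basis forces $\dv L_i=[\ep_i]_+\,\dv I_i$ (using $[\ep_i]_+\ep_i=[\ep_i]_+$), so $L_i$ is the vertex $I_i$ of $\N(I_i)$ when $\ep_i=+$ and the vertex $0$ when $\ep_i=-$; in either case $\bs{\delta}\subseteq\F_{L_i}(I_i)$.

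Now set $\gamma:=\sum_i\dv L_i=\sum_{i\,:\,\ep_i=+}\dv I_i$. By Corollary~\ref{C:sumvertex}, $\N(M)=\sum_i\N(I_i)$ as a Minkowski sum, so $\gamma\in\N(M)$ and, by the standard description of normal fans of Minkowski sums, the dual cone of $\N(M)$ at $\gamma$ is $\bigcap_i\F_{L_i}(I_i)$. This intersection contains every $\delta_k$, hence the full-dimensional simplicial cone $\{\sum_k c_k\delta_k\mid c_k\ge 0\}$; in particular $\gamma$ really is a vertex and $\F_\gamma(M)=\bigcap_i\F_{L_i}(I_i)$. For the reverse inclusion I use that $0$ and $\dv I_i$ are both vertices of $\N(I_i)$: the dual cone at a vertex lies in the halfspace on which the linear form is at least as large there as at any other vertex, so $\F_{I_i}(I_i)\subseteq\{\delta:\delta(\dv I_i)\ge\delta(0)=0\}$ when $\ep_i=+$, and $\F_0(I_i)\subseteq\{\delta:0=\delta(0)\ge\delta(\dv I_i)\}$ when $\ep_i=-$. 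By $\delta(\dv I_i)=c_i\ep_i$, in both cases this says $\F_{L_i}(I_i)\subseteq\{\delta=\sum_k c_k\delta_k\mid c_i\ge 0\}$; intersecting over $i$ gives $\F_\gamma(M)\subseteq\{\sum_k c_k\delta_k\mid c_k\ge 0\ \forall k\}$. Hence $\F_\gamma(M)$ equals the cone spanned by $\bs{\delta}$, which is the assertion.

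This argument is the specialization of the proof of Theorem~\ref{T:Mcones} to $M=\bigoplus_i I_i$: the only point beyond what is already available is that $I_i$ separates the exchange pair $\{\delta_i,\delta_i'\}$ inside $\N(I_i)$, which is clear here since $\delta_i\in\F_{L_i}(I_i)$ while $\delta_i'$ pairs to $-\ep_i$ with $\dv I_i$ (Lemma~\ref{L:duality}) and hence lies in the dual cone of the opposite vertex. I do not expect a serious obstacle; the only place demanding care is tracking the signs $[\ep_i]_+$, $[-\ep_i]_+$, $\ep_i$ and verifying that the full-dimensionality of the cone spanned by $\bs{\delta}$ is exactly what legitimizes treating $\bigcap_i\F_{L_i}(I_i)$ as the dual cone of an honest vertex $\gamma$ rather than of a point interior to some face.
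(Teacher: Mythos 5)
Your proof is correct. The forward direction --- that every $\delta_k$ lies in the normal cone at $\gamma=\dv M_+$ --- is the same computation as in the paper, resting on Theorem \ref{T:dualcluster} (equivalently $\delta_k(\dv I_i)=\ep_i\updelta(k,i)$) together with $f_{I_i}(\delta_k)=\hom(\delta_k,I_i)$ for real $\delta_k$. Where you genuinely diverge is the reverse inclusion. The paper shows that each adjacent weight $\delta_i'$ fails to lie in $\F_{M_+}(M)$, using Corollary \ref{C:pairing_mu} to control the signs of $\hom(\delta_i',M_-)$ and $\e(\delta_i',M_+)$, and then relies (implicitly) on $\F(M)$ being a coarsening of the cluster fan so that excluding the adjacent clusters pins the cone down to $\innerprod{\delta_i}_i$. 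You instead decompose $\N(M)=\sum_i\N(I_i)$ (Corollary \ref{C:sumvertex}, Lemma \ref{L:refine}), identify the relevant vertex of each summand as $0$ or $\dv I_i$ according to the sign $\ep_i$, and read off $\F_\gamma(M)=\bigcap_i\F_{L_i}(I_i)\subseteq\bigcap_i\{c_i\ge 0\}$ from the elementary inequality $\delta(v)\ge\delta(w)$ valid on the normal cone of a vertex $v$ against any other point $w$ of the polytope. This buys you a direct equality of cones without invoking Corollary \ref{C:pairing_mu} or any adjacency/completeness property of the cluster fan, whereas the paper's route makes the link with the exchange structure explicit. Your explicit attention to full-dimensionality (which is what certifies that $\gamma$ is an honest vertex) is precisely the point the paper leaves implicit when it asserts that $M_+$ is a vertex subrepresentation.
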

\begin{proof} 
	Consider $M_\pm= \oplus_{\ep_i=\pm} I_i$, then $M_{\pm}$ is a vertex subrepresentation of $M$.
	We claim that $\F_{M_+}(M) = \innerprod{\delta_i}_i$.
	We first show that each $\delta_i\in \F_{M_+}(M)$, or equivalently
	$$\delta_i(\dv M_+) = \hom(\delta_i,M).$$
	But this is rather clear from \eqref{eq:homdual}.
	
	Next we show that each adjacent $\delta_i' \notin \F_{M_+}(M)$, or equivalently
	$$\delta_i'(\dv M_+) < \hom(\delta_i', M),$$
	which is equivalent to
	$$-\e(\delta_i', M_+) < \hom(\delta_i', M_-).$$
	But it is clear from Corollary \ref{C:pairing_mu} that 
	if $\ep_i >0$, then	$\hom(\delta_i', M_-)\geq 0$ and $\e(\delta_i', M_+)\geq 1$;
	if $\ep_i <0$, then	$\hom(\delta_i', M_-)\geq 1$ and $\e(\delta_i', M_+)\geq 0$.
\end{proof}

Finally, we pose some questions. Consider the following three sets consisting of \begin{enumerate}
	\item All real Schur representations;
	\item All real Schur representations constructed from Lemma \ref{L:Schur};
	\item All real Schur representations constructed from exchange pairs (Lemma \ref{L:duality}).
\end{enumerate}
It is clear that (1) contains (2), and (2) contains (3).
\begin{conjecture} For the finite-dimensional Jacobian algebras, the three sets are equal.
\end{conjecture}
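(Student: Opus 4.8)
Since the containments $(3)\subseteq(2)\subseteq(1)$ are already observed, the remaining task is the reverse inclusion $(1)\subseteq(3)$: every real Schur representation must arise as a $c$-vector representation $I_j$. The plan is to recast this through Theorem \ref{T:dualcluster}: the set $(3)$ is exactly the set of simple objects of the wide subcategories $\delta_I^\perp$ as $I$ ranges over partial clusters, whereas a real Schur representation $V$ is the unique simple object of the wide subcategory $\op{add}(V)$ — which is abelian because $\End(V)=k$ and extension-closed because $\Ext^1(V,V)=0$ — so the goal becomes to realize $\op{add}(V)$ as some $\delta_I^\perp$. I would first record, via the $\tau$-tilting reduction of \cite{J}, that each $I_j$ lies in a wide subcategory equivalent to the module category of an algebra with a single vertex whose quiver has no loop (as $\Ext^1_A(I_j,I_j)=0$, computed inside the extension-closed subcategory $\delta_I^\perp$), hence equivalent to the category of $k$-vector spaces; so every $I_j$ is an $\E$-rigid brick, and the conjecture amounts to proving:
\emph{(a) every real Schur representation of a finite-dimensional Jacobian algebra is $\E$-rigid}; and
\emph{(b) every $\E$-rigid brick is a $c$-vector representation}.

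Granting (a), the plan for (b) is this. For an $\E$-rigid brick $V$, its minimal projective presentation $d_V$ is rigid (Lemma \ref{L:E} and the identity $\E(d_V,d_V)=\E(d_V,\Coker d_V)=\E(V,V)$), so by Theorem \ref{T:maxrigid} it completes to a maximal rigid presentation $\tilde d=d_V\oplus d_0$; the almost-complete rigid $d_0$ then has a second complement $d_V'$, and the exchange pair $(d_V,d_V')$ produces, via Lemma \ref{L:duality}, a $c$-vector representation $I$. Using that $V$ is a brick — so that the universal homomorphism of Corollary \ref{C:univhomo} underlying $I$ has rank one with $V$ appearing at both of its ends — I expect that for a suitable choice of the completion $d_0$ one gets $I\cong V$, i.e. $\op{add}(V)=\delta_{I_0}^\perp$ for the associated partial cluster $I_0$. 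When $A=kQ$ is hereditary both (a) and (b) are classical: then $\Ext^1(V,V)^*\cong\Hom(V,\tau V)$ makes (a) automatic, and (b) is the statement that every exceptional representation of an acyclic quiver fits into a complete signed exceptional sequence, hence is a $c$-vector; combined with the connectedness of the exchange graph \cite{HU} and the mutation formulas of Lemma \ref{L:HomEQP} and Corollary \ref{C:pairing_mu}, the same conclusion should transport to mutation-acyclic quivers with potential.

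The decisive obstacle is (a), which has no content in the acyclic case. One always has $\Ext^1(V,V)\cong\overline{\Hom}(V,\tau V)^*$, so $\Ext^1(V,V)=0$ only kills the part of $\Hom(V,\tau V)$ not factoring through an injective module; promoting this to $\Hom(V,\tau V)=0$ needs genuinely new input, and the natural source is the $2$-Calabi--Yau structure of the generalized cluster category $\mc{C}$ of $(Q,\mc{P})$. There a module is $\tau$-rigid exactly when it is rigid in $\mc{C}$, and one would want $\End_A(V)=k$ together with $\Ext^1_A(V,V)=0$ to force $\Hom_{\mc{C}}(V,V[1])=0$; since $\Hom_{\mc{C}}(V,V[1])$ is in general strictly larger than $\Ext^1_A(V,V)$, the crux is to control the ``wrapped-around'' contribution to it in terms of self-extensions and endomorphisms of $V$, using finite-dimensionality of the Jacobian algebra and nondegeneracy of $\mc{P}$. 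If (a) can be proved this way, (b) finishes the argument; if instead some finite-dimensional Jacobian algebra carries a real Schur brick that is not $\E$-rigid, then (a), and with it the conjecture, is false. So (a) is at once the hard step and the one on which the truth of the statement hinges.
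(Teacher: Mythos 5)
The statement you are addressing is presented in the paper as an open conjecture, with no proof given, so there is nothing of the author's to compare your argument against; the only question is whether your text settles the conjecture, and by your own account it does not. What you have is a reduction of the inclusion $(1)\subseteq(3)$ to two claims: (a) every real Schur representation of a finite-dimensional Jacobian algebra is $\E$-rigid, and (b) every $\E$-rigid brick arises as a $c$-vector representation from some exchange pair. You leave (a) open and correctly identify it as the crux: the definition of real Schur only demands $\Ext^1(V,V)=0$, which by Lemma \ref{L:E}.(2) is in general strictly weaker than $\E(V,V)\cong\Hom(V,\tau V)^*=0$, and nothing in the paper bridges that gap. Your appeal to the $2$-Calabi--Yau structure of the generalized cluster category is a sensible place to look, but as written it is a direction, not an argument, and you concede that the conjecture stands or falls with (a).

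Step (b) is also not established by your sketch. Granting (a), completing the rigid presentation $d_V$ to a maximal rigid one and forming the exchange pair $(d_V,d_V')$ produces, via Lemma \ref{L:duality}, the representation $I=\fc_{\delta_-}(V)$, which is the image of the universal homomorphism from $V=\Coker(d_V)$ to $\Ker(\tau_p d_V')$ and hence a quotient of $V$ --- in general a proper one. Your phrase ``for a suitable choice of the completion $d_0$ one gets $I\cong V$'' is precisely the content that needs proof; nothing in Corollary \ref{C:univhomo} or Lemma \ref{L:duality} guarantees that a completion making this map injective exists. Your preliminary assertion that every $I_j$ is $\E$-rigid also goes beyond what the paper proves (Lemma \ref{L:Schur} and Lemma \ref{L:duality} only yield that $I_j$ is real Schur), although it would follow from (a). In short, the proposal is a reasonable two-step strategy for an open conjecture, with both steps unproven, rather than a proof.
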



\begin{problem} We say a set of real Schur representations is {\em compatible} if they are a part of some dual cluster.
Find some reasonable conditions without referring to the original cluster that can verify the compatibility.
\end{problem}

\section{The Dual Fan and the Edge Quiver} \label{S:fan&edge}
\begin{definition} A {\em fan} $\F$ in a real vector space $V$ is a finite collection of nonempty polyhedral cones in $V$ such that \begin{enumerate}
		\item every nonempty face of a cone in $\F$ is also a cone in $\F$;
        \item the intersection of any two cones in $\F$ is a face of both.
	\end{enumerate}
A fan is called {\em complete} if the union of all the cones in $\F$ is $V$.
\end{definition}


The dual cones of a polytope ${\sf P}$ fit together into a complete fan, the {\em dual fan} of $\sf P$.
It is also called the {\em normal fan} of $\sf P$.
To pedantically stick to the definition, we need the cones dual to faces (not just vertices) of $\sf P$.
Let ${\sf L}$ be a face of $\N(M)$. The dual cone $\F_{\sf L}(M)$ of $\sf L$ is spanned by
$$\{\delta\in \mb{Z}^{Q_0} \mid \delta(\gamma) = f_M(\delta),\ \forall \gamma \in {\sf L}\},$$
which is the intersection $\bigcap_\gamma \F_{\gamma}(M)$ over all vertices $\gamma \in {\sf L}$.
The dual cones of vertices are the maximal cones of the dual fan.
We denote the dual fan of $\N(M)$ by $\F(M)$.

A fan $\F_1$ is said to be a coarsening of a fan $\F_2$ if every cone of $\F_2$ is contained in some cone of $\F_1$.
A fan $\F_2$ is said to be a refinement of a fan $\F_1$ if every cone of $\F_1$ is a union of cones of $\F_2$.
If $\F_1$ is complete, then it is clear that $\F_2$ is a refinement of $\F_1$ then $\F_1$ is a coarsening of $\F_2$, but not vice versa. 
It follows from Lemma \ref{L:refine} that
\begin{lemma}[{{\em cf}. \cite[Proposition 7.12]{Z}}] \label{L:fansum} Let $M_1$ and $M_2$ be any two representations of $A$.
	Then $\F(M_1\oplus M_2)$ is the common refinement of $\F(M_1)$ and $\F(M_2)$. 
\end{lemma}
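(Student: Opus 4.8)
The plan is to show that the maximal cones of $\F(M_1\oplus M_2)$ are exactly the nonempty intersections $\sigma_1\cap\sigma_2$ of a maximal cone $\sigma_1$ of $\F(M_1)$ with a maximal cone $\sigma_2$ of $\F(M_2)$; this is precisely the statement that $\F(M_1\oplus M_2)$ is the common refinement of $\F(M_1)$ and $\F(M_2)$. The starting point is the Minkowski sum identity $\N(M_1\oplus M_2)=\N(M_1)+\N(M_2)$ from Corollary \ref{C:sumvertex}, together with the elementary fact (from Lemma \ref{L:directsum}, or directly) that the support functions add: $f_{M_1\oplus M_2}=f_{M_1}+f_{M_2}$.

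First I would fix a vertex $\gamma\in\V(M_1\oplus M_2)$. By Corollary \ref{C:sumvertex} it decomposes uniquely as $\gamma=\gamma_1+\gamma_2$ with $\gamma_i\in\V(M_i)$. For any $\delta\in\mb{Z}^{Q_0}$ we have $\delta(\gamma)\le f_{M_i}(\delta)$ componentwise, hence $\delta\in\F_\gamma(M_1\oplus M_2)$ iff $\delta(\gamma_1)+\delta(\gamma_2)=f_{M_1}(\delta)+f_{M_2}(\delta)$, which forces $\delta(\gamma_i)=f_{M_i}(\delta)$ for $i=1,2$, i.e. $\delta\in\F_{\gamma_1}(M_1)\cap\F_{\gamma_2}(M_2)$. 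Conversely such a $\delta$ clearly lies in $\F_\gamma(M_1\oplus M_2)$. Thus $\F_\gamma(M_1\oplus M_2)=\F_{\gamma_1}(M_1)\cap\F_{\gamma_2}(M_2)$, which is exactly the content already recorded in Lemma \ref{L:refine} (taking $L=L_1\oplus L_2$). This shows every maximal cone of $\F(M_1\oplus M_2)$ is an intersection of a maximal cone from each $\F(M_i)$.

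Next I would check the converse direction needed for ``common refinement'': given maximal cones $\sigma_i=\F_{\gamma_i}(M_i)$ with $\sigma_1\cap\sigma_2$ of full dimension, I must produce a vertex $\gamma$ of $\N(M_1\oplus M_2)$ with $\F_\gamma(M_1\oplus M_2)=\sigma_1\cap\sigma_2$. Pick $\delta$ in the relative interior of $\sigma_1\cap\sigma_2$; then $\delta$ lies in the interior of both $\sigma_i$, so by Proposition \ref{P:vertex} (or directly from \eqref{eq:v2fan}) the face of $\N(M_i)$ selected by $\delta$ is the single vertex $\gamma_i$, and hence the face of the Minkowski sum selected by $\delta$ is the single vertex $\gamma_1+\gamma_2$. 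Therefore $\gamma:=\gamma_1+\gamma_2\in\V(M_1\oplus M_2)$ and, by the computation of the previous paragraph, $\F_\gamma(M_1\oplus M_2)=\F_{\gamma_1}(M_1)\cap\F_{\gamma_2}(M_2)=\sigma_1\cap\sigma_2$. Since the maximal cones of $\F(M_1\oplus M_2)$ are in bijection with such full-dimensional intersections, $\F(M_1\oplus M_2)$ is the common refinement of $\F(M_1)$ and $\F(M_2)$, and in particular each $\F(M_i)$ is a coarsening of it. Finally, one notes the general-face version follows since a cone dual to a lower-dimensional face is an intersection of maximal cones, so no separate argument is needed.

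The routine part is the support-function bookkeeping; the only point requiring a little care is the passage from ``$\delta$ in the relative interior of $\sigma_1\cap\sigma_2$'' to ``$\delta$ in the interior of each $\sigma_i$,'' which holds because the $\sigma_i$ are top-dimensional and $\sigma_1\cap\sigma_2$ is assumed top-dimensional, so its relative interior is open in $V$ and meets the interiors of both. I expect the main (mild) obstacle to be phrasing this cleanly enough that the identification of maximal cones with full-dimensional intersections is unambiguous; everything else is a direct consequence of Corollary \ref{C:sumvertex}, Lemma \ref{L:directsum}, and Lemma \ref{L:refine}.
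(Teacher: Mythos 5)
Your argument is correct and follows essentially the same route as the paper, which deduces the lemma from Lemma \ref{L:refine} together with the classical fact (Ziegler, Proposition 7.12) that the normal fan of a Minkowski sum $\N(M_1)+\N(M_2)$ (here supplied by Corollary \ref{C:sumvertex} and the additivity $f_{M_1\oplus M_2}=f_{M_1}+f_{M_2}$) is the common refinement of the normal fans. You have simply written out in full the standard support-function argument that the paper leaves implicit.
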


Let us recall the {\em cluster fan} $\F^r(\rep A)$ of $A$ introduced in \cite{DF}.
The cones of $\F^r(\rep A)$ are spanned by $\{\delta_1,\dots,\delta_p\}$ such that each $\delta_i$ is real indecomposable and $\e(\delta_i,\delta_j)=0$ for $i\neq j$. 
Note that the maximal cones of $\F^r(\rep A)$ are precisely those spanned by the clusters.
\begin{definition}
Let $\F(\rep A)$ be the set of all cones spanned by $\{\delta_1,\dots,\delta_p\}$ such that each $\delta_i$ is normal and $\e(\delta_i,\delta_j)=0$ for $i\neq j$.
By Theorem \ref{T:CDPHom} and Corollary \ref{C:indnormal}, $\F(\rep A)$ forms a simplicial fan as well. We call it {\em generalized cluster fan}. 
\end{definition}
\noindent It follows from Lemma \ref{L:clusterincone} that
\begin{proposition} \label{P:fanCC} The fan $\F(M)$ is a coarsening of the generalized cluster fan $\F(\rep A)$.
\end{proposition}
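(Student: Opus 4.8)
The plan is to unwind the definition of ``coarsening'' given just above the statement: I must show that every cone of the generalized cluster fan $\F(\rep A)$ is contained in a single cone of $\F(M)$. Since $\N(M)$ is a polytope, its normal fan $\F(M)$ is complete and its maximal cones are exactly the dual cones $\F_\gamma(M)$ attached to the vertices $\gamma\in\N(M)$; every cone of $\F(M)$ is a face of such a maximal cone, hence contained in some $\F_\gamma(M)$. So it suffices to produce, for each cone of $\F(\rep A)$, a vertex $\gamma$ of $\N(M)$ with that cone sitting inside $\F_\gamma(M)$.

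First I would fix a cone $\sigma$ of $\F(\rep A)$. By the very definition of $\F(\rep A)$, it is spanned by normal weight vectors $\delta_1,\dots,\delta_p$ with $\e(\delta_i,\delta_j)=0$ for $i\neq j$ — which is precisely the hypothesis of Lemma \ref{L:clusterincone}. That lemma then hands me a single vertex $\gamma\in\N(M)$ with $\delta_i\in\F_\gamma(M)$ for all $i$. Because $\F_\gamma(M)$ is a convex cone, it contains the $\mb{R}_{\geq 0}$-span $\mb{R}_{\geq 0}\delta_1+\cdots+\mb{R}_{\geq 0}\delta_p=\sigma$. Since $\F_\gamma(M)$ is itself a cone of the fan $\F(M)$, this exhibits $\sigma$ inside a cone of $\F(M)$, which is exactly what ``coarsening'' requires; as $\sigma$ was arbitrary, $\F(M)$ coarsens $\F(\rep A)$.

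There is essentially no obstacle left, since all the substance has been front-loaded into Lemma \ref{L:clusterincone} (itself a reformulation of the linearity statement Lemma \ref{L:linearity}, which in turn rests on Theorem \ref{T:HomE}). The only points deserving a word of care are bookkeeping ones: that we genuinely only need \emph{containment} of cones and not the stronger claim that each cone of $\F(\rep A)$ is a \emph{face} of a cone of $\F(M)$ — the latter may fail, and the definition of coarsening in force here does not demand it; and that the generators of a cone of $\F(\rep A)$ satisfy the pairwise $\e$-vanishing by definition, so Lemma \ref{L:clusterincone} applies verbatim with no extra work. One could alternatively reduce first to maximal cones (clusters) via Lemma \ref{L:fansum}, writing $M$ as a direct sum, but handling an arbitrary cone $\sigma$ directly is cleaner and avoids any such detour.
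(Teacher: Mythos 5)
Your proof is correct and follows exactly the route the paper intends: the paper's entire justification is the sentence ``It follows from Lemma \ref{L:clusterincone}'' preceding the statement, and your argument simply unwinds that lemma together with the definition of coarsening in the same way. No issues.
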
 
\begin{remark} In view of Lemma \ref{L:fansum}, Propositions \ref{P:fanCC} and \ref{P:finitetype}, $\F(\rep A)$ can be viewed heuristically as the normal fan of the infinite dimensional representation $\bigoplus_{M \in \rep A} M$.
\end{remark}

Next we discuss the 1-skeleton of $\N(M)$. We will represent an edge of $\N(M)$, that is, an 1-dimensional face of $\N(M)$, by $\br{L_0 L_1}$ where $L_0$ and $L_1$ are vertex subrepresentations. Recall the functor $t_\dtb$ and $\tc_\dtb$ in Section \ref{S:tf}. 

		
\begin{proposition} \label{P:edgequiver} If $\br{L_- L_+}$ is an edge in $\N(M)$, then either $L_- \subset L_+$ or $L_+ \subset L_-$.
	Say $L_- \subset L_+$, then 
	$$L_- = t_\dtb(M)\ \text{ and } \ L_+ = \tc_\dtb(M)\ \text{ for any $\delta$ in the interior of $\F_{\br{L_- L_+}}(M)$}.$$
	Moreover, we have the following \begin{enumerate}
		\item $\delta_+(L_+/L_-)\geq 0$ for any $\delta_+ \in \F_{L_+}(M)$ and $\delta_-(L_+/L_-))\leq 0$ for any $\delta_- \in \F_{L_-}(M)$ with the equality holding only when $\delta_\pm \in \F_{\br{L_- L_+}}(M)$. 
		\item If $\F_{L_-}(M)$ is spanned by a regular cluster, then $L_+/L_-$ is a direct sum of isomorphic real Schur representations.
	\end{enumerate}
\end{proposition}
\begin{proof} The convex hull of $\mc{L}(\delta,M)$ contains $\br{L_- L_+}$ for $\delta \in \F_{\br{L_- L_+}}(M) = \F_{L_-}(M) \cap \F_{L_+}(M)$.
	If $\delta$ is in the interior of $\F_{\br{L_- L_+}}(M)$, then $\delta \notin \F_{L}(M)$ for any other vertex $L$, so the convex hull of $\mc{L}(\delta,M)$ is exactly $\br{L_- L_+}$.
	By Theorem \ref{T:maxminsub} we have either $L_- \subset L_+$ or $L_+ \subset L_-$. 
	If $L_- \subset L_+$, then 
	$L_- = t_\dtb(M)\ \text{ and } \ L_+ = \tc_\dtb(M)$ for any $\delta\in \F_{\br{L_- L_+}}(M)$.

For (1), $L_+/L_- = \tc_\dtb(M)/t_\dtb(M)$ is $\delta$-semistable by Theorem \ref{T:maxminsub}. 
If $\delta_+ \in  \F_{L_+}(M) \setminus \F_{L_-}(M)$, then $\delta_+(L_-)<\delta_+(L_+) = f_M(\delta_+)$.
Hence $\dtb_+(L_+/L_-)> 0$.
Similarly, we get $\dtb_-(L_+/L_-)< 0$.	

For (2), suppose that $\F_{L_-}(M)$ is spanned by a regular cluster $\bs{\delta}$.
Then $L_+/L_-$ is $\delta$-semistable for any $\delta\in \F_{\br{L_-L_+}}(M)$.
There is only one element in $\bs{\delta}$ lying outside $\F_{\br{L_-L_+}}(M)$,
so by Theorem \ref{T:dualcluster} $L_+/L_-$ must be an iterated extension of a real Schur representation $E$.
But $\Ext^1(E,E)=0$, so it has to be a direct sum of $E$. 
\end{proof}

\begin{definition} We assign the orientation $L_0 \to L_1$ for each edge $\br{L_0 L_1}$ with $L_0\subset L_1$. We call the resulting oriented graph the {\em edge quiver} of $\N(M)$, denoted by $\N_1(M)$. We call $L_1/L_0$ an {\em edge factor} of $M$. 
\end{definition}

For any 2 consecutive arrows $L_0\to L_1 \to L_2$, we have that $\Hom(L_1/L_0, L_2/L_1)=0$. 
Indeed, by Proposition \ref{P:vertex} we have that $\Hom(L_1,M/L_1)=0$.
Since $L_1/L_0$ is a quotient of $L_1$ and $L_2/L_1$ is a subrepresentation of $M/L_1$,
we have that $\Hom(L_1/L_0,L_2/L_1)=0$.
However, there could be some homomorphism if the 2 arrows are not consecutive as shown in the following example.

\begin{example}
Consider the same quiver with potential as in Example \ref{ex:QP4}.
	Let $M=\Coker(-1,1,1,0)$. The Newton polytope of $M$ was computed in \cite[Example 6.10]{Fc}.
	There is a path $0\to S_3 \to S_{34} \to L \to M$ in $\N_1(M)$, where $L$ is the vertex subrepresentation such that $M/L=S_3$.
	We see that $\Hom(S_3/0, M/L)=k$.

The point of this example is that the filtration of $M$ given by a path from $0$ to $M$	in $\N_1(M)$ may  
not be the {\em Harder-Narasimhan filtration} associated to any stability condition.
\end{example}

\begin{definition} The {\em exchange quiver} of $A$ is the dual graph of $\F^r(\rep A)$ with orientation given by $\{\delta_-\}\cup \bs{\delta}_0 \to \{\delta_+\}\cup \bs{\delta}_0$ if $\e(\delta_-,\delta_+)>0$.
\end{definition}

In the end of this section, we state a conjecture relating the maximal paths in a general representation of quiver to the Schur sequences introduced in \cite{DW2}.
Recall from \cite{Ka1} that if $\rep_\alpha(Q)$ contains a Schur representation, then $\alpha$ is called a {\em Schur root}. $\alpha$ is called {\em real} if $\innerprod{\alpha,\alpha}=1$, otherwise it is called {\em imaginary}. It is also called {\em isotropic} if $\innerprod{\alpha,\alpha}=0$. We denote $L\perp N$ if $\hom(L,N)=\ext(L,N)=0$, and denote $\gamma \perp \beta$ if $\hom(\gamma,\beta)=\ext(\gamma,\beta)=0$.
If $\gamma \perp \beta$, then the number of $\gamma$-dimensional subrepresentations of a general $(\beta+\gamma)$-dimensional representation is finite. We denote this number by $\gamma \circ \beta$.
\begin{definition}[\cite{DW2}] We call two dimension vectors $\gamma$ and $\beta$ {\em strongly perpendicular} if $\gamma \circ \beta =1$. We denote this by $\gamma \pperp \beta$.
	A sequence $(\beta_1,\beta_2,\dots,\beta_s)$ of Schur root is called a {\em Schur sequence} if $\beta_i \pperp \beta_j$ for all $i<j$.
\end{definition}

\noindent The Schur sequence was introduced as a simultaneous generalization of exceptional sequences, sequences arising from the canonical decomposition and the stable decomposition in the quiver setting. It has interesting applications in the Schubert calculus. We refer readers to \cite{DW2} for more background and motivation.

Let $\mc{S}(\alpha)$ be the set of all Schur sequences $(\beta_1,\beta_2,\dots,\beta_r)$ (of any length) such that 
$\alpha$ is a positive integral combination $\alpha=\sum_{i=1}^r c_i \beta_i$ and $c_i=1$ whenever $\beta_i$ is not real or isotropic.
\begin{conjecture} \label{T:Schurseq} There is a bijection between $\mc{S}(\alpha)$ and the maximal paths in $\N_1(\alpha)$.
\end{conjecture}

\begin{example} Let $Q$ be the quiver $\twoone{1}{2}{3}$, and $\alpha$ be the dimension vector $(3,5,2)$.
	Except for zero and itself, $\N(M)$ has 4 vertex subrepresentations $L_1,L_2,L_3,L_4$ of dimension
	$(0,3,0), (0,0,2), (2,3,2)$, and $(0,5,2)$ respectively (see Example \ref{ex:genacyclic}).
We list all paths from $0$ to $M$ in $\N_1(M)$ together with the corresponding Schur sequences.
\begin{align*} &0\to M && (3,5,2)\\
&0\to L_1 \to M &&  e_2, (3,2,2)\\
&0\to L_3 \to M &&  (2,3,2), (1,2,0)\\
&0\to L_2 \to L_3 \to M && e_3, (2,3,0), (1,2,0) \\
&0\to L_1 \to L_4 \to M && e_2, (0,1,1), e_1\\
&0\to L_2 \to L_4 \to M && e_3, e_2, e_1
\end{align*}
\end{example}

\section{Examples} \label{S:example}
In this section, we give some concrete examples of $\F(M)$ and $\N_1(M)$.
There are at least two parameters that we can vary. One is the representation $M$, and the other is the algebra $A$.
\subsection{The case when \texorpdfstring{$M=A$}{M=A}}
\begin{lemma} \label{L:ideal} A vertex subrepresentation $I$ of $A$ is a two-sided ideal of $A$.
Dually, for a vertex quotient representation $I^*$ of $A^*$, $I$ is a two-sided ideal of $A$.
\end{lemma}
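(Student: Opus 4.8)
The plan is to reduce the statement to the right-module structure of $A$ and then lift that to a two-sided structure. Recall that $A$ is a basic algebra, $A=kQ/I$, and we are regarding $A$ as a representation of itself, i.e.\ as a left $A$-module; its indecomposable summands are the $P_v$. A subrepresentation $L\hookrightarrow A$ is a left ideal; the content of the lemma is that a \emph{vertex} subrepresentation — one whose dimension vector is a vertex of $\N(A)$ — is automatically a right ideal as well, hence two-sided. The key fact I would use about vertex subrepresentations is Proposition \ref{P:vertex}: if $L$ is a vertex subrepresentation of $M$ then $L$ is the \emph{unique} subrepresentation of $M$ with $\dv L=\dv L$, and moreover $\Hom(L,M/L)=0$.

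First I would exploit uniqueness together with the automorphisms of $A$. For each unit $u\in A^\times$, right multiplication $\rho_u:A\to A$, $a\mapsto au$, is a left $A$-module automorphism of $A$. Hence $\rho_u(L)$ is again a subrepresentation of $A$ of the same dimension vector as $L$, so by the uniqueness clause of Proposition \ref{P:vertex} we get $\rho_u(L)=L$, i.e.\ $Lu=L$ for every unit $u$. Since $A$ is finite-dimensional over an infinite (algebraically closed) field, $A$ is spanned by its units — every element $a\in A$ can be written as a $k$-linear combination of units (e.g.\ write $a=(a-\lambda)+\lambda$ with $\lambda\in k^\times$ chosen so that $a-\lambda$ is a unit, possible because $a$ has only finitely many eigenvalues on the regular representation; or argue idempotent-by-idempotent). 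Therefore $La\subseteq L$ for all $a\in A$, so $L$ is a right ideal, and being also a left ideal it is a two-sided ideal.

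For the dual statement I would run the mirror argument: a vertex quotient representation $I^*$ of $A^*$ corresponds, under the duality between $\N(M)$ and $\check\N$ (or directly via $D=\Hom_k(-,k)$ sending right modules to left modules), to a vertex subrepresentation of the left module $A$ in the $\check{\ }$-picture; equivalently, $I$ is a vertex subrepresentation of $A$ viewed as a \emph{right} $A$-module. The same units-act-trivially argument, now with left multiplication $\lambda_u:a\mapsto ua$ as right-module automorphisms, shows $uI=I$ for all units $u$, hence $AI\subseteq I$, and combined with $IA\subseteq I$ we again get a two-sided ideal.

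The only subtlety — and the step I would be most careful about — is the assertion that $A$ is spanned by its units, and that right (resp.\ left) multiplication by a \emph{fixed} unit really is a morphism \emph{in the category of representations of the relevant module structure}, i.e.\ that it respects the $Q$-action encoding the opposite-side structure. For the left-module $A$, right multiplications commute with left multiplications, so $\rho_u$ is $A$-linear on the left — this is immediate from associativity — and a vertex subrepresentation is a left submodule, so the argument goes through cleanly. The units-span claim is standard for finite-dimensional algebras over an infinite field (for each $a$, the set $\{\lambda\in k:\lambda\cdot 1-a\in A^\times\}$ is cofinite, so pick two such $\lambda$ and take a difference), so I expect no real obstacle there; I would simply state it carefully and cite or prove it in one line.
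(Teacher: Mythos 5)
Your proof is correct and rests on the same two pillars as the paper's: multiplication by a unit on the opposite side is a module automorphism preserving the dimension vector, and Proposition \ref{P:vertex} forces any such image to coincide with $I$. The only (cosmetic) difference is how you pass from units to all of $A$ — you invoke the standard fact that a finite-dimensional algebra over an infinite field is spanned by its units, whereas the paper argues by contradiction, using Wedderburn--Malcev to reduce to an element $a$ of the radical and then applying the unit $1+a$; note also that the paper's convention makes modules right modules, so it is $AI\subseteq I$ that needs proving, but your argument is the exact mirror and goes through verbatim.
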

\begin{proof} Recall our convention that all modules are right.
	We need to show that $AI \subseteq I$.
	If not, there is some $a$ such that $aI \nsubseteq I$.
	By the Wedderburn-Malcev theorem, we can assume that $a$ is in the radical of $A$.
	Then $(1+a)I \nsubseteq I$ as well.
	But $1+a$ is invertible so $(1+a)I$ has the same dimension vector as $I$.
	This contradicts the fact that $I$ is a vertex subrepresentation (see Proposition \ref{P:vertex}).
\end{proof}

\begin{proposition} Assume that $\fc_A(\dtc) = \hom(A,\dtc)$. If $\dtc\in \Fc_{A/I}(A)$ and $\dc\in \IHom(\dtc)$, then $\dc$ is surjective with the same kernel after tensoring with $A/I$.
	Dually, assume that $f_{A^*}(\delta) = \hom(\delta, A^*)$. If $\delta\in \F_{A^*/I^*}(A^*)$ and $d\in \PHom(\delta)$, then $d$ becomes injective with the same cokernel when tensoring with $A/I$.
\end{proposition}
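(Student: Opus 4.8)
The plan is to prove the second (projective) statement in detail; the first is its dual and is obtained by running the same scheme on the injective side (equivalently, over $A^{\op{op}}$). Write $C=\Coker(\delta)$ for the cokernel of a general $d\colon P_-\to P_+$ in $\PHom(\delta)$, and let $L$ be the vertex subrepresentation of $A^*$ that the statement denotes by $A^*/I^*$. First I would identify $L$ concretely: by Lemma~\ref{L:ideal} the vertex subrepresentations of $A^*$ are precisely the duals $(A/I)^*$ of quotients of $A$ by two-sided ideals, so $L=(A/I)^*$; dualizing $0\to I\to A\to A/I\to 0$ gives $0\to L\to A^*\to I^*\to 0$, hence $A^*/L\cong I^*$ where $I^*=\Hom_k(I,k)$.

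The first step of the proof is to deduce from the running hypothesis that a general $d$ is injective. Combining $f_{A^*}(\delta)+\fc_{A^*}(\delta)=\delta(\dv A^*)$ with \eqref{eq:heform} for $M=A^*$ gives $\fc_{A^*}(\delta)=\delta(\dv A^*)-\hom(\delta,A^*)=-\e(\delta,A^*)$, and since $\fc_{A^*}(\delta)\ge 0$ (take the zero quotient) we get $\e(\delta,A^*)=0$. Since $A^*$ is injective, $\Hom(-,A^*)$ is exact; applying it to the exact sequence $0\to\Ker d\to P_-\xrightarrow{d}P_+\to C\to 0$ and comparing with the defining sequence of $\E(d,A^*)$ identifies $\E(d,A^*)\cong\Hom(\Ker d,A^*)\cong(\Ker d)^*$. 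Hence the generic value of $\dim\Ker d$ is $\e(\delta,A^*)=0$, so $d$ is injective and $0\to P_-\xrightarrow{d}P_+\to C\to 0$ is a projective resolution of $C$.

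Next I would feed in membership in the normal cone. By definition $\delta\in\F_L(A^*)$ means $\delta(\dv L)=f_{A^*}(\delta)$, which by hypothesis equals $\hom(\delta,A^*)$; Lemma~\ref{L:sub=h} (for $L\hookrightarrow A^*$) then gives $\hom(\delta,A^*/L)=0$, i.e. $\Hom(C,I^*)=0$ (recall $\hom(\delta,N)=\dim\Hom(C,N)$). By the Hom-tensor adjunction $\Hom(C,I^*)\cong(C\otimes_A I)^*$, so $C\otimes_A I=0$. Tensoring $0\to I\to A\to A/I\to 0$ with $C$ and using $\operatorname{Tor}_1^A(C,A)=0$ gives an exact sequence $0\to\operatorname{Tor}_1^A(C,A/I)\to C\otimes_A I\to C\to C\otimes_A A/I\to 0$; with $C\otimes_A I=0$ this forces $\operatorname{Tor}_1^A(C,A/I)=0$ and $C\cong C\otimes_A A/I$, that is $CI=0$ and $C\in\module(A/I)$. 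Finally, tensoring the resolution $0\to P_-\xrightarrow{d}P_+\to C\to 0$ with $A/I$ yields (the leftmost term being $\operatorname{Tor}_1^A(C,A/I)$) the exact sequence $0\to P_-\otimes_A A/I\xrightarrow{d\otimes A/I}P_+\otimes_A A/I\to C\to 0$: so $d\otimes_A A/I$ is injective with cokernel $C=\Coker(d)$, as claimed.

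The first statement is the mirror image. Now $\fc_A(\dtc)=\hom(A,\dtc)$ combined with \eqref{eq:heformdual} gives $f_A(\dtc)=-\ec(A,\dtc)$, and $f_A(\dtc)\ge 0$ forces $\ec(A,\dtc)=0$; since $\Ec(A,\dc)=\Coker(\dc)$, a general $\dc\in\IHom(\dtc)$ is surjective. From $\dtc\in\Fc_{A/I}(A)$, the hypothesis, and the dual of Lemma~\ref{L:sub=h} one gets $\hom(I,\dtc)=0$, i.e. $\Hom(I,\Ker\dc)=0$; plugging this into $\Hom(-,\Ker\dc)$ applied to $0\to I\to A\to A/I\to 0$ shows simultaneously $\Hom(A/I,\Ker\dc)=\Ker\dc$, hence $(\Ker\dc)\,I=0$ and $\Ker\dc\in\module(A/I)$, and $\Ext^1_A(A/I,\Ker\dc)=0$, which is what is needed for the base change of $\dc$ to $A/I$ to stay surjective with the same kernel. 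The step I expect to be most delicate is not the homological algebra — each vanishing is a one-line computation with $\operatorname{Tor}$ or $\operatorname{Ext}$ — but the bookkeeping of the various dualities ($(-)^*$, $\nu$, $\tau_p$) and the left/right module conventions, so that the identifications $A^*/L\cong I^*$ and $\Hom(C,I^*)\cong(C\otimes_A I)^*$ come out with the correct handedness and really match the cone attached to the chosen ideal.
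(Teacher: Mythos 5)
Your proof is correct and follows essentially the same route as the paper's: both pivot on extracting the key vanishing from the cone-membership condition via Lemma~\ref{L:sub=h} (giving $\Hom(C,I^*)=0$, resp.\ $\hom(I,\dtc)=0$, hence that the (co)kernel is unchanged under base change) and then finish with elementary homological algebra. The only cosmetic differences are that the paper concludes surjectivity of $\Hom(A/I,\dc)$ by a pure dimension count ($\dim\Ker(\dc)=\dtc(\dv(A/I))$) where you instead use the vanishing of $\Ext^1(A/I,\Ker\dc)$, resp.\ $\operatorname{Tor}_1^A(C,A/I)$, and that you make explicit two points the paper leaves implicit, namely that the standing hypothesis forces a general $\dc$ to be surjective (resp.\ $d$ injective) and the identification of the vertex subrepresentation as $(A/I)^*$.
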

\begin{proof} $\dtc\in \check{\F}_{A/I}(A)$ implies that $\dtc(A/I) = \hom(A, \dtc)$. 
	By the dual of Lemma \ref{L:sub=h}, we have that $\hom(A/I, \dtc)=\hom(A,\dtc)$. 
	This implies that the kernel of $\dc$ does not change after tensoring with $A/I$.
Then $\dim \Ker(\dtc) = \hom(A/I, \dtc) = \dtc(A/I) = \dtc((A/I)^*),$
which implies that $\dtc$ is surjective after tensoring with $A/I$.
\end{proof}


\begin{example} \label{ex:ab=0} Consider the quiver $\affineA{1}{2}{3}{a}{b}$ with relation $ab$.
Except for the two trivial ones, $\N(A)$ has 7 vertex subrepresentations as listed in the left column.
The middle and right columns are the corresponding ideals and dual cones.
\begin{align*}& (P_1,S_3,S_3) && \innerprod{e_1,e_3} && (e_1,-e_2,-e_3)\\
&(S_2\oplus S_3,P_2,S_3) && \innerprod{e_2,e_3} && (-e_1,e_2-e_1,-e_3) \\
&(S_3,S_3,S_3) && \innerprod{e_3} &&  (e_1,e_2-e_1,-e_3)\\
&(S_2,P_2,0) && \innerprod{e_2} && (-e_1,-e_2,e_3-e_2-e_1) \\
&(P_1,P_2,0) && \innerprod{e_1,e_2} && (e_1,-e_2,e_3-e_2-e_1) \\
&(P_1,0,0) && \innerprod{e_1} && (-e_1,e_2-e_1,e_3-e_1,e_3-e_2-e_1)\\
&(S_2,0,0) && \innerprod{a} && (e_1,e_3-e_2-e_1,e_3-e_1)
\end{align*}
\end{example} 

\subsection{Cluster-Finite Algebras}
\begin{definition} We call an algebra {\em cluster-finite} if it has only finitely many indecomposable $\E$-rigid representations.
\end{definition}
\noindent 
 A cluster-finite algebra may not be representation-finite. For example, the preprojective algebra of Dynkin type (other than $A_i$, $i<5$). We showed that the cluster fan of a cluster-finite algebra is complete (\cite[Proposition 6.1]{DF}).
 In particular, the cluster fan is the same as the generalized cluster fan.
\begin{proposition} \label{P:finitetype} Suppose that $A$ is cluster-finite. Let $M$ be the direct sum of all $\E$-rigid representations. 
	Then the dual fan $\F(M)$ is the cluster fan of $A$, and the edge quiver $\N_1(M)$ is the exchange quiver of $A$. 
\end{proposition}
\begin{proof} The claim about the dual fan is a direct consequence of Theorem \ref{T:Mcones} and the completeness of the cluster fan.
	So if $(\delta_-,\delta_+)$ is an exchange pair and $\delta_\pm \in \F_{L_\pm}(M)$, then there is an arrow between $L_-$ and $L_+$.
	We need to show that the arrow has the correct direction.
	By Proposition \ref{P:edgequiver} it suffices to show that $\delta^-(\dv L_0)< 0$ and $\delta^+(\dv L_0)> 0$ where $L_0=\dtb_0^\perp(M)$ for $\delta_0 \in \F_{\br{L_-L_+}}(M)$.
	Apply $\Hom(-,L_0)$ to the exchange triangle $d_+ \to d_0 \to d_-^e \to d_+[1]$ of Proposition \ref{P:+-}, and we get the exact sequence
	$$0\to \Hom(e\delta_-, L_0)\to 0 \to \Hom(\delta_+, L_0) \to \E(e\delta_-, L_0) \to 0\to \E(\delta_+,L_0)\to 0.$$
	Hence $\delta^-(\dv L_0)< 0$ and $\delta^+(\dv L_0)> 0$.
\end{proof}
	

\begin{example} We continue with Example \ref{ex:ab=0}. There are 9 indecomposable representations of $A$.
Except for indecomposable projective, injective, and simple representations, they are $R=\Coker(1,-1,0)$ and $T=\Coker(1,1,-1)$. They are either $\E$-rigid or $\Ec$-rigid.
It turns out that to get the cluster fan of $A$, we do not need all of them as in Proposition \ref{P:finitetype}.
We have two minimal choices. One is $P_2,P_3,I_1,I_2,R,T$, and the other is $P_1,P_2,P_3,I_1,I_2,I_3$.
Here is the polytope for the first choice. We also display the edge quiver and the edge factors.
The 18 vertices correspond to the 18 clusters.
$$\polytopeone$$
The statement for the dual cluster in Corollary \ref{C:QPfinite} also holds here because we can check that each cluster is regular.
\end{example}

Let $(Q,\mc{P})$ be a {\em nondegenerate} quiver with potential such that its Jacobian algebra $A$ is finite-dimensional and cluster-finite.
Let $\mc{C}(Q)$ be the cluster algebra associated to the quiver $Q$.
Then the results in \cite{DWZ2} implies that the map sending each $\delta$ to the corresponding cluster variables induces an isomorphism from  the cluster fan of $A$ to the ordinary cluster fan of $\mc{C}(Q)$.

\begin{corollary} \label{C:QPfinite} Let $M$ be the direct sum of all $\E$-rigid representations of $A$.
Then the dual fan $\F(M)$ is the cluster fan of $\mc{C}(Q)$, and the edge quiver $\N_1(M)$ is the exchange quiver of $\mc{C}(Q)$. 
Moreover, the signed dimension vectors of the real Schur representations attached to the arrows from/to a fixed vertex $L$ are the signed $c$-vectors dual to the $\g$-vectors in the corresponding cluster.
\end{corollary}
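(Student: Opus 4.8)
The plan is to obtain Corollary \ref{C:QPfinite} as a direct specialization of Proposition \ref{P:finitetype}, transported along the dictionary between the cluster theory of $A=J(Q,\mc{P})$ and the cluster algebra $\mc{C}(Q)$.

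\textbf{Step 1.} Since $A$ is finite-dimensional and cluster-finite by hypothesis, Proposition \ref{P:finitetype} applies: with $M$ the direct sum of all $\E$-rigid representations, the normal fan $\F(M)$ equals the cluster fan $\F^r(\rep A)$, the edge quiver $\N_1(M)$ equals the exchange quiver of $A$, and for each vertex $L$ the signed dimension vectors of the real Schur representations on the arrows incident to $L$ are the signed $c$-vectors dual to the cluster $\F_L(M)$ in the sense of Theorem \ref{T:dualcluster}.

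\textbf{Step 2.} Next I would invoke the identification recalled just above the statement: because $(Q,\mc{P})$ is nondegenerate and $A$ is finite-dimensional and cluster-finite, the results of \cite{DWZ2} show that $\delta\mapsto X(\delta)$ (equivalently $\g=-\delta$) is a bijection from the real indecomposable $\delta$-vectors of $A$ to the $\g$-vectors of cluster variables of $\mc{C}(Q)$ carrying clusters to clusters, hence inducing an isomorphism from $\F^r(\rep A)$ onto the $\g$-vector fan of $\mc{C}(Q)$, i.e.\ the ordinary cluster fan, and from the exchange quiver of $A$ onto that of $\mc{C}(Q)$. Composing with Step 1 yields the first two assertions.

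\textbf{Step 3 (moreover part).} Here one must match the two notions of $c$-vector. Since $A$ is cluster-finite, every cluster of $\delta$-vectors is reachable from the initial cluster; thus by the remark following Theorem \ref{T:dualcluster} the representation-theoretic $c$-vectors (the dual basis to $\{\delta_i\}$ under the Euler pairing of $K^b(\proj A)$ with $D^b(\rep A)$) coincide with the classical $c$-vectors of the corresponding cluster of $\mc{C}(Q)$. Translating $\delta$-vectors into $\g$-vectors as in Step 2, the signed $c$-vectors dual to $\F_L(M)$ become precisely the signed $c$-vectors dual to the $\g$-vectors of the cluster at $L$, as claimed. The argument is essentially bookkeeping: the only point requiring care is the sign convention $\delta=-\g$ when matching clusters and their dual $c$-vectors, and the one genuine input — that the two $c$-vector notions agree — is already available for reachable clusters, which the cluster-finite hypothesis guarantees. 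So I expect no real obstacle beyond correctly citing and composing the previously established statements.
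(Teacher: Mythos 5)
Your proposal is correct and follows essentially the same route the paper intends: the corollary is obtained by specializing Proposition \ref{P:finitetype} and transporting the result along the identification from \cite{DWZ2} of the cluster fan of $A$ with the ordinary cluster fan of $\mc{C}(Q)$, which the paper states in the paragraph immediately preceding the corollary. Your Step 3, matching the two notions of $c$-vector via the remark after Theorem \ref{T:dualcluster} together with the reachability of all clusters in the cluster-finite case, is exactly the bookkeeping the paper leaves implicit.
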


\begin{remark}  We recover and generalize the main result in \cite{BDMTY}, where the authors obtain the similar result for Dynkin quivers (without potentials). In such cases, the Newton polytope is the so-called generalized associahedron \cite{FZy}.
	
We conjecture that any strict subset of $\ind(M)$ cannot do the job. 
More precisely, let $N$ be a direct sum of elements in any strict subset, then $\F(N)$ is {\em not} the cluster fan of $\mc{C}(Q)$.
We are able to prove this conjecture for the Dynkin quivers.
By contrast, we will see that for the preprojective algebras of Dynkin type, we need very few $\E$-rigid representations, namely projective ones only.
\end{remark}

\subsection{Preprojective Algebras}
In this subsection, we let $A$ be the preprojective algebra of a Dynkin diagram.
In \cite{BKT} three authors showed that if $M$ is a general representation in some irreducible component of $\rep_{\nu}(A)$, then $\N(M)$ is the {\em MV polytope} of certain basis element of $k[U]$ associated to $M$,
where $U$ is the maximal unipotent group of the simple, connected, simply-connected Lie group of the same Dynkin type.
This is also part of our motivation for studying the Newton polytope of a representation.

An interesting result in \cite{M} says that the maximal rigid presentations $d_w$ can be labelled by the elements $w$ in the Weyl group of the same Dynkin type. The cokernel of $d_w$ is the ideal $I_w$ of $A$ introduced in \cite{BIRS}.
\begin{proposition} \label{P:preproj} The vertices of $\N(A)$ are labelled by the ideals $I_w$, and $\F_{I_w}(A)$ is the cluster corresponding to $d_w$. 	
So $\F(A)$ is the cluster fan $\F(\rep A)$, which is a Weyl fan.
\end{proposition}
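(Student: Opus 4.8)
The plan is to leverage two inputs: first, the description from \cite{M} identifying maximal rigid presentations $d_w$ of the preprojective algebra $A$ with Weyl group elements $w$, with $\Coker(d_w)=I_w$ the ideals from \cite{BIRS}; second, Theorem \ref{T:Mcones} together with the completeness of the cluster fan for cluster-finite algebras (recalled just before Proposition \ref{P:finitetype}), since preprojective algebras of Dynkin type are cluster-finite. First I would observe that each $I_w$ is (the cokernel of) an $\E$-rigid presentation, hence is itself an $\E$-rigid representation, so the weight vectors of $\ind(d_w)$ form a cluster, call it $\bs{\delta}_w$; by \cite{M} these clusters are exactly the clusters of $A$ and they are indexed by $w$ in the Weyl group, forming the cluster fan $\F^r(\rep A)$, which is the Weyl fan.

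Next I would show $\F_{I_w}(A)$ is spanned by $\bs{\delta}_w$. Since $d_w$ is a maximal rigid presentation with cokernel $I_w$, each constituent weight $\delta_i \in \bs{\delta}_w$ is $\E$-rigid, so by Lemma \ref{L:erigid0} applied to the $\E$-rigid presentation $d_i$ and the quotient $I_w$ of $\Coker(d_i)$... — more carefully, I want $\delta_i(\dv I_w) = \hom(\delta_i, A)$, i.e. $\delta_i \in \F_{I_w}(A)$. The cleanest route is Corollary \ref{C:CS}: there the dual cluster $\{\ep_i I_i\}_i$ of a cluster $\{\delta_i\}_i$ is constructed and it is shown that for $M=\bigoplus_i I_i$ the cone $\F_{M_+}(M)$ equals $\innerprod{\delta_i}_i$. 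So I would instead argue that the vertex subrepresentation of $\N(A)$ picked out by the cluster $\bs{\delta}_w$ is precisely $I_w$: by Observation \ref{O:vertex} (or directly), a cluster lying in a dual cone $\F_L(A)$ determines $L$ via $\delta_i(\dv L)=\hom(\delta_i,A)$, and one checks $\hom(\delta_i,A) = \dim$ of the appropriate homomorphism space, which by the universal homomorphism description (Corollary \ref{C:univhomo}) and the standard fact that $I_w = t_{\delta_i}$-type truncations match the $I_w$ of \cite{BIRS,M}. The key technical point reduces to: for the maximal rigid presentation $d_w$ with $\Coker(d_w) \supseteq$ the relevant summands, $\Coker(d_w)$ as a vertex subrepresentation of $A$ is $I_w$ — this is essentially built into the identification in \cite{M} that $d_w$ presents $A \twoheadrightarrow A/I_w$, equivalently the universal map from $\op{add}(\Coker d_w)$ to $A$ has image $I_w$, which by Proposition \ref{P:vertex} and Theorem \ref{T:torsion} makes $I_w$ the vertex subrepresentation with $\F_{I_w}(A) \supseteq \bs{\delta}_w$.

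Then I would conclude the equality $\F_{I_w}(A)=\innerprod{\bs{\delta}_w}$ and surjectivity-of-labelling: since the clusters $\bs{\delta}_w$ exhaust all clusters (by \cite{M}), and since the cluster fan $\F^r(\rep A)$ is complete (cluster-finiteness), every maximal cone of $\F(A)$ is a cone of $\F^r(\rep A)$ by Proposition \ref{P:fanCC}, and conversely each $\bs{\delta}_w$ spans a genuine (full-dimensional) dual cone by Theorem \ref{T:Mcones} applied with all the $I_w$'s — or more economically, by the construction above each $\bs{\delta}_w$ lies in $\F_{I_w}(A)$ and the $\F_{I_w}(A)$ are full-dimensional and distinct as $w$ varies. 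Hence $\F(A)$ and $\F^r(\rep A)$ have the same maximal cones, so they coincide, and $\F^r(\rep A)$ is the Weyl fan. Finally, since $A$ is cluster-finite, $\F^r(\rep A) = \F(\rep A)$ as already noted, giving all three assertions: the vertices are labelled by the $I_w$, $\F_{I_w}(A)$ is the corresponding cluster, and $\F(A) = \F(\rep A)$ is a Weyl fan.

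The main obstacle I anticipate is the precise matching between the combinatorially-defined vertex subrepresentations of $\N(A)$ (the images of universal homomorphisms, per Proposition \ref{P:vertex} and Corollary \ref{C:univhomo}) and the ideals $I_w$ of \cite{BIRS}; one must verify that the cokernel functor on the maximal rigid presentation $d_w$ — which \cite{M} relates to $w$ — produces exactly $I_w$ as the $t_\delta$-truncation of $A$ for $\delta$ in the interior of the cone spanned by $\bs{\delta}_w$. This is where I would need to quote the structural results of \cite{M} and \cite{BIRS} carefully rather than re-deriving them; everything else is a bookkeeping assembly of Theorem \ref{T:Mcones}, Proposition \ref{P:fanCC}, Corollary \ref{C:CS}, and the completeness of the cluster fan.
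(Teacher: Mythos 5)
Your overall architecture is the same as the paper's: reduce everything to the single claim that $I_w$ is the vertex subrepresentation of $A$ whose normal cone contains the cluster $\bs{\delta}_w$ of weights of $\ind(d_w)$ (equivalently, $t_{\delta_w}(A)=I_w$), and then conclude by combining the completeness of the cluster fan for cluster-finite algebras with Proposition \ref{P:fanCC}, so that the coarsening $\F(A)$ of the complete fan $\F(\rep A)$ must coincide with it. That closing assembly you have right. The divergence is precisely at the step you flag as the main obstacle: you leave $t_{\delta_w}(A)=I_w$ as something ``essentially built into'' \cite{M} and \cite{BIRS}, whereas the paper supplies a short torsion-pair argument that your proposal does not. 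Namely, since $\delta_w$ is real, the torsion-free class of Definition \ref{D:torsion} is
$\mc{F}(\delta_w)=\{M\mid \hom(\delta_w,M)=0\}=\{M\mid \Hom(I_w,M)=0\}=\mc{F}(I_w)$,
using $\Hom(d,M)=\Hom(\Coker(d),M)$ and $\Coker(d_w)=I_w$; the associated torsion class is therefore $\mc{T}(I_w)=\{M\mid \Ext^1(I_w,M)=0\}$, and the facts $I_w\in\mc{T}(I_w)$ and $A/I_w\in\mc{F}(I_w)$ from \cite{BKT} identify $0\to I_w\to A\to A/I_w\to 0$ as the canonical torsion sequence, forcing $t_{\delta_w}(A)=I_w$. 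Without some such bridge between the presentation-theoretic functor $t_{\delta_w}$ and the ideal $I_w$, the proof is incomplete at its central point (the image of the universal map $h\,I_w\to A$ is not a priori $I_w$). Two smaller remarks: Corollary \ref{C:CS}, which you invoke midway, concerns the dual cluster of real Schur representations attached to exchange pairs, not the module $A$, so it is not the right tool here; and the detour through Lemma \ref{L:erigid0} becomes unnecessary once the torsion-pair identification is in place.
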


\begin{proof} 

Let $\delta_w$ be the weight vector of the maximal rigid presentation $d_w$.
We claim that $t_{\delta_w}(A) = I_w$, which implies that
$I_w$ is the vertex subrepresentation of $A$ such that
$\F_{I_w}(A)$ is the cluster corresponding to $d_w$.
It is known (e.g, \cite{BKT}) that the $I_w$ determines a torsion pair
$$\mc{T}(I_w)=\{M\in \rep A \mid \Ext^1(I_w,M)=0\}\ \text{ and }\ \mc{F}(I_w)=\{M\in \rep A \mid \Hom(I_w,M)=0\}.$$
On the other hand, recall from Definition \ref{D:torsion} that the torsion free class $\mc{F}(\delta_w)$ associated to $\delta_w$ is $\mc{F}(I_w)$ as well.
So its associated torsion class is $\mc{T}(I_w)$.
Now the claim follows from the the exact sequence $0 \to I_w \to A \to A/I_w \to 0$.
Indeed, from $I_w\in \mc{T}(I_w)$ and $A/I_w\in \mc{F}(I_w)$ \cite{BKT},
we conclude that $t_{\delta_w}(A)=I_w$.

Since $\F(A)$ is a coarsening of $\F(\rep A)$, we must have the equality $\F(A)=\F(\rep A)$, and thus there are no more vertices other than $I_w$.

%
\end{proof}

\begin{example} Let $T_{ij}$ be the indecomposable representation with socle $S_i$ and top $S_j$, and $R_2$ (resp. $R^2$) be the $(1,1,1)$-dimensional indecomposable representation with socle $S_2$ (resp. top $S_2$). We display the Newton polytope of $A$ for Dynkin type $A_3$. The vertices are labelled by the 24 permutations of the symmetric group $\mf{S}_4$. 
$$\polytopepreproj$$
\end{example}

\section*{Acknowledgement}
The author would like to thank Hugh Thomas for encouraging him to publish this manuscript.
He would like to thank LinHui Shen for explaining to him in detail the Fock-Goncharov duality pairing during the CRM conference Aug 2019.

\bibliographystyle{amsplain}

\end{document}